\DeclareRobustCommand{\em}{%
	\@nomath\em \if b\expandafter\@car\f@series\@nil
	\normalfont \else \slshape \fi}
\newcommand{\spaceplease}{\needspace{5\baselineskip}}
\newcommand{\SkAlg}{\catf{SkAlg}}
\newcommand{\HOM}{\underline{\catf{Hom}}} \newcommand{\END}{\underline{\catf{\End}}}
\tikzstyle{tikzfig}=[baseline=-0.25em,scale=0.5]
\tikzstyle{tikzfigs}=[baseline=-0.25em,scale=0.4]
\tikzstyle{none}=[inner sep=0mm]
\newcommand{\tikzfig}[1]{%
	{\tikzstyle{every picture}=[tikzfig]
		\IfFileExists{#1.tikz}
		{\input{#1.tikz}}
		{%
			\IfFileExists{./figures/#1.tikz}
			{\input{./figures/#1.tikz}}
			{\tikz[baseline=-0.5em]{\node[draw=red,font=\color{red},fill=red!10!white] {\textit{#1}};}}%
	}}%
}
\newcommand{\tikzfigs}[1]{%
	{\tikzstyle{every picture}=[tikzfigs]
		\IfFileExists{#1.tikz}
		{\input{#1.tikz}}
		{%
			\IfFileExists{./figures/#1.tikz}
			{\input{./figures/#1.tikz}}
			{\tikz[baseline=-0.5em]{\node[draw=red,font=\color{red},fill=red!10!white] {\textit{#1}};}}%
	}}%
}
\tikzstyle{every loop}=[]
\tikzstyle{black dot}=[fill=black, draw=black, shape=circle, minimum size=3pt, inner sep=0pt]
\tikzstyle{black dot small}=[fill=black, draw=black, shape=circle, minimum size=2pt, inner sep=0pt]
\tikzstyle{fblack dot}=[fill=black, draw=red, shape=circle, minimum size=2pt, inner sep=0pt]
\tikzstyle{wbox}=[fill=white, draw=black, shape=rectangle, minimum height=0.5cm, minimum width=0.01cm]
\tikzstyle{bbox}=[fill=white, draw=blue, shape=rectangle, minimum height=0.5cm, minimum width=0.01cm]
\tikzstyle{rbox}=[fill=white, draw=red, shape=rectangle, minimum height=0.5cm, minimum width=0.01cm]
\tikzstyle{bwbox}=[draw=blue, shape=rectangle, minimum width=2cm, minimum height=0.5cm]
\tikzstyle{bbwbox}=[draw=blue, shape=rectangle, minimum width=1cm, minimum height=1cm]
\tikzstyle{big white circle}=[fill=white, draw=black, shape=circle, minimum width=0.75cm]
\tikzstyle{white dot big}=[fill=white, draw=black, shape=circle, inner sep=1pt]
\tikzstyle{white dot}=[fill=white, draw=black, shape=circle, minimum size=3pt, inner sep=0pt]
\tikzstyle{flat box}=[fill=white, draw=black, shape=rectangle, minimum width=1.3cm, minimum height=0.5cm,fill=morphismcolor]
\tikzstyle{square}=[fill=white, draw=black, shape=rectangle]
\tikzstyle{flat box 2}=[fill=white, draw=black, shape=rectangle, minimum height=0.5cm, minimum width=0.01cm,fill=morphismcolor]
\tikzstyle{bigbox}=[fill=white, draw=black, shape=rectangle, minimum height=0.5cm, minimum width=0.8cm,fill=white]
\tikzstyle{over }=[front]
\tikzstyle{theta}=[fill=blue, draw=blue, shape=ellipse, minimum height=6pt, minimum width=6pt, inner sep=0pt]
\tikzstyle{thetabig}=[fill=blue, draw=blue, shape=ellipse, minimum width=1cm, minimum height=0.01cm]
\tikzstyle{thetainv}=[fill=blue, draw=red, shape=ellipse, minimum height=6pt, minimum width=6pt, inner sep=0pt]
\tikzstyle{thetabinv}=[fill=blue, draw=red, shape=ellipse, minimum width=1cm, minimum height=0.01cm]
\tikzstyle{bigdisk}=[draw=black, shape=circle, minimum width=3cm]
\tikzstyle{wdisk}=[shape=circle, minimum width=0.48cm,fill=white]
\tikzstyle{bigdisk2}=[draw=black, fill=lightgray, shape=circle, minimum width=3cm]
\tikzstyle{little disk}=[fill=white, draw=black, shape=circle, minimum width=0.5cm]
\tikzstyle{mid arrow}=[-, postaction={on each segment={mid arrow}}]
\tikzstyle{end arrow}=[->]
\tikzstyle{mover}=[-, link]
\tikzstyle{string}=[-, draw=blue,postaction={on each segment={mid arrow}}]
\tikzstyle{stringd}=[-, dotted,draw=blue,postaction={on each segment={mid arrow}}]
\tikzstyle{red}=[-, dotted,draw=red]
\tikzstyle{mydots}=[-,dotted,dashed,draw=gray]
\tikzstyle{mydotsblack}=[-,dotted,dashed,draw=black]
\tikzstyle{open}=[-, line width=1pt,draw=blue]
\tikzstyle{thick}=[-,line width=1pt]
\tikzstyle{rarrow}=[->,draw=red]
\tikzstyle{red mid arrow}=[-, draw={rgb,255: red,214; green,42; blue,51}, postaction={on each segment={mid arrow}}, line width=1pt]
\tikzstyle{RED}=[-, draw={rgb,255: red,214; green,42; blue,51}]
\tikzstyle{REDdashed}=[-,dashed, draw={rgb,255: red,214; green,42; blue,51}]
\tikzstyle{REDarrow}=[->, draw={rgb,255: red,214; green,42; blue,51}]
\tikzstyle{darrow}=[->,dotted]
\tikzstyle{blue}=[-, draw=blue]
\tikzstyle{blue mid arrow}=[-, draw={rgb,255: red,23; green,37; blue,167}, postaction={on each segment={mid arrow}}, line width=1pt]
\tikzstyle{over}=[-, link]
\tikzstyle{bover}=[-, blink]
\tikzstyle{mover}=[-, link]
\tikzstyle{mapsto}=[{|->}]
\tikzset{
	on each segment/.style={
		decorate,
		decoration={
			show path construction,
			moveto code={},
			lineto code={
				\path [#1]
				(\tikzinputsegmentfirst) -- (\tikzinputsegmentlast);
			},
			curveto code={
				\path [#1] (\tikzinputsegmentfirst)
				.. controls
				(\tikzinputsegmentsupporta) and (\tikzinputsegmentsupportb)
				..
				(\tikzinputsegmentlast);
			},
			closepath code={
				\path [#1]
				(\tikzinputsegmentfirst) -- (\tikzinputsegmentlast);
			},
		},
	},
	mid arrow/.style={postaction={decorate,decoration={
				markings,
				mark=at position .7 with {\arrow[#1]{stealth}}
	}}},
}
\tikzset{%
	link/.style    = { white, double = black, line width = 1.8pt,
		double distance = 0.4pt },
	channel/.style = { white, double = black, line width = 0.8pt,
		double distance = 0.8pt },
}
\tikzset{%
	blink/.style    = { white, double = blue, line width = 2pt,
		double distance = 1pt },
	channel/.style = { white, double = blue, line width = 2pt,
		double distance = 1pt },
}
\tikzstyle{tikzfig}=[baseline=-0.25em,scale=0.5]
\tikzstyle{none}=[inner sep=0mm]
\tikzstyle{every loop}=[]
\newtheoremstyle{mytheorem}
{\topsep}
{\topsep}
{\slshape}
{0pt}
{\bfseries}
{.}
{ }
{\thmname{#1}\thmnumber{ #2}\thmnote{ {\normalfont\slshape(#3)}}}
\newtheoremstyle{mydefinition}
{\topsep}
{\topsep}
{\normalfont}
{0pt}
{\bfseries}
{.}
{ }
{\thmname{#1}\thmnumber{ #2}\thmnote{ {\normalfont\slshape(#3)}}}
\theoremstyle{mytheorem}
\newtheorem{theorem}{Theorem}[section]
\newtheorem*{rep@theorem}{\rep@title}
\newcommand{\newreptheorem}[2]{%
	\newenvironment{rep#1}[1]{%
		\def\rep@title{#2 \ref{##1}}%
		\begin{rep@theorem}}%
		{\end{rep@theorem}}}
\newtheorem{lemma}[theorem]{Lemma}
\newtheorem{proposition}[theorem]{Proposition}
\newtheorem{corollary}[theorem]{Corollary}
\theoremstyle{mydefinition}
\newtheorem{definition}[theorem]{Definition}
\newenvironment{example}
{\pushQED{\qed}\exx}
{\popQED\endexx}
\newenvironment{remark}
{\pushQED{\qed}\remm}
{\popQED\endremm}
\numberwithin{equation}{section}
\newenvironment{pnum}{\begin{enumerate}[topsep=2pt,parsep=2pt,partopsep=2pt,itemsep=0pt,label={(\roman{*})}]}{\end{enumerate}}
\DeclareMathSymbol{\Phiit}{\mathalpha}{letters}{"08}\let\Phi\undefined\newcommand{\Phi}{\Phiit}
\DeclareMathSymbol{\Psiit}{\mathalpha}{letters}{"09}\let\Psi\undefined\newcommand{\Psi}{\Psiit}
\DeclareMathSymbol{\Sigmait}{\mathalpha}{letters}{"06}\let\Sigma\undefined\newcommand{\Sigma}{\Sigmait}
\DeclareMathSymbol{\Xiit}{\mathalpha}{letters}{"04}
\DeclareSymbolFont{extraup}{U}{zavm}{m}{n}
\DeclareMathSymbol{\vardiamond}{\mathalpha}{extraup}{87}
\DeclareMathSymbol{\Lambdait}{\mathalpha}{letters}{"03}\let\Lambda\undefined\newcommand{\Lambda}{\Lambdait}
\DeclareMathSymbol{\Piit}{\mathalpha}{letters}{"05}\let\Pi\undefined\newcommand{\Pi}{\Piit}
\DeclareMathSymbol{\Gammait}{\mathalpha}{letters}{"00}\let\Gamma\undefined\newcommand{\Gamma}{\Gammait}
\DeclareMathSymbol{\Omegait}{\mathalpha}{letters}{"0A}\let\Omega\undefined\newcommand{\Omega}{\Omegait}
\DeclareMathSymbol{\Upsilonit}{\mathalpha}{letters}{"07}\let\Upsilon\undefined\newcommand{\Upsilon}{\Upilonit}
\DeclareMathSymbol{\Thetait}{\mathalpha}{letters}{"02}\let\Theta\undefined\newcommand{\Theta}{\Thetait}
\def\Hom{\catf{Hom}}
\let\O\undefined\newcommand{\O}{\catf{O}}
\def\End{\catf{End}}
\def\id{\mathrm{id}}
\def\SL{\catf{SL}}
\def\dim{\mathrm{dim}}
\let\to\undefined\newcommand{\to}{\longrightarrow}
\let\mapsto\undefined\newcommand{\mapsto}{\longmapsto}
\newcommand{\catf}[1]{\mathsf{#1}}
\newcommand{\Proj}{\operatorname{\catf{Proj}}}
\newcommand{\Map}{\catf{Map}}
\newcommand{\dg}{^{\normalfont\textbf{\dag}}}
\def\op{\mathrm{op}}
\newcommand{\bzero}{\catf{B}^\circ}\newcommand{\psizero}{\psi^\circ}
\newcommand{\bulk}{\catf{B}}
\newcommand{\qss}{\cat{O}_\Sigma^\cat{A}}
\newcommand{\qssp}{\cat{O}_{\Sigma'}^\cat{A}}
\newcommand{\LO}{\mathscr{L}}
\newcommand{\Ao}{\cat{A}_{\text{\normalfont \bfseries !}}}
\newcommand{\ra}[1]{\xrightarrow{\   #1    \ }}
\newcommand{\Lexf}{\catf{Lex}^\catf{f}}
\newcommand{\Rexf}{\catf{Rex}^\catf{f}}
\newcommand{\cupx}{\stackrel{\times}{\cup}}
\newcommand{\trace}{\catf{t}}
\newcommand{\moduli}{\mathfrak{a}}
\newcommand{\act}{\triangleright}
\newcommand{\actcor}{\blacktriangleright}
\newcommand{\FA}{\mathfrak{F}_{\! \cat{A}}}
\newcommand{\FAA}{\mathfrak{F}_{\! \bar{\cat{A}}\boxtimes \cat{A}}}
\newcommand{\cor}{\xi}
\newcommand{\precor}{\lambda}
\newcommand{\Fbar}{\mathring{F}}
\newcommand{\Fhb}{F^\vardiamond}
\newcommand{\vect}{\catf{vect}}
\newcommand{\spr}[1]{\left\langle #1\right\rangle}
\newcommand{\cat}[1]{\mathcal{#1}}
\newcommand{\Ext}{\catf{Ext}}
\newcommand{\Surf}{\catf{Surf}}
\newcommand{\SkA}{\catf{Sk}_{\! \cat{A}}}
\newcommand{\Legs}{\catf{Legs}}
\newcommand{\Add}{\catf{Add}}
\newcommand{\PhiA}{\Phi_{\! \cat{A}}}
\newcommand{\elliptic}{\mathcal{D}_{\! \cat{A}}}
\newcommand{\skA}{\catf{sk}_\cat{A}}
\newcommand{\Fmod}{F\catf{-mod}_\cat{A}}\newcommand{\rmod}{\catf{mod}_{\cat{A}}\!-\!}
\definecolor{Blue}  {rgb} {0.282352,0.239215,0.803921}
\definecolor{Green} {rgb} {0.133333,0.545098,0.133333}
\definecolor{Red}   {rgb} {0.803921,0.000000,0.000000}
\definecolor{Violet}{rgb} {0.580392,0.000000,0.827450}
\newcommand{\monthyeardate}{%
	\DTMenglishmonthname{\@dtm@month}, \@dtm@year
}
\newtheorem*{theorem*}{Theorem}
\newtheorem*{corollary*}{Corollary}
\renewcommand\section{\@startsection {section}{1}{\z@}%
	{-3.5ex \@plus -1ex \@minus -.2ex}%
	{2.3ex \@plus.2ex}%
	{\normalfont\scshape\centering}}
\titleformat{\subsection}[runin]
{\slshape}
{\thesubsection}
{0.5em}
{}
[.]
\renewcommand\thepart{\Alph{part}}
\begin{document}

	\vspace*{-0.5cm}	\begin{center}	\textbf{\large{The Construction of Correlators in \\[0.5ex]
				Finite Rigid Logarithmic Conformal Field Theory}}\\	\vspace{1cm}{\large Lukas Woike }\\ 	\vspace{5mm}{\slshape  Université Bourgogne Europe\\ CNRS\\ IMB UMR 5584\\ F-21000 Dijon\\ France }\end{center}	\vspace{0.3cm}	
	\begin{abstract}\noindent 
		For logarithmic conformal field theories whose monodromy data is given by a not necessarily semisimple modular category, we solve the problem of constructing and classifying the consistent systems of correlators. The correlator construction given in this article applies to the open-closed sector and generalizes the well-known one for rational conformal field theories given by Fuchs-Runkel-Schweigert roughly twenty years ago and solves conjectures of Fuchs, Gannon, Schaumann and Schweigert. The strategy is, even in the rational special case, entirely different. The correlators are constructed using the extension procedures that can be devised by means of the modular microcosm principle. It is shown that, as in the rational case, the correlators admit a holographic description, with the main difference that the holographic principle is phrased in terms of factorization homology. The latter description is used to prove that the coefficients obtained by the evaluation of the torus partition function at projective objects are non-negative integers. Moreover, we show that the derived algebra of local operators associated to a consistent system of correlators carries a Batalin-Vilkovisky structure. We prove that it is equivalent to the Batalin-Vilkovisky structure on the Hochschild cohomology of the pivotal module category of boundary conditions, for the notion of pivotality due to Schaumann and Shimizu. This proves several expectations formulated by Kapustin-Rozansky and Fuchs-Schweigert for general conformal field theories.
\end{abstract}

	\tableofcontents
	\normalsize

	\section{Introduction and summary}
	The monodromy data of a two-dimensional conformal field theory can be described by providing for 
	 each compact oriented surface with a label attached to each boundary component a vector space, called the \emph{space of conformal blocks} for that surface.
	This vector space comes with an action of an extension of the mapping class group of that surface. 
	The spaces of conformal blocks are organized into what is called a \emph{modular functor} \cite{Segal,ms89,turaev,tillmann,baki,jfcs}. Restriction to genus zero endows the label set with the structure of  a certain linear balanced braided monoidal category $\cat{A}$ with a weak form of duality~\cite{baki,cyclic}. 
The term \emph{monodromy data} is sometimes used for the modular functor in its entirety or just the category $\cat{A}$. 

	In this article, we are interested in the situation in which $\cat{A}$ is 
a \emph{modular category}, a linear balanced braided monoidal category that is subject to finiteness conditions and a non-degeneracy condition on the braiding and is moreover  \emph{rigid}, i.e.\ all objects in $\cat{A}$ have duals. If $\cat{A}$ is additionally semisimple, then it is called a \emph{modular fusion category}. By a  result of Huang~\cite{huang} such a category arises for example by taking modules over a \emph{(strongly) rational} vertex operator algebra; for this reason, this situation is often 
 referred to as the `rational case'.
Not necessarily  semisimple
modular categories are set to be the monodromy data for \emph{finite rigid logarithmic conformal field theories} in \cite{jfcs,fspivotal}, see Section~\ref{secmodel} for an independent justification for why this is the correct input datum. 
Dropping the requirement of semisimplicity is a vast generalization that comes with tremendous challenges. One still has, however, finiteness and rigidity as remaining guardrails. Sources for non-semisimple modular categories include suitable vertex operator algebras or Hopf algebras; some modular categories arise from both sources at once. Some relatively recent works in this very active field include~\cite{lo,glo,creutziggainutdinovrunkel,creutziglentnerrupert,mcrae,gannonnegron}.

The modular functor $\FA$ associated to a
not necessarily semisimple modular category $\cat{A}$
was constructed by Lyubashenko \cite{lyubacmp,lyu,lyulex,kl}. For a surface $\Sigma$ (in this article always smooth, compact and oriented) with $n\ge 0$ boundary components, the modular functor gives us a left exact functor $\FA(\Sigma;-): \cat{A}^{\boxtimes n}\to \vect$ taking as input the $n$ boundary labels, where $\boxtimes$ is the Deligne product and $\vect$ is the category of finite-dimensional vector spaces over our fixed algebraically closed ground field $k$. 
The functor $\FA(\Sigma;-)$ comes with a projective action of the mapping class group of $\Sigma$.
Moreover, the assignment $\Sigma \mapsto \FA(\Sigma;-)$ is compatible with gluing. 
It is justified to call $\FA$ `the' modular functor associated to $\cat{A}$ because $\FA$ is, up to equivalence, the only modular functor extending the genus zero spaces of conformal blocks given by the morphism spaces of $\cat{A}$. This is a consequence of the results of~\cite{brochierwoike}. The definition of the notion of a modular functor is recalled in Section~\ref{secmf}.

	The construction and study of the spaces of conformal blocks is a part of \emph{chiral conformal field theory}. It is only the first layer of the construction.
	The next step is to pass to 
	 \emph{full conformal field theory}.
	This has the mathematically precise meaning of finding all \emph{consistent systems of correlators}, which are vectors in the spaces of conformal blocks 
	invariant under the mapping class group action and compatible with the gluing of surfaces. 
Exhibiting the consistent systems of correlators for a modular functor describing the monodromy data of a conformal field theory can be understood as a mathematically rigorous incarnation of finding the `solutions' of the theory.	

	To describe the notion of a consistent system of correlators more precisely, we follow the encyclopedia entry~\cite{algcften}: For a modular category $\cat{A}$, consider the balanced braided category $\bar{\cat{A}}$ that is the same monoidal category as $\cat{A}$, but with inverted braiding and balancing. It is again a modular category, and so is $\bar{\cat{A}} \boxtimes \cat{A}$. The shift of interest from $\cat{A}$ to  $\bar{\cat{A}} \boxtimes \cat{A}$ comes, physically speaking, from the fact that we need to combine \emph{left and right movers}; this is a part of the \emph{process of holomorphic factorization}.

	A \emph{consistent system of correlators for the conformal field theory with monodromy data $\cat{A}$} has an underlying object $B \in \bar{\cat{A}} \boxtimes \cat{A}$, called the \emph{bulk field}, and vectors $\cor_\Sigma^B \in \FAA(\Sigma;B,\dots,B)$ for each surface $\Sigma$ with all boundary components labeled by $B$ such that the $\cor_\Sigma^B$ are mapping class group invariant and  compatible with the gluing along boundary components. The latter is expressed by saying that the $\cor_\Sigma^B$ satisfy the \emph{sewing constraints}.

	Actually, we will ask for a little more: The modular functor $ \FAA$ is 
	 an \emph{open-closed modular functor}~\cite{sn}, i.e.\
	it is also consistently defined on surfaces with marked intervals in the boundary.
	In this situation, we will not only ask for a bulk field $B \in \bar{\cat{A}}\boxtimes\cat{A}$ attached to the boundary circles, but also a \emph{boundary field} $F\in\cat{A}$ attached to the marked intervals, see Figure~\ref{figocsurface}. Again, one demands invariance under the mapping class group and gluing.
	
	\begin{figure}[h]
		\centering
		\begin{tikzpicture}[scale=0.4]
			\begin{pgfonlayer}{nodelayer}
				\node [style=none] (0) at (-10, 3) {};
				\node [style=none] (1) at (-10, 1) {};
				\node [style=none] (2) at (-10, -1) {};
				\node [style=none] (3) at (-10, -3) {};
				\node [style=none] (4) at (-7.5, 0.75) {};
				\node [style=none] (5) at (-6.5, 0.75) {};
				\node [style=none] (6) at (-7.75, 1.25) {};
				\node [style=none] (7) at (-6.25, 1.25) {};
				\node [style=none] (8) at (-5.5, -0.75) {};
				\node [style=none] (9) at (-4.5, -0.75) {};
				\node [style=none] (10) at (-5.75, -0.25) {};
				\node [style=none] (11) at (-4.25, -0.25) {};
				\node [style=none] (12) at (-9.25, 2.5) {};
				\node [style=none] (13) at (-9.25, 1.5) {};
				\node [style=none] (14) at (-10, 3) {};
				\node [style=none] (15) at (-10.75, 1.5) {};
				\node [style=none] (16) at (-11.5, -2) {$B$};
				\node [style=none] (17) at (-11.5, 2.5) {$F$};
				\node [style=none] (18) at (-8.5, 2) {$F$};
				\node [style=none] (19) at (-11.5, -2.5) {};
				\node [style=none] (20) at (-10, -4.5) {};
				\node [style=none] (21) at (-10, -5) {\small \slshape bulk object $B$ associated to every parametrized boundary circle};
				\node [style=none] (22) at (-10, 4) {};
				\node [style=none] (23) at (-11.25, 3) {};
				\node [style=none] (24) at (-8.5, 2.5) {};
				\node [style=none] (25) at (-10, 5.75) {\small \slshape boundary object $F$ associated to every parametrized boundary interval};
				\node [style=none] (26) at (-10, 5.25) {};
				\node [style=none] (27) at (-2, 0) {};
				\node [style=none] (28) at (1, 0) {};
				\node [style=none] (29) at (-2, 0.25) {};
				\node [style=none] (30) at (-2, -0.25) {};
				\node [style=none] (31) at (6.5, 0) {$\FA(\Sigma;F,F,B) \curvearrowleft \Map(\Sigma)$};
				\node [style=none] (32) at (-7.5, -1.75) {$\Sigma$};
				\node [style=none] (33) at (6.5, -1) {space of conformal blocks};
			\end{pgfonlayer}
			\begin{pgfonlayer}{edgelayer}
				\draw [bend left=90, looseness=1.50] (0.center) to (1.center);
				\draw [bend right=90, looseness=1.50] (0.center) to (1.center);
				\draw [bend left=90, looseness=3.00] (1.center) to (2.center);
				\draw [bend left=90, looseness=4.00] (0.center) to (3.center);
				\draw [bend left=90, looseness=1.25] (4.center) to (5.center);
				\draw [bend right=90, looseness=1.50] (6.center) to (7.center);
				\draw [bend left=90, looseness=1.25] (8.center) to (9.center);
				\draw [bend right=90, looseness=1.50] (10.center) to (11.center);
				\draw [style=open, in=120, out=-180] (14.center) to (15.center);
				\draw [style=open, bend left] (12.center) to (13.center);
				\draw [style=open, bend left=90, looseness=1.75] (2.center) to (3.center);
				\draw [style=open, bend right=90, looseness=1.75] (2.center) to (3.center);
				\draw [style=end arrow, in=-90, out=90, looseness=0.75] (20.center) to (19.center);
				\draw [style=end arrow, bend left] (22.center) to (24.center);
				\draw [style=end arrow, bend right, looseness=1.25] (22.center) to (23.center);
				\draw (26.center) to (22.center);
				\draw [style=end arrow] (27.center) to (28.center);
				\draw (29.center) to (30.center);
			\end{pgfonlayer}
		\end{tikzpicture}
		\label{figocsurface}
		\caption{An open-closed surface with parametrized intervals (`open boundary') and parametrized boundary circles (`closed boundary'),
			 both printed in blue. To the open boundary components, we attach the boundary object $F$ while the closed boundary components are labeled with the bulk object $B$.}
		\end{figure}
	\spaceplease
	Finding any consistent system of correlators, let alone all, is a rather difficult mathematical problem.
	For modular fusion categories, i.e.\ in the rational case,
	the construction of correlators has been achieved roughly twenty years ago in a series of articles of Fuchs, Runkel and Schweigert, partially with Fjelstad
	\cite{frs1,frs2,frs3,frs4,ffrs,ffrsunique}, building on additional works of some of these authors with Felder and Fröhlich~\cite{fffs0,fffs}, see also the  paper~\cite{correspondences} building algebraic foundations. In these works, consistent systems of correlators are constructed
	from special symmetric Frobenius algebras in the modular fusion category $\cat{A}$
	 using the Reshetikhin-Turaev topological field theory~\cite{rt1,rt2,turaev} associated to $\cat{A}$.
	 Phrased in a slightly more pointed way, one uses three-dimensional topological field theory to solve two-dimensional rational conformal field theory. This is an instance of the \emph{holographic principle}~\cite{kapustinsaulina}. An early account of the correspondence between rational conformal field theory and three-dimensional topological field theory (however not yet including full conformal field theory) is \cite{kontsevich,witten}.
	 
	 Beyond the semisimple case, the tools of \cite{rt1,rt2,turaev} are not available, and the construction of correlators in this generality is open. The main result of this article is the following:

	\spaceplease
		\begin{reptheorem}{thmmainshort}
		Let $\cat{A}$ be a modular category. Then any special symmetric Frobenius algebra $F\in\cat{A}$ gives rise to a consistent system of open-closed correlators for the modular functor $\FAA$.
	\end{reptheorem}
	
	In the rational case, the construction of \cite{frs1,frs2,frs3,frs4,ffrs,ffrsunique} is recovered.
	
	If one asks for the open-closed correlators a compatibility between bulk and boundary similar to \cite{ffrsunique}, then all consistent systems of open-closed correlators are of this form, so that Theorem~\ref{thmmainshort} is not only a construction, but a classification (Corollary~\ref{corclassification}).
	The solution is completely constructive, and the bulk algebra is Morita invariant in the input algebra $F$, see Corollary~\ref{cormoritainvarianz}.
	
	Let us comment on the relation to the non-semisimple correlator constructions that are available in special cases:
	 The already highly non-trivial task to set up the framework to treat correlators in the non-semisimple case is achieved in \cite{jfcs,jfcslog,fspivotal}.
	 The case in which the Frobenius algebra $F$ in Theorem~\ref{thmmainshort} is the monoidal unit is called the \emph{Cardy case} investigated in \cite{cardycartan,cardycase}.
	 If $\cat{A}$ is given by finite-dimensional modules over a ribbon factorizable Hopf algebra $H$, the Cardy case, including a version that is twisted by a ribbon automorphism of $H$ is solved in \cite{fuchsschweigertstignerhopf,fuchsschweigertstignermcg}.
	 Beyond the Hopf-algebraic case, there is an analysis of the genus zero situation in \cite{jfcs} with a condition for the extension beyond genus zero. Unfortunately, it has not been possible so far to check this condition for the conjectured bulk field candidate in the Cardy case \cite{jfcs,cardycase}. Theorem~\ref{thmmainshort} solves in particular this conjecture.

	 \subsection*{The holographic principle\label{secholographicintro}}
	 In order to prove Theorem~\ref{thmmainshort}, we 
	  construct for a modular category $\cat{A}$ and a special symmetric Frobenius algebra $F\in\cat{A}$ vectors in $\FAA(\Sigma;\bulk(F),\dots,\bulk(F))$ for a surface $\Sigma$, with the bulk object $\bulk(F) \in \bar{\cat{A}}\boxtimes\cat{A}$ that we will discuss in detail in Section~\ref{seccyl}. 
	 To this end, we will first construct vectors in $\FAA(\Sigma;\bzero(F),\dots,\bzero(F))$, with the coend $\bzero(F)=\int^{X \in \cat{A}} X^\vee \boxtimes F \otimes X \in \bar{\cat{A}}\boxtimes \cat{A}$ being a `pre-version' of the bulk object that will have  to be refined afterwards
	 to obtain the actual bulk object. These vectors are constructed first on the open sector using the extension procedures within the framework of the modular microcosm principle~\cite{microcosm} and will then be successively extended building on the methods developed in \cite{cyclic,mwansular,brochierwoike,envas,reflection}.

	 Actually,
	 $\FAA(\Sigma;\bzero(F),\dots,\bzero(F))$ is isomorphic to $\FA(    \bar\Sigma \cupx_{\partial \Sigma} \Sigma;F,\dots,F)$
	 if $\Sigma$ is connected with $n\ge 1$ boundary components, where $\bar \Sigma \cupx_{\partial \Sigma} \Sigma$ is the surface obtained by gluing $\Sigma$ and $\bar{\Sigma}$ together along  cylinders with an additional boundary component on them, see Figure~\ref{figdouble}; this is a version of the familiar \emph{double} of a surface.

	 \begin{figure}[h]
	 	\begin{tikzpicture}[scale=0.5]
	 		\begin{pgfonlayer}{nodelayer}
	 			\node [style=none] (0) at (3.75, -0.25) {};
	 			\node [style=none] (1) at (4.75, -0.25) {};
	 			\node [style=none] (2) at (3.5, 0.25) {};
	 			\node [style=none] (3) at (5, 0.25) {};
	 			\node [style=none] (4) at (1.5, 2) {};
	 			\node [style=none] (5) at (2.5, 0.5) {};
	 			\node [style=none] (6) at (1.5, -2) {};
	 			\node [style=none] (7) at (-1.5, 2) {};
	 			\node [style=none] (8) at (-2.5, 0.5) {};
	 			\node [style=none] (9) at (-1.5, -2) {};
	 			\node [style=none] (10) at (-4.75, -0.25) {};
	 			\node [style=none] (11) at (-3.75, -0.25) {};
	 			\node [style=none] (12) at (-5, 0.25) {};
	 			\node [style=none] (13) at (-3.5, 0.25) {};
	 			\node [style=none] (14) at (-1.5, -0.25) {};
	 			\node [style=none] (15) at (1.5, -0.25) {};
	 			\node [style=none] (16) at (-0.5, -1.25) {};
	 			\node [style=none] (17) at (0.5, -1.25) {};
	 			\node [style=none] (19) at (-0.5, 1.25) {};
	 			\node [style=none] (20) at (0.5, 1.25) {};
	 			\node [style=none] (28) at (-18, 2) {};
	 			\node [style=none] (29) at (-18, 0.75) {};
	 			\node [style=none] (30) at (-18, -2) {};
	 			\node [style=none] (31) at (-21.25, -0.25) {};
	 			\node [style=none] (32) at (-20.25, -0.25) {};
	 			\node [style=none] (33) at (-21.5, 0.25) {};
	 			\node [style=none] (34) at (-20, 0.25) {};
	 			\node [style=none] (35) at (-18, -0.75) {};
	 			\node [style=none] (36) at (-15, 2) {};
	 			\node [style=none] (37) at (-15, 0.75) {};
	 			\node [style=none] (38) at (-15, -2) {};
	 			\node [style=none] (39) at (-12.75, -0.25) {};
	 			\node [style=none] (40) at (-11.75, -0.25) {};
	 			\node [style=none] (41) at (-13, 0.25) {};
	 			\node [style=none] (42) at (-11.5, 0.25) {};
	 			\node [style=none] (43) at (-15, -0.75) {};
	 			\node [style=none] (44) at (-14.75, 6.5) {};
	 			\node [style=none] (45) at (-14.75, 5) {};
	 			\node [style=none] (46) at (-17.75, 6.5) {};
	 			\node [style=none] (47) at (-17.75, 5) {};
	 			\node [style=none] (50) at (-16.75, 5.75) {};
	 			\node [style=none] (51) at (-15.75, 5.75) {};
	 			\node [style=none] (52) at (-16.5, 4.5) {};
	 			\node [style=none] (53) at (-16.5, 1.5) {};
	 			\node [style=none] (54) at (-16.5, -1.5) {};
	 			\node [style=none] (55) at (-14.75, 3.25) {glue in};
	 			\node [style=none] (56) at (-9, 0) {};
	 			\node [style=none] (57) at (-7.5, 0) {};
	 			\node [style=none] (58) at (-13, -1) {$\Sigma$};
	 			\node [style=none] (59) at (-20, -1) {$\bar\Sigma$};
	 			\node [style=none] (60) at (2.5, -1.25) {$\bar \Sigma \cupx_{\partial \Sigma} \Sigma$};
	 		\end{pgfonlayer}
	 		\begin{pgfonlayer}{edgelayer}
	 			\draw [bend left=90, looseness=1.25] (0.center) to (1.center);
	 			\draw [bend right=90, looseness=1.50] (2.center) to (3.center);
	 			\draw [bend right=90, looseness=4.50] (6.center) to (4.center);
	 			\draw [in=180, out=-180, looseness=4.50] (7.center) to (9.center);
	 			\draw [bend left=90, looseness=1.25] (10.center) to (11.center);
	 			\draw [bend right=90, looseness=1.50] (12.center) to (13.center);
	 			\draw (7.center) to (4.center);
	 			\draw (9.center) to (6.center);
	 			\draw [bend right=90, looseness=0.75] (8.center) to (5.center);
	 			\draw [bend left=90, looseness=0.75] (14.center) to (15.center);
	 			\draw [bend left=90] (16.center) to (17.center);
	 			\draw [bend right=90] (16.center) to (17.center);
	 			\draw [bend left=90] (19.center) to (20.center);
	 			\draw [bend right=90] (19.center) to (20.center);
	 			\draw [in=180, out=-180, looseness=4.25] (28.center) to (30.center);
	 			\draw [bend left=90, looseness=1.25] (31.center) to (32.center);
	 			\draw [bend right=90, looseness=1.50] (33.center) to (34.center);
	 			\draw [bend right=90, looseness=2.00] (29.center) to (35.center);
	 			\draw [bend left=90, looseness=1.50] (35.center) to (30.center);
	 			\draw [bend left=90, looseness=1.50] (28.center) to (29.center);
	 			\draw [in=0, out=0, looseness=4.25] (36.center) to (38.center);
	 			\draw [bend left=90, looseness=1.25] (39.center) to (40.center);
	 			\draw [bend right=90, looseness=1.50] (41.center) to (42.center);
	 			\draw [bend left=90, looseness=2.00] (37.center) to (43.center);
	 			\draw [bend right=90, looseness=1.75] (36.center) to (37.center);
	 			\draw [bend right=90, looseness=1.75] (43.center) to (38.center);
	 			\draw [bend left=90, looseness=1.50] (43.center) to (38.center);
	 			\draw [bend left=90, looseness=1.50] (36.center) to (37.center);
	 			\draw [style=mydotsblack, bend right=90, looseness=1.75] (28.center) to (29.center);
	 			\draw [style=mydotsblack, bend right=90, looseness=1.50] (35.center) to (30.center);
	 			\draw [bend left=90, looseness=1.50] (44.center) to (45.center);
	 			\draw [bend right=90, looseness=1.75] (46.center) to (47.center);
	 			\draw [bend left=90, looseness=1.50] (46.center) to (47.center);
	 			\draw [style=mydotsblack, bend right=90, looseness=1.75] (44.center) to (45.center);
	 			\draw (46.center) to (44.center);
	 			\draw (47.center) to (45.center);
	 			\draw [bend left=90] (50.center) to (51.center);
	 			\draw [bend right=90] (50.center) to (51.center);
	 			\draw [style=end arrow, bend right=15] (52.center) to (53.center);
	 			\draw [style=end arrow, bend left=15] (52.center) to (54.center);
	 			\draw [style=end arrow] (56.center) to (57.center);
	 		\end{pgfonlayer}
	 	\end{tikzpicture}
	 	
	 	\caption{The construction of $\bar \Sigma \cupx_{\partial \Sigma} \Sigma$ for a genus one surface with two boundary components.}
	 	\label{figdouble}
	 \end{figure}
	 
	 One then proves \begin{align}\FAA(\Sigma;\bzero(F),\dots,\bzero(F))\cong\FA (\bar \Sigma \cupx_{\partial \Sigma} \Sigma;F,\dots,F) \cong \Hom_{\int_\Sigma \cat{A}}(\qss,F^{\boxtimes n}\act \qss) \ , \label{eqnfaasigma}\end{align}
	 where $\int_\Sigma \cat{A}$ is the factorization homology for $\Sigma$ with coefficients in $\cat{A}$, $\qss \in \int_\Sigma \cat{A}$ is the \emph{quantum structure sheaf} from~\cite{bzbj},
	  and $\act : \cat{A}^{\boxtimes n} \boxtimes \int_\Sigma \cat{A}\to\int_\Sigma \cat{A}$ denotes the $\cat{A}^{\boxtimes n}$-action on factorization homology induced by the boundary.
	 The homotopy fixed point structure of the quantum structure sheaf endows $\Hom_{\int_\Sigma \cat{A}}(\qss,F^{\boxtimes n}\act \qss)$ with a mapping class group action that makes~\eqref{eqnfaasigma} equivariant.
	 By \eqref{eqnfaasigma} the correlator 
	  is obtained from  a 
	  map $\qss \to F^{\boxtimes n} \act \qss$ invariant under the mapping class group $\Map(\Sigma)$ or,
	 equivalently, to a $\Map(\Sigma)$-invariant map $F^{\boxtimes n} \to \moduli_\Sigma$, where $\moduli_\Sigma := \END_{\int_\Sigma \cat{A}}(\qss)\in\cat{A}^{\boxtimes n}$ is the inner endomorphism algebra of the quantum structure sheaf carrying again a mapping class group action.
	 This algebra, introduced in this form in 	\cite{bzbj,bzbj2,skeinfin}, generalizes the moduli algebras from
	 \cite{alekseevmoduli,agsmoduli,asmoduli,br1,br2}.

	 If $\Sigma$ is the torus $\mathbb{T}_1^2$ with one boundary component, the correlator comes from a map
	  $F \to \elliptic$ from $F$ to the \emph{elliptic double} $\elliptic$ defined by Brochier-Jordan~\cite{brochierjordane}. This is just a morphism in $\cat{A}$, not necessarily an algebra map.
	 It is invariant under the mapping class group of the torus with one boundary component, the braid group $B_3$ on three strands. 
	 The elliptic double $\elliptic$ is given by two tensor copies $\mathbb{F}\otimes\mathbb{F}$ of the coend algebra $\mathbb{F}=\int^{X\in\cat{A}} X^\vee \otimes X$, with both copies of $\mathbb{F}$ as subalgebras and additional cross relations. 
	 If $\cat{A}$ is given by finite-dimensional modules over a ribbon factorizable Hopf algebra $H$, then $\mathbb{F}$ is the coadjoint representation $H_\text{coadj}^*$, i.e.\ $\elliptic = H_\text{coadj}^* \otimes H_\text{coadj}^*$ as objects. 
	 Precomposition with the unit $I\to F$ of $F$ gives us a morphism $I\to \elliptic$,\label{refelementell} i.e.\ an invariant of the elliptic double which is just an element of the skein algebra of the torus $\mathbb{T}_1^2$ with one boundary component. 
	 It will turn out that this element will later induce the torus partition function
	 that we calculate in Section~\ref{sectorus}.

	 The skein-theoretic perspective on~\eqref{eqnfaasigma} is also quite instructive:
	 Consider the cylinder $\Sigma \times [0,1]$.
	 To this three-dimensional manifold, we apply the functor $\SkA(-)$ for the skein module; since we are working in the non-semisimple setting, this needs to be the cocompleted version $\SkA$~\cite{brownhaioun,mwskein} of the 
	 admissible skein module construction $\skA$ of Costantino-Geer-Patureau-Mirand~\cite{asm}. 
	 For the $\Sigma \times \{1\}$-copy, we insert $F$ near the boundary, but not for the $\Sigma \times \{0\}$-copy, see Figure~\ref{fig3d}.

	 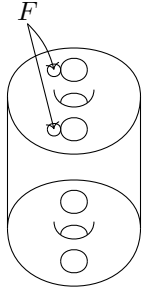
\begin{figure}[h]
	 	\centering	\begin{tikzpicture}[scale=0.35]
	 		\begin{pgfonlayer}{nodelayer}
	 			\node [style=none] (0) at (-2.5, -2) {};
	 			\node [style=none] (1) at (2.5, -2) {};
	 			\node [style=none] (2) at (-0.5, -2.25) {};
	 			\node [style=none] (3) at (0.5, -2.25) {};
	 			\node [style=none] (4) at (-0.75, -1.75) {};
	 			\node [style=none] (5) at (0.75, -1.75) {};
	 			\node [style=none] (6) at (-2.5, 3) {};
	 			\node [style=none] (7) at (2.5, 3) {};
	 			\node [style=none] (8) at (-0.5, 2.75) {};
	 			\node [style=none] (9) at (0.5, 2.75) {};
	 			\node [style=none] (10) at (-0.75, 3.25) {};
	 			\node [style=none] (11) at (0.75, 3.25) {};
	 			\node [style=none] (12) at (-0.5, -1) {};
	 			\node [style=none] (13) at (0.5, -1) {};
	 			\node [style=none] (14) at (-1.75, 6.25) {$F$};
	 			\node [style=none] (15) at (-0.5, -3.25) {};
	 			\node [style=none] (16) at (0.5, -3.25) {};
	 			\node [style=none] (18) at (-0.5, 4) {};
	 			\node [style=none] (19) at (0.5, 4) {};
	 			\node [style=none] (21) at (-0.5, 1.75) {};
	 			\node [style=none] (22) at (0.5, 1.75) {};
	 			\node [style=none] (27) at (0, -1.5) {};
	 			\node [style=none] (28) at (-1, 1.75) {};
	 			\node [style=none] (29) at (-0.5, 1.75) {};
	 			\node [style=none] (30) at (-1, 4) {};
	 			\node [style=none] (31) at (-0.5, 4) {};
	 			\node [style=none] (32) at (-1.75, 5.75) {};
	 			\node [style=none] (33) at (-0.75, 4) {};
	 			\node [style=none] (34) at (-0.75, 1.75) {};
	 		\end{pgfonlayer}
	 		\begin{pgfonlayer}{edgelayer}
	 			\draw [bend right=90, looseness=1.50] (0.center) to (1.center);
	 			\draw [bend left=90, looseness=1.25] (0.center) to (1.center);
	 			\draw [bend left=90, looseness=1.25] (2.center) to (3.center);
	 			\draw [bend right=90, looseness=1.50] (4.center) to (5.center);
	 			\draw [bend right=90, looseness=1.50] (6.center) to (7.center);
	 			\draw [bend left=90, looseness=1.25] (6.center) to (7.center);
	 			\draw [bend left=90, looseness=1.25] (8.center) to (9.center);
	 			\draw [bend right=90, looseness=1.50] (10.center) to (11.center);
	 			\draw (6.center) to (0.center);
	 			\draw (7.center) to (1.center);
	 			\draw [bend left=90, looseness=1.50] (12.center) to (13.center);
	 			\draw [bend right=90, looseness=1.50] (12.center) to (13.center);
	 			\draw [bend left=90, looseness=1.50] (15.center) to (16.center);
	 			\draw [bend right=90, looseness=1.50] (15.center) to (16.center);
	 			\draw [bend left=90, looseness=1.50] (18.center) to (19.center);
	 			\draw [bend right=90, looseness=1.50] (18.center) to (19.center);
	 			\draw [bend left=90, looseness=1.50] (21.center) to (22.center);
	 			\draw [bend right=90, looseness=1.50] (21.center) to (22.center);
	 			\draw [bend left=90, looseness=1.50] (28.center) to (29.center);
	 			\draw [bend right=90, looseness=1.75] (28.center) to (29.center);
	 			\draw [bend left=90, looseness=1.50] (30.center) to (31.center);
	 			\draw [bend right=90, looseness=1.75] (30.center) to (31.center);
	 			\draw [style=end arrow] (32.center) to (34.center);
	 			\draw [style=end arrow, bend left=15] (32.center) to (33.center);
	 		\end{pgfonlayer}
	 	\end{tikzpicture}
	 	
	 	\caption{The cylinder over $\Sigma$, with the boundary labels only kept in the upper copy.}
	 	\label{fig3d}
	 \end{figure}

	 Therefore, \eqref{eqnfaasigma} is telling us that the space of conformal blocks for the above-mentioned version of the double of the surface with $F$ as boundary label is canonically isomorphic to the skein module for the manifold in Figure~\ref{fig3d}:
	 \begin{align}\label{eqnholprinintro}
	 	\FA\left(\!\!\!\!\!\!\!\!\!\begin{array}{c}
	 	\begin{tikzpicture}[scale=0.35]
	 		\begin{pgfonlayer}{nodelayer}
	 			\node [style=none] (0) at (3.75, -0.25) {};
	 			\node [style=none] (1) at (4.75, -0.25) {};
	 			\node [style=none] (2) at (3.5, 0.25) {};
	 			\node [style=none] (3) at (5, 0.25) {};
	 			\node [style=none] (4) at (1.5, 2) {};
	 			\node [style=none] (5) at (2.5, 0.5) {};
	 			\node [style=none] (6) at (1.5, -2) {};
	 			\node [style=none] (7) at (-1.5, 2) {};
	 			\node [style=none] (8) at (-2.5, 0.5) {};
	 			\node [style=none] (9) at (-1.5, -2) {};
	 			\node [style=none] (10) at (-4.75, -0.25) {};
	 			\node [style=none] (11) at (-3.75, -0.25) {};
	 			\node [style=none] (12) at (-5, 0.25) {};
	 			\node [style=none] (13) at (-3.5, 0.25) {};
	 			\node [style=none] (14) at (-1.5, -0.25) {};
	 			\node [style=none] (15) at (1.5, -0.25) {};
	 			\node [style=none] (16) at (-0.5, -1.25) {};
	 			\node [style=none] (17) at (0.5, -1.25) {};
	 			\node [style=none] (18) at (0, -1.25) {$F$};
	 			\node [style=none] (19) at (-0.5, 1.25) {};
	 			\node [style=none] (20) at (0.5, 1.25) {};
	 			\node [style=none] (21) at (0, 1.25) {$F$};
	 		\end{pgfonlayer}
	 		\begin{pgfonlayer}{edgelayer}
	 			\draw [bend left=90, looseness=1.25] (0.center) to (1.center);
	 			\draw [bend right=90, looseness=1.50] (2.center) to (3.center);
	 			\draw [bend right=90, looseness=4.50] (6.center) to (4.center);
	 			\draw [in=180, out=-180, looseness=4.50] (7.center) to (9.center);
	 			\draw [bend left=90, looseness=1.25] (10.center) to (11.center);
	 			\draw [bend right=90, looseness=1.50] (12.center) to (13.center);
	 			\draw (7.center) to (4.center);
	 			\draw (9.center) to (6.center);
	 			\draw [bend right=90, looseness=0.75] (8.center) to (5.center);
	 			\draw [bend left=90, looseness=0.75] (14.center) to (15.center);
	 			\draw [bend left=90, looseness=1.50] (16.center) to (17.center);
	 			\draw [bend right=90, looseness=1.50] (16.center) to (17.center);
	 			\draw [bend left=90, looseness=1.50] (19.center) to (20.center);
	 			\draw [bend right=90, looseness=1.50] (19.center) to (20.center);
	 		\end{pgfonlayer}
	 	\end{tikzpicture}
	 	\end{array} \!\!\!\!\!\!\!\!\!\right) 
	 	\cong
	 	\catf{Sk}_\cat{A} \left(\begin{array}{c} \begin{tikzpicture}[scale=0.35]
	 		\begin{pgfonlayer}{nodelayer}
	 			\node [style=none] (0) at (-2.5, -2) {};
	 			\node [style=none] (1) at (2.5, -2) {};
	 			\node [style=none] (2) at (-0.5, -2.25) {};
	 			\node [style=none] (3) at (0.5, -2.25) {};
	 			\node [style=none] (4) at (-0.75, -1.75) {};
	 			\node [style=none] (5) at (0.75, -1.75) {};
	 			\node [style=none] (6) at (-2.5, 3) {};
	 			\node [style=none] (7) at (2.5, 3) {};
	 			\node [style=none] (8) at (-0.5, 2.75) {};
	 			\node [style=none] (9) at (0.5, 2.75) {};
	 			\node [style=none] (10) at (-0.75, 3.25) {};
	 			\node [style=none] (11) at (0.75, 3.25) {};
	 			\node [style=none] (12) at (-0.5, -1) {};
	 			\node [style=none] (13) at (0.5, -1) {};
	 			\node [style=none] (14) at (-1.75, 6.25) {$F$};
	 			\node [style=none] (15) at (-0.5, -3.25) {};
	 			\node [style=none] (16) at (0.5, -3.25) {};
	 			\node [style=none] (18) at (-0.5, 4) {};
	 			\node [style=none] (19) at (0.5, 4) {};
	 			\node [style=none] (21) at (-0.5, 1.75) {};
	 			\node [style=none] (22) at (0.5, 1.75) {};
	 			\node [style=none] (27) at (0, -1.5) {};
	 			\node [style=none] (28) at (-1, 1.75) {};
	 			\node [style=none] (29) at (-0.5, 1.75) {};
	 			\node [style=none] (30) at (-1, 4) {};
	 			\node [style=none] (31) at (-0.5, 4) {};
	 			\node [style=none] (32) at (-1.75, 5.75) {};
	 			\node [style=none] (33) at (-0.75, 4) {};
	 			\node [style=none] (34) at (-0.75, 1.75) {};
	 		\end{pgfonlayer}
	 		\begin{pgfonlayer}{edgelayer}
	 			\draw [bend right=90, looseness=1.50] (0.center) to (1.center);
	 			\draw [bend left=90, looseness=1.25] (0.center) to (1.center);
	 			\draw [bend left=90, looseness=1.25] (2.center) to (3.center);
	 			\draw [bend right=90, looseness=1.50] (4.center) to (5.center);
	 			\draw [bend right=90, looseness=1.50] (6.center) to (7.center);
	 			\draw [bend left=90, looseness=1.25] (6.center) to (7.center);
	 			\draw [bend left=90, looseness=1.25] (8.center) to (9.center);
	 			\draw [bend right=90, looseness=1.50] (10.center) to (11.center);
	 			\draw (6.center) to (0.center);
	 			\draw (7.center) to (1.center);
	 			\draw [bend left=90, looseness=1.50] (12.center) to (13.center);
	 			\draw [bend right=90, looseness=1.50] (12.center) to (13.center);
	 			\draw [bend left=90, looseness=1.50] (15.center) to (16.center);
	 			\draw [bend right=90, looseness=1.50] (15.center) to (16.center);
	 			\draw [bend left=90, looseness=1.50] (18.center) to (19.center);
	 			\draw [bend right=90, looseness=1.50] (18.center) to (19.center);
	 			\draw [bend left=90, looseness=1.50] (21.center) to (22.center);
	 			\draw [bend right=90, looseness=1.50] (21.center) to (22.center);
	 			\draw [bend left=90, looseness=1.50] (28.center) to (29.center);
	 			\draw [bend right=90, looseness=1.75] (28.center) to (29.center);
	 			\draw [bend left=90, looseness=1.50] (30.center) to (31.center);
	 			\draw [bend right=90, looseness=1.75] (30.center) to (31.center);
	 			\draw [style=end arrow] (32.center) to (34.center);
	 			\draw [style=end arrow, bend left=15] (32.center) to (33.center);
	 		\end{pgfonlayer}
	 	\end{tikzpicture}\end{array} \right)
	 \end{align}

	 The vector that later induces the correlator is equivalently a vector in the skein module of the cylinder and therefore has a three-dimensional origin. 
	 As in \cite{frs1,frs2,frs3,frs4,ffrs,ffrsunique}, this yields a \emph{holographic interpretation of the correlators}.
	 We should however warn the reader that the construction in this paper differs, even in the semisimple case, substantially 
	  from the known semisimple one using the Reshetikhin-Turaev topological field theory.
	 The holographic principle is rather an a posteriori interpretation; for the mere construction, it is not needed.

	\subsection*{Torus partition function} The torus partition function is extracted by carefully generalizing existing methods in Section~\ref{sectorus}:
	 In the semisimple case, one may use the isomorphism
	 $\FA(\bar\Sigma) \cong \FA(\Sigma)^*$ 
	 originating from the description via the three-dimensional topological field theory
	  to see the correlator for the torus $\mathbb{T}^2$ as an endomorphism of $\FA(\mathbb{T}^2)$ whose matrix elements $\mathcal{Z}^F_{i,j}$ with respect to the canonical basis of $\FA(\mathbb{T}^2)$ indexed by the simple objects of $\cat{A}$ are the coefficients of the partition function.
	  These are non-negative integers by the results of~\cite{frs1}, a fact that is extremely crucial for the physical interpretation of these quantities.
	   
	 In the non-semisimple case, the situation is more complicated, but we use the factorization homology description of the correlators to see 
	 that the correlator for the torus gives rise to an element in the skein algebra of the torus 
	 and therefore to an endomorphism $\mathcal{Z}^F$ of $\FA(\Sigma)$ intertwining the action of the modular group. We propose this endomorphism
	 as the partition function.
	 We may then extract a coefficient $\mathcal{Z}_{P,Q}^F$ of the partition function for two \emph{projective} objects $P,Q \in \Proj \cat{A}$ roughly as follows:
	 By definition $\mathcal{Z}^F$
	  can act on  a 	vector inside the space of conformal blocks for the torus that we can represent by skeins labeled by endomorphisms of projective objects in $\mathbb{S}^1 \times \mathbb{D}^2$. We denote this action by $\mathcal{Z}^F \actcor -$. After acting on the identity of $Q \in \Proj \cat{A}$ and taking the union with the identity of $P^\vee \in \Proj \cat{A}$ in $\mathbb{S}^1 \times \mathbb{D}^2$ with opposite orientation, we obtain a link in $(\overline{\mathbb{S}^1 \times \mathbb{D}^2}) \cup_{\mathbb{T}^2} (\mathbb{S}^1 \times \mathbb{D}^2)=\mathbb{S}^2 \times \mathbb{S}^1$ that we turn into a numerical invariant by generalizing the standard methods of Reshetikhin-Turaev with the help of the theory of admissible skeins~\cite{asm,brownhaioun,mwskein}.
	 On a technical level, the coefficients of the partition function
	  will be realized as Hattori-Stallings traces on factorization homology of the two-dimensional sphere.
	 A symbolic overview is given in Figure~\ref{figpartition},
	 see Section~\ref{sectorus} for the  details.
	 \begin{figure}[h]
	 	\centering
	 	\begin{tikzpicture}[scale=0.5]
	 		\begin{pgfonlayer}{nodelayer}
	 			\node [style=none] (0) at (-13.25, 6.5) {};
	 			\node [style=none] (1) at (-11.25, 6.5) {};
	 			\node [style=none] (2) at (-12.75, 5.75) {};
	 			\node [style=none] (3) at (-11.75, 5.75) {};
	 			\node [style=none] (4) at (-15.25, 6) {};
	 			\node [style=none] (5) at (-9.25, 6) {};
	 			\node [style=none] (6) at (-14.25, 6) {};
	 			\node [style=none] (7) at (-10.25, 6) {};
	 			\node [style=none] (8) at (-12.25, 5) {$P$};
	 			\node [style=none] (9) at (-1, 6.5) {};
	 			\node [style=none] (10) at (1, 6.5) {};
	 			\node [style=none] (11) at (-0.5, 5.75) {};
	 			\node [style=none] (12) at (0.5, 5.75) {};
	 			\node [style=none] (13) at (-3, 6) {};
	 			\node [style=none] (14) at (3, 6) {};
	 			\node [style=none] (15) at (-2, 6) {};
	 			\node [style=none] (16) at (2, 6) {};
	 			\node [style=none] (17) at (0, 5) {$Q$};
	 			\node [style=none] (18) at (-4.25, 6) {$\mathcal{Z}^F \actcor$};
	 			\node [style=none] (19) at (-7, 4.5) {$\mathbb{T}^2$};
	 			\node [style=none] (20) at (-8, 6.5) {};
	 			\node [style=none] (21) at (-6, 6.5) {};
	 			\node [style=none] (25) at (0, 3) {$\mathbb{S}^1 \times \mathbb{D}^2$};
	 			\node [style=none] (26) at (-12.25, 3) {$\overline{\mathbb{S}^1 \times \mathbb{D}^2}$};
	 			\node [style=none] (27) at (4, 6) {};
	 			\node [style=none] (28) at (7, 6) {};
	 			\node [style=none] (29) at (10.5, 3) {link in $\mathbb{S}^2 \times \mathbb{S}^1$};
	 			\node [style=none] (30) at (10.5, 2) {};
	 			\node [style=none] (31) at (10.5, 0.25) {};
	 			\node [style=none] (32) at (7, -0.5) {coefficient of partition function $\mathcal{Z}_{P,Q}^F \in \mathbb{Z}_{\ge 0}$};
	 			\node [style=none] (33) at (5.25, 1.25) {turn into numerical invariant};
	 			\node [style=none] (36) at (10.75, 9.75) {};
	 			\node [style=none] (37) at (9.75, 6.5) {};
	 			\node [style=none] (41) at (9.75, 9.75) {};
	 			\node [style=none] (42) at (9.25, 7) {$F$};
	 			\node [style=none] (43) at (11.25, 7) {$Q$};
	 			\node [style=none] (44) at (7.75, 6) {};
	 			\node [style=none] (45) at (12, 6) {};
	 			\node [style=none] (46) at (10.75, 6.5) {};
	 			\node [style=none] (47) at (9.75, 4.25) {};
	 			\node [style=none] (48) at (10, 4.75) {$\mathbb{S}^2$};
	 			\node [style=none] (49) at (8.5, 6.5) {};
	 			\node [style=none] (50) at (8.5, 11.5) {};
	 			\node [style=none] (51) at (7.75, 8.25) {$P$};
	 			\node [style=none] (53) at (10.75, 8.5) {};
	 			\node [style=none] (54) at (10.75, 9.75) {};
	 			\node [style=none] (55) at (10.75, 11.5) {};
	 			\node [style=none] (56) at (9.75, 8.5) {};
	 			\node [style=none] (57) at (9.75, 8.75) {};
	 			\node [style=none] (59) at (9.75, 10.75) {};
	 			\node [style=none] (63) at (9.75, 11.5) {};
	 			\node [style=none] (64) at (9.75, 9.75) {};
	 			\node [style=none] (65) at (9.75, 9.75) {};
	 			\node [style=none] (66) at (11.25, 9.75) {};
	 		\end{pgfonlayer}
	 		\begin{pgfonlayer}{edgelayer}
	 			\draw [bend right=90, looseness=1.50] (0.center) to (1.center);
	 			\draw [bend left=45, looseness=1.25] (2.center) to (3.center);
	 			\draw [bend left=90, looseness=1.25] (4.center) to (5.center);
	 			\draw [bend right=90, looseness=1.25] (4.center) to (5.center);
	 			\draw [style=red, bend left=90, looseness=1.25] (6.center) to (7.center);
	 			\draw [style=red, bend right=90, looseness=1.25] (6.center) to (7.center);
	 			\draw [bend right=90, looseness=1.50] (9.center) to (10.center);
	 			\draw [bend left=45, looseness=1.25] (11.center) to (12.center);
	 			\draw [bend left=90, looseness=1.25] (13.center) to (14.center);
	 			\draw [bend right=90, looseness=1.25] (13.center) to (14.center);
	 			\draw [style=red, bend left=90, looseness=1.25] (15.center) to (16.center);
	 			\draw [style=red, bend right=90, looseness=1.25] (15.center) to (16.center);
	 			\draw [bend right=90, looseness=2.50] (20.center) to (21.center);
	 			\draw [style=end arrow] (27.center) to (28.center);
	 			\draw [style=end arrow] (30.center) to (31.center);
	 			\draw [bend left=90, looseness=1.75] (44.center) to (45.center);
	 			\draw [bend right=90, looseness=1.75] (44.center) to (45.center);
	 			\draw [style=mydots, bend right=15] (44.center) to (45.center);
	 			\draw [style=mover] (49.center) to (50.center);
	 			\draw (56.center) to (57.center);
	 			\draw [style=open] (57.center) to (56.center);
	 			\draw [style=open, bend left=90, looseness=1.25] (57.center) to (59.center);
	 			\draw [style=open] (57.center) to (65.center);
	 			\draw (53.center) to (54.center);
	 			\draw [style=bover, in=-90, out=0] (57.center) to (66.center);
	 			\draw [style=open, in=0, out=90] (66.center) to (59.center);
	 			\draw [style=over] (54.center) to (55.center);
	 			\draw [style=open] (56.center) to (65.center);
	 			\draw [style=bover] (65.center) to (63.center);
	 			\draw [style=bover] (37.center) to (56.center);
	 			\draw [style=over] (46.center) to (53.center);
	 		\end{pgfonlayer}
	 	\end{tikzpicture}
	 	\caption{On the extraction of the torus partition function, see Section~\ref{sectorus} for further details.}\label{figpartition}
	 	\end{figure}
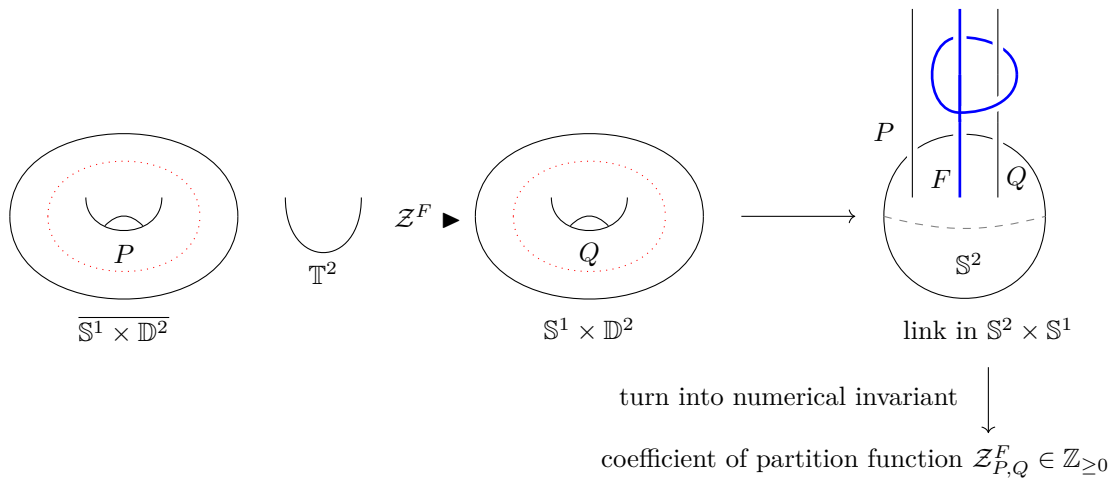

	 We then prove that these coefficients of the torus partition function
	  are again non-negative integers (in the following statement, we denote the morphism spaces of $\cat{A}$ by $\cat{A}(-,-)$):
	 
	 \begin{reptheorem}{thmpartition} Let $F\in \cat{A}$ be a special Frobenius algebra in a modular category $\cat{A}$. 
	 	The coefficient $\mathcal{Z}_{P,Q}^F$ 
	 	of the
	 	torus partition function
	 	for $P,Q \in \Proj \cat{A}$ is a non-negative integer, and $\mathcal{Z}_{P,Q}^F \le \dim\, \cat{A}(P^\vee,F\otimes P)$. In the Cardy case $F=I$, the torus partition function for $P,Q \in \Proj \cat{A}$ is the entry $C_{P^\vee,Q}=\dim \, \cat{A}(P^\vee,Q)$ of the Cartan matrix of $\cat{A}$.   
	 \end{reptheorem}
	 
	 In the Hopf-algebraic Cardy case, the 
	  fact that the partition function leads to the Cartan matrix was proven in~\cite{cardycartan}; this partition function is known as the \emph{Cardy-Cartan modular invariant}. The non-semisimple generalization of the partition function in \cite{cardycartan} follows a priori a different proposed definition. Theorem~\ref{thmpartition} asserts in particular that the notion of partition function used in this article is in line with \cite{cardycartan}. 
	 
We should however warn the reader that the identities at 
projective objects do not, as in the rational case, 
form a basis for the space of conformal blocks of the torus. 
As an alternative, the factorization homology description of correlators suggests directly a
 tangible algebraic datum capturing the full information of the torus partition function that we refer to as \emph{elliptic class function} (Section~\ref{secclassfunctions} \& \ref{secellicf}).
It amounts to a morphism $\zeta_{1,1}^F:\mathbb{A}^{\otimes 2}\to I$
(namely the one dual to the map $I\to \elliptic$ mentioned on page~\pageref{refelementell}), where  $\mathbb{A}=\int_{X\in\cat{A}} X^\vee \otimes X$ is the canonical end. This map is invariant under the $B_3$-action on $\mathbb{A}^{\otimes 2}$ from \cite{brochierjordane}.
If $\cat{A}$ is given by finite-dimensional modules over a ribbon factorizable Hopf algebra $H$, the elliptic class function is a $H$-module map $H_\text{adj} \otimes H_\text{adj}\to k$. We express it via a closed formula in Corollary~\ref{corellcfhopf}.	

	\spaceplease
	\subsection*{Local operators and Hochschild cohomology the category of boundary conditions\label{secbulkfieldintro}}
	For a special symmetric Frobenius algebra $F \in \cat{A}$ in a modular category $\cat{A}$, consider the associated consistent system of correlators. After genus zero restriction, we must obtain a symmetric commutative Frobenius algebra 
	  in $\bar{\cat{A}}\boxtimes\cat{A}$~\cite{jfcs}.
	  The image under the canonical equivalence $\bar{\cat{A}}\boxtimes\cat{A}\to Z(\cat{A})$~\cite{eno-d}
	  to the Drinfeld center $Z(\cat{A})$
	  of $\cat{A}$
	   is
	  a symmetric commutative Frobenius algebra $\Fhb \in Z(\cat{A})$, and its image
	  under the forgetful functor $U:Z(\cat{A})\to\cat{A}$ is an algebra in $\cat{A}$ whose homotopy invariants, i.e.\ the right derived hom space \begin{align} \LO(\cat{A},F):=\mathbb{R}\Hom_ \cat{A}(I,U\Fhb) \ , 
	  	\end{align} give us a differential graded algebra. This algebra should be interpreted as the \emph{algebra of local operators} for the full conformal field theory $(\cat{A},F)$
	  	because the product on $\Fhb$ describes the operator product expansion. 
	  	
	From the consistent system of correlators for $F$, the algebra $\LO(\cat{A},F)$ inherits the structure of a framed $E_2$-algebra (Proposition~\ref{prophi}), which means that its cohomology is a Batalin-Vilkovisky algebra. Such a structure should be expected by the principles of
	\cite[Section~1.2]{bbzbdn}, but in the context of this article, this structure is far from automatic. We obtain it using the methods of \cite{homotopyinvariants}. 
	The main result about $\LO(\cat{A},F)$ is the following:
	
	\begin{reptheorem}{thmhhfmod}
		Let $F$ be a special symmetric Frobenius algebra in a modular category $\cat{A}$.
		Then there is an equivalence 
		\begin{align} \LO(\cat{A};F) \simeq CH^*(\Fmod)
		\end{align} of differential graded framed $E_2$-algebras
		between the local operators for the conformal field theory $\cat{A}$ with correlator built from $F$ and the Hochschild cochains of the category $\Fmod$ of $F$-modules in $\cat{A}$, equipped with the framed $E_2$-structure coming from the cyclic Deligne conjecture.
	\end{reptheorem}

	Seeing that $\LO(\cat{A};F)$ and $ CH^*(\Fmod)$
	 agree as framed $E_2$-algebras is relatively involved, especially because the multiplicative structure on $\LO(\cat{A};F)$ uses the ribbon structure on $\cat{A}$ while $CH^*(\Fmod)$ just sees the linear category $\Fmod$. As explained in \cite{verlindesw}, transforming an $E_2$-structure into Deligne's $E_2$-structure on Hochschild cohomology is an analogue of a \emph{diagonalization} of the multiplication.
	 Such a diagonalization should be expected because of the observations in \cite[Section~3.3]{fspivotal} pertaining to the underived case. 
	 
	The strategy for the proof of Theorem~\ref{thmhhfmod} is the following: Clearly, the symmetric commutative Frobenius algebra $\Fhb \in Z(\cat{A})$ should be the center of $F$ for any notion of center that  deserves its name, but one needs relatively sophisticated algebraic methods to make a precise statement. One needs to use first that $\Fmod$ comes with the structure of a \emph{pivotal module category} over $\cat{A}$ in the sense of \cite{schaumannpiv,relserre}. Informed by this observation, the notion of center suggested in \cite{internal} is the end $\int_{M\in\Fmod} \HOM_F(M,M) \in Z(\cat{A})$ over internal endomorphism algebras. In the same article and additionally~\cite{fspivotal}, $\Fmod$ is proposed as \emph{category of boundary conditions} for a conformal field theory with monodromy data $\cat{A}$, thereby making $\int_{M\in\Fmod} \HOM_F(M,M) \in Z(\cat{A})$ a bulk field candidate. 
	The key step for the proof of Theorem~\ref{thmhhfmod} is to prove that $\Fhb \in Z(\cat{A})$ is isomorphic to 
	$\int_{M\in\Fmod} \HOM_F(M,M) \in Z(\cat{A})$ as symmetric Frobenius algebra. 
	As expected as this is (again: to the circle we \emph{have} to assign some sort of center --- there is no way around it), the proof is involved and requires a rather explicit calculation of the bulk algebra in terms of relations (Section~\ref{secrelbulk}).
	The comparison to \cite{internal} might also be a bit surprising because the correlator construction in this article does not start with the pivotal module category $\Fmod$ as an input datum. 
	In other words, \cite[Assumption~2]{fspivotal} is not one of our axioms, but it still satisfied.
	Another consequence is that   the correlators built from $F$, at least in hindsight, extend to higher genus
	the genus zero correlators that one would be able to build from $\int_{M\in\Fmod} \HOM_F(M,M)$ via the commutative symmetric Frobenius structure.

	There is also a physical interpretation:
	Since $\Fmod$ is the category of boundary conditions, Theorem~\ref{thmhhfmod} tells us that the algebra of local operators agrees with the Hochschild cochains of the category of boundary conditions, as differential graded framed $E_2$-algebra (or Batalin-Vilkovisky algebra once we pass to cohomology), and should therefore be described as deformations of the category of boundary conditions. 
	This is a conjecture made  by Kapustin-Rozansky in \cite{kapustinrozansky}. Theorem~\ref{thmhhfmod} makes precise and proves this conjecture in the context of finite rigid logarithmic conformal field theory.

		\vspace*{0.2cm}\textsc{Acknowledgments.} 
		I would like to thank Christoph Schweigert for countless explanations on correlators and extremely insightful comments on this project.
			Moreover, I would like to thank
			Jorge Becerra,
	Adrien Brochier, Lukas Müller, Yang Yang and Deniz Yeral
		 for  helpful discussions related to this article.
		LW  gratefully acknowledges support
		by the ANR project CPJ n°ANR-22-CPJ1-0001-01 at the Institut de Mathématiques de Bourgogne (IMB).
		The IMB receives support from the EIPHI Graduate School (ANR-17-EURE-0002).\enlargethispage*{0.5cm}

	\needspace{15\baselineskip}
\part{The construction and classification of correlators}
The first part of this article contains the construction and classification of the consistent systems of correlators for the conformal field theory whose monodromy data is a modular category.
The emphasis is on brevity; it does not contain a lot of applications or explanations working out the connection to previously known special cases. Instead, this is treated in the second part.

\section{Preliminaries}
In this section, we collect some general tools needed throughout the article.
Other, more specialized tools will be recalled later directly in the place where they are needed.

\subsection{Modular operads and their algebras\label{secmodop}}
First we give a brief reminder on modular operads and their algebras. The description focuses on the idea without the background on how these ideas are technically implemented. The reader who prefers a slightly longer, dense account that gives more insight on technicalities is referred to \cite[Section~2]{reflection}.

According to the definitions in \cite{gkmod,costello,cyclic},
 a \emph{category-valued modular operad} $\cat{O}$ consists of categories $\cat{O}(T)$, where $T$ is a finite disjoint union of corollas, i.e.\ graphs with one vertex and finitely many legs attached to it (the corolla can have zero legs).
Whenever we have two such disjoint unions of corollas $S$ and $T$, the modular operads comes with canonical equivalences $\cat{O}(S)\times\cat{O}(T)\simeq \cat{O}(S \sqcup T)$. One refers to $\cat{O}(T)$ as the object of operations of arity $n-1$ or total arity $n$ if $T$ has $n$ legs.
The operadic composition takes the following form:
Suppose that $\Gamma$ is a finite graph. Denote by $\nu(\Gamma)$ the disjoint union of corollas obtained by cutting $\Gamma$ at all internal edges and by $\pi_0(\Gamma)$ the disjoint union of corollas obtained by contracting all internal edges, see Figure~\ref{figgraphsexample}. 
Suppose that we are now given two disjoint unions of corollas $T$ and $T'$ and isomorphisms $\nu(\Gamma)\cong T$ and $\pi_0(\Gamma)\cong T'$ of graphs, then $\cat{O}$ provides us with a functor $\cat{O}(T)\to \cat{O}(T')$ describing the operadic composition. 	Moreover, $\cat{O}$ comes equipped with an operation of total arity two behaving neutrally with respect to operadic composition, the so-called \emph{operadic identity}.
All of this data has to be complemented with a lot of coherence isomorphisms subject to even more conditions. This is developed in \cite[Section~2.1]{cyclic} based on the description of modular operads in \cite{costello}.

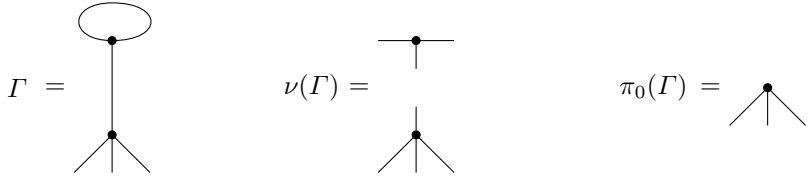
\begin{figure}[h]
	\centering	\begin{tikzpicture}[scale=0.5]
		\begin{pgfonlayer}{nodelayer}
			\node [style=none] (0) at (-7, 2.5) {};
			\node [style=none] (1) at (-7, 0) {};
			\node [style=none] (2) at (-8, -1) {};
			\node [style=none] (3) at (-7, -1) {};
			\node [style=none] (4) at (-6, -1) {};
			\node [style=none] (5) at (-7, 3.5) {};
			\node [style=none] (6) at (-8.5, 1.25) {$=$};
			\node [style=none] (7) at (-9.5, 1.25) {$\Gamma$};
			\node [style=none] (8) at (1, 2.5) {};
			\node [style=none] (9) at (1, 0) {};
			\node [style=none] (10) at (0, -1) {};
			\node [style=none] (11) at (1, -1) {};
			\node [style=none] (12) at (2, -1) {};
			\node [style=none] (14) at (-0.5, 1.25) {$=$};
			\node [style=none] (15) at (-1.75, 1.25) {$\nu(\Gamma)$};
			\node [style=none] (16) at (0, 2.5) {};
			\node [style=black dot] (17) at (-7, 0) {};
			\node [style=black dot] (18) at (-7, 2.5) {};
			\node [style=black dot] (19) at (1, 2.5) {};
			\node [style=black dot] (20) at (1, 0) {};
			\node [style=none] (21) at (2, 2.5) {};
			\node [style=none] (22) at (1, 1.75) {};
			\node [style=none] (23) at (1, 0.75) {};
			\node [style=none] (24) at (10.25, 1.25) {};
			\node [style=none] (25) at (9.25, 0.25) {};
			\node [style=none] (26) at (10.25, 0.25) {};
			\node [style=none] (27) at (11.25, 0.25) {};
			\node [style=black dot] (28) at (10.25, 1.25) {};
			\node [style=none] (29) at (8.75, 1.25) {$=$};
			\node [style=none] (30) at (7.25, 1.25) {$\pi_0(\Gamma)$};
		\end{pgfonlayer}
		\begin{pgfonlayer}{edgelayer}
			\draw (0.center) to (1.center);
			\draw (1.center) to (2.center);
			\draw (1.center) to (3.center);
			\draw (1.center) to (4.center);
			\draw [bend left=90, looseness=3.00] (0.center) to (5.center);
			\draw [bend left=90, looseness=3.50] (5.center) to (0.center);
			\draw (9.center) to (10.center);
			\draw (9.center) to (11.center);
			\draw (9.center) to (12.center);
			\draw (16.center) to (21.center);
			\draw (19) to (22.center);
			\draw (23.center) to (20);
			\draw (24.center) to (25.center);
			\draw (24.center) to (26.center);
			\draw (24.center) to (27.center);
		\end{pgfonlayer}
	\end{tikzpicture}
	
	\caption{For the definition of $\nu(\Gamma)$ and $\pi_0(\Gamma)$.
		The operadic composition for this example of $\Gamma$ describes the composition of operations of total arity three and four to an operation of total arity three.}
	\label{figgraphsexample}
\end{figure}

Given a category-valued modular operad $\cat{O}$, we can consider a modular $\cat{O}$-algebra inside a symmetric monoidal bicategory.
The case of interest in this article is the symmetric monoidal bicategory $\Lexf$ of finite (linear) categories~\cite{etingofostrik,fss} over an algebraically closed field $k$ that we fix throughout (linear abelian categories with finite-dimensional morphisms spaces, enough projective objects, finitely many simple objects up to isomorphism, finite length for every object), left exact functors (functors preserving finite limits) and linear natural transformations. The monoidal product is the Deligne product $\boxtimes$. The monoidal unit is the category $\vect$ of finite-dimensional vector spaces.

Concretely, such a $\Lexf$-valued modular $\cat{O}$-algebra is a finite category $\cat{A} \in \Lexf$ and, for every $o\in\cat{O}(T)$ for a corolla $T$, a left exact functor $\cat{A}_o:\cat{A}^{\boxtimes \Legs(T)}\to\vect$ with further coherence data and conditions that in particular express the compatibility with operadic composition, see \cite[Section~2.4]{cyclic}.
In the case in which $o$ is the operadic identity, we obtain a left exact functor $\kappa : \cat{A}\boxtimes\cat{A}\to\vect$ that gives us a non-degenerate symmetric pairing on $\cat{A}$ that can equivalently be described by an equivalence $D:\cat{A}^\op \to \cat{A}$ with
$\kappa(X,Y)\cong \cat{A}(DX,Y)$ for $X,Y\in\cat{A}$, where $\cat{A}(-,-)$ denotes the morphisms spaces of $\cat{A}$. Because of the symmetry of $\kappa$, the anti-equivalence $D$ squares to the identity up to a canonical isomorphism $\id_\cat{A}\ra{\cong} D^2$. 
We describe modular algebras through left exact functors
$\cat{A}_o:\cat{A}^{\boxtimes \Legs(T)}\to\vect$, but we could actually use the pairing $\kappa$ to move all copies of $\cat{A}$ to the right to obtain a left exact functor $\vect \to \cat{A}^{\boxtimes \Legs(T)}$ (which is an object in $\cat{A}^{\boxtimes \Legs(T)}$).
If we do this for the pairing itself, we obtain the \emph{coevaluation object} $\Delta \in \cat{A}\boxtimes\cat{A}$ that by \cite[Proposition 2.26]{cyclic} is given by the coend \begin{align}\label{eqncoev}\Delta = \int^{X\in\cat{A}} DX\boxtimes X \ . 
	\end{align}
The compatibility of the modular algebra with gluing tells us that, if an operation $o'$ arises from an operation $o \in \cat{O}(T)$ by taking the operadic composition over two legs of $T$, then there is a canonical isomorphism
\begin{align}\label{eqnexcision} \cat{A}_{o'} \cong \cat{A}_o (...,\Delta',\dots, \Delta'',\dots) \ , 
	\end{align} where $\Delta = \Delta' \boxtimes \Delta''$ (in Sweedler notation) is inserted into the slots of $\cat{A}_o$ affected by the gluing.
The gluing property expressed by~\eqref{eqnexcision} is often referred to as \emph{excision}.

If we restrict in the definition of a modular operad the graphs describing the operadic compositions to graphs which are \emph{forests} (disjoint unions of contractible graphs),
we obtain cyclic operads~\cite{gk} and their cyclic algebras in the description of \cite{costello}, see \cite[Section~2]{cyclic} for details.

\subsection{Modular functors\label{secmf}}
The notion of a modular operad is best illustrated by looking at the motivating example in \cite{gkmod}, the \emph{modular surface operad} $\Surf$ whose category-valued version~\cite{cyclic,brochierwoike} will be one of the key objects of this paper:
For a corolla $T$, the category of operations $\Surf(T)$ is a groupoid.
Its objects are connected compact smooth oriented surfaces $\Sigma$ together with an orientation-preserving diffeomorphism $(\mathbb{S}^1)^{\sqcup \Legs(T)}\ra{\cong} \partial \Sigma$ (for some fixed standard orientation on the circle) that is referred to as \emph{boundary parametrization}. 
A compact oriented smooth surface with parametrized boundary will just be referred to as \emph{surface} in the sequel.
Morphisms in $\Surf(T)$ are isotopy classes of diffeomorphisms preserving the orientation and the boundary parametrization; in short, the morphisms are mapping classes. In particular, the automorphism group of $\Sigma$ in $\Surf$ is the (pure) mapping class group $\Map(\Sigma)$ of $\Sigma$, see \cite{farbmargalit} for a textbook on mapping class groups.

For the modular surface operad, a theory of suitable extensions $\cat{Q}\to\Surf$ is developed in \cite[Section~3.1]{brochierwoike}
(suitable meaning that the extension is relative to genus zero and satisfies an insertion of vacua property),
 and a \emph{modular functor} is a pair $(\cat{Q},\cat{B})$ of such an extension and a modular $\cat{Q}$-algebra $\cat{B}$.
While this is inspired by the classical definitions~\cite{Segal,ms89,turaev,tillmann,baki,jfcs} (that are partly in mutual disagreement), this definition covers a much greater generality. Luckily, the subtleties of the definition of a modular functor are not terribly important
for this paper because
we will only treat one well-understood classical class of examples, namely the \emph{Lyubashenko modular functor}~\cite{lyubacmp,lyu,lyulex,kl} built from a not necessarily semisimple modular category $\cat{A}$.
Recall that a \emph{modular category} $\cat{A}$ is a finite tensor category~\cite{etingofostrik} (a finite category with bilinear rigid monoidal product and simple unit; we denote the rigid duality by $-^\vee$) that additionally comes equipped with a braiding $c$ and a balancing $\theta$ that are subject to two conditions:
\begin{itemize}
	\item The balancing $\theta$ is compatible with the rigid duality in the sense that $\theta_{X^\vee}=\theta_X^\vee$ for $X\in\cat{A}$. This
	makes $\cat{A}$ a so-called \emph{finite ribbon category}~\cite{egno}. Because of this, $-^\vee$ is automatically a two-sided duality and it comes with a \emph{pivotal structure}, i.e.\ a monoidal isomorphism $\id_\cat{A}\ra{\cong}-^{\vee\vee}$. 
	\item The braiding $c$ of $\cat{A}$ is non-degenerate, i.e.\ $c_{Y,X}\circ c_{X,Y}=\id_{X,Y}$ for all $Y\in\cat{A}$ implies $X\cong I^{\oplus n}$ for some $n\ge 0$.
\end{itemize}
A semisimple modular category is called a \emph{modular fusion category}.
In the modular fusion case, the Lyubashenko modular functor agrees with the one built by Reshetikhin-Turaev~\cite{rt1,rt2,turaev}.

For a modular category $\cat{A}$, we denote by $\FA$ its Lyubashenko modular functor.
To a surface $\Sigma$ with $n$ boundary components, it assign a left exact functor
$\FA(\Sigma):\cat{A}^{\boxtimes n}\to\vect$ on which a central extension
\begin{align} 1 \to k^\times \to \widetilde {\Map(\Sigma)} \to \Map(\Sigma) \to 1 \ , \label{eqnextensions}
\end{align} of $\Map(\Sigma)$ acts through natural transformations. The extensions showing up here are well understood and arise from the so-called \emph{framing anomaly}, see e.g.~\cite{gilmermasbaum} for more background.
An explicit description of the projective mapping class group representation on $\FA(\Sigma)$ at all genera is possible in terms of generators and relations of the mapping class groups, but it is notoriously difficult.
We will content ourselves with the following implicit description:
If $\Sigma$ has genus zero, then \begin{align}\label{eqngenuszeroblocks} \FA(\Sigma)(X_1,\dots, X_n)\cong \cat{A}(I,X_1\otimes\dots\otimes X_n)\end{align} with the mapping class group (which is the ribbon braid group in this case) acting in the usual way through the balanced braided structure of $\cat{A}$. 
The Lyubashenko modular functor $\FA$ is now characterized completely by the fact that, up to a contractible choice, it is the unique modular functor extending this prescription in genus zero to all surfaces \cite[Section~8.1]{brochierwoike}.

\subsection{Symmetric Frobenius algebras}
A symmetric Frobenius algebra $F$ in a pivotal finite tensor category $\cat{A}$ has an underlying 
object $F\in \cat{A}$, an associative multiplication $\mu : F \otimes F \to I$ with unit $\eta : I \to F$ and a non-degenerate, invariant and symmetric pairing $\beta :F\otimes F \to I$. For more background on symmetric Frobenius algebras, see e.g.~\cite{fuchsstigner}.
The non-degeneracy means that the induced map $\psi  = \kappa \otimes \id_{F^\vee} \circ (\id_F \otimes b_F):F\to F^\vee$ defined using the coevaluation $b_F:I\to F\otimes F^\vee$ is an isomorphism.
The invariance of the pairing amounts to $\kappa(-,\mu)=\kappa(\mu,-)$.
The symmetry of $\beta$ means that $F\ra{\omega_F}F^{\vee\vee} \ra{\psi^\vee} F^\vee$ agrees with $\psi$.
In other words, $\psi$ is a self-duality in the sense of \cite[Section~4]{microcosm}, see also Section~\ref{secbulkfield} below.

We will need later for any symmetric Frobenius algebra $F\in \cat{A}$ a \emph{canonical operation \begin{align} \label{eqntarn}v	_n : I\to F^{\otimes n}\end{align} of total arity $n\ge 0$}:\begin{itemize}
	\item For $n=0$, it is the identity of $I$. \item For $n=1$, it is the unit $I\to F$. \item For $n=2$, it is the copairing $I\to F\otimes F$.
	\item For $n\ge 3$, it is given inductively by
	\begin{align}
		v_n : I   \ra{v_{n-1}} F^{\otimes (n-1)} \ra{  \delta \otimes \id_{F^{\otimes (n-2)}}   } F^{\otimes n}  \ , 
	\end{align}
	where $\delta : F \to F\otimes F$ is the coproduct.
\end{itemize}

\begin{definition}[$\text{\cite[Definition 2.22]{correspondences}}$]\label{defspecial}
	A symmetric Frobenius algebra $F$ 
	in a pivotal finite tensor category $\cat{A}$
	with product $\mu :F \otimes F\to F$, unit $\eta :I\to F$, coproduct $\delta:F\to F\otimes F$ and counit $\varepsilon :F\to I$  is called \emph{special} if
	\begin{align}
		\varepsilon \circ \eta & \neq 0 \ , \\
		\mu \circ \delta &= \id _F \ . 
	\end{align}
\end{definition}

\begin{remark} Sometimes one requires instead 
	\begin{align}
		\varepsilon \circ \eta &= \beta_I\cdot  \id_I \, \\
		\mu \circ \delta &= \beta_F\cdot  \id_F 
	\end{align}
	for invertible scalars $\beta_I$ and $\beta_F$, but in this situation, it is standard to renormalize using the following convention:
	The two conditions imply that $\beta_I \beta_F$ agrees with the dimension $\dim\, F\in k$ defined by 
	$
	\dim\, F \cdot \id_I = \varepsilon \circ \mu \circ \delta \circ \eta
	$ of $F$, which hence is non-zero.
	We then renormalize the Frobenius form $\varepsilon$ 
	 such that the conditions from Definition~\ref{defspecial} hold.
\end{remark}

\begin{remark}\label{remh}
	For a symmetric Frobenius algebra $F$, one calls $H:= \mu \circ \delta$ also the \emph{handle operator}, see e.g.~\cite[page~128]{kocktft}, and $h:= \mu \circ \delta \circ \eta =H\circ \eta$ the \emph{handle element}.
	Since $H=\mu(h,-)$, we have 
	\begin{align}
		H=\id_F \quad \Longleftrightarrow \quad h=\eta \ . \label{eqnh}
	\end{align}
\end{remark}

\begin{example}\label{exspecialsymfrob}
	The monoidal unit $I\in\cat{A}$ of any finite tensor category is obviously a special symmetric Frobenius algebra.
	A different class of examples can be extracted from~\cite[Proposition~6.2]{laugwitzwalton}:
	Let $K$ be a finite-dimensional commutative non-semisimple Hopf algebra such that its Drinfeld double $D(K)$ is ribbon,
	and let $N$ be a finite abelian group such that the characteristic of our ground field $k$ does not divide $|N|$.
	If one endows $H=K\otimes k[N]$ with the tensor product Hopf algebra structure ($k[N]$ has the standard Hopf algebra structure on the group algebra), then the Drinfeld center $Z(H\catf{-mod})$ is a non-semisimple modular category, and
	$k[N]$ carries the structure of a special symmetric Frobenius algebra in $Z(H\catf{-mod})$.
	In fact, in \cite{laugwitzwalton}, it is proved that this Frobenius algebra is special and has a trivial twist. By \cite[Proposition 2.25~(ii)]{correspondences} it is then also symmetric.
\end{example}

	\section{A description of correlators via the microcosm principle\label{secmicrocosm}}
The notion of a consistent system of correlators is standard and was recalled in the introduction: It is a collection of vectors 
 in the spaces of conformal blocks invariant under the mapping class group and under gluing.
As intuitive as this description may be, it is full of technical subtleties, particularly beyond the semisimple case. 
These will be addressed in this section.

\subsection{Holomorphic factorization}
Correlators are mapping class group invariant vectors in the spaces of conformal blocks.
Clearly, we will not ask for invariance under the extension $\widetilde {\Map(\Sigma)}$. This would not make any sense because non-zero vectors are not invariant under the multiplication with invertible scalars different from $1$.
But how does one ask for invariance under $\Map(\Sigma)$? Na\"ively, one might want to lift an element $f\in \Map(\Sigma)$ to $\widetilde {\Map(\Sigma)}$ and ask for invariance, but two such lifts will differ by a scalar. So if we have invariance for one choice, we will not have invariance for a different one. 

Luckily, we are not really in this situation because if the conformal field theory has monodromy data $\cat{A}$, we will not search for invariant elements in the spaces of conformal blocks for $\cat{A}$,
 but for $\bar{\cat{A}}\boxtimes \cat{A}$. 
This is the principle of \emph{holomorphic factorization}, see e.g.~\cite{algcften} for more background.

If $\cat{A}$ is a modular fusion category, then $\FA(\Sigma)$ is the value of the Reshetikhin-Turaev topological field theory on $\Sigma$ and
$\FA(\bar\Sigma)\cong \FA(\Sigma)^*$ by a canonical $\Map(\Sigma)$-equivariant isomorphism which means that $\FAA(\Sigma)$ is $\Map(\Sigma)$-equivariantly isomorphic to $\End\, \FA(\Sigma)$. This implies that there is actually in a canonical way a non-projective $\Map(\Sigma)$-action on $\FAA(\Sigma)$, and this is the action under which the correlators should be invariant.
In that way, the construction of \cite{frs1,frs2,frs3,frs4,ffrs,ffrsunique} uses the duality in the three-dimensional bordism category to eliminate the anomaly for $\FAA$. 

Beyond semisimplicity, without the Reshetikhin-Turaev topological field theory in the background, this becomes more difficult. 
There exists a definition of correlators in the non-semisimple situation~\cite{jfcs,jfcslog}, but this assumes in crucial places that mapping class group elements act as actual operators on the spaces of conformal blocks. This leads to invariance equations as in \cite[Definition 4.9]{jfcs} that depend on this choice. 	
That is not necessarily a problem, but in order to apply the	 definitions of \cite{jfcs}, we need to specialize to $\bar{\cat{A}} \boxtimes \cat{A}$ and provide a  splitting of the extensions~\eqref{eqnextensions} in that situation. 
The correlators then need to be constructed \emph{in reference to that splitting}.
The technical framework that needs to be set up for this is unfortunately rather involved.

\subsection{The modular microcosm principle for the surface operad\label{secbulkfield}}
Suppose that $\cat{O}$ is a category-valued modular operad and $\cat{B}$ a $\Lexf$-valued modular $\cat{O}$-algebra, then the microcosm principle for modular operad set up in \cite{microcosm} as a generalization of the microcosm principle of Baez-Dolan~\cite{baezdolanmicrocosm} allows us the define modular $\cat{O}$-algebras (with coefficients) in $\cat{B}$.

Let us recall this concept for a $\Surf$-algebra in a $\Surf$-algebra $\cat{B}$ (the \emph{macrocosmic} $\Surf$-algebra).
By abuse of notation we denote the underlying category of $\cat{B}$ also by $\cat{B}$.
Then a modular $\Surf$-algebra in $\cat{B}$ has an underlying object $B\in\cat{B}$ (the \emph{microcosmic} $\Surf$-algebra).
By \cite[Theorem 7.17]{cyclic} $\cat{B}$ inherits from the topology a ribbon Grothendieck-Verdier structure in the sense of Boyarchenko-Drinfeld~\cite{bd}, i.e.\ a balanced braided category with a weak form of duality $D:\cat{A}^\op \to \cat{A}$; this is the anti-equivalence encountered in Section~\ref{secmodop}. For the purpose of this article, it is actually enough to imagine that $D$ is the rigid duality.

With respect to $D$, the object $B\in\cat{B}$ must be self-dual in the sense of \cite[Section~4]{microcosm}, i.e.\ it is equipped with an isomorphism $\psi: B\cong DB$ such that $B\ra{\cong} D^2 B\ra{D\psi} DB$ agrees with $\psi$. This self-duality amounts to a non-degenerate symmetric pairing $B\otimes B \to K$ with $K=DI$. Again, the rigid case is enough. Then $K=I$, and we will find a non-degenerate symmetric pairing in the usual sense. Beyond that, similar notions are available in \cite{fsswfrobenius}.

By \cite[Section~3]{microcosm} this entails that for a surface $\Sigma$ the vector spaces $\cat{B}(\Sigma;B,\dots,B)$ form a
functor $\int \Surf \to \vect$ from the Grothendieck construction $\int \Surf$
 (the category obtained by gluing all spaces of operations of $\Surf$ together) to $\vect$. This can be seen as 
a flat vector bundle over the moduli stack of Riemann surfaces. 

A \emph{modular $\Surf$-algebra in a $\Lexf$-valued $\Surf$-algebra $\cat{B}$} with underlying self-dual object $B \in \cat{B}$ now consists of vectors $\cor_\Sigma^B \in \cat{B}(\Sigma;B,\dots,B)$ invariant under the mapping class group and gluing such that 
$\cor_{\mathbb{D}^2 \times \mathbb{S}^1}^B \in \cat{B}(\mathbb{D}^2 \times \mathbb{S}^1;B,B)=\cat{B}(B\otimes B,K)$ for the cylinder is the pairing of $B$.
 The latter is called the \emph{unitality condition}.

\subsection{Bulk field correlators via reflection equivariance\label{secbulkfield}}
Finally, we will give in this subsection a description of bulk field correlators equivalent to the one from \cite{jfcs}.
	The latter definition is developed a bit further \cite[Section~4.2]{jfcslog} where it is suggested to see the self-duality of the Frobenius algebras typically appearing in the description of bulk field correlators 
	as a shadow of the rigid duality under the microcosm principle.
	Various incarnations of this seem to be known to experts: A correspondence between Frobenius structures and Grothendieck-Verdier duality via the microcosm principle is mentioned without further explanation in \cite[Section~2]{nlab:microcosm_principle}. It  is moreover implicit e.g.\ in \cite[Definition~2.3.2]{egger}.
	
The framework of \cite{microcosm}, that treats ansular and open correlators only, is not enough to turn the remark in \cite[Section~4.2]{jfcslog} into a result. Instead, a crucial ingredient that we are still missing is \emph{reflection equivariance}~\cite{reflection}:
Let $\cat{A}$ be a modular category.
The prescription~\eqref{eqngenuszeroblocks} gives us a genus zero modular functor, i.e.\ a cyclic framed $E_2$-algebra $\cat{A}$~\cite[Section~5]{cyclic} in $\Lexf$. By dualizing the morphism spaces and the labels we obtain a new cyclic framed $E_2$-algebra $\cat{A}\dg$~\cite[Section~3]{reflection}. 
By \cite[Theorem 5.11]{reflection}
the choice of a two-sided \emph{modified trace}~\cite{geerpmturaev,mtrace1,mtrace2,mtrace3,mtrace} on $\Proj \cat{A}$ provides an equivalence
\begin{align}
	\cat{A}\dg \ra{\simeq} \bar{\cat{A}}
	\end{align}
of cyclic framed $E_2$-algebras (that is the identity when forgetting the cyclic structure).
This structure is referred to as \emph{cyclic reflection equivariance}; it is a homotopy $\mathbb{Z}_2$-fixed point structure with respect to the operation that dualizes the genus zero spaces of conformal blocks and acts by orientation reversal on genus zero surfaces.

Suppose now that we are given a conformal field theory whose monodromy data is a modular category $\cat{A}$, then we clearly recover the notion of a bulk field correlator in line with the definitions in \cite{jfcs} if we consider the $\Surf$-algebras in $\FAA$, see also the explanations in \cite[Section~8]{microcosm}.
But for this,
 $\FAA$ needs to be seen as $\Surf$-algebra. As explained above, the definition cannot be made without this detail.
We achieve this by choosing a cyclic reflection equivariant structure for $\cat{A}$. Then 
\cite[Theorem 5.11]{reflection} provides us with an equivalence
\begin{align}\label{eqnequivmf} \FAA \ra{\simeq} \mathfrak{F}_{\! \cat{A}\dg \boxtimes \cat{A}}\end{align}
 of modular functors.
Thanks to \cite[Lemma 6.19]{reflection}, $\FAA$ then lives canonically over $\Surf$ once a cyclic reflection equivariance is chosen.
In other words, $\FAA$ is anomaly-free in a canonical way.
In terms of string-nets, this can be deduced from \cite[Corollary~8.3]{sn}, but this does not really help us because we will not formulate the correlators in terms of string-nets.

With \emph{this} structure of a $\Surf$-algebra for $\FAA$, the groupoid $\catf{ModAlg}(\Surf;\FAA)$ of $\Surf$-algebras in $\FAA$~\cite[Remark 5.3]{microcosm} describes the bulk field correlators for the conformal field theory with monodromy data $\cat{A}$. We suppress the chosen reflection equivariant structure in the notation, and this is justified:
The groupoid $\catf{ModAlg}(\Surf;\FAA)$ does not depend on the chosen reflection equivariant structure because the $\Surf$-structure on $\FAA$ is always chosen such that there is a canonical equivalence $\catf{ModAlg}(\Surf;\FAA) \ra{\simeq} \catf{ModAlg}(\Surf;\mathfrak{F}_{\! \cat{A}\dg \boxtimes \cat{A}})$. 
Let us summarize:

\begin{proposition}\label{propmicrocosmcor}
	A bulk field correlator for a conformal field theory whose monodromy data is a modular category $\cat{A}$ can be equivalently described as a
	modular $\Surf$-algebra with coefficients in the modular $\Surf$-algebra $\bar{\cat{A}}\boxtimes\cat{A}$, where the latter $\Surf$-algebra structure is inherited from $\cat{A}\dg \boxtimes \cat{A}$ through the choice of a reflection equivariant structure.
\end{proposition}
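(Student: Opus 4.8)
The plan is to show that, once the $\Surf$-algebra structure on $\FAA$ has been fixed by means of the chosen cyclic reflection equivariant structure, a modular $\Surf$-algebra with coefficients in $\bar{\cat{A}}\boxtimes\cat{A}$ carries exactly the same data as a bulk field correlator in the sense of \cite[Definition~4.9]{jfcs}, and then to check that this identification is independent of the chosen reflection equivariance. So the argument has three movements: pin down the macrocosmic $\Surf$-algebra structure; unwind the microcosm data and match it term by term with the definition of \cite{jfcs}; and dispose of the dependence on auxiliary choices.

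First I would make the $\Surf$-algebra structure on $\FAA$ explicit. Given the chosen cyclic reflection equivariance of $\cat{A}$, \cite[Theorem~5.11]{reflection} produces an equivalence $\FAA \simeq \mathfrak{F}_{\!\cat{A}\dg\boxtimes\cat{A}}$ of modular functors, and \cite[Lemma~6.19]{reflection} then lifts $\FAA$ canonically to a genuine $\Surf$-algebra, i.e.\ cancels the framing anomaly \eqref{eqnextensions} naturally in $\cat{A}$. With this structure, \cite[Theorem~7.17]{cyclic} equips the underlying category $\bar{\cat{A}}\boxtimes\cat{A}$ (the macrocosm) with its ribbon Grothendieck--Verdier structure, whose duality we may take to be the rigid duality since $\bar{\cat{A}}\boxtimes\cat{A}$ is rigid. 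Hence a self-dual object of the macrocosm in the sense of \cite[Section~4]{microcosm} is precisely an object $B\in\bar{\cat{A}}\boxtimes\cat{A}$ together with a non-degenerate symmetric pairing $B\otimes B\to I$, and the microcosm principle of \cite[Section~3]{microcosm} then turns such a $B$ into a functor $\int\Surf\to\vect$; a modular $\Surf$-algebra structure on $B$ consists of vectors $\cor_\Sigma^B\in\FAA(\Sigma;B,\dots,B)$ that are invariant under the (now honest, non-projective) mapping class group action, compatible with gluing via excision isomorphisms of the form \eqref{eqnexcision}, and satisfy the unitality condition identifying $\cor_{\disk^2\times\mathbb{S}^1}^B$ with the pairing of $B$.

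Next I would match this list against \cite[Definition~4.9]{jfcs}: the underlying object and pairing of $B$ are the bulk field and its symmetric pairing there; the excision maps of \eqref{eqnexcision} are the sewing maps of \cite{jfcs}, so compatibility with gluing is the sewing constraint; and the unitality condition is the normalization built into loc.\ cit. The one delicate point is the mapping class group invariance, which \cite[Definition~4.9]{jfcs} phrases relative to a chosen splitting of \eqref{eqnextensions} for $\bar{\cat{A}}\boxtimes\cat{A}$, whereas the microcosm formulation uses the canonical anomaly-free action coming from reflection equivariance. The step I expect to be the main obstacle is precisely showing that the splitting produced by the reflection equivariant structure, via $\FAA\simeq\mathfrak{F}_{\!\cat{A}\dg\boxtimes\cat{A}}$, is one to which \cite[Definition~4.9]{jfcs} applies and that the two invariance conditions then coincide literally; this amounts to tracking the framing anomaly at all genera and verifying that the homotopy $\mathbb{Z}_2$-fixed point datum of reflection equivariance yields exactly the multiplicative splitting used in the string-net description, for which \cite[Lemma~6.19]{reflection} (together with \cite[Corollary~8.3]{sn}) is the relevant input.

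Finally, I would address the suppression of the chosen reflection equivariance in the notation. Any two such choices give two $\Surf$-algebra structures on $\FAA$, but both fit into a commuting triangle with $\mathfrak{F}_{\!\cat{A}\dg\boxtimes\cat{A}}$ through \cite[Theorem~5.11]{reflection}, so the induced equivalences assemble into a canonical equivalence $\catf{ModAlg}(\Surf;\FAA)\simeq\catf{ModAlg}(\Surf;\mathfrak{F}_{\!\cat{A}\dg\boxtimes\cat{A}})$; composing these shows that any two choices produce canonically equivalent groupoids of modular $\Surf$-algebras, which justifies dropping the choice from the notation and completes the equivalence asserted in the proposition.
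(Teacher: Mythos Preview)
Your proposal is correct and follows essentially the same route as the paper. In the paper, the proposition is explicitly presented as a summary (``Let us summarize:'') of the discussion immediately preceding it in Section~\ref{secbulkfield}: one uses \cite[Theorem~5.11]{reflection} to obtain the equivalence $\FAA\simeq\mathfrak{F}_{\!\cat{A}\dg\boxtimes\cat{A}}$, then \cite[Lemma~6.19]{reflection} to lift $\FAA$ to a genuine $\Surf$-algebra, and finally observes that the groupoid $\catf{ModAlg}(\Surf;\FAA)$ is independent of the choice of reflection equivariance because of the canonical equivalence to $\catf{ModAlg}(\Surf;\mathfrak{F}_{\!\cat{A}\dg\boxtimes\cat{A}})$; your three movements reproduce exactly these steps, with somewhat more explicit bookkeeping in the comparison with \cite[Definition~4.9]{jfcs}.
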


Note that in order to apply the microcosm principle and to obtain the correct notion of correlators, 
it seems that we really need the concept of reflection equivariance and the invertibility 
of the generalized skein modules from \cite{reflection} as both enter into the cancellation of the anomaly. 
These rely in turn heavily on Grothendieck-Verdier duality and factorization homology, and we are not aware of	 a construction that circumvents this.
In the semisimple case, all of these issues, in particular those related to the anomaly, are implicitly resolved by means of the Reshetikhin-Turaev topological field theory.

\needspace{10\baselineskip}
	\section{The modular extension procedure 
		for the underlying pivotal finite tensor category
		\label{secopen}}
The modular functor for $\bar{\cat{A}}\boxtimes\cat{A}$
is actually an open-closed modular functor~\cite[Corollary 8.7]{sn},
and the correlators that we want to build should be compatible with this additional structure.
To this end, we treat in this section the open sector first.

\subsection{The open modular functor of a pivotal finite tensor category}
A \emph{categorified open two-dimensional topological field theory}~\cite{envas}, that in this text we will mostly refer to as \emph{open modular functor} is a modular algebra over the open surface operad. The definition is inspired by the concept of an open field theory by Lazaroiu~\cite{Lazaroiu} and Moore-Segal~\cite{mooresegal}.
The open surface operad is  a groupoid-valued modular operad $\O$ whose operations in total arity $n$ are given by connected compact oriented surfaces with at least one boundary component and $n\ge 0$ parametrized intervals in the boundary of the surface, sometimes referred to  as \emph{marked intervals}. The rest of the boundary is not parametrized and called \emph{free boundary}. We refer to Figure~\ref{figexopen} for an example.
The morphisms in the groupoids of operations are mapping classes, more precisely isotopy classes of diffeomorphisms
preserving the orientation and the boundary parametrization.
The operadic composition is gluing along intervals.
For more details, we refer to \cite[Section~2 \& 3]{sn}.

The	cyclic operad of disks with marked intervals in their boundary
is  the cyclic associative operad. Therefore, the restriction of an open modular functor in $\Lexf$
to disks with marked intervals in their boundary is a cyclic associative algebra in $\Lexf$.
A cyclic associative algebra in $\Lexf$ has in particular an underlying non-cyclic associative algebra in $\Lexf$, i.e.\ a monoidal category $\cat{A}$
whose monoidal product we denote by $\otimes$.
In addition, we have the duality $D: \cat{A}\to\cat{A}^\op$ coming from the symmetric non-degenerate pairing 
and the symmetry isomorphisms that give us a natural isomorphism $\omega :\id_\cat{A}\to D^2$ as explained in Section~\ref{secmodop}.
By \cite[Theorem~4.12]{cyclic} being a cyclic associative algebra means exactly that this data forms a so-called \emph{pivotal Grothendieck-Verdier category} in the sense of~\cite{bd}. This means the following things:
\begin{itemize}
	\item  $(\cat{A},\otimes,D)$ is a \emph{Grothendieck-Verdier category} (using the dual conventions of \cite{cyclic}), i.e.\ the equivalence $D: \cat{A}\to\cat{A}^\op$ makes the functors 
	$\cat{A}(K,X\otimes -)$ for $X\in \cat{A}$ and the \emph{dualizing object} $K:=DI$
	representable by means of isomorphisms $\cat{A}(K,X\otimes -)\cong \cat{A}(DX,-)$
	(the equivalence $D$ is called a \emph{Grothendieck-Verdier duality});
	
	\item the isomorphism $\omega:\id_\cat{A}\to D^2$ is monoidal, and  $\omega_K$ is the canonical isomorphism $K\cong D^2 K$. 
\end{itemize}
An example of a pivotal Grothendieck-Verdier category is a pivotal finite tensor category in the sense of \cite{etingofostrik,egno}.
By \cite[Theorem 2.2 \& Corollary~4.4]{envas} the restriction map from open modular functors to pivotal Grothendieck-Verdier categories is an equivalence.

\begin{figure}[h]
	\centering	\begin{tikzpicture}[scale=0.5]
		\begin{pgfonlayer}{nodelayer}
			\node [style=none] (0) at (7.5, 11) {};
			\node [style=none] (1) at (7.5, 9) {};
			\node [style=none] (2) at (7.5, 8) {};
			\node [style=none] (3) at (7.5, 6) {};
			\node [style=none] (4) at (10, 7.5) {};
			\node [style=none] (5) at (12, 7.5) {};
			\node [style=none] (6) at (10.5, 6.75) {};
			\node [style=none] (7) at (11.5, 6.75) {};
			\node [style=none] (8) at (11, 8.75) {$\Sigma$};
			\node [style=none] (9) at (7.5, 5) {};
			\node [style=none] (10) at (7.5, 3) {};
			\node [style=none] (11) at (8.25, 10.5) {};
			\node [style=none] (12) at (8.25, 9.5) {};
			\node [style=none] (13) at (8.25, 4.5) {};
			\node [style=none] (14) at (8.25, 3.5) {};
			\node [style=none] (15) at (6, 7) {};
			\node [style=none] (16) at (0.5, 5) {};
			\node [style=none] (17) at (0.5, 4.5) {free boundary component};
			\node [style=none] (18) at (6.25, 4) {};
			\node [style=none] (19) at (3.75, 2.5) {};
			\node [style=none] (20) at (3.75, 2) {a marked interval};
		\end{pgfonlayer}
		\begin{pgfonlayer}{edgelayer}
			\draw [bend right=90, looseness=1.50] (1.center) to (0.center);
			\draw [bend right=270, looseness=1.50] (1.center) to (0.center);
			\draw [bend left=90, looseness=1.50] (2.center) to (3.center);
			\draw [bend right=90, looseness=1.50] (2.center) to (3.center);
			\draw [bend right=90, looseness=1.50] (4.center) to (5.center);
			\draw [bend left=45, looseness=1.25] (6.center) to (7.center);
			\draw [bend right=90, looseness=1.50] (10.center) to (9.center);
			\draw [bend right=270, looseness=1.50] (10.center) to (9.center);
			\draw [bend left=90, looseness=4.50] (1.center) to (2.center);
			\draw [bend left=90, looseness=4.50] (3.center) to (9.center);
			\draw [bend left=90, looseness=3.00] (0.center) to (10.center);
			\draw [style=open, bend left] (11.center) to (12.center);
			\draw [style=open, bend left] (13.center) to (14.center);
			\draw [style=open, bend right=90, looseness=1.50] (9.center) to (10.center);
			\draw [style=end arrow, in=-180, out=90] (16.center) to (15.center);
			\draw [style=end arrow, in=-180, out=90, looseness=1.25] (19.center) to (18.center);
		\end{pgfonlayer}
	\end{tikzpicture}
	\caption{An operation of total arity 3 in the open surface operad. Marked intervals are printed in blue.}
	\label{figexopen}
\end{figure}
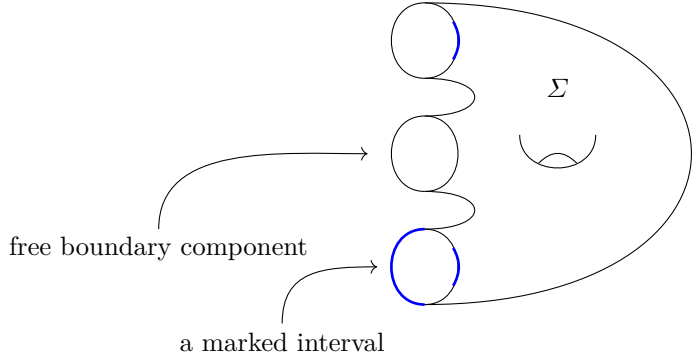

Let $\cat{A}$ be a ribbon category in $\Lexf$.
By what we just explained,	there is a unique open modular functor $\Ao$ whose restriction to disks with marked boundary intervals is given by $\cat{A}$ as pivotal finite tensor category. 
This means that the open modular functor is characterized by the fact that 
on a disk $D$ with $n\ge 0$ boundary intervals it is given by
\begin{align} \Ao(D ; X_1,\dots,X_n) \cong \cat{A}(I,X_1\otimes \dots \otimes X_n) \ ,  \label{eqndisk}\end{align}
where the needed cyclic symmetry is the usual cyclic symmetry of the hom spaces of pivotal tensor categories.

\subsection{The construction of open correlators and implications\label{secopencor}}
Let $\cat{A}$ be a pivotal tensor category.
In \cite[Theorem 8.3]{microcosm} the modular microcosm principle is used to build a consistent system of open correlators from any symmetric Frobenius algebra $F\in\cat{A}$, i.e.\
vectors \begin{align}\label{omicrocosmeqn}
	\lambda_\Sigma^F \in \Ao(\Sigma;F,\dots,F)\quad \text{with}\quad \Sigma \in \O(T)\end{align} that are mapping class group invariant and preserved by the gluing along intervals (this is the \emph{open correlator}).
All consistent systems of open correlators with monodromy data $\cat{A}$ are of this form.
The construction in \cite{microcosm} relies on Costello's modular envelope and the description of the latter in the case of the cyclic associative operad \cite{giansiracusa,envas} in terms of ribbon graphs.
It should be thought of as a generalization of the usual construction principles in quantum topology involving skein theory, triangulations or state-sums.
It directly extends a cyclic associative algebra in $\cat{A}$ (a symmetric Frobenius algebra) to the entire open surface operad.
We refer to this as \emph{modular extension}.

For a modular category $\cat{A}$ with fixed reflection equivariant structure, we will now rewrite this result in terms of the modular functor $\mathfrak{F}_{\bar{\cat{A}}\boxtimes\cat{A}}$:
To this end, we will use 
the definition
\begin{align}
	\bzero  (Y):= \int^{X \in \cat{A}} X^\vee \boxtimes Y\otimes X\in\bar{\cat{A}}\boxtimes\cat{A} \quad \text{for all}\quad Y\in\cat{A} \ . 
\end{align}
Under the equivalence \begin{align}\label{equivG} G:\bar{\cat{A}}\boxtimes\cat{A}\ra{\simeq}  Z(\cat{A})\end{align} from~\cite[Section~4]{eno-d},
$\bzero  (Y)$ is sent to $LY \in Z(\cat{A})$, where $L:\cat{A}\to Z(\cat{A})$ is the left adjoint to the forgetful functor $U:Z(\cat{A})\to \cat{A}$, see \cite[Section~2.2]{cardycase}.
With respect to the rigid duality $X \boxtimes Y \mapsto X^\vee \boxtimes Y^\vee$ of $\bar{\cat{A}}\boxtimes\cat{A}$, any self-duality $Y\cong Y^\vee$ induces a self-duality $\psizero:\bzero  (Y)\cong \bzero  (Y)^\vee$
(this uses that we have fixed a reflection equivariant structure and hence a trivialization of both Nakayama functors~\cite[Theorem~4.12]{reflection}, and moreover \cite[eq.~(3.52)]{fss}).
In particular, $\bzero  (F)$ for a symmetric Frobenius algebra $F\in\cat{A}$ is canonically a self-dual object.

\begin{theorem}\label{thmsym}
	Let $\cat{A}$ be a modular category and let $F$ be a 
	symmetric Frobenius algebra $F$ in $\cat{A}$.
	Then via the modular extension procedure, $F$ gives rise to 
	vectors $\lambda_\Sigma^F \in \mathfrak{F}_{\bar{\cat{A}}\boxtimes\cat{A}}(\Sigma;\bzero(F),\dots,\bzero(F))$ 
	in the spaces of conformal blocks for the Lyubashenko modular functor for the modular category $\bar{\cat{A}}\boxtimes\cat{A}$ at all surfaces $\Sigma\in \Surf(T)$ with at least one boundary component per connected component.  These vectors are mapping class group invariant and solve the gluing constraints for gluing along intervals. 
\end{theorem}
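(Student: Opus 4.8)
The plan is to obtain the vectors $\lambda_\Sigma^F$ by transporting the open correlators furnished by \cite[Theorem~8.3]{microcosm} into $\FAA$ along a coend identification. First I would invoke \cite[Theorem~8.3]{microcosm}: since $\cat{A}$ is in particular a pivotal finite tensor category and $F$ a symmetric Frobenius algebra, the modular microcosm principle already produces a consistent system of open correlators $\lambda^F\in\Ao(S;F,\dots,F)$ on all open surfaces $S$, mapping class group invariant and compatible with the gluing along marked intervals; these are obtained by modular extension, via Costello's modular envelope of the cyclic associative operad, of the cyclic associative algebra underlying $F$.

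The crux is then to produce a natural, mapping class group equivariant identification
\[ \FAA(\Sigma;\bzero(F),\dots,\bzero(F))\;\cong\;\Ao(\widehat\Sigma;F,\dots,F), \]
where $\widehat\Sigma$ is the open surface obtained from $\Sigma$ by opening each parametrized boundary circle into a single marked interval together with a complementary free arc. I would assemble this from three ingredients. (i) The open sector of the open-closed modular functor $\FAA$ — surfaces with marked intervals but no parametrized circles — is nothing but the open modular functor $\Ao$ of $\cat{A}$, since boundary fields are objects of $\cat{A}$. (ii) One has $\bzero(F)=(I\boxtimes F)\otimes\bzero(I)$ with $\bzero(I)=\int^{X}X^\vee\boxtimes X$, which under the equivalence $G$ of~\eqref{equivG} corresponds to the free functor $L\colon\cat{A}\to Z(\cat{A})$ left adjoint to the forgetful functor; by excision together with this coend presentation, labeling a boundary circle by $\bzero(F)=LF$ amounts to pushing the label $F$ from a whiskered marked interval onto the circle — this is also why the left-hand side agrees with the space of conformal blocks $\FA(\bar\Sigma\cupx_{\partial\Sigma}\Sigma;F,\dots,F)$ of the double of Figure~\ref{figdouble} with its new boundary circles carrying $F$. (iii) The chosen cyclic reflection-equivariant structure together with the equivalence $\cat{A}\dg\simeq\bar{\cat{A}}$ of \cite[Theorem~5.11]{reflection}, needed to match the Grothendieck-Verdier and rigid dualities occurring on the two sides — here the self-duality $\psizero$ of $\bzero(F)$ induced by that of $F$ enters — and, above all, to make the identification anomaly-free, so that it is equivariant for the honest mapping class groups (which coincide for $\Sigma$ and $\widehat\Sigma$), in the spirit of~\eqref{eqnequivmf}.

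Granting this, I would define $\lambda_\Sigma^F$ to be the image of $\lambda^F_{\widehat\Sigma}$ under the identification; mapping class group invariance is then inherited from that of the open correlators, and the gluing constraints for gluing along intervals hold because an interval-gluing of surfaces carrying $\bzero(F)$-labeled circles corresponds, under the identification, to an interval-gluing of the associated surfaces $\widehat\Sigma$, where the required compatibility is part of \cite[Theorem~8.3]{microcosm}; the cylinder case reduces to the self-duality pairing of $\bzero(F)$. The hard part will be ingredients (ii) and (iii): the coend bookkeeping producing $\widehat\Sigma$ (equivalently the double) is formally transparent, but it is not formal that the left exact Lyubashenko modular functor may be interchanged with the colimit defining $\bzero$ — this rests on the specific handle/coend structure of the functor and on excision — and, more seriously, making the identification compatible simultaneously with the mapping class group actions and with the reflection-equivariant $\Surf$-structure that cancels the framing anomaly is precisely the point where the invertibility of the generalized skein modules and Grothendieck-Verdier duality from \cite{reflection} are indispensable, and for which the semisimple argument via the Reshetikhin-Turaev topological field theory offers no shortcut.
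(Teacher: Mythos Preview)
Your proposal is correct and follows essentially the same route as the paper: open correlators from \cite[Theorem~8.3]{microcosm}, then transport along an identification $\Ao(\Sigma_*;F,\dots,F)\cong\FAA(\Sigma;\bzero(F),\dots,\bzero(F))$ with $\Sigma_*=\widehat\Sigma$ your open surface. The only real difference is in how the identification is obtained. The paper does not assemble it from your pieces (i)--(iii); it invokes \cite[Theorem~4.3]{envas} directly, which furnishes a canonical isomorphism $\Ao(\Sigma_*;F,\dots,F)\cong\mathfrak{F}_{Z(\cat{A})}(\Sigma;LF,\dots,LF)$ respecting the mapping class group action and interval-gluing, and then passes through the braided equivalence $G:\bar{\cat{A}}\boxtimes\cat{A}\ra{\simeq}Z(\cat{A})$ (which sends $\bzero(F)$ to $LF$) to reach $\FAA$. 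So the coend bookkeeping and the exactness concern you raise in (ii) are absorbed into that citation; and the reflection-equivariance and anomaly-cancellation you emphasize in (iii) are not actually needed at this stage of the argument --- they become essential only later, in Section~\ref{seccyl} and onward, when one must make sense of honest $\Map(\Sigma)$-invariance for the bulk correlators and compare self-dualities. For Theorem~\ref{thmsym} itself the open side $\Ao$ carries no anomaly, and the \cite{envas} isomorphism already delivers the required equivariance.
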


\begin{proof}
	For the construction of the vectors,  we can assume that $\Sigma$ is connected.
	Each boundary component $S$ of $\Sigma$ is parametrized, so it comes with a diffeomorphism $\mathbb{S}^1\ra{\cong} S$. We model $\mathbb{S}^1$ as the unit circle in $\mathbb{C}$ and  place a small marked interval on $S$ around the image of $1 \in \mathbb{S}^1$ under the $\mathbb{S}^1\ra{\cong} S$. This gives us a surface $\Sigma_*$ that is an operation
	 in the open surface operad; we have $\Map(\Sigma_*)=\Map(\Sigma)$ since it makes no difference for the mapping class group whether we fix a boundary component or a marked interval. 
	
	The symmetric Frobenius algebra $F$ gives us by~\eqref{omicrocosmeqn} 
	a vector in $\lambda_{\Sigma_*}^F \in \Ao(\Sigma_* ; F,\dots,F)$ invariant under the mapping class group and compatible with the gluing along intervals.
	Thanks to \cite[Theorem 4.3]{envas}, we have  \begin{align} \Ao(\Sigma_* ; F,\dots,F)\cong \mathfrak{F}_{Z(\cat{A})}(\Sigma;LF,\dots,LF)\label{eqnAoZ}\end{align} by a canonical isomorphism respecting the mapping class group action and the gluing along intervals; we denote the image of the just-constructed vector 
	by $\lambda_\Sigma^F$.
	The statement now follows from the above-mentioned fact that the equivalence~\eqref{equivG} sends $\bzero(F)$ to $LF$, see also \cite[Corollary 4.4]{envas}.
\end{proof}

\needspace{10\baselineskip}
\section{The correlator construction}
In this section, we finish
the correlator construction. Essentially, we will take the vectors from Theorem~\ref{thmsym} and prove the compatibility under gluing.

\subsection{Invariance under partial traces}
For  a pivotal  finite tensor category  $\cat{A}$ and  a connected surface $\Sigma$ with at least one boundary component
and $n\ge 1$ marked boundary intervals, denote 
by $\Sigma \setminus d$ the result of cutting out a small disk $d$  from
$\Sigma$, thereby adding a free boundary component to the surface.
We constructed in \cite[Section~6]{envas}
so-called \emph{partial trace maps}
\begin{align} \Ao (\Sigma \setminus d; X_1,\dots,X_n) \to  \Ao(\Sigma;X_1,\dots,X_n) \ . 
\end{align}

\begin{lemma}\label{lemmapt}
	Let $\cat{A}$ a modular category and $\Sigma\in \O(T)$ a connected surface with at least one boundary component
	and $n\ge 1$ marked boundary intervals. 
	Then for a  special symmetric Frobenius algebra $F$ in $\cat{A}$
	any partial trace map
	\begin{align} \Ao (\Sigma \setminus d; F,\dots,F) \to \Ao(\Sigma;F,\dots,F) 	\label{eqnpartialtrace}
	\end{align} preserves the vectors $\lambda^F$ from~\eqref{omicrocosmeqn}.
\end{lemma}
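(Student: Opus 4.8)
The plan is to exploit that \emph{both} the vectors $\lambda^F$ and the partial trace maps are assembled from the same combinatorial data — ribbon graphs, i.e.\ Costello's modular envelope of the cyclic associative operad — so that the assertion collapses to one local identity for the Frobenius algebra $F$. First I would recall the shape of the modular extension from \cite[Theorem~8.3]{microcosm}: for a surface $\Sigma$ with $n\ge 1$ marked intervals one chooses a ribbon-graph spine $\Gamma$ of $\Sigma$ whose external half-edges sit on the marked intervals, labels each internal edge of $\Gamma$ by the Frobenius copairing $I\to F\otimes F$ and each $m$-valent vertex by the cyclically invariant $m$-ary form $F^{\otimes m}\to I$ built from the multiplication and the Frobenius pairing, and composes; the vector $\lambda^F_\Sigma$ is the result, well-definedness being exactly the Frobenius algebra axioms and compatibility with gluing along intervals being built in.

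\textbf{Key steps.} Next I would recall from \cite[Section~6]{envas} that cutting out a small interior disk $d$ adds a free boundary circle $c$ to $\Sigma$, that a spine of $\Sigma\setminus d$ can be chosen to be a spine $\Gamma$ of $\Sigma$ together with one extra self-loop edge $e$ accounting for the new generator of $H_1$ contributed by $c$, and that in these terms the partial trace map \eqref{eqnpartialtrace} is the contraction of that extra loop $e$. Contracting a self-loop whose two ends are adjacent at a vertex $v$ turns the $(k{+}2)$-ary vertex $v$ with the copairing plugged into its two loop-slots into the $k$-ary vertex of $\Gamma$ with one of its factors replaced by $H(-)$, where $H=\mu\circ\delta$ is the handle operator of $F$ (Remark~\ref{remh}). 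Comparing state-sums, the partial trace therefore sends $\lambda^F_{\Sigma\setminus d}$ to the vector obtained from $\lambda^F_\Sigma$ by inserting one copy of $H$ at a single edge of $\Gamma$. Since $F$ is \emph{special}, Definition~\ref{defspecial} gives $\mu\circ\delta=\id_F$, i.e.\ $H=\id_F$; hence this insertion is the identity and the partial trace maps $\lambda^F_{\Sigma\setminus d}$ to $\lambda^F_\Sigma$, as claimed. To make the "comparing state-sums" step rigorous I would argue locally: the partial trace only modifies $\Sigma\setminus d$ in a neighbourhood of $c$, and both $\lambda^F$ and the partial trace are compatible with the operadic composition of the open surface operad, so it suffices to check the identity after decomposing $\Sigma$ along marked intervals into a piece containing $d$ and a complementary piece — equivalently, to check it on a minimal model such as a once-holed disk (an annulus with one marked interval on the outer boundary and free inner boundary), where the computation above is the entire content. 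Finally, mapping class group invariance of $\lambda^F$ (Theorem~\ref{thmsym}) shows that the isotopy class of the removed disk $d$ is immaterial, so the statement holds for every partial trace map.

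\textbf{Main obstacle.} The delicate point is the dictionary in the middle step: establishing that the partial trace of \cite[Section~6]{envas} is precisely "contract the extra loop / insert the handle operator", and organizing the locality reduction so that no spurious scalar sneaks in — in particular none involving $\varepsilon\circ\eta$, which for the open sector ought not to enter (it is the \emph{other} half of speciality and is needed only later, for the closed sector and the torus partition function). Once that translation between the abstract partial trace and the ribbon-graph move is in place, the proof is just the identity $H=\id_F$.
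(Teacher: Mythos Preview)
Your proposal is correct and follows essentially the same two-step architecture as the paper: reduce to a local model by gluing compatibility, then verify there that the partial trace inserts the handle operator $H=\mu\circ\delta$, which is the identity by speciality. The paper's presentation differs only in that it makes the local computation fully explicit via the coend: it identifies $\Ao(D\setminus d;F,\dots,F)\cong\cat{A}(I,F^{\otimes n}\otimes\mathbb{F})$, recognizes the partial trace as postcomposition with $\tau:\mathbb{F}\to I$, and unwinds $\lambda^F_{D\setminus d}$ using the operations $v_n$ to reduce the claim to $(\id_{F^{\otimes n}}\otimes(\beta\circ\delta))\circ v_{n+1}=v_n$, which is exactly $\beta\circ\delta=\varepsilon$, i.e.\ $\mu\circ\delta=\id_F$. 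Your ribbon-graph phrasing (extra self-loop, contract it, get $H$) is the same computation read geometrically; the paper's coend bookkeeping is what pins down the ``dictionary'' you flag as the delicate point and confirms that only the $\mu\circ\delta=\id_F$ half of speciality enters, not $\varepsilon\circ\eta\neq 0$.
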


\begin{proof}
	\begin{pnum} \item We first treat the case $\Sigma =D$ in which $\Sigma$ is itself a disk:	Under the identifications
		\begin{align}
			\Ao(D;F,\dots,F) &\cong \cat{A}(I,F^{\otimes n}) \ , \\
			\Ao(D \setminus d ; F ,\dots, F) &\cong \cat{A}(I,F ^{\otimes n}\otimes \mathbb{F})  \ , 
		\end{align}
		that can be extracted from~\cite[Section~9]{microcosm}, 
		the map \eqref{eqnpartialtrace} is the  map
		\begin{align}
			\label{eqnpartialtrace0} 	\cat{A}(I,F ^{\otimes n}\otimes \mathbb{F}) \to \cat{A}(I,F^{\otimes n})
		\end{align} induced by the map $\tau:\mathbb{F}=\int^{X \in \cat{A}} X^\vee \otimes X\to I$  that  comes from the evaluations $ X^\vee \otimes X\to I$. 
		The vectors $\lambda^F$ are:
		\begin{itemize}
			\item The vector
			\begin{align}
				\lambda_{D\setminus d}^F:	
				I \ra{v_{n+2}} F^{\otimes n} \otimes F^{\otimes 2} \ra{\text{self-duality}} F^{\otimes n} \otimes F^\vee \otimes F \ra{\text{structure map for the coend}} F^{\otimes n}\otimes \mathbb{F} 
			\end{align}in $	\cat{A}(I,F^{\otimes n} \otimes \mathbb{F})$, where $v_n:I\to F^{\otimes n} $ was defined in~\eqref{eqntarn}.
			\item The vector
			\begin{align}
				\lambda_D^F=	v_n:I\to  F^{\otimes n}
			\end{align}
			in $\cat{A}(I,F^{\otimes n})$.
		\end{itemize}
		Therefore, it remains to prove
		\begin{align}
			\label{eqnspecial1}(\id\otimes \tau)\circ	\lambda_{D\setminus d}^F  = v_n \quad \text{for}\quad n\ge 1 \ .  
		\end{align}
		Note that $F^\vee \otimes F \to \mathbb{F} \ra{\tau} I$ is the coevaluation of $F$; in combination with the self-duality, it yields the pairing $\beta : F\otimes F\to I$, so that \eqref{eqnspecial1}, with the inductive definition of $v_n$, reduces to
		\begin{align} (\id_{F^{\otimes n}} \otimes (\beta \circ \delta))\circ v_{n+1}=v_n \ , 
			\end{align}
		and this equation holds indeed because $F$ is special.\label{ptstep1}
		
		\item Next we treat the general case:
		 We can obtain $\Sigma$ by gluing a disk $D$ along intervals. We then obtain $\Sigma \setminus d$ by gluing a suitable disk $D\setminus d$ with a hole along intervals. 
		By the topological construction of the partial trace maps the following diagram commutes:
		\begin{equation}\label{eqndiagramdisk}
			\begin{tikzcd}
			 \Ao(D\setminus d;F,\dots,F)	  \ar[]{rrr}{p_D}  \ar[swap]{dd}{G_d} &&&  \Ao(D;F,\dots,F)  \ar{dd}{G} \\
				\\ \Ao(\Sigma\setminus d;F,\dots,F)    \ar[]{rrr}{p_\Sigma} &&& 	\Ao(\Sigma;F,\dots,F) \ . 
			\end{tikzcd} 
		\end{equation}
		The horizontal arrows are the partial trace maps while
		the vertical ones are the gluing maps. 
		
		The statement that remains to be shown
		is $ p_\Sigma \precor_{\Sigma\setminus d}^F = \precor_\Sigma^F$. 
		Since the $\lambda^F$ are compatible with the gluing along intervals, $\precor_{\Sigma \setminus d}^F = G_d \precor_{D \setminus d}^F $
		and $\lambda_\Sigma^F=G\lambda_D^F$. 
		This leaves us with
		\begin{align} p_\Sigma \precor_{\Sigma\setminus d}^F =p_\Sigma G_d \precor_{D \setminus d}^F \stackrel{\eqref{eqndiagramdisk}}{=} Gp_D \precor_{D \setminus d}^F \stackrel{\text{\ref{ptstep1}}}{=} G\precor_D^F =\precor_\Sigma^F  
		\end{align}
		and finishes the proof.
	\end{pnum}
\end{proof}

\subsection{Gluing along boundary circles}
For the vectors constructed in Theorem~\ref{thmsym}, we can now prove further gluing properties along boundary \emph{circles}:
	
\begin{proposition}\label{propgluingspecial}
	The vectors $\precor_\Sigma^F \in \mathfrak{F}_{\bar{\cat{A}}\boxtimes\cat{A}}(\Sigma;\bzero(F),\dots,\bzero(F))$
	built in Theorem~\ref{thmsym} for a modular category $\cat{A}$ and a special symmetric Frobenius algebra $F\in\cat{A}$ are preserved by gluing along boundary circles if the surface obtained via gluing still has at least one boundary component per connected component.
\end{proposition}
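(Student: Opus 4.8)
The plan is to reduce gluing along a boundary circle to the two moves whose effect on the vectors $\precor^F$ is already under control: gluing along an interval (Theorem~\ref{thmsym}, resp.\ the open correlators of \eqref{omicrocosmeqn}) and the partial trace (Lemma~\ref{lemmapt}). It suffices to treat the gluing of a single pair of boundary circles $S_1,S_2$ of $\Sigma$, the general case following by iterating and ordering the elementary circle-gluings so that every connected component keeps a boundary component at each stage. So let $\sigma$ glue $S_1$ to $S_2$ and put $\Sigma'':=\sigma\Sigma$; by hypothesis every component of $\Sigma''$ still has a boundary circle. Recall from the proof of Theorem~\ref{thmsym} that $\precor_\Sigma^F$ is the image of an open-sector vector $\precor_{\Sigma_*}^F\in\Ao(\Sigma_*;F,\dots,F)$ under \eqref{eqnAoZ} and \eqref{equivG}, where $\Sigma_*$ is $\Sigma$ with one small marked interval $I_i$ placed on each boundary circle $S_i$; the same description holds for $\Sigma''$ with $\Sigma''_*$.

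First I would set up the topological picture. Gluing the interval $I_1$ to $I_2$ in $\Sigma_*$ makes $I_1=I_2$ interior, and the two free boundary arcs $S_1\setminus I_1$ and $S_2\setminus I_2$ join along their endpoints into a single new free boundary circle $S$; call the resulting open surface $\tilde\Sigma$. Comparing Euler characteristics and boundary-component counts, and then matching orientations and boundary parametrisations, shows that $\Sigma''_*$ is recovered from $\tilde\Sigma$ by capping off $S$ with a disk $d$, i.e.\ $\tilde\Sigma=\Sigma''_*\setminus d$. The key point is then that, under \eqref{eqnAoZ} and \eqref{equivG}, the circle-gluing map $\sigma$ corresponds to the composite
\[ \Ao(\Sigma_*;F,\dots,F)\xrightarrow{\ g\ }\Ao(\tilde\Sigma;F,\dots,F)\xrightarrow{\ p\ }\Ao(\Sigma''_*;F,\dots,F), \]
where $g$ glues $I_1$ to $I_2$ and $p$ is the partial trace that caps off $S$. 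This should be read off from the construction of the comparison \eqref{eqnAoZ} in \cite[Theorem 4.3]{envas}, which exhibits the value of $\mathfrak{F}_{Z(\cat{A})}$ on $\Sigma$ as the value of $\Ao$ on $\Sigma_*$ (one marked interval per boundary circle), with gluing of two boundary circles on the $Z(\cat{A})$-side corresponding to gluing the two marked intervals followed by the partial trace from \cite[Section 6]{envas}.

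With that in place the argument closes quickly: $g(\precor_{\Sigma_*}^F)=\precor_{\tilde\Sigma}^F$ since the open correlators are compatible with gluing along intervals, and $p(\precor_{\tilde\Sigma}^F)=\precor_{\Sigma''_*}^F$ by Lemma~\ref{lemmapt}, whose hypotheses hold: $F$ is \emph{special}, and the assumption that $\Sigma''$ keeps a boundary component in every connected component guarantees that $\Sigma''_*$ (hence also $\tilde\Sigma$) still carries a marked interval, so that we remain inside the domain of the partial traces and of the vectors $\precor^F$. Hence the composite sends $\precor_{\Sigma_*}^F$ to $\precor_{\Sigma''_*}^F$, i.e.\ $\sigma$ sends $\precor_\Sigma^F$ to $\precor_{\Sigma''}^F$, which is the claim.

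I expect the main obstacle to be the identification in the second paragraph: verifying that the comparison of \cite{envas} intertwines the gluing of two boundary circles on the $Z(\cat{A})$-side with ``interval-gluing followed by the partial trace capping the arising free circle'' on the $\cat{A}$-side. The comparison is designed to respect the mapping class group actions and gluing along intervals, and one must extract from its construction, together with the partial trace formalism of \cite{envas}, that it is moreover compatible with capping off free boundary circles; the remaining steps are elementary surface topology. The role of \emph{specialness} of $F$ is exactly to make Lemma~\ref{lemmapt} applicable --- without it, the vectors of Theorem~\ref{thmsym} are invariant only under the mapping class group and gluing along intervals, not under gluing along circles.
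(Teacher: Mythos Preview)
Your proof is correct and follows essentially the same approach as the paper: decompose a circle-gluing into an interval-gluing (creating a free boundary circle) followed by a partial trace capping it off, then invoke Theorem~\ref{thmsym} for the first step and Lemma~\ref{lemmapt} (using specialness of $F$) for the second. The paper is terser, citing \cite[Theorem~6.3]{envas} directly for the decomposition rather than spelling out the surface topology, but the argument is the same.
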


The gluing maps with respect to which the $\precor^F$ are preserved originate from the fact that $\FAA$ is a modular functor
and \cite[Proposition~4.7]{microcosm}. This needs a self-duality of $\bzero(F)$, namely the one $\psizero:\int^{X\in\cat{A}} X^\vee \boxtimes F\otimes X\cong \int_{X\in\cat{A}}X^\vee \boxtimes F^\vee \otimes X$ coming from reflection equivariance and the self-duality of $F$,
see the explanations before Theorem~\ref{thmsym}.

\begin{proof}[\slshape Proof of Proposition~\ref{propgluingspecial}]
	Suppose that we perform a gluing $s : \Sigma \to \Omega$ along a pair of boundary circles such that $\Omega$ still satisfies the hypotheses of Theorem~\ref{thmsym} (at least one boundary component per connected component).
	The gluing along a boundary circle can be decomposed into two steps \cite[Theorem~6.3]{envas}:
	\begin{pnum}
		\item The gluing along a pair of intervals on the boundary. This creates an unparametrized boundary component, i.e.\ a removed disk. This first operation preserves the $\precor^F$. This is already included in Theorem~\ref{thmsym}. 
		\item The closing of the hole via a partial trace map (we can assume that $\Omega$ is connected)
		\begin{align} \Ao(\Omega\setminus d;F,\dots,F) \to \Ao(\Omega;F,\dots,F) \end{align}
		that under the isomorphisms $\Ao(-;F,\dots,F)\cong \FAA(-;\bzero(F),\dots,\bzero(F))$
		amounts to a map
		\begin{align} \mathfrak{F}_{\bar{\cat{A}}\boxtimes\cat{A}}(\Omega\setminus d ;\bzero(F),\dots,\bzero(F)) \to \mathfrak{F}_{\bar{\cat{A}}\boxtimes\cat{A}}(\Omega ;\bzero(F),\dots,\bzero(F)) \ . 
		\end{align}
		These partial trace maps also preserve the $\precor^F$ by Lemma~\ref{lemmapt} because $F$ is assumed to be special.
	\end{pnum}
	Therefore, the gluing $s$ preserves the $\precor^F$.
\end{proof}

\subsection{The cylinder idempotent\label{seccyl}}
Proposition~\ref{propgluingspecial} gives us for a special symmetric Frobenius algebra $F$ in a modular category $\cat{A}$ and any surface without closed components 	vectors $\precor_\Sigma^F \in \mathfrak{F}_{\bar{\cat{A}}\boxtimes\cat{A}}(\Sigma;\bzero(F),\dots,\bzero(F))$ that are invariant under the mapping class group and gluing. 
Most of the work is done, but there remains an issue:	The vector for the cylinder, when seen as endomorphism of $\bzero(F)$ is not the identity, as it should be for the unitality of the microcosmic $\Surf$-algebra describing the correlator, see Section~\ref{secbulkfield}. 
Let us explain this in detail: For a special symmetric Frobenius algebra $F$ in a modular category $\cat{A}$,
the vector $\precor_{\mathbb{S}^1 \times [0,1]}^F \in \FAA(\mathbb{S}^1 \times [0,1];\bzero(F),\bzero(F))$
from Proposition~\ref{propgluingspecial}
is by construction a symmetric copairing $\precor_{\mathbb{S}^1 \times [0,1]}^F:I\to \bzero(F) \otimes \bzero(F) $ in $\bar{\cat{A}}\boxtimes\cat{A}$ and hence a map $\bzero(F) \to \bzero(F)^\vee$ that however does not need to be an isomorphism because the pairing  could be degenerate.
By the self-duality of $\bzero(F)$ coming from reflection equivariance and self-duality of $F$, the map 
$\bzero(F) \to \bzero(F)^\vee$ gives rise to an endomorphism $C_F:\bzero(F)\to\bzero(F)$ in $\bar{\cat{A}}\boxtimes\cat{A}$. 
The compatibility with gluing along boundary circles from Proposition~\ref{propgluingspecial} implies $C_F \circ C_F=C_F$.
For this reason, we refer to $C_F$ as the \emph{cylinder idempotent}. 

This means that the unitality condition from Section~\ref{secbulkfield} is not necessarily satisfied.
This is a standard problem and there are standard methods to resolve this, see \cite[Section~3.2]{rcftsn} in the semisimple setting.
We will develop here a similar approach in the non-semisimple setting:
We will adjust $\bzero(F)$ such that $C_F$ is `forced' to become the identity. More precisely, we define $\bulk(F)\in\bar{\cat{A}}\boxtimes\cat{A}$ as the image of $C_F$, i.e.\ the equalizer
\begin{equation}\label{eqneq}
	\begin{tikzcd}
		\displaystyle 	\bulk(F)   	\ar[r,"j"]& 	\displaystyle	\bzero(F) \ar[r, shift left=2,"C_F"] \ar[r, swap,shift right=2,"\id"]
		& \displaystyle 	\bzero(F)  \ .      
	\end{tikzcd}
\end{equation} 
It is also the coequalizer
\begin{equation}\label{eqncoeq}
	\begin{tikzcd}
		\displaystyle	\bzero(F) \ar[r, shift left=2,"C_F"] \ar[r, swap,shift right=2,"\id"]
		& \displaystyle 	\bzero(F)  \ar[r,"p"]& \displaystyle 	\bulk(F) \ ,      
	\end{tikzcd}
\end{equation} 
where $p$ is induced by $C_F:\bzero(F)\to\bzero(F)$ and the universal property of $\bulk(F)$ thanks to~\eqref{eqneq}.
The idempotent is recovered as $j\circ p=C_F$; moreover, $p\circ j=\id_{	\bulk(F)}$.

The symmetry of $\precor_{\mathbb{S}^1 \times [0,1]}^F$ tells us that, under self-duality identification $\psizero:\bzero(F) \ra{\cong} \bzero(F)^\vee$ mentioned after Proposition~\ref{propgluingspecial},
the idempotent $C_F$ satisfies \begin{align} \psizero \circ C_F=C_F^\vee\circ  {\psizero}^\vee=C_F^\vee\circ \psizero  \ , \end{align} where ${\psizero}^\vee$ is seen as map $\bzero(F)\to\bzero(F)^\vee$ (we suppress the pivotal structure). 
With~\eqref{eqneq} and~\eqref{eqncoeq}, this entails that $\bulk(F)$ is self-dual via an isomorphism $\psi:\bulk(F) \ra{\cong} \bulk(F)^\vee$, namely the one satisfying
\begin{align} p^\vee \circ \psi = \psizero \circ j \ . 
\end{align}
To see that $\psi$ is again self-dual, i.e.\ $\psi = \psi^\vee$ as maps $\bulk(F)\to\bulk(F)^\vee$, observe
\begin{align}
	p^\vee \circ \psi \circ p= \psizero \circ j\circ p=\psizero \circ C_F=C_F^\vee \circ {\psizero}^\vee=(j\circ p)^\vee \circ {\psizero}^\vee =p^\vee \circ ({\psizero} \circ j)^\vee=p^\vee \circ \psi^\vee \circ p
\end{align}
and use that $p$ is an epimorphism while $p^\vee$ is a monomorphism.
Moreover, from
\begin{align}\psi \circ p\circ {\psizero}^{-1} = (p^\vee \circ \psi)^\vee \circ {\psizero}^{-1} = (\psizero \circ j)^\vee \circ {\psizero}^{-1}=j^\vee\ ,  
\end{align}
we deduce
\begin{align} \psi^{-1}\circ j^\vee = p \circ {\psizero}^{-1} 
	\label{eqnbzeroinv}
\end{align}
and therefore, after dualization, the relation
\begin{align}
	\label{eqnbzeroinv2} p^\vee \circ \psi = \psi^\circ\circ j 
\end{align}
that we will need later.

We finish the investigation of the cylinder idempotent with the following observation:

\begin{corollary}[Generalized idempotence]\label{corgi}
	For a special symmetric Frobenius algebra $F\in\cat{A}$ in a modular category and any surface $\Sigma$ with at least one boundary component per connected component,
	the endomorphism
	\begin{align}
		{C_F}_* : \FAA(\Sigma;\bzero(F),\dots,\bzero(F)) \to \FAA(\Sigma;\bzero(F),\dots,\bzero(F))
	\end{align} that applies $C_F:\bzero(F)\to\bzero(F)$ in any or even of the slots preserves the vectors
	\begin{align}
		\precor_\Sigma^F \in \FAA(\Sigma;\bzero(F),\dots,\bzero(F))\end{align} from Theorem~\ref{thmsym}.
\end{corollary}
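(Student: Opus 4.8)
The plan is to deduce the statement from the gluing compatibility along boundary circles established in Proposition~\ref{propgluingspecial}, by recognizing the operation ${C_F}_*$ in a given slot as gluing in a cylinder that carries the cylinder correlator $\precor_{\mathbb{S}^1\times[0,1]}^F$. Since ${C_F}_*$ acts independently in each slot and the one-slot operations in distinct slots commute --- they are gluings along disjoint boundary circles --- it suffices to treat the case in which $C_F$ is inserted in a single fixed slot, say the $i$-th one, and then iterate over whichever slots are chosen; one may also assume $\Sigma$ connected, reattaching the other components at the end.

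For the single-slot case, I would first write $\Sigma$ as the result of gluing one boundary circle of the cylinder $\mathbb{S}^1\times[0,1]$ to the $i$-th boundary circle of $\Sigma$ inside the disjoint union $\Sigma\sqcup(\mathbb{S}^1\times[0,1])$. The resulting surface is diffeomorphic to $\Sigma$ compatibly with the boundary parametrization (up to a mapping class) and still has at least one boundary component on each connected component, so Proposition~\ref{propgluingspecial} applies: the gluing map sends the correlator $\precor_{\Sigma\sqcup(\mathbb{S}^1\times[0,1])}^F=\precor_\Sigma^F\otimes\precor_{\mathbb{S}^1\times[0,1]}^F$ of the disjoint union to the correlator of the glued surface, which by mapping class group invariance is identified with $\precor_\Sigma^F$ again. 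The second, and crucial, step is to check that the gluing map, with the second tensor factor fixed to be $\precor_{\mathbb{S}^1\times[0,1]}^F$, is exactly ${C_F}_*$ acting in the $i$-th slot. This is essentially the definition of the cylinder idempotent from Section~\ref{seccyl}: the vector $\precor_{\mathbb{S}^1\times[0,1]}^F$ is, through the self-duality $\psizero$, precisely the endomorphism $C_F$ of $\bzero(F)$, and gluing a cylinder carrying a pairing of $\bzero(F)$ onto a boundary circle acts on the conformal blocks by the endomorphism of that slot corresponding to the pairing --- the same mechanism, \cite[Proposition~4.7]{microcosm}, that underlies the gluing maps used in Proposition~\ref{propgluingspecial}. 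Combining the two steps gives ${C_F}_*\,\precor_\Sigma^F=\precor_\Sigma^F$ for one slot, and iterating over the chosen slots yields the corollary.

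The step I expect to require the most care is this last identification: one must unwind the excision isomorphism~\eqref{eqnexcision} for $\FAA$ together with the self-duality conventions for $\bzero(F)$ coming from reflection equivariance and the self-duality of $F$, and check that plugging $\precor_{\mathbb{S}^1\times[0,1]}^F$ into the gluing map reproduces the slot-wise application of $C_F$ on the nose, rather than $C_F$ conjugated by some coherence isomorphism. Once the cylinder block is normalized so that the operadic identity block glues to the identity map on conformal blocks --- which is part of the modular functor axioms --- an arbitrary pairing-carrying cylinder glues to the associated endomorphism, and by construction that endomorphism is $C_F$; the symmetry and idempotence of $\precor_{\mathbb{S}^1\times[0,1]}^F$ make it irrelevant which of the cylinder's two slots is regarded as carrying $C_F$ and which of its boundary circles is glued onto $\Sigma$. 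The remaining point is the routine verification that the surface obtained after gluing the cylinder has a boundary component on every connected component, so that Proposition~\ref{propgluingspecial} genuinely applies.
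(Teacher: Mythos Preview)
Your proposal is correct and follows essentially the same route as the paper's proof: reduce to a single slot, identify ${C_F}_*$ in that slot with the operation of tensoring with $\precor_{\mathbb{S}^1\times[0,1]}^F$ and then gluing the cylinder onto $\Sigma$, and conclude via Proposition~\ref{propgluingspecial}. The paper simply records this identification as a commuting triangle that holds ``by definition'' of $C_F$, whereas you spell out the self-duality bookkeeping more carefully; but the argument is the same.
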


If $\Sigma$ is a cylinder, this amounts to the usual idempotence of $C_F$, hence the name of the result.

\begin{proof}Without loss of generality, we can treat the case in which $C_F$ is applied to one boundary component.
	By definition the triangle
	\begin{equation}
		\begin{tikzcd}
			\FAA(\Sigma;\bzero(F),\dots,\bzero(F)) \ar[swap]{dd}{{C_F}_*}  \ar[]{drr}{\id\otimes \precor^F_{\mathbb{S}^1 \times [0,1]}}    &&	 \\ && \FAA(\Sigma;\bzero(F),\dots,\bzero(F)) \otimes \FAA(\mathbb{S}^1 \times [0,1];\bzero(F),\bzero(F))  \ar[]{lld}{\text{gluing $\Sigma \cup (\mathbb{S}^1 \times [0,1])\cong \Sigma$}}
			\\ \FAA(\Sigma;\bzero(F),\dots,\bzero(F))    
		\end{tikzcd} 
	\end{equation}
	commutes. 
	Therefore, the statement follows from Proposition~\ref{propgluingspecial}.
\end{proof}

\subsection{Extension to closed surfaces\label{secextclosedsurfaces}}
We have not made assignments for closed surfaces so far. To this end, we will later use the following extension result:

\begin{proposition}\label{propextclosed}
	Let $\cat{A}$ be a modular category and $\FAA$ the modular functor associated to $\bar{\cat{A}}\boxtimes\cat{A}$ with the $\Surf$-structure coming from reflection equivariance.
	Suppose that for a self-dual object $B \in \bar{\cat{A}} \boxtimes\cat{A}$
	the structure of a $\Surf$-algebra on $B$ with coefficients in $\FAA$ is only partially defined, namely
	through vectors $\cor_\Sigma \in \FAA(\Sigma;B,\dots,B)$ in the spaces of conformal blocks for surfaces with at least one boundary component on every connected component such that the invariance under the mapping class group and gluing holds whenever the $\xi$ are defined on the surface resulting from the gluing. We also demand the unitality condition on the cylinder.
	Then the vectors $\xi$ extend uniquely to a $\Surf$-algebra in $\FAA$, i.e.\ a consistent system of bulk field 
	correlators for the conformal field theory with monodromy data $\cat{A}$.
\end{proposition}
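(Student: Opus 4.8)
The plan is to define the missing vectors on closed surfaces by ``capping off an embedded disk'' and to verify the modular algebra axioms afterwards. The crucial structural input will be that, since a reflection equivariant structure has been fixed, $\FAA$ genuinely lives over $\Surf$, so that all the mapping class group and gluing actions involved are honest rather than merely projective; on the projective level the statement would not even make sense.

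First I would dispose of uniqueness, which simultaneously dictates the definition. For a connected closed surface $\Sigma$, remove a small embedded disk $d\subset\Sigma$; then $\Sigma\setminus d$ has a single parametrized boundary circle, so $\cor_{\Sigma\setminus d}\in\FAA(\Sigma\setminus d;B)$ is already given, and the disk $\mathbb{D}^2$ itself has one boundary component and hence lies in the domain of the partial structure, providing $\cor_{\mathbb{D}^2}\in\FAA(\mathbb{D}^2;B)$. Gluing $\Sigma\setminus d$ to $\mathbb{D}^2$ along their boundary circles recovers $\Sigma$, and the self-duality of $B$ makes the corresponding gluing map $\FAA(\Sigma\setminus d;B)\otimes\FAA(\mathbb{D}^2;B)\to\FAA(\Sigma)$ available via \cite[Proposition~4.7]{microcosm}. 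Any consistent extension must satisfy $\cor_\Sigma=\mathrm{glue}(\cor_{\Sigma\setminus d}\otimes\cor_{\mathbb{D}^2})$, which forces uniqueness and tells us how to define $\cor_\Sigma$; for a disconnected closed surface I would set $\cor$ to be the exterior product of the values on its connected components, which is legitimate because $\FAA$ sends disjoint unions of surfaces to Deligne products.

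Next I would check well-definedness and invariance. Independence of the chosen disk: two embedded disks in the connected surface $\Sigma$ differ by a diffeomorphism $\phi$ of $\Sigma$ isotopic to the identity (isotopy extension theorem), which restricts to an isomorphism $\Sigma\setminus d_1\xrightarrow{\cong}\Sigma\setminus d_2$ in $\Surf$ carrying $\cor_{\Sigma\setminus d_1}$ to $\cor_{\Sigma\setminus d_2}$ by naturality of $\cor$ and intertwining the two cap-off maps by naturality of the gluing maps, while $\FAA(\phi)=\id$ since $\phi$ is isotopic to the identity. Mapping class group invariance: given $f\in\Map(\Sigma)$, I would replace $f$ within its mapping class by a diffeomorphism fixing an embedded disk $d$ pointwise; restricting gives $\tilde f\in\Map(\Sigma\setminus d)$, the cap-off map intertwines the $\tilde f$- and $f$-actions, and $\FAA(\tilde f)\cor_{\Sigma\setminus d}=\cor_{\Sigma\setminus d}$ yields $\FAA(f)\cor_\Sigma=\cor_\Sigma$; the disconnected case reduces to this by taking products, and the cylinder is left untouched so the unitality condition persists.

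The last step, compatibility with gluing, is where the real work sits. I would decompose an arbitrary gluing into elementary ones, each identifying one pair of boundary circles, ordering them so that every elementary gluing which removes the last boundary circle(s) from a connected component comes last. The earlier elementary gluings land on surfaces still having a boundary component on each component, and are covered by hypothesis. A terminal ``closing'' gluing is either a self-gluing of a two-holed component or a merger of two one-holed components (one of which may be $\mathbb{D}^2$, in which case the desired identity is nothing but the definition of $\cor$). In each case I would choose an auxiliary embedded disk $d$ inside the resulting closed component, away from the gluing locus, to serve as the hole used to define $\cor$ there; then the closing gluing and the capping of $d$ are operadic compositions over disjoint pairs of legs, hence commute, and performing the closing gluing first produces a surface lying in the domain of the partial structure (the $d$-hole survives), where the known gluing compatibility applies — commuting back gives the required identity, and an induction over the elementary factors completes the proof. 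I expect this to be the main obstacle: one must ensure that every gluing producing closed components admits the ``closings last'' factorization and that in each closing the auxiliary disk can be positioned so that closing and capping genuinely commute as operadic compositions, which, together with the routine but fiddly point-set topology of isotoping diffeomorphisms to fix a disk compatibly with the boundary parametrizations, is the substantive content; everything else is a formal consequence of the modular operad axioms and of $\FAA$ lying over $\Surf$.
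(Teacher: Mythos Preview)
Your proof is correct and takes a genuinely different route from the paper's. The paper does not construct the extension by capping off disks and then verifying the axioms by hand. Instead, it restricts the given vectors to genus zero, where they assemble into a cyclic framed $E_2$-algebra structure on $B$ in $\cat{A}\dg\boxtimes\cat{A}$ (equivalently, a symmetric commutative Frobenius algebra). This genus-zero data is then extended, via \cite[Theorem~10.6]{microcosm}, to an \emph{ansular} correlator $\vartheta_H$ for every handlebody $H$. One then invokes the criterion \cite[Proposition~6.21]{reflection}: the ansular correlator extends to a $\Surf$-correlator if and only if, under the canonical identification of the space of conformal blocks with the ansular value, the image of $\vartheta_H$ is mapping class group invariant in the single case $\Sigma=\mathbb{T}_1^2$. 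Since that image is precisely $\cor_{\mathbb{T}_1^2}$, which is invariant by hypothesis, the extension exists.

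What your approach buys is self-containment: you never leave the world of surfaces, you avoid the handlebody and ansular machinery, and you make transparent that the only operadic ingredient needed is associativity of gluing together with surjectivity of $\Map(\Sigma\setminus d)\to\Map(\Sigma)$. What the paper's approach buys is that the entire verification of MCG invariance and gluing compatibility on closed surfaces is absorbed into a single pre-established criterion, so that no reordering of gluings, no isotopy-extension arguments, and no disk-independence checks have to be carried out explicitly; it also situates the extension within the ansular framework used elsewhere in the paper. Your sketch would need the point-set details you flag (parametrization of the new boundary when removing $d$, the ``closings last'' factorization) filled in, but none of these hides a genuine obstruction.
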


\begin{proof}
	The uniqueness is clear because every closed surface can be obtained by gluing surfaces with boundary components together.
	
	The existence is a bit more subtle:
	By Proposition~\ref{propmicrocosmcor} we can replace $\bar{\cat{A}}$ with $\cat{A}\dg$. 
	Let us now restrict the vectors $\xi$ to genus zero:	Then the vectors $\cor_\Sigma \in \mathfrak{F}_{\! \cat{A}\dg \boxtimes\cat{A}}(\Sigma;B,\dots,B)$ with $\Sigma$ running over all genus zero surfaces with at least one boundary component give
	 us a cyclic framed $E_2$-algebra in $\cat{A}\dg \boxtimes \cat{A}$. 
	By \cite[Proposition~10.2]{microcosm} $B$ becomes therefore a symmetric  commutative Frobenius algebra in $\cat{A}\dg\boxtimes \cat{A}$. 
	We want to extend this symmetric  commutative Frobenius algebra to a consistent system of correlators.

	To this end, denote by $\vartheta_H \in \left(  \widehat{\cat{A}\dg} \boxtimes \widehat{\cat{A}} \right)(H;B,\dots,B)$ the ansular correlator to which the genus zero restriction of $\xi$ extends thanks to \cite[Theorem~10.6]{microcosm}, i.e.\ the system of handlebody group invariant vectors in the vector spaces assigned to handlebodies by the ansular functor $\widehat{\cat{A}\dg} \boxtimes \widehat{\cat{A}}$, see \cite{mwansular} for more background on ansular functors.

	The isomorphism
	\begin{align}  \label{eqnH} \mathfrak{F}_{\! \cat{A}\dg \boxtimes \cat{A}}(\Sigma;B,\dots;B)  \ra{\cong} 
		\left( \widehat{\cat{A}\dg} \boxtimes \widehat{\cat{A}}\right)(H;B,\dots,B) \ , 
	\end{align} 
	that we obtain for the choice of any handlebody $H$ for any surface $\Sigma=\partial H$ sends $\cor_\Sigma$ to $\vartheta_H$ if $\Sigma$ has at least one boundary component per connected component (otherwise the $\cor_\Sigma$ are not defined yet).
	This is true because the $\vartheta_H$ and the $\cor_\Sigma$ both satisfy the compatibility under gluing. 
	
	The ansular correlator $\vartheta_H$ extends to a correlator over all of $\Surf$ if and only if its image under~\eqref{eqnH} is mapping class group invariant in the case in which $\Sigma$ is a torus with one boundary component. This follows from the model for the surface operad given in \cite[Proposition~6.21]{reflection} (note how this is essentially a version of the criterion \cite[Theorem 4.10]{jfcs} taking additionally reflection equivariance into account).
	But this image is $\xi_{\mathbb{T}_1^2}$, and its mapping class group 
	invariance holds by assumption.
\end{proof}

\subsection{The main result\label{secmainresult}} We can now finally prove the main result of this paper:

\begin{theorem}\label{thmmainshort}
	Let $\cat{A}$ be a modular category. Then any special symmetric Frobenius algebra $F\in\cat{A}$ gives rise to a consistent system of open-closed correlators for the modular functor $\FAA$.
\end{theorem}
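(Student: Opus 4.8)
The plan is to produce the open-closed correlator from the open correlator of \cite[Theorem~8.3]{microcosm} by capping boundary circles with marked intervals, transporting the result along the idempotent projection $p\colon\bzero(F)\to\bulk(F)$ from Section~\ref{seccyl}, and then completing to closed surfaces via Proposition~\ref{propextclosed}. Concretely, for an open-closed surface $\Sigma$ with finitely many marked intervals, finitely many boundary circles, and at least one boundary component on each connected component, I would first place a small marked interval on each boundary circle, as in the proof of Theorem~\ref{thmsym}, obtaining an all-open surface $\Sigma_*$ with $\Map(\Sigma_*)=\Map(\Sigma)$; apply the open correlator $\precor^F_{\Sigma_*}\in\Ao(\Sigma_*;F,\dots,F)$ built from the special symmetric Frobenius algebra $F$ via \cite[Theorem~8.3]{microcosm}; invoke the isomorphism of \cite[Theorem~4.3]{envas} (in its open-closed refinement), already used for Theorem~\ref{thmsym}, to close only the capping intervals back into boundary circles labelled by $\bzero(F)$, obtaining $\precor^F_\Sigma\in\FAA(\Sigma;F,\dots,F,\bzero(F),\dots,\bzero(F))$ with $F$ in the genuine interval slots and $\bzero(F)$ in the circle slots; and finally define $\cor^F_\Sigma\in\FAA(\Sigma;F,\dots,F,\bulk(F),\dots,\bulk(F))$ as the image of $\precor^F_\Sigma$ under the map applying $p$ in each circle slot. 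The prospective correlator has boundary field $F\in\cat{A}$ and bulk field $\bulk(F)\in\bar{\cat{A}}\boxtimes\cat{A}$.

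Mapping class group invariance of $\cor^F_\Sigma$ is immediate from that of $\precor^F_\Sigma$ (Theorem~\ref{thmsym}) together with the naturality of the left exact functors $\FAA(\Sigma;-,\dots,-)$ in the slots where $p$ is applied. All gluings touching only marked intervals -- the entire open sector, and the open-to-open part of the open-closed gluing -- are likewise covered by Theorem~\ref{thmsym}, since both the open-sector and closed-sector data descend from the single open correlator $\precor^F_{\Sigma_*}$, so their mutual compatibility is built in. It therefore remains to verify gluing along pairs of boundary circles and the unitality condition of Section~\ref{secbulkfield}.

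For gluing along a pair of boundary circles, the gluing map for $\FAA$ along $\bulk(F)$-labelled circles is, by \cite[Proposition~4.7]{microcosm}, built from the self-duality $\psi\colon\bulk(F)\xrightarrow{\cong}\bulk(F)^\vee$ of Section~\ref{seccyl} and the coevaluation object inserted by excision~\eqref{eqnexcision}, while the corresponding map for $\bzero(F)$ uses $\psizero$. The identities~\eqref{eqnbzeroinv}--\eqref{eqnbzeroinv2} record exactly how $\psi$, $\psizero$, $p$ and $j$ intertwine, and from them one checks that pushing the $\bzero(F)$-gluing map forward along $p$ agrees with the $\bulk(F)$-gluing map up to an extra insertion of the cylinder idempotent $C_F=j\circ p$ in the glued slots. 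Since, by the generalized idempotence of Corollary~\ref{corgi} -- which is precisely where specialness of $F$ enters, through Proposition~\ref{propgluingspecial} -- the vectors $\precor^F_\Sigma$ are invariant under inserting $C_F$ in any collection of slots, that extra insertion is harmless, and the $\cor^F_\Sigma$ are preserved by circle-gluing whenever the resulting surface still has a boundary component in each connected component. For unitality, $\precor^F_{\mathbb{S}^1\times[0,1]}$ is by the very definition of $C_F$ the endomorphism $C_F=j\circ p$ of $\bzero(F)$ read off via $\psizero$, so a short computation with $p\circ j=\id_{\bulk(F)}$, $j\circ p=C_F$ and~\eqref{eqnbzeroinv2} shows that $\cor^F_{\mathbb{S}^1\times[0,1]}$, viewed as an endomorphism of $\bulk(F)$, is the identity.

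At this point $(\bulk(F),\{\cor^F_\Sigma\})$ meets the hypotheses of Proposition~\ref{propextclosed}: a partially defined $\Surf$-algebra structure on the self-dual object $\bulk(F)$ with coefficients in $\FAA$, given on all surfaces with boundary, invariant under mapping class groups and gluing, unital on the cylinder, and mapping class group invariant on the torus $\mathbb{T}_1^2$ with one boundary component -- the latter because $\cor^F_{\mathbb{T}_1^2}$ is the $p$-pushforward of the already invariant vector $\precor^F_{\mathbb{T}_1^2}$. Proposition~\ref{propextclosed} then furnishes the unique extension to all closed surfaces, and together with the boundary field $F$ on marked intervals this is a consistent system of open-closed correlators for $\FAA$; by Proposition~\ref{propmicrocosmcor} it is a correlator in the sense of \cite{jfcs}. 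In the rational case the open correlator of \cite{microcosm} reduces to the ribbon-graph state-sum, so one recovers \cite{frs1,frs2,frs3,frs4,ffrs,ffrsunique}. The main obstacle is the third step: establishing rigorously that the circle-gluing map for $\bulk(F)$ factors, on the relevant vectors, through the circle-gluing map for $\bzero(F)$ up to a single insertion of $C_F$ that Corollary~\ref{corgi} can then absorb. This is a bookkeeping argument with the duality identities~\eqref{eqnbzeroinv}--\eqref{eqnbzeroinv2} and the explicit form of the $\Surf$-algebra gluing maps in $\FAA$, and one must be careful about the directions in which the pairings, copairings and the duality $D$ are read, and about how the idempotent interacts with the coevaluation object $\Delta$ inserted by excision.
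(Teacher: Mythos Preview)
Your proposal is correct and follows essentially the same approach as the paper. The paper first proves the bulk case (Theorem~\ref{thmmainlong}) and then extends to open-closed surfaces (Corollary~\ref{coropenclosed}), while you treat both sectors at once, but the substance is identical: construct $\precor^F$ from the open correlator via Theorem~\ref{thmsym}, project via $p$ to $\bulk(F)$, verify circle-gluing by combining Corollary~\ref{corgi} with the duality identities~\eqref{eqnbzeroinv}--\eqref{eqnbzeroinv2} (this is exactly the content of the paper's diagram~\eqref{eqndiagramres} and pentagon~\eqref{eqncommpentagon}), check unitality on the cylinder, and then invoke Proposition~\ref{propextclosed}.
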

This is the short version from the introduction. We will prove the following more elaborate version that treats the closed sector, i.e.\ the bulk field correlators. The open-closed nature follows from how the bulk part is constructed from the open part and will be discussed in Corollary~\ref{coropenclosed}. 

\begin{theorem}\label{thmmainlong}
	Let $\cat{A}$ be a modular category.
	Then any special symmetric Frobenius algebra $F$ in $\cat{A}$ gives rise to 
	vectors $\cor_\Sigma^F \in \mathfrak{F}_{\bar{\cat{A}}\boxtimes\cat{A}}(\Sigma;\bulk(F),\dots,\bulk(F))$ in the spaces of conformal blocks for the Lyubashenko modular functor for the modular category $\bar{\cat{A}}\boxtimes\cat{A}$ at all surfaces.  
	These vectors are mapping class group invariant and solve the sewing constraints for gluing along boundary circles. 
\end{theorem}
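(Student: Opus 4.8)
The plan is to transport the open-sector vectors built in Theorem~\ref{thmsym} along the projection onto the image of the cylinder idempotent, to check that the transported vectors meet the hypotheses of Proposition~\ref{propextclosed}, and to let that proposition carry out the extension to closed surfaces. Concretely, for a surface $\Sigma$ with at least one boundary component on every connected component, I would set $\cor_\Sigma^F:=\FAA(\Sigma;p,\dots,p)(\precor_\Sigma^F)\in\FAA(\Sigma;\bulk(F),\dots,\bulk(F))$, where $p\colon\bzero(F)\to\bulk(F)$ is the projection from~\eqref{eqncoeq} and $\precor_\Sigma^F$ is the vector of Theorem~\ref{thmsym}. Since $\bulk(F)$ is a retract of $\bzero(F)$ — by Section~\ref{seccyl} we have $p\circ j=\id_{\bulk(F)}$ and $j\circ p=C_F$ — and each functor $\FAA(\Sigma;-)$ is additive, $\FAA(\Sigma;\bulk(F),\dots,\bulk(F))$ is a retract of $\FAA(\Sigma;\bzero(F),\dots,\bzero(F))$ with inclusion $\FAA(\Sigma;j,\dots,j)$, projection $\FAA(\Sigma;p,\dots,p)$, and associated idempotent $\FAA(\Sigma;C_F,\dots,C_F)$. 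By the generalized idempotence of Corollary~\ref{corgi}, $\precor_\Sigma^F$ lies in the image of this idempotent, so $\cor_\Sigma^F$ and $\precor_\Sigma^F$ correspond to one another under the retract; in particular $\FAA(\Sigma;j,\dots,j)(\cor_\Sigma^F)=\precor_\Sigma^F$.

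First I would record the two easy properties. Mapping class group invariance of $\cor_\Sigma^F$ is immediate: $\Map(\Sigma)$ acts on $\FAA(\Sigma;-)$ through natural transformations, hence commutes with $\FAA(\Sigma;p,\dots,p)$, and $\precor_\Sigma^F$ is invariant by Theorem~\ref{thmsym}. For the unitality condition one must show that the endomorphism of $\bulk(F)$ induced by $\cor_{\mathbb{S}^1\times[0,1]}^F$ is the identity. By the construction of the cylinder idempotent in Section~\ref{seccyl}, the endomorphism of $\bzero(F)$ induced by $\precor_{\mathbb{S}^1\times[0,1]}^F$ is exactly $C_F$; transporting the corresponding pairing along $p$ and using the relation $p^\vee\circ\psi=\psizero\circ j$ from~\eqref{eqnbzeroinv2} together with $j\circ p=C_F$ and $p\circ j=\id_{\bulk(F)}$, one computes that $\cor_{\mathbb{S}^1\times[0,1]}^F$ induces $p\circ C_F\circ j=p\circ j\circ p\circ j=\id_{\bulk(F)}$. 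This is precisely what the definition of $\bulk(F)$ as the image of $C_F$ was designed to achieve.

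The remaining, and hardest, point is the compatibility of $\cor^F$ with gluing along boundary circles: if $s\colon\Sigma\to\Omega$ glues a pair of boundary circles and $\Omega$ still has a boundary component on every connected component, the sewing map $s_\ast$ must send $\cor_\Sigma^F$ to $\cor_\Omega^F$. For $\precor^F$ this is Proposition~\ref{propgluingspecial}, but the sewing map on $\bulk(F)$-labels is built from the self-duality $\psi$ of $\bulk(F)$ whereas the one on $\bzero(F)$-labels uses $\psizero$, so the two are not literally conjugated by $p_\ast$ and $j_\ast$. The key is that the cylinder idempotent is self-adjoint for the pairing, $\psizero\circ C_F=C_F^\vee\circ\psizero$ (Section~\ref{seccyl}), which is what lets the self-dual structure — and with it the modular-functor structure, in particular the sewing maps — restrict to the image $\bulk(F)$; this is the non-semisimple analogue of the passage carried out in~\cite[Section~3.2]{rcftsn}. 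Granting that descent, one has
\[\FAA(\Omega;j,\dots,j)\bigl(s_\ast\cor_\Sigma^F\bigr)=s_\ast\bigl(\FAA(\Sigma;j,\dots,j)\cor_\Sigma^F\bigr)=s_\ast\precor_\Sigma^F=\precor_\Omega^F=\FAA(\Omega;j,\dots,j)\bigl(\cor_\Omega^F\bigr),\]
where the first equality encodes the compatibility of the sewing maps with the $j$'s up to the idempotent $\FAA(-;C_F,\dots,C_F)$ — absorbed because $\precor_\Omega^F$ is idempotent-fixed by Corollary~\ref{corgi} — and the third equality is Proposition~\ref{propgluingspecial}. Since $\FAA(\Omega;j,\dots,j)$ is a split monomorphism, $s_\ast\cor_\Sigma^F=\cor_\Omega^F$ follows. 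I expect the careful bookkeeping relating the two sewing maps through $\psi$, $\psizero$, $p$ and $j$ to be the main technical obstacle, though it is ultimately formal once the relations of Section~\ref{seccyl} and Corollary~\ref{corgi} are in hand.

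With these three properties checked, the vectors $\cor_\Sigma^F$ for surfaces with boundary form exactly the partial datum required in Proposition~\ref{propextclosed}; note that the torus with one boundary component, whose mapping class group invariance enters the proof of that proposition, is itself a surface with boundary and so is among our $\cor_\Sigma^F$. Proposition~\ref{propextclosed} then extends them uniquely to vectors $\cor_\Sigma^F\in\FAA(\Sigma;\bulk(F),\dots,\bulk(F))$ for all surfaces $\Sigma$ that are mapping class group invariant and solve the sewing constraints for gluing along boundary circles, i.e.\ (by Proposition~\ref{propmicrocosmcor}) a consistent system of bulk field correlators for the conformal field theory with monodromy data $\cat{A}$. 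This proves Theorem~\ref{thmmainlong}.
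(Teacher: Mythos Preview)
Your proposal is correct and follows essentially the same strategy as the paper: define $\cor_\Sigma^F$ by projecting $\precor_\Sigma^F$ along $p$, verify mapping class group invariance by naturality, check unitality on the cylinder, prove gluing compatibility using the relations of Section~\ref{seccyl} together with Corollary~\ref{corgi} and Proposition~\ref{propgluingspecial}, and then invoke Proposition~\ref{propextclosed} to extend to closed surfaces.

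The only organizational difference is in the gluing step. The paper works on the $p$-side and proves the pentagon $\ell\circ(p\boxtimes p)=\ell^\circ\circ(C_F\boxtimes\id)$ inside a large commutative diagram; you work on the $j$-side. In fact your route is slightly cleaner than you suggest: using $p^\vee\circ\psi=\psizero\circ j$ from~\eqref{eqnbzeroinv2} and dinaturality of the coend structure maps $\iota$, one gets $\ell^\circ\circ(j\boxtimes j)=\iota_{\bzero(F)}\circ((p^\vee\circ\psi)\boxtimes j)=\iota_{\bulk(F)}\circ(\psi\boxtimes(p\circ j))=\ell$ on the nose, so the two sewing maps are intertwined by $j_\ast$ exactly, with no idempotent correction needed. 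Your chain of equalities then goes through without the caveat you added, and the appeal to Corollary~\ref{corgi} is only used to identify $j_\ast\cor_\Sigma^F$ with $\precor_\Sigma^F$. Making this one computation explicit would remove the hand-waving at ``granting that descent'' and bring your argument to the same level of detail as the paper's diagram~\eqref{eqndiagramres}.
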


\begin{proof}[\slshape Proof of Theorem~\ref{thmmainlong}]
	Theorem~\ref{thmsym} provides
	 the mapping class group invariant vectors	\begin{align} \label{eqnvectorsxi} \precor_\Sigma^F \in \FAA(\Sigma;\bzero(F),\dots,\bzero(F))\end{align}  as long as $\Sigma$ has at least one boundary component per connected component. Since $F$ is special, these vectors
	are preserved under the gluing along boundary circles by Proposition~\ref{propgluingspecial} provided that the surface resulting from the gluing still has at least one boundary component per connected component.

	Now we restrict to the image of the cylinder idempotent (Section~\ref{seccyl}):
	We  define 
	 the vector
	$
		\cor^F_\Sigma\in \FAA(\Sigma;\bulk(F),\dots,\bulk(F)) 
	$ as the image of $\precor_\Sigma^F$ under the map that applies $p:\bzero(F)\to\bulk(F)$ from~\eqref{eqncoeq} to all labels.
	By construction these vectors are mapping class group invariant 
	because the mapping class group acts by natural transformations on the $\FAA(\Sigma;-)$.
	
	They are also preserved under gluing $\Sigma \to \Omega$ provided that the $\lambda^F$ are still defined on $\Omega$ as we will show next:
	Without loss of generality, we consider a gluing affecting the first two slots 
	of $\FAA(\Sigma;-)$.
	We will see the compatibility with gluing by means of the following diagram (all parts of the diagram will be explained afterwards):
	
	\footnotesize
	\begin{equation}\label{eqndiagramres}
		\begin{tikzcd}
		\ar[swap,bend right=20]{dddddd}{G} \ar[swap,bend right=20]{ddddddrr}{\ell: \bulk(F)\boxtimes\bulk(F)\to \Delta_2} 	\FAA(\Sigma;\bulk(F),\dots,\bulk(F))  &&&&  \ar[swap]{ddll}{\text{apply $C_F$ to the first copy of $\bzero(F)$}} \ar[swap]{llll}{p_\Sigma} \FAA(\Sigma;\bzero(F),\dots,\bzero(F))  \\ \\
			&& \FAA(\Sigma;\bzero(F),\dots,\bzero(F)) \ar[bend left=20]{ddddrr}{g} \ar[]{dd}{\ell^\circ:  \bzero(F)\boxtimes\bzero(F)\to \Delta_2}   && k \ar[]{ll}{\precor_\Sigma^F}   \ar[]{dddd}{\precor_\Omega^F} \ar[swap]{uu}{\precor_\Sigma^F} \\ \\
			&& \FAA(\Sigma;\Delta_2,\bzero(F),\dots,\bzero(F)) \ar[bend left=20]{ddrr}{\cong} \ar[]{dd}{p_\Sigma \ \text{on the remaining slots}}  \\ \\
			\FAA(\Omega;\bulk(F),\dots,\bulk(F))   	&& \ar[]{ll}{\cong}  \FAA(\Sigma;\Delta_2,\bulk(F),\dots,\bulk(F))   && \ar[bend left=20]{llll}{p_\Omega} \FAA(\Omega;\bzero(F),\dots,\bzero(F))   
		\end{tikzcd}
	\end{equation}
	\normalsize
	
	Here $g$ and $G$ are the respective gluing maps. Therefore, the commutativity of the outer diagram gives us indeed the compatibility of the $\xi^F$ with gluing.
	Let us now explain the inner diagrams and prove their commutativity:
	\begin{itemize}
		
		\item The map $\ell:  \bulk(F)\boxtimes\bulk(F)\to \Delta_2$, with the coevaluation object $\Delta_2$ of $\bar{\cat{A}}\boxtimes\cat{A}$, is the map associated to the self-duality of $\bulk(F)$. With the structure map $\iota_{\bulk(F)}:\bulk(F)^\vee \boxtimes \bulk(F)\to \Delta_2$ of the coend $\Delta_2$, it is
		\begin{align}
			\ell:	\bulk(F) \boxtimes\bulk(F) \ra{\psi\boxtimes \id}    \bulk(F)^\vee \boxtimes \bulk(F) 
		\ra{\iota_{\bulk(F)}}	\Delta_2
			 \ , 
		\end{align}
		where the isomorphism $\psi:\bulk(F)\to\bulk(F)^\vee$ is from Section~\ref{seccyl}.
		Analogously, $\psi^\circ :\bzero(F)\to\bzero(F)^\vee$ induces the map $\ell^\circ: \bzero(F)^\vee \boxtimes\bzero(F) \to \Delta_2  $.
		Through the definition of the gluing maps~\cite[Proposition 4.7]{microcosm}, the two triangles involving $\ell$ and $\ell^\circ$
		 commute by construction.
		
		\item The unlabeled isomorphisms `$\cong$' come from excision, and naturality of excision in the uninvolved boundary labels therefore gives us the commutativity of the lowest tetragon.
		
		\item On the right, the upper right triangle commutes by Corollary~\ref{corgi}, and the triangle below it by Proposition~\ref{propgluingspecial}.
		
		\item We are left with the pentagon in the middle whose commutativity follows from the commutativity of
		\begin{equation}\label{eqncommpentagon}
			\begin{tikzcd}
				\Delta_2     &&& 	\bzero(F)\boxtimes\bzero(F) \ar[swap]{lll}{\ell^\circ}  \\
				\\ \ar[]{uu}{\ell}  \bulk(F)\boxtimes\bulk(F)  &&& \bzero(F) \boxtimes \bzero(F) \ar[]{lll}{p\boxtimes p} \ar[swap]{uu}{C_F\boxtimes \id} \ . 
			\end{tikzcd} 
		\end{equation}
		To see the commutativity of~\eqref{eqncommpentagon}, observe with $C_F=j\circ p : \bzero(F)\to\bzero(F)$ for the monomorphism $j:\bulk(F)\to \bzero(F)$ and the epimorphism $p:\bzero(F)\to\bulk(F)$ from Section~\ref{seccyl} that
		\begin{align}
		\ell^\circ \circ (	C_F \boxtimes \id)   =  \iota _{\bzero(F)} \circ
		\left(\left(\psizero\circ j \circ p\right) \boxtimes \id\right) \stackrel{\eqref{eqnbzeroinv2}}{=}  \iota _{\bzero(F)}\circ   \left(\left(p^\vee \circ \psi \circ p \right) \boxtimes \id\right) 
		\end{align}
		With the universal property of the coend, this gives us
		\begin{align}
		\ell \circ (	p\boxtimes p)  =  \iota_{\bulk(F)} \circ   \left(\left(\psi\circ p  \right) \boxtimes p\right)  = \iota _{\bzero(F)}\circ   \left(\left(p^\vee \circ \psi \circ p \right) \boxtimes \id\right) =\ell^\circ \circ (	C_F \boxtimes \id) \ . 
		\end{align}
		This prove the commutativity of~\eqref{eqncommpentagon} and finishes the proof of the compatibility with gluing.
	\end{itemize}

	We summarize: The vectors $\cor^F$
	\begin{itemize}
		\item are invariant under the mapping class group and under gluing of boundary circles (at least to the extent that they are defined yet because we have not made assignments for closed surfaces so far),
		\item and the vector in $\FAA(\mathbb{S}^1 \times \mathbb{D}^2;\bulk(F),\bulk(F))$ selects the non-degenerate symmetric pairing for $\bulk(F)$ by construction, see Section~\ref{seccyl}. 
	\end{itemize} The application of Proposition~\ref{propextclosed} gives us the desired unique extension to closed surfaces and finishes the proof that 
	the $\cor^F\in \mathfrak{F}_{\bar{\cat{A}}\boxtimes\cat{A}}(\Sigma;\bulk(F),\dots,\bulk(F))$ are a consistent system of bulk field correlators.
\end{proof}

A  observation following mostly from the proof above is the following:

\begin{corollary}\label{coropenclosed}
The correlators from Theorem~\ref{thmmainlong}
 are part of a consistent system of open-closed correlators for the conformal field theory with monodromy data $\cat{A}$. The boundary object is $F$ while the bulk object is $\bulk(F)$.
	\end{corollary}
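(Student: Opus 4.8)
The plan is to re-run the construction of Theorem~\ref{thmmainlong}, but this time leaving some marked intervals in place rather than closing all of them into boundary circles. Recall that $\FAA$ is an open-closed modular functor by \cite[Corollary~8.7]{sn}, and that on purely open surfaces it restricts to the open modular functor $\Ao$ for the pivotal finite tensor category $\cat{A}$, with a marked interval labeled by $Y\in\cat{A}$ corresponding, under the isomorphism \eqref{eqnAoZ} of \cite[Theorem~4.3]{envas} and the equivalence $G$ of \eqref{equivG}, to a parametrized boundary circle labeled by $\bzero(Y)\in\bar{\cat{A}}\boxtimes\cat{A}$. So, given an open-closed surface $\Sigma$ with $n\ge 0$ marked intervals and $m\ge 0$ parametrized boundary circles (and, as always for operations in the open surface operad, at least one boundary component per connected component), I would first place one additional small marked interval near the basepoint of each of the $m$ parametrized circles. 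This produces a purely open surface $\Sigma_\ast$ with $n+m$ marked intervals, and $\Map(\Sigma_\ast)=\Map(\Sigma)$ because adding a marked interval on a boundary circle does not change the mapping class group.

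Next I would apply to $\Sigma_\ast$ the open correlator $\lambda^F_{\Sigma_\ast}\in\Ao(\Sigma_\ast;F,\dots,F)$ produced from the special symmetric Frobenius algebra $F$ by the modular microcosm principle, that is, by Theorem~\ref{thmsym} (equivalently \cite[Theorem~8.3]{microcosm}). Translating through \eqref{eqnAoZ} and $G$, the $m$ slots coming from the auxiliary intervals turn into parametrized boundary circles labeled by $\bzero(F)$, while the $n$ original marked-interval slots remain marked intervals labeled by $F$; this gives a vector $\precor^F_\Sigma$ in the space of conformal blocks $\FAA(\Sigma;\bzero(F),\dots,\bzero(F),F,\dots,F)$ with the first $m$ (closed) slots labeled $\bzero(F)$ and the last $n$ (open) slots labeled $F$. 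Finally I would apply the projection $p:\bzero(F)\to\bulk(F)$ from~\eqref{eqncoeq} to each of the $m$ bulk slots, leaving the $F$-labeled interval slots untouched, and call the result $\cor^F_\Sigma\in\FAA(\Sigma;\bulk(F),\dots,\bulk(F),F,\dots,F)$. For $m$ arbitrary and $n=0$ this reproduces the bulk correlators of Theorem~\ref{thmmainlong} (including their unique extension to closed surfaces via Proposition~\ref{propextclosed}), and for $n$ arbitrary and $m=0$ it reproduces the open correlators $\lambda^F$ of \cite{microcosm}; hence the three families of assignments agree on their overlaps and combine into a single system.

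It then remains to verify that the $\cor^F_\Sigma$ satisfy all sewing constraints of the open-closed surface operad. Mapping class group invariance is clear: $\lambda^F_{\Sigma_\ast}$ is invariant under $\Map(\Sigma_\ast)=\Map(\Sigma)$, and $p$ is applied through natural transformations. Invariance under gluing along marked intervals is immediate from the corresponding property of the open correlator $\lambda^F$ established in \cite{microcosm}. Invariance under gluing along boundary circles is proved exactly as in the proof of Theorem~\ref{thmmainlong}: by \cite[Theorem~6.3]{envas} such a gluing decomposes into a gluing along a pair of intervals followed by closing the resulting hole via a partial trace map; the first step preserves the $\precor^F$ by Theorem~\ref{thmsym}, the second by Lemma~\ref{lemmapt} since $F$ is special, and the passage to $\bulk(F)$ is handled by the diagram~\eqref{eqndiagramres} together with the generalized idempotence of Corollary~\ref{corgi}. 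That argument goes through verbatim in the presence of the uninvolved $F$-labeled interval slots, since excision and the gluing maps are natural in the labels not affected by the gluing. The unitality conditions hold by construction: the strip between two marked intervals carries the pairing of $F$ by \cite[Theorem~8.3]{microcosm}, while the closed cylinder $\mathbb{S}^1\times[0,1]$ carries the non-degenerate symmetric pairing of $\bulk(F)$ by the very definition of $\bulk(F)$ in Section~\ref{seccyl}. This exhibits the $\cor^F_\Sigma$ as an algebra over the open-closed surface operad with coefficients in $\FAA$, i.e.\ a consistent system of open-closed correlators with boundary object $F$ and bulk object $\bulk(F)$.

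The step I expect to be the main obstacle is the interplay between the two kinds of gluing, rather than either kind in isolation: one must check that closing a boundary circle which has been decorated with an auxiliary marked interval is compatible with the open-closed operad structure of \cite{sn} and with the identification \eqref{eqnAoZ}, and that the placement of this auxiliary interval on the circle is immaterial (two placements differ by a diffeomorphism supported near that boundary circle, hence by a mapping class, so invariance of $\lambda^F$ settles it). Once this bookkeeping is done, the statement follows by assembling Theorem~\ref{thmsym}, Lemma~\ref{lemmapt}, Proposition~\ref{propgluingspecial} and the idempotent argument of Theorem~\ref{thmmainlong}; no genuinely new construction is needed beyond keeping track of the open slots throughout.
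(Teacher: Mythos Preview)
Your proposal is correct and follows essentially the same route as the paper: produce open-closed vectors $\lambda^F_\Sigma$ by running the open construction of Theorem~\ref{thmsym} with some interval slots left untouched, then project the closed labels from $\bzero(F)$ to $\bulk(F)$ and verify mapping class group invariance and both kinds of gluing by naturality and by re-running the argument of Theorem~\ref{thmmainlong}. The paper's proof is slightly terser (it does not spell out the auxiliary-interval passage $\Sigma\mapsto\Sigma_\ast$ again or discuss unitality on the strip), but the logic is the same; the bookkeeping you flag as the main obstacle is handled implicitly there as well.
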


	This  means that for any surface $\Sigma$ with parametrized boundary circles and parametrized intervals, i.e.\ an operation in the two-colored open-closed surface operad~\cite[Section~3]{sn}, we obtain vectors \begin{align} \xi_\Sigma^F \in \FAA(\Sigma; \bulk(F),\dots,\bulk(F),F,\dots,F) \ , \label{eqnopenclosed}
\end{align} where all parametrized boundary circles are labeled with the bulk object $\bulk(F)$ while all parametrized intervals are labeled by the boundary object $F$. These vectors are mapping class group invariant and preserved under the gluing along parametrized boundary circles and parametrized boundary intervals. 

\begin{proof}[\slshape Proof of Corollary~\ref{coropenclosed}]
	Essentially, this holds by the construction of the vectors $\lambda^F$. Let us spell this out:  The same methods from the proof of Theorem~\ref{thmsym} based on \cite{envas,microcosm} give us vectors
	\begin{align}
		\lambda_\Sigma^F \in \FAA(\Sigma; \bzero(F),\dots,\bzero(F),F,\dots,F)
	\end{align}
	where $\Sigma$ is an `open-closed' surface all of whose parametrized boundary circles are labeled by $\bzero(F)$ and all of whose parametrized boundary intervals are labeled by $F$. 
	The desired vectors $\xi_\Sigma^F$ from~\eqref{eqnopenclosed} are obtained by projecting all labels $\bzero(F)$ to $\bulk(F)$ while the remaining labels are left unchanged.
	As above, the so-constructed $\xi_\Sigma^F$ are mapping class group invariant because the mapping class group action is natural in the arguments of $\FAA$.
	They are also preserved by the gluing along intervals because the gluing maps are natural in arguments of $\FAA$ not involved in the gluing.
	For the gluing along boundary circles, the argument reduces to the one given in the proof of Theorem~\ref{thmmainlong}.
	\end{proof}

	\begin{remark}[String-net	 methods]
	For rational conformal field theories, there exists a correlator construction using string-nets~\cite{rcftsn,fsy-sn}
	that allows one to obtain correlators directly through graphs drawn on surfaces. 
	In the non-semisimple case, the string-net description of the spaces of conformal blocks is still possible~\cite{sn}, but
	a direct translation
	of \cite{rcftsn} to \cite{sn} does not work at present.
	The main issue is that for a modular category $\cat{A}$ the modular functor for $Z(\cat{A})\simeq \bar{\cat{A}}\boxtimes \cat{A}$ is described through a string-net construction involving just the projective objects of $\cat{A}$ (and this is presently the only version of the string-net construction that recovers the Lyubashenko modular functor for $Z(\cat{A})$). 
	This seems incompatible with the fact that the underlying object of the special symmetric Frobenius algebra that we would like to built correlators from is generally not projective.
	Actually, the open modular functor $\Ao$ is equivalent to the open part of the string-net construction for $\cat{A}$~\cite[Theorem~4.2]{envas}, but in a rather non-trivial way.
	The key advantage of $\Ao$, as modular extension of the cyclic associative algebra $\cat{A}$, 
	is that we can apply the modular microcosm principle from \cite{microcosm}.
	A version of the construction on the string-net side would be highly desirable.
\end{remark} 

\subsection{The classification problem for open-closed correlators\label{secclass}}
For a modular category $\cat{A}$,
Theorem~\ref{thmmainshort} tells us that we get for every special symmetric Frobenius algebra $F\in\cat{A}$ a consistent system of open-closed correlators.
The obvious question is now whether all consistent systems of correlators for the monodromy data $\cat{A}$
are of this form. The answer  to this question generalizes the one given in the semisimple case in \cite{ffrsunique}: The answer is `yes' if we are looking for open-closed correlators in the \emph{strong sense}, which means that we ask for the following conditions
\begin{enumerate}
	\item[(S)] that are non-zero on the sphere, \label{conditionS}
	\item[(O)]
	and if we ask the closed part to be induced by the open part. \label{conditionO}
	\end{enumerate}
While~(S) is clear, let us explain what~(O) means:
If we are given a consistent system of open-closed correlators, we can restrict to the open sector. This open part of the data is equivalent to a symmetric Frobenius algebra $F\in \cat{A}$ by \cite[Theorem~8.3]{microcosm}.
In a canonical way, $F$ induces already vectors  $\precor_\Sigma^F\in \mathfrak{F}_{\bar{\cat{A}}\boxtimes\cat{A}}(\Sigma;\bzero(F),\dots,\bzero(F))$  as we have seen in the proof of
by Theorem~\ref{thmsym} for all surfaces in the closed sector with at least one boundary component per connected component.
A system of  open-closed correlators
being induced by the open part means that for the closed part we use these vectors that the open part already provides, i.e.\ we ask these vectors to be compatible with gluing and then pass to the image of the cylinder idempotent.

Let us remark that we do not mention as assumptions the non-degeneracy of the two-point function of the disk or the sphere. For the sphere, this is built into the definition in \cite{jfcs} for bulk field correlators and in \cite{microcosm} for the open part. We consider this as part of the definition of a consistent system of correlators.

\begin{corollary}\label{corclassification}
	Let $\cat{A}$ be a modular category.
	Then all consistent systems of open-closed correlators for the conformal field theory
	with monodromy data $\cat{A}$
	that are open-closed correlators in the strong sense (satisfying the conditions~(S) and~(O)) arise from special symmetric Frobenius algebras in $\cat{A}$ through the construction of Theorem~\ref{thmmainshort}. 
	\end{corollary}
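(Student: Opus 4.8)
The plan is to reverse-engineer the construction of Theorem~\ref{thmmainlong}. Suppose we are given a consistent system of open-closed correlators in the strong sense. First I would restrict it to its open sector. By \cite[Theorem~8.3]{microcosm} (see the discussion in Section~\ref{secopencor}), the open part of the data is canonically the datum of a symmetric Frobenius algebra $F\in\cat{A}$, and the associated open correlators are precisely the vectors $\precor_\Sigma^F$ of~\eqref{omicrocosmeqn}. Thus the candidate Frobenius algebra is forced on us; it remains to show that $F$ is special and that the given system coincides with the one produced in Theorem~\ref{thmmainlong} from $F$.

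Next, condition~(O) says that the bulk object is $\bulk(F)$ and that the closed correlators are obtained from the $\precor_\Sigma^F\in\FAA(\Sigma;\bzero(F),\dots,\bzero(F))$ of Theorem~\ref{thmsym} (via the identification $\Ao(-;F,\dots,F)\cong \FAA(-;\bzero(F),\dots,\bzero(F))$) by passing to the image of the cylinder idempotent of Section~\ref{seccyl}. Since the given system is consistent, the $\precor_\Sigma^F$ are in particular compatible with gluing along boundary circles. Decomposing such a gluing into a gluing along intervals followed by the closing of a hole, exactly as in \cite[Theorem~6.3]{envas} and the proof of Proposition~\ref{propgluingspecial}, this compatibility is equivalent to invariance of the $\precor^F$ under all partial trace maps of~\cite[Section~6]{envas}.

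The main step is now to deduce that $F$ is special. Running the computation in the proof of Lemma~\ref{lemmapt}\,\ref{ptstep1} backwards: invariance under the partial trace on a disk with $n$ marked intervals is the identity $(\id_{F^{\otimes n}}\otimes(\beta\circ\delta))\circ v_{n+1}=v_n$ for all $n\ge 1$, with $v_n$ the canonical operation~\eqref{eqntarn}; for $n=1$, using $\beta\circ\delta=\varepsilon\circ\mu\circ\delta$ together with the counit axiom, this collapses to $h=\eta$ for the handle element $h=\mu\circ\delta\circ\eta$, hence to $\mu\circ\delta=\id_F$ by Remark~\ref{remh}. On the other hand, condition~(S) forces $\dim F\neq 0$ once one identifies the sphere amplitude with a nonzero rescaling of $\dim F$ (obtained by capping off and applying the handle operator); since $\mu\circ\delta=\id_F$ gives $\dim F\cdot\id_I=\varepsilon\circ\eta$ and $I$ is simple, this yields $\varepsilon\circ\eta\neq 0$. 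Thus $F$ is special. I expect the delicate points to be precisely the justification that a consistent closed system genuinely implies partial-trace invariance (i.e.\ that the closing-the-hole step is literally the partial trace of~\cite{envas}) and the exact identification of the sphere amplitude with $\dim F$.

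Finally, once $F$ is known to be special, Theorem~\ref{thmmainlong} produces from it a consistent system of open-closed correlators with boundary object $F$ and bulk object $\bulk(F)$, built from the $\precor_\Sigma^F$ by the cylinder idempotent and extended uniquely to closed surfaces via Proposition~\ref{propextclosed}. This coincides with the system we started from: the open parts agree (both are $F$), the closed parts over surfaces with a boundary component in every connected component agree by condition~(O), and the extension to closed surfaces is unique by Proposition~\ref{propextclosed}. Hence every consistent system of open-closed correlators in the strong sense arises via Theorem~\ref{thmmainshort}, which proves Corollary~\ref{corclassification}.
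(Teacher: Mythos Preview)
Your proposal is correct and follows essentially the same approach as the paper. Both arguments extract the symmetric Frobenius algebra $F$ from the open sector via \cite[Theorem~8.3]{microcosm}, then use compatibility with circle gluing to force $h=\eta$ (hence $\mu\circ\delta=\id_F$) and condition~(S) to force $\varepsilon\circ\eta\neq 0$; the paper does the first step by gluing the disk and cylinder correlators directly and reading off the handle element, whereas you obtain the same equation by running the $n=1$ case of Lemma~\ref{lemmapt}\,\ref{ptstep1} backward through the partial-trace decomposition --- these are two presentations of the same computation. Your final paragraph, making explicit via Proposition~\ref{propextclosed} that the given system must coincide with the one produced by Theorem~\ref{thmmainlong}, is a point the paper leaves implicit in condition~(O); and for condition~(S) the paper identifies the sphere correlator directly with $\varepsilon\circ\eta$ rather than passing through $\dim F$, which is slightly cleaner but equivalent once $\mu\circ\delta=\id_F$ is already established.
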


\begin{proof}
	The preservation of the $\precor^F$ under gluing implies that the symmetric Frobenius algebra $F$ describing the open sector needs to be special: Indeed, if we glue the $\precor^F$ for the disk and the cylinder together, 
	we find a vector in $\FAA(D;\bzero(F))\cong\Ao(D;F)\cong \cat{A}(I,F)$
	that, as a brief calculation using the recipe in \cite[Section~9]{microcosm} shows,
	is the handle element $h$ of $F$ from Remark~\ref{remh}. If the $\precor^F$ are supposed to be preserved by gluing, 
	we need $h$ to be the unit: $h=\eta$. Via~\eqref{eqnh}, this implies that $F$ is special if we ask 
	$\varepsilon\circ\eta \neq 0$, i.e.\ if we impose the correlator for $\mathbb{S}^2$ to be non-zero.
	\end{proof}

The correlator for the torus is the \emph{(internal) character} in the sense of \cite{shimizucf}
of the bulk object $\bulk(F)$,
\begin{align}
	I \ra{ \text{coevaluation}} \bulk(F)^\vee \otimes \bulk(F) \to \int^{X\in\bar{\cat{A}} \boxtimes \cat{A}} X^\vee \otimes X \ ; \label{eqncharacter}
\end{align}
this is a consequence of the sewing constraints, see \cite[Proposition~4.11]{jfcs} or use the gluing property of the underlying ansular correlator \cite[Theorem 10.6]{microcosm}. 

\begin{corollary}
		Under the assumption from Corollary~\ref{corclassification}, the correlator for the torus is always non-zero: $\xi_{\mathbb{T}^2}^F\neq 0$. 
	\end{corollary}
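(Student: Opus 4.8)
The plan is to reduce the non-vanishing of $\xi_{\mathbb{T}^2}^F$ to the non-vanishing of an internal character and then to invoke a standard structural fact. First I would use the identification already recorded in~\eqref{eqncharacter}: the sewing constraints (concretely, gluing the cylinder correlator to itself along its two boundary circles, which is a special case of the gluing compatibility established in Theorem~\ref{thmmainlong}, cf.\ also \cite[Proposition~4.11]{jfcs} and \cite[Theorem~10.6]{microcosm}) force $\xi_{\mathbb{T}^2}^F$ to equal the internal character $\chi_{\bulk(F)}\colon I\to \int^{X\in\bar{\cat{A}} \boxtimes \cat{A}} X^\vee\otimes X$ of the bulk object $\bulk(F)\in\bar{\cat{A}}\boxtimes\cat{A}$ in the sense of Shimizu~\cite{shimizucf}. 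It therefore suffices to prove that this internal character is non-zero.

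The next step is to note that $\bulk(F)$ is a non-zero object. This is where assumption~(S) enters: the closed surface $\mathbb{S}^2$ is obtained by gluing surfaces each having at least one boundary component, and on any such surface $\Sigma$ the space $\FAA(\Sigma;\bulk(F),\dots,\bulk(F))$ vanishes once $\bulk(F)=0$, by left exactness of $\FAA(\Sigma;-)$ in each label. Hence $\xi_{\mathbb{S}^2}^F$ would vanish, contradicting~(S); so $\bulk(F)\neq 0$.

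Finally I would invoke that in any modular category the internal character of a non-zero object is non-zero. The assignment $Z\mapsto\chi_Z$ is additive on short exact sequences, so $\chi_{\bulk(F)}=\sum_S [\bulk(F):S]\,\chi_S$, the sum running over the finitely many isomorphism classes of simple objects $S$ of $\bar{\cat{A}}\boxtimes\cat{A}$ weighted by their multiplicities in a composition series of $\bulk(F)$, at least one of which is positive by the previous paragraph. The internal characters $\{\chi_S\}_S$ of the pairwise non-isomorphic simple objects are linearly independent in $\Hom(I,\int^{X}X^\vee\otimes X)$ --- the categorical counterpart of the linear independence of (Brauer) characters; this can be checked by pairing the $\chi_S$ against the Hattori--Stallings traces of the projective covers, or via the non-degenerate Hopf pairing on $\int^{X}X^\vee\otimes X$ afforded by the factorizability of the modular category $\bar{\cat{A}}\boxtimes\cat{A}$ (see \cite{shimizucf}). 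Consequently $\chi_{\bulk(F)}\neq 0$, that is, $\xi_{\mathbb{T}^2}^F\neq 0$.

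The only non-formal ingredient is the last step, the linear independence of the internal characters of the simple objects, but this is well established in the theory of finite tensor categories, so no real obstacle remains. One could alternatively argue in the Cardy case $F=I$ directly from Theorem~\ref{thmpartition}, where the torus partition function is the Cartan matrix and hence visibly non-zero, and then propagate to a general $F$ using that $\bulk(F)$ depends only on the Morita class of $F$; the internal-character route above is cleaner and uniform in $F$.
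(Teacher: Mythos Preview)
Your proof is correct and follows essentially the same route as the paper: identify $\xi_{\mathbb{T}^2}^F$ with the internal character of $\bulk(F)$ via~\eqref{eqncharacter}, argue $\bulk(F)\neq 0$ from condition~(S), and conclude that the internal character of a non-zero object is non-zero. The paper simply cites \cite[Corollary~4.2]{shimizucf} for this last step, whereas you unpack it via the additivity of internal characters and the linear independence of the $\chi_S$ for simple $S$ (which is exactly the content behind Shimizu's result), so the only difference is level of detail.
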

\begin{proof}
The character~\eqref{eqncharacter} is non-zero by \cite[Corollary 4.2]{shimizucf} and because $\bulk(F)\neq 0$ (if $\bulk(F)=0$, condition (S) cannot hold).\end{proof}

\subsection{A comment on model-independence\label{secmodel}}
Theorem~\ref{thmmainshort} and Corollary~\ref{corclassification} treat correlators for the conformal field theory whose monodromy data is a modular category, with the modular functor built following the Lyubashenko construction.
This seems to be a statement that is model-dependent: It relies on a certain prescription for the construction of spaces of conformal blocks.
From a physical perspective, this is a problem, see also the comments in \cite[Section~3.3]{csrcft} and \cite[Section~1.2]{damioliniwoike} on this matter.

But actually, the results of this article are \emph{not} tied to models: They apply to any conformal field theory whose monodromy data forms a modular functor with values in $\Lexf$ 
 that is reflection equivariant relative to a rigid duality in the sense of \cite{reflection}. By \cite[Theorem~5.9 \& 5.11]{reflection} we obtain:

\begin{corollary}
	For any conformal field theory whose monodromy data can be described as modular functor with values in $\Lexf$ 
	 and that is reflection equivariant relative to a rigid duality, the correlator construction and classification from Theorem~\ref{thmmainshort} and Corollary~\ref{corclassification} apply. 
	\end{corollary}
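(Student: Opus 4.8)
The plan is to reduce the statement to the Lyubashenko case treated in Theorem~\ref{thmmainshort} and Corollary~\ref{corclassification} by exploiting the uniqueness of reflection-equivariant modular functors. Let $\mathcal{M}$ be a modular functor with values in $\Lexf$ that is reflection equivariant relative to a rigid duality. First I would restrict $\mathcal{M}$ to genus zero; this produces a cyclic framed $E_2$-algebra in $\Lexf$, equivalently a balanced braided ribbon Grothendieck--Verdier category in the sense recalled in Sections~\ref{secmf} and~\ref{secopen}. Because the reflection equivariance is taken relative to a \emph{rigid} duality, the Grothendieck--Verdier duality here is the rigid duality, so this genus-zero restriction is a finite ribbon category; the non-degeneracy condition built into the definition of a modular functor makes its braiding non-degenerate. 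Hence the genus-zero restriction of $\mathcal{M}$ is a modular category, which I will call $\cat{A}$.

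Next I would invoke the uniqueness statement \cite[Theorem~5.9]{reflection}, which rests in turn on \cite{brochierwoike}: up to a contractible space of choices, $\FA$ is the only reflection-equivariant modular functor whose genus-zero restriction is $\cat{A}$, so there is an equivalence $\mathcal{M}\ra{\simeq}\FA$ of modular functors compatible with the reflection-equivariant structures. The full open-closed correlators for the conformal field theory with monodromy data $\mathcal{M}$ are, by definition, the modular $\Surf$-algebras (in the open-closed sense) with coefficients in the modular functor for $\bar{\cat{A}}\boxtimes\cat{A}$; by \cite[Theorem~5.11]{reflection} together with the equivalence~\eqref{eqnequivmf} this coefficient modular functor is $\FAA$ carrying exactly the $\Surf$-structure determined by a choice of reflection equivariance, which by \cite[Lemma~6.19]{reflection} is canonical. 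Invoking Proposition~\ref{propmicrocosmcor}, the groupoid of consistent systems of open-closed correlators for the monodromy data $\mathcal{M}$ is therefore equivalent to the corresponding groupoid for the modular category $\cat{A}$.

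Finally I would transport Theorem~\ref{thmmainshort} and Corollary~\ref{corclassification} along this equivalence: every special symmetric Frobenius algebra in $\cat{A}$ produces a consistent system of open-closed correlators for $\mathcal{M}$, and in the strong sense (conditions~(S) and~(O)) these exhaust all of them. The one point requiring care is that the equivalence $\mathcal{M}\simeq\FA$ be compatible with the reflection-equivariant $\Surf$-structure and not merely with the bare modular functor structure, so that it indeed induces an equivalence of the groupoids of $\Surf$-algebras with coefficients; this is exactly what the contractibility of the space of reflection-equivariant lifts in \cite[Theorem~5.9]{reflection} together with \cite[Lemma~6.19]{reflection} guarantees. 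No argument beyond the careful combination of these cited results is needed, which is why the statement can be recorded as a corollary.
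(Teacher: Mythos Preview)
Your proposal is correct and follows essentially the same approach as the paper: the paper's entire argument is the single sentence ``By \cite[Theorem~5.9 \& 5.11]{reflection} we obtain'' preceding the corollary, and you have unpacked exactly what those citations contribute. One minor point: your claim that ``the non-degeneracy condition built into the definition of a modular functor makes its braiding non-degenerate'' is not quite how the modularity of $\cat{A}$ arises---it is the reflection equivariance relative to a rigid duality (via \cite[Theorem~5.9]{reflection}) that forces $\cat{A}$ to be modular, not the bare modular functor axioms---but since you cite the correct result this does not affect the validity of the argument.
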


In this formulation, the result does not even mention modular categories anymore. Instead, we just ask for a more geometric condition for the behavior under orientation reversal. \enlargethispage*{0.5cm}
This condition can be used to replace the a priori stronger \cite[Assumption~1]{fspivotal}.

\spaceplease
\part{Consequences and applications}
Having finished the construction part, we will now explore the features of the non-semisimple correlator construction.

\section{Holographic principle}
The correlators constructed above admit a holographic description, i.e.\ they have a three-dimensional origin. This will be worked out in this section.

\subsection{A reminder on module categories}
Let $\cat{A}$ be a finite tensor category. Then it is in particular an algebra 
up to coherent isomorphism, so we may consider the action on some finite category $\cat{M}$ that we then call a \emph{module category} (meaning here \emph{left} module; right module categories can be treated analogously). Of course, the usual axioms for a module action have to be relaxed up to coherent isomorphism, see \cite[Chapter~7]{egno} for details.
Since the action $\act : \cat{A}\boxtimes \cat{M}\to \cat{M}$ will be assumed to be right exact, the functor $-\act m : \cat{A}\to \cat{M}$ is right exact and therefore has a right adjoint (which is automatically left exact) that we denote by $\HOM_\cat{M}(m,-):\cat{M}\to\cat{A}$ and that is determined by natural isomorphisms
\begin{align} \cat{M}(a \act m,n)\cong \cat{A}(a,\HOM_\cat{M}(m,n))\label{eqnadjunction} \end{align} for $m,n\in \cat{M}$ and $a\in \cat{A}$.
The functor
\begin{align}\HOM_\cat{M}(-,-):\cat{M}^\op \boxtimes \cat{M} \to \cat{A} \end{align}
is called the \emph{internal hom} of the $\cat{A}$-module $\cat{M}$ and has a composition
\begin{align}
	\HOM_\cat{M}(m,n) \otimes \HOM_\cat{M}(\ell,m)\to\HOM_\cat{M}(\ell,n) \end{align} for $m,n,\ell \in\cat{M}$.

\subsection{A reminder on factorization homology\label{secfh}}	Let $\cat{A}$ be a finite ribbon category (or more generally a framed $E_2$-algebra).
Then one may define for each surface $\Sigma$
the factorization homology $\int_\Sigma \cat{A}$.
This concept was introduced in \cite{higheralgebra,AF}  inspired by \cite{bdca}.
The category $\int_\Sigma \cat{A}$ is obtained by `integrating' $\cat{A}$ over $\Sigma$.
Formally, it is obtained as the homotopy colimit of all $\cat{A}^{\boxtimes m}$ indexed by all oriented embeddings 
$\varphi : \sqcup_{m} \mathbb{D}^2 \to \Sigma$  of $m$ disks into $\Sigma$.
For an introduction tailored towards quantum algebra applications, we refer to \cite[Section~1.1]{bzbj}.
By \cite[Section~5.1]{bzbj} the embedding $\emptyset \to \Sigma$ induces an object $\qss \in \int_\Sigma \cat{A}$
that is a homotopy fixed point under oriented diffeomorphisms of $\Sigma$. This object is called the \emph{quantum structure sheaf}.
One calls its endomorphism algebra
\begin{align} \SkAlg_\cat{A}(\Sigma):= \End_{\int_\Sigma \cat{A}}(\qss) \label{eqnskeinalgebras}
\end{align}  \emph{skein algebra} of $\cat{A}$ at $\Sigma$, see \cite[Definition~2.6]{skeinfin}.
For a ribbon fusion category, this recovers the classical definition of the skein algebra \cite{cooke}.

\subsection{The factorization homology description of the open modular functor of a finite ribbon category}
The open modular functor $\Ao$ associated to a pivotal finite tensor $\cat{A}$ category can be described explicitly using ribbon graphs, see \cite[Eq.~(4.1)]{envas} and \cite[Section~9]{microcosm}. 
Here we will prove that it has a  factorization homology description in the case $\cat{A}$ is actually ribbon.
To this end, we will denote for a surface $\Sigma$ with $n\ge 0$ marked boundary intervals the induced $\cat{A}^{\boxtimes n}$-action on factorization homology \cite[Section~5.2]{bzbj} by
$\act : \cat{A}^{\boxtimes n} \boxtimes \int_\Sigma \cat{A}\to\int_\Sigma \cat{A}$.

\begin{theorem}\label{thmao}
	Let $\cat{A}$ be a finite ribbon category.
	For any surface $\Sigma$ with at least one boundary component per connected component and $n\ge 0$ intervals embedded in its boundary, there is a canonical isomorphism
	\begin{align}
		\label{eqnAo}	\Ao(\Sigma;X_1,\dots,X_n) \cong \Hom_{\int_\Sigma \cat{A}} \left(    \qss , (X_1 \boxtimes \dots \boxtimes X_n)\act\qss  \right)     
	\end{align} that is mapping class group equivariant and compatible with the gluing along intervals.
In fact, \eqref{eqnAo} can be seen as an equivalence of $\Lexf$-valued open modular functors.
\end{theorem}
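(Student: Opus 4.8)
The plan is to recognize the right-hand side of~\eqref{eqnAo} as an open modular functor valued in $\Lexf$ whose restriction to disks with marked boundary intervals is the pivotal Grothendieck-Verdier category $\cat{A}$, and then to conclude by the uniqueness of such an extension. Write $\mathcal{G}(\Sigma)(X_1,\dots,X_n):=\Hom_{\int_\Sigma\cat{A}}(\qss,(X_1\boxtimes\dots\boxtimes X_n)\act\qss)$ for a surface $\Sigma$ with at least one boundary component per connected component and $n\ge 0$ marked boundary intervals. First I would verify that $\mathcal{G}(\Sigma)$ is a left exact functor $\cat{A}^{\boxtimes n}\to\vect$: since $\cat{A}$ is finite, $\int_\Sigma\cat{A}$ is a finite $\cat{A}$-module category via each boundary interval~\cite{bzbj}, so the action bifunctor is exact in each variable; hence $(X_1\boxtimes\dots\boxtimes X_n)\act\qss$ is exact in each $X_i$, and postcomposing with the left exact functor $\Hom_{\int_\Sigma\cat{A}}(\qss,-)$ gives the required left exactness.

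Next I would equip the assignment $\Sigma\mapsto\mathcal{G}(\Sigma)$ with the structure of an open modular functor. The mapping class group equivariance is a consequence of the functoriality of factorization homology in the surface: an orientation-preserving diffeomorphism $f$ of $\Sigma$ induces an auto-equivalence $f_*$ of $\int_\Sigma\cat{A}$, under which $\qss$ is a homotopy fixed point by~\cite[Section~5.1]{bzbj} and the boundary actions are intertwined up to the permutation of marked intervals induced by $f$; this is exactly a $\Map(\Sigma)$-action on $\mathcal{G}(\Sigma)$, natural in the labels. The compatibility with the operadic composition of the open surface operad follows from excision for factorization homology: gluing along a marked interval realizes the factorization homology of the glued surface as a relative tensor product over the copy of $\cat{A}\simeq\int_{\mathbb{D}^2}\cat{A}$ attached to the glued intervals; under this identification the quantum structure sheaves and the remaining boundary actions match, so $\mathcal{G}$ on the glued surface is obtained from $\mathcal{G}$ on the pieces by inserting the coevaluation object $\Delta=\int^{X\in\cat{A}}DX\boxtimes X$ of $\cat{A}$ into the two glued slots --- precisely the excision formula~\eqref{eqnexcision}. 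Finally, for the two-interval disk the construction produces the symmetric non-degenerate pairing underlying the Grothendieck-Verdier duality of $\cat{A}$, so the operadic identity is respected.

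It then remains to compute the restriction of $\mathcal{G}$ to disks with marked boundary intervals. Since $\int_{\mathbb{D}^2}\cat{A}\simeq\cat{A}$ with $\qss\simeq I$ and the boundary action of $X$ on $I$ given by $X\otimes I\cong X$, one gets $\mathcal{G}(\mathbb{D}^2)(X_1,\dots,X_n)\cong\cat{A}(I,X_1\otimes\dots\otimes X_n)$, and the cyclic symmetry --- induced via the homotopy fixed point structure of $\qss$ by the rotation diffeomorphisms of the disk that cyclically permute the intervals --- is the standard cyclic symmetry of the hom spaces of the pivotal category $\cat{A}$. Hence the restriction of $\mathcal{G}$ to disks is $\cat{A}$ as a pivotal Grothendieck-Verdier category, which is the same as the restriction of $\Ao$ by~\eqref{eqndisk}. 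By~\cite[Theorem~2.2 \& Corollary~4.4]{envas} an open modular functor in $\Lexf$ is determined up to canonical equivalence by this restriction, so $\mathcal{G}\simeq\Ao$ as $\Lexf$-valued open modular functors; evaluating the equivalence on a fixed $\Sigma$ yields~\eqref{eqnAo} together with the asserted mapping class group equivariance and compatibility with gluing along intervals.

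I expect the main obstacle to be the excision step of the second paragraph: one has to check carefully that the relative tensor product description of the factorization homology of the glued surface is compatible with the operadic composition of the open surface operad --- including the matching of orientations and boundary parametrizations at the glued interval --- and that under this matching the quantum structure sheaf of the glued surface corresponds to the appropriate image of the quantum structure sheaves of the pieces while the remaining boundary actions are identified. This is the point where the bookkeeping of the Ben-Zvi--Brochier--Jordan formalism~\cite{bzbj} has to be reconciled with the combinatorics of open surfaces from~\cite{sn,envas}; an alternative, more computational route would compare the two sides directly through the explicit ribbon graph description of $\Ao$ in~\cite[Eq.~(4.1)]{envas} and a skein presentation of $\int_\Sigma\cat{A}$, but the uniqueness argument above is cleaner.
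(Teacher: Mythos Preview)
Your approach is essentially the same as the paper's: verify that the right-hand side defines a $\Lexf$-valued open modular functor whose restriction to disks recovers the pivotal Grothendieck--Verdier category $\cat{A}$, and then conclude by the uniqueness theorem from~\cite{envas}. The paper carries out the gluing step---which you rightly flag as the crux---by a concrete computation using \cite[Theorem~3.3~(3)]{dss} for hom spaces in the relative tensor product together with $\END_\cat{A}(I)\cong\Delta$ from~\cite{shimizuunimodular}, and it deduces left exactness of $\mathcal{G}(\Sigma)$ a posteriori from the disk case plus gluing rather than from a direct exactness argument for the action.
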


\begin{proof}
	The strategy is to prove that the right hand side of~\eqref{eqnAo}
	gives us a $\Lexf$-valued open modular functor and to argue then via uniqueness. We begin by observing that the
	 functor 
	$\Hom_{\int_\Sigma \cat{A}} \left(   \qss, - \act \qss  \right)  : \cat{A}^{\boxtimes n} \to \vect$  carries a representation of the mapping class group of $\Sigma$. Indeed, any diffeomorphism $f: \Sigma \to \Sigma$ (for which we always assume that it preserves the orientation and the boundary parametrization) acts by an $\cat{A}^{\boxtimes n}$-module map $f_* : \int_\Sigma \cat{A}\to\int_\Sigma \cat{A}$. But then $ (X_1 \boxtimes \dots \boxtimes X_n) \act \qss$ is a homotopy fixed point because $\qss$ is already a homotopy fixed point, as was recalled in Section~\ref{secfh}.
	This endows $\Hom_{\int_\Sigma \cat{A}} \left( \qss, (X_1 \boxtimes \dots \boxtimes X_n) \act \qss  \right)$ with an action of the diffeomorphism group of $\Sigma$ (as topological group)
	 that clearly descends to the mapping class group because the action is on an object in a 1-category.

	Next we treat the compatibility of the right hand side of~\eqref{eqnAo} 
	 under gluing: Fix two marked boundary intervals for $\Sigma$ and denote by $\Sigma'$ the surface obtained by gluing these two intervals together; this requires $n\ge 2$ of course. 
	By the excision property for factorization homology, $\int_{\Sigma'} \cat{A}$ is the `trace category' of the 
	$\cat{A}$-bimodule $\int_\Sigma \cat{A}$. After seeing an $\cat{A}$-bimodule as module over the enveloping algebra $\cat{A}^\catf{e} = \cat{A}^{\otimes \op} \boxtimes \cat{A}$
	(here $\cat{A}^{\otimes \op}$ is the same category as $\cat{A}$,
	but with monoidal product $X \stackrel{\text{op}}{\otimes} Y=Y\otimes X$), this means that the gluing provides us with an equivalence
	$G:\cat{A}\boxtimes_{\cat{A}^\catf{e}}      \int_\Sigma \cat{A} \ra{\simeq}\int_{\Sigma'} \cat{A}$. 
	This equivalence is compatible with the diffeomorphism group actions, the remaining $\cat{A}^{\boxtimes (n-2)}$-action, and sends $I\boxtimes \qss$ to $\qssp$.
	Therefore, $G$ induces an isomorphism
	\begin{align}
		&	\Hom_{\int_{\Sigma'} \cat{A}} \left( \qssp, (X_1 \boxtimes \dots \boxtimes X_{n-2}) \act \qssp   \right)\\ \cong \quad &\Hom_{       \cat{A}\boxtimes_{\cat{A}^\catf{e}}      \int_\Sigma \cat{A}    }   (  I\boxtimes \qss, I\boxtimes (X_1 \boxtimes \dots \boxtimes I \boxtimes \dots \boxtimes I \boxtimes \dots \boxtimes X_{n-2})\act \qss)    \ , 
	\end{align}
	where the appearance of monoidal units in $X_1 \boxtimes \dots \boxtimes I \boxtimes \dots \boxtimes I \boxtimes \dots \boxtimes X_{n-2}$ is such  that the slots affected by the gluing are filled with $I$.
	With \cite[Theorem 3.3~(3)]{dss}, this gives us 
	an 	isomorphism
	\begin{align}
		&	\Hom_{\int_{\Sigma'} \cat{A}} \left( \qssp, (X_1 \boxtimes \dots \boxtimes X_{n-2}) \act \qssp    \right) \\ \cong \quad &\Hom_{      \cat{A}^\catf{e}    }   ( I\boxtimes I,   \END_\cat{A}(I) \otimes    \HOM_{\int_\Sigma \cat{A}}( \qss,	(X_1 \boxtimes \dots \boxtimes I \boxtimes \dots \boxtimes I \boxtimes \dots \boxtimes X_{n-2})\act \qss ) )  \, \label{eqnreins}
	\end{align} with the respective $\cat{A}^{\catf{e}}$-valued internal homs.
	By \cite[Lemma 4.4]{shimizuunimodular} $\END_\cat{A}(I) \cong \int^{X \in \cat{A}} X^\vee \boxtimes X$
	 is the coevaluation object $\Delta$ for the cyclic algebra $\cat{A}$, see \eqref{eqncoev}. We write as usual $\Delta = \Delta' \boxtimes \Delta''$ in Sweedler notation. 
	
	Denote the $\cat{A}^\text{e}$-action by $\unrhd$.
	Since internal homs are module functors, see~\cite[Section~3.2]{ostrik03} and \cite[Section~2.8]{shimizuunimodular},
	\begin{align} &\END_\cat{A}(I) \otimes     \HOM_{\int_\Sigma \cat{A}}(\qss, (X_1 \boxtimes \dots \boxtimes I \boxtimes \dots \boxtimes I \boxtimes \dots \boxtimes X_{n-2})\act \qss)\\ \cong\quad&    
		 \HOM_{\int_\Sigma \cat{A}}(\qss,\END_\cat{A}(I) \unrhd (X_1 \boxtimes \dots \boxtimes I \boxtimes \dots \boxtimes I \boxtimes \dots \boxtimes X_{n-2})\act \qss)
		 \\ \cong\quad&    
		 \HOM_{\int_\Sigma \cat{A}}(\qss, (X_1 \boxtimes \dots \boxtimes \Delta' \boxtimes \dots \boxtimes \Delta'' \boxtimes \dots \boxtimes X_{n-2})\act \qss) \ . 
	\end{align}
	If we re-insert in~\eqref{eqnreins}, we arrive at
	\begin{align}
		&\Hom_{\int_{\Sigma'} \cat{A}} \left( \qssp, (X_1 \boxtimes \dots \boxtimes X_{n-2}) \act \qssp    \right) \\ \cong \quad &\Hom_{      \cat{A}^\catf{e}    }   ( I\boxtimes I, \HOM_{\int_\Sigma \cat{A}}(\qss, (X_1 \boxtimes \dots \boxtimes \Delta' \boxtimes \dots \boxtimes \Delta'' \boxtimes \dots \boxtimes X_{n-2})\act \qss) )  \\
		\cong \quad &
		\Hom_{   \int_\Sigma \cat{A}    }   ( \qss, (X_1 \boxtimes \dots \boxtimes \Delta' \boxtimes \dots \boxtimes \Delta'' \boxtimes \dots \boxtimes X_{n-2})\act\qss)	\ . 
	\end{align}
	Consider now the case  in which $\Sigma$ is a disk $D$:
	Then $\int_D \cat{A}\simeq \cat{A}$, $\cat{O}_D^\cat{A}=I$ and $ (X_1 \boxtimes \dots \boxtimes X_n) \act I=X_1 \otimes \dots \otimes X_n$. Therefore,
	\begin{align} 
		\Hom_{\int_\Sigma \cat{A}} \left(  \qss, (X_1 \boxtimes \dots \boxtimes X_n) \act \qss   \right)=\cat{A}(I,X_1\otimes\dots\otimes X_n) \ , \label{eqnrecoverA} \end{align} with the cyclic symmetry being the usual one.
	From this and the gluing property, we conclude that the right hand side 
	of~\eqref{eqnAo} is left exact, and from the straightforward compatibility of the mapping class group action and the gluing, we conclude  that we obtain actually a $\Lexf$-valued open modular functor whose restriction to marked disks recovers the pivotal finite tensor category $\cat{A}$ thanks to~\eqref{eqnrecoverA}.
	Now the statement follows by uniqueness~\cite[Theorem~2.2]{envas} and~\eqref{eqndisk}.
\end{proof}

\subsection{A reminder on moduli algebras\label{secmodulialgebras}}
Let $\Sigma$ be a surface with $n$ marked boundary intervals.
Recall that in this situation,
$\int_\Sigma \cat{A}$ becomes
a module over $\cat{A}^{\boxtimes n}$  and gives us a \emph{moduli algebra}
\begin{align}
	\mathfrak{a}_\Sigma := \END_{\int_\Sigma \cat{A}} (\qss) \in \cat{A}^{\boxtimes n}  
\end{align} defined as usual as the internal endomorphism algebra of the quantum structure sheaf, see \cite[Section~5.2]{bzbj}, where the relation  to the works \cite{alekseevmoduli,agsmoduli,asmoduli} is explained.

The algebra $\moduli_\Sigma$ carries a mapping class group action~\cite[Section~5.2]{bzbj}, and one can easily observe that this makes the isomorphisms
\begin{align}
	\label{eqnmorequiv}	\Hom_{\int_\Sigma \cat{A}} (\qss,X\act \qss ) \cong \Hom_{\cat{A}^{\boxtimes n} } (  I, X\otimes  \moduli_\Sigma     )\cong \Hom_{\cat{A}^{\boxtimes n} } (  X^\vee,   \moduli_\Sigma     )
\end{align} for $X\in       \cat{A}^{\boxtimes n}$ mapping class group equivariant, where the origin of the mapping class group representation on the left hand side is again the one explained in the proof of Theorem~\ref{thmao}. 

\subsection{Holographic interpretation of the correlator construction\label{secintholprinciple}}
Let $F$ be a special symmetric Frobenius algebra in a modular category $\cat{A}$.
Moreover, let $\Sigma$ be a surface with at least one boundary component per connected component.
As in the proof of Theorem~\ref{thmsym}, we will see it as an operation in the open surface operad. 
Denote as 
in Section~\ref{secholographicintro} of the introduction by $\bar \Sigma \cupx_{\partial \Sigma} \Sigma$
the result of gluing $\Sigma$ and $\bar\Sigma$ together along spheres with three holes, see Figure~\ref{figdouble2} for an example.

\begin{figure}[h]
	\begin{tikzpicture}[scale=0.5]
		\begin{pgfonlayer}{nodelayer}
			\node [style=none] (0) at (3.75, -0.25) {};
			\node [style=none] (1) at (4.75, -0.25) {};
			\node [style=none] (2) at (3.5, 0.25) {};
			\node [style=none] (3) at (5, 0.25) {};
			\node [style=none] (4) at (1.5, 2) {};
			\node [style=none] (5) at (2.5, 0.5) {};
			\node [style=none] (6) at (1.5, -2) {};
			\node [style=none] (7) at (-1.5, 2) {};
			\node [style=none] (8) at (-2.5, 0.5) {};
			\node [style=none] (9) at (-1.5, -2) {};
			\node [style=none] (10) at (-4.75, -0.25) {};
			\node [style=none] (11) at (-3.75, -0.25) {};
			\node [style=none] (12) at (-5, 0.25) {};
			\node [style=none] (13) at (-3.5, 0.25) {};
			\node [style=none] (14) at (-1.5, -0.25) {};
			\node [style=none] (15) at (1.5, -0.25) {};
			\node [style=none] (16) at (-0.5, -1.25) {};
			\node [style=none] (17) at (0.5, -1.25) {};
			\node [style=none] (19) at (-0.5, 1.25) {};
			\node [style=none] (20) at (0.5, 1.25) {};
			\node [style=none] (28) at (-18, 2) {};
			\node [style=none] (29) at (-18, 0.75) {};
			\node [style=none] (30) at (-18, -2) {};
			\node [style=none] (31) at (-21.25, -0.25) {};
			\node [style=none] (32) at (-20.25, -0.25) {};
			\node [style=none] (33) at (-21.5, 0.25) {};
			\node [style=none] (34) at (-20, 0.25) {};
			\node [style=none] (35) at (-18, -0.75) {};
			\node [style=none] (36) at (-15, 2) {};
			\node [style=none] (37) at (-15, 0.75) {};
			\node [style=none] (38) at (-15, -2) {};
			\node [style=none] (39) at (-12.75, -0.25) {};
			\node [style=none] (40) at (-11.75, -0.25) {};
			\node [style=none] (41) at (-13, 0.25) {};
			\node [style=none] (42) at (-11.5, 0.25) {};
			\node [style=none] (43) at (-15, -0.75) {};
			\node [style=none] (44) at (-14.75, 6.5) {};
			\node [style=none] (45) at (-14.75, 5) {};
			\node [style=none] (46) at (-17.75, 6.5) {};
			\node [style=none] (47) at (-17.75, 5) {};
			\node [style=none] (50) at (-16.75, 5.75) {};
			\node [style=none] (51) at (-15.75, 5.75) {};
			\node [style=none] (52) at (-16.5, 4.5) {};
			\node [style=none] (53) at (-16.5, 1.5) {};
			\node [style=none] (54) at (-16.5, -1.5) {};
			\node [style=none] (55) at (-14.75, 3.25) {glue in};
			\node [style=none] (56) at (-9, 0) {};
			\node [style=none] (57) at (-7.5, 0) {};
			\node [style=none] (58) at (-13, -1) {$\Sigma$};
			\node [style=none] (59) at (-20, -1) {$\bar\Sigma$};
			\node [style=none] (60) at (2.5, -1.25) {$\bar \Sigma \cupx_{\partial \Sigma} \Sigma$};
		\end{pgfonlayer}
		\begin{pgfonlayer}{edgelayer}
			\draw [bend left=90, looseness=1.25] (0.center) to (1.center);
			\draw [bend right=90, looseness=1.50] (2.center) to (3.center);
			\draw [bend right=90, looseness=4.50] (6.center) to (4.center);
			\draw [in=180, out=-180, looseness=4.50] (7.center) to (9.center);
			\draw [bend left=90, looseness=1.25] (10.center) to (11.center);
			\draw [bend right=90, looseness=1.50] (12.center) to (13.center);
			\draw (7.center) to (4.center);
			\draw (9.center) to (6.center);
			\draw [bend right=90, looseness=0.75] (8.center) to (5.center);
			\draw [bend left=90, looseness=0.75] (14.center) to (15.center);
			\draw [bend left=90] (16.center) to (17.center);
			\draw [bend right=90] (16.center) to (17.center);
			\draw [bend left=90] (19.center) to (20.center);
			\draw [bend right=90] (19.center) to (20.center);
			\draw [in=180, out=-180, looseness=4.25] (28.center) to (30.center);
			\draw [bend left=90, looseness=1.25] (31.center) to (32.center);
			\draw [bend right=90, looseness=1.50] (33.center) to (34.center);
			\draw [bend right=90, looseness=2.00] (29.center) to (35.center);
			\draw [bend left=90, looseness=1.50] (35.center) to (30.center);
			\draw [bend left=90, looseness=1.50] (28.center) to (29.center);
			\draw [in=0, out=0, looseness=4.25] (36.center) to (38.center);
			\draw [bend left=90, looseness=1.25] (39.center) to (40.center);
			\draw [bend right=90, looseness=1.50] (41.center) to (42.center);
			\draw [bend left=90, looseness=2.00] (37.center) to (43.center);
			\draw [bend right=90, looseness=1.75] (36.center) to (37.center);
			\draw [bend right=90, looseness=1.75] (43.center) to (38.center);
			\draw [bend left=90, looseness=1.50] (43.center) to (38.center);
			\draw [bend left=90, looseness=1.50] (36.center) to (37.center);
			\draw [style=mydotsblack, bend right=90, looseness=1.75] (28.center) to (29.center);
			\draw [style=mydotsblack, bend right=90, looseness=1.50] (35.center) to (30.center);
			\draw [bend left=90, looseness=1.50] (44.center) to (45.center);
			\draw [bend right=90, looseness=1.75] (46.center) to (47.center);
			\draw [bend left=90, looseness=1.50] (46.center) to (47.center);
			\draw [style=mydotsblack, bend right=90, looseness=1.75] (44.center) to (45.center);
			\draw (46.center) to (44.center);
			\draw (47.center) to (45.center);
			\draw [bend left=90] (50.center) to (51.center);
			\draw [bend right=90] (50.center) to (51.center);
			\draw [style=end arrow, bend right=15] (52.center) to (53.center);
			\draw [style=end arrow, bend left=15] (52.center) to (54.center);
			\draw [style=end arrow] (56.center) to (57.center);
		\end{pgfonlayer}
	\end{tikzpicture}
	\caption{The construction of $\bar \Sigma \cupx_{\partial \Sigma} \Sigma$ for a genus one surface with two boundary components.}
	\label{figdouble2}
\end{figure}
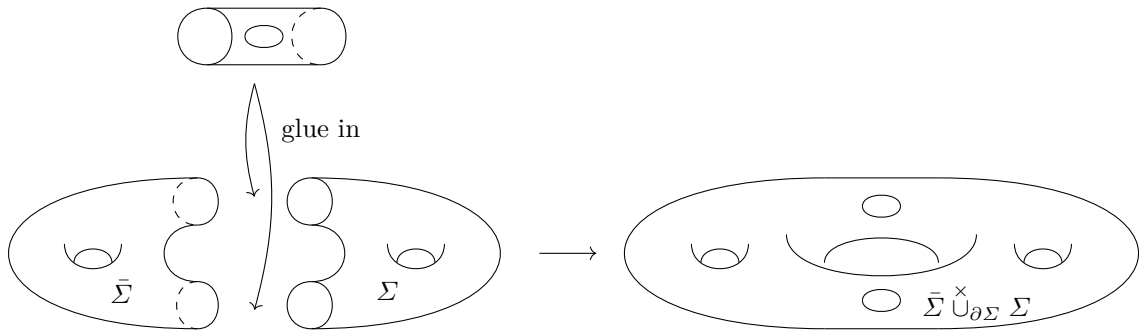

With excision for spaces of conformal blocks~\eqref{eqnexcision}, we 
obtain
\begin{align} \mathfrak{F}_{\bar{\cat{A}}\boxtimes\cat{A}}(\Sigma;\bzero(F),\dots,\bzero(F))\cong \FA(     \bar{\Sigma} \cupx_{\partial \Sigma} \Sigma;F^{\boxtimes n}    )\ ,  \end{align}
and we may see the vectors
\begin{align} \lambda_\Sigma^F \in \mathfrak{F}_{\bar{\cat{A}}\boxtimes\cat{A}}(\Sigma;\bzero(F),\dots,\bzero(F))    \label{eqnlambdas}\end{align}
from Theorem~\ref{thmsym},
that the correlators featuring in Theorem~\ref{thmmainlong} were built from,
 as vectors 
\begin{align} \lambda_\Sigma^F \in 	\FA(     \bar{\Sigma} \cupx_{\partial \Sigma} \Sigma;F^{\boxtimes n}    ) \  \end{align}
that we denote by the same symbol by a slight abuse of notation.
By combining~\eqref{eqnAoZ} and Theorem~\ref{thmao},
we obtain a canonical isomorphism
\begin{align}
	\FA(     \bar{\Sigma} \cupx_{\partial \Sigma} \Sigma;F^{\boxtimes n}    )\cong \Hom_{\int_\Sigma \cat{A}}(\qss,F^{\boxtimes n} \act \qss) 
	\end{align}
The right hand side can be rewritten using the admissible skein modules $\skA$  of $\cat{A}$ on a three-dimensional manifold which are defined in \cite{asm} by applying generalized skein-theoretic methods to the tensor ideal $\Proj \cat{A}$ of projective objects. 
The skein modules $\skA(\Sigma\times[0,1];-)$ with appropriate boundary conditions describe the morphism spaces in the skein category for $\cat{A}$ on $\Sigma$ whose finite free cocompletion is $\int_\Sigma \cat{A}$~\cite[Theorem~3.10]{brownhaioun}.
The morphism spaces of $\int_\Sigma \cat{A}$ can be described by  the finitely cocompleted version   $\SkA$ of the admissible skein modules~\cite{brownhaioun,mwskein}. 
This means that $\Hom_{\int_\Sigma \cat{A}}(\qss,F^{\boxtimes n} \act \qss)$ is the value of $\SkA$ 
on $\Sigma \times [0,1]$ with $F$ placed $n$ times on a disk near the boundary components as the target boundary condition
and the empty initial boundary condition.
Let us illustrate this for the example in Figure~\ref{figdouble2}:
\begin{align}
	\FA\left(\!\!\!\!\!\!\!\!\!\!\!\!\!\!\! \begin{array}{c}\begin{tikzpicture}[scale=0.5]
			\begin{pgfonlayer}{nodelayer}
				\node [style=none] (0) at (3.75, -0.25) {};
				\node [style=none] (1) at (4.75, -0.25) {};
				\node [style=none] (2) at (3.5, 0.25) {};
				\node [style=none] (3) at (5, 0.25) {};
				\node [style=none] (4) at (1.5, 2) {};
				\node [style=none] (5) at (2.5, 0.5) {};
				\node [style=none] (6) at (1.5, -2) {};
				\node [style=none] (7) at (-1.5, 2) {};
				\node [style=none] (8) at (-2.5, 0.5) {};
				\node [style=none] (9) at (-1.5, -2) {};
				\node [style=none] (10) at (-4.75, -0.25) {};
				\node [style=none] (11) at (-3.75, -0.25) {};
				\node [style=none] (12) at (-5, 0.25) {};
				\node [style=none] (13) at (-3.5, 0.25) {};
				\node [style=none] (14) at (-1.5, -0.25) {};
				\node [style=none] (15) at (1.5, -0.25) {};
				\node [style=none] (16) at (-0.5, -1.25) {};
				\node [style=none] (17) at (0.5, -1.25) {};
				\node [style=none] (18) at (0, -1.25) {$F$};
				\node [style=none] (19) at (-0.5, 1.25) {};
				\node [style=none] (20) at (0.5, 1.25) {};
				\node [style=none] (21) at (0, 1.25) {$F$};
			\end{pgfonlayer}
			\begin{pgfonlayer}{edgelayer}
				\draw [bend left=90, looseness=1.25] (0.center) to (1.center);
				\draw [bend right=90, looseness=1.50] (2.center) to (3.center);
				\draw [bend right=90, looseness=4.50] (6.center) to (4.center);
				\draw [in=180, out=-180, looseness=4.50] (7.center) to (9.center);
				\draw [bend left=90, looseness=1.25] (10.center) to (11.center);
				\draw [bend right=90, looseness=1.50] (12.center) to (13.center);
				\draw (7.center) to (4.center);
				\draw (9.center) to (6.center);
				\draw [bend right=90, looseness=0.75] (8.center) to (5.center);
				\draw [bend left=90, looseness=0.75] (14.center) to (15.center);
				\draw [bend left=90, looseness=1.50] (16.center) to (17.center);
				\draw [bend right=90, looseness=1.50] (16.center) to (17.center);
				\draw [bend left=90, looseness=1.50] (19.center) to (20.center);
				\draw [bend right=90, looseness=1.50] (19.center) to (20.center);
			\end{pgfonlayer}
		\end{tikzpicture}
		\end{array}\!\!\!\!\!\!\!\!\!\!\!\!\!\!\! \right) 
	\cong
	\catf{Sk}_\cat{A} \left( \begin{array}{c}\begin{tikzpicture}[scale=0.5]
			\begin{pgfonlayer}{nodelayer}
				\node [style=none] (0) at (-2.5, -2) {};
				\node [style=none] (1) at (2.5, -2) {};
				\node [style=none] (2) at (-0.5, -2.25) {};
				\node [style=none] (3) at (0.5, -2.25) {};
				\node [style=none] (4) at (-0.75, -1.75) {};
				\node [style=none] (5) at (0.75, -1.75) {};
				\node [style=none] (6) at (-2.5, 3) {};
				\node [style=none] (7) at (2.5, 3) {};
				\node [style=none] (8) at (-0.5, 2.75) {};
				\node [style=none] (9) at (0.5, 2.75) {};
				\node [style=none] (10) at (-0.75, 3.25) {};
				\node [style=none] (11) at (0.75, 3.25) {};
				\node [style=none] (12) at (-0.5, -1) {};
				\node [style=none] (13) at (0.5, -1) {};
				\node [style=none] (14) at (-1.75, 6.25) {$F$};
				\node [style=none] (15) at (-0.5, -3.25) {};
				\node [style=none] (16) at (0.5, -3.25) {};
				\node [style=none] (18) at (-0.5, 4) {};
				\node [style=none] (19) at (0.5, 4) {};
				\node [style=none] (21) at (-0.5, 1.75) {};
				\node [style=none] (22) at (0.5, 1.75) {};
				\node [style=none] (27) at (0, -1.5) {};
				\node [style=none] (28) at (-1, 1.75) {};
				\node [style=none] (29) at (-0.5, 1.75) {};
				\node [style=none] (30) at (-1, 4) {};
				\node [style=none] (31) at (-0.5, 4) {};
				\node [style=none] (32) at (-1.75, 5.75) {};
				\node [style=none] (33) at (-0.75, 4) {};
				\node [style=none] (34) at (-0.75, 1.75) {};
			\end{pgfonlayer}
			\begin{pgfonlayer}{edgelayer}
				\draw [bend right=90, looseness=1.50] (0.center) to (1.center);
				\draw [bend left=90, looseness=1.25] (0.center) to (1.center);
				\draw [bend left=90, looseness=1.25] (2.center) to (3.center);
				\draw [bend right=90, looseness=1.50] (4.center) to (5.center);
				\draw [bend right=90, looseness=1.50] (6.center) to (7.center);
				\draw [bend left=90, looseness=1.25] (6.center) to (7.center);
				\draw [bend left=90, looseness=1.25] (8.center) to (9.center);
				\draw [bend right=90, looseness=1.50] (10.center) to (11.center);
				\draw (6.center) to (0.center);
				\draw (7.center) to (1.center);
				\draw [bend left=90, looseness=1.50] (12.center) to (13.center);
				\draw [bend right=90, looseness=1.50] (12.center) to (13.center);
				\draw [bend left=90, looseness=1.50] (15.center) to (16.center);
				\draw [bend right=90, looseness=1.50] (15.center) to (16.center);
				\draw [bend left=90, looseness=1.50] (18.center) to (19.center);
				\draw [bend right=90, looseness=1.50] (18.center) to (19.center);
				\draw [bend left=90, looseness=1.50] (21.center) to (22.center);
				\draw [bend right=90, looseness=1.50] (21.center) to (22.center);
				\draw [bend left=90, looseness=1.50] (28.center) to (29.center);
				\draw [bend right=90, looseness=1.75] (28.center) to (29.center);
				\draw [bend left=90, looseness=1.50] (30.center) to (31.center);
				\draw [bend right=90, looseness=1.75] (30.center) to (31.center);
				\draw [style=end arrow] (32.center) to (34.center);
				\draw [style=end arrow, bend left=15] (32.center) to (33.center);
			\end{pgfonlayer}
		\end{tikzpicture}
		\end{array}\right) \label{eqnholprin}
\end{align}
By summarizing all this
and taking additionally into account the description
in terms of moduli algebras from Section~\ref{secmodulialgebras}, we find the following:

\begin{corollary}[Holographic principle]\label{corholp}
	The vectors~\eqref{eqnlambdas} inducing the correlators can equivalently be seen
	\begin{itemize}
		\item as morphisms $\qss \to F^{\boxtimes n}\act \qss$ in factorization homology $\int_\Sigma \cat{A}$,
		\item or as morphisms $F^{\boxtimes n}\to \moduli_\Sigma$ in $\cat{A}^{\boxtimes n}$
		\end{itemize}
	that are fixed by the respective actions of the mapping class group of $\Sigma$. They amount to vectors inside the cocompleted admissible skein module for the cylinder over $\Sigma$.
	\end{corollary}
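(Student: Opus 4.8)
The plan is to obtain the corollary by assembling the chain of identifications established earlier in this section together with Theorems~\ref{thmsym}, \ref{thmao} and equation~\eqref{eqnmorequiv}, checking that each link is equivariant for $\Map(\Sigma)$, and then appending the skein-theoretic reading. Since $\lambda_\Sigma^F$ is already known to be mapping class group invariant by Theorem~\ref{thmsym}, everything reduces to transporting it along these equivariant isomorphisms and reading off what it becomes.

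First I would use the excision isomorphism $\FAA(\Sigma;\bzero(F),\dots,\bzero(F))\cong\FA(\bar\Sigma\cupx_{\partial\Sigma}\Sigma;F^{\boxtimes n})$ obtained above from~\eqref{eqnexcision}; since excision is natural, it carries the anomaly-free $\Map(\Sigma)$-action — available because a reflection equivariant structure has been fixed — to the action on the right-hand side by the mapping classes of $\bar\Sigma\cupx_{\partial\Sigma}\Sigma$ that restrict to the given one on $\Sigma$ and to its mirror on $\bar\Sigma$. Composing with~\eqref{eqnAoZ} from \cite{envas} and the equivalence $G$ of~\eqref{equivG} (which sends $\bzero(F)$ to $LF$) identifies this equivariantly with $\Ao(\Sigma;F,\dots,F)$, under which $\lambda_\Sigma^F$ is the open correlator of \cite[Theorem~8.3]{microcosm}. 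Theorem~\ref{thmao} then supplies the canonical $\Map(\Sigma)$-equivariant isomorphism $\Ao(\Sigma;F,\dots,F)\cong\Hom_{\int_\Sigma\cat A}(\qss,F^{\boxtimes n}\act\qss)$, which gives the first bullet. Equation~\eqref{eqnmorequiv} rewrites the target, equivariantly, as $\Hom_{\cat A^{\boxtimes n}}((F^{\boxtimes n})^\vee,\moduli_\Sigma)$, and precomposition with the self-duality $\psi^{\boxtimes n}:F^{\boxtimes n}\ra{\cong}(F^{\boxtimes n})^\vee$ coming from the symmetric Frobenius structure turns this into $\Hom_{\cat A^{\boxtimes n}}(F^{\boxtimes n},\moduli_\Sigma)$, which gives the second bullet. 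Equivariance at every step means these identifications restrict to mapping class group invariant vectors, so the image of $\lambda_\Sigma^F$ furnishes the two claimed fixed morphisms $\qss\to F^{\boxtimes n}\act\qss$ and $F^{\boxtimes n}\to\moduli_\Sigma$.

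For the final sentence I would invoke the comparison between factorization homology and skein theory: by \cite[Theorem~3.10]{brownhaioun} (see also \cite{mwskein}) the category $\int_\Sigma\cat A$ is the finite free cocompletion of the skein category of $\cat A$ on $\Sigma$, so its morphism spaces are computed by the cocompleted admissible skein module $\SkA$ of the cylinder $\Sigma\times[0,1]$. Concretely $\Hom_{\int_\Sigma\cat A}(\qss,F^{\boxtimes n}\act\qss)$ is $\SkA(\Sigma\times[0,1])$ with the empty boundary condition on $\Sigma\times\{0\}$ and with $F$ placed on $n$ small disks near the $n$ boundary circles of $\Sigma\times\{1\}$, so that $\lambda_\Sigma^F$ lies in precisely this skein module, as illustrated by~\eqref{eqnholprin}.

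There is no genuinely hard step here: the content is the packaging of results already available. What needs care is the mapping-class-group bookkeeping — in particular that the $\Map(\Sigma)$-action inherited on $\FA(\bar\Sigma\cupx_{\partial\Sigma}\Sigma;F^{\boxtimes n})$ through excision coincides with the honest, non-projective action on $\Hom_{\int_\Sigma\cat A}(\qss,F^{\boxtimes n}\act\qss)$ described in the proof of Theorem~\ref{thmao} (coming from diffeomorphisms acting on $\int_\Sigma\cat A$ and the homotopy fixed point $\qss$), and that the chosen reflection equivariant structure is exactly what renders both sides anomaly-free. The only remaining subtlety is tracking the duality $(F^{\boxtimes n})^\vee\cong F^{\boxtimes n}$ coherently, which is harmless because, $F$ being symmetric Frobenius, $\psi$ is an isomorphism of the relevant $\Map(\Sigma)$-representations.
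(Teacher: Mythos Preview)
Your proposal is correct and follows essentially the same approach as the paper: the corollary is stated there as a summary of the preceding discussion in Section~\ref{secintholprinciple}, assembling excision, \eqref{eqnAoZ}, Theorem~\ref{thmao}, the moduli algebra description~\eqref{eqnmorequiv}, and the skein-theoretic comparison via \cite{brownhaioun,mwskein}. Your treatment is slightly more explicit about the mapping class group equivariance and the use of the self-duality of $F$ to pass from $(F^{\boxtimes n})^\vee$ to $F^{\boxtimes n}$, but these are the same ingredients.
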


This tells us that the correlators for the double of $\Sigma$ originate
from dimension three because they can be seen as morphisms in factorization homology. 
The latter is a category assigned in dimension two, 
but its morphisms have a three-dimensional interpretation.

\begin{remark}[A holographic principle involving the four-dimensional (generalized) Crane-Yetter-Kauffman topological field theory]	
Corollary~\ref{corholp} a priori does not use the language of topological field theory. Nonetheless, the admissible skein modules for cylinders over surfaces belong to the three-dimensional
part of the four-dimensional skein topological field theory constructed in \cite{skeintft} generalizing the work of Crane-Yetter-Kauffman~\cite{CY,CYK}.
This means that the holographic principle afforded by Corollary~\ref{corholp} can actually be phrased in terms of topological field theory, but more naturally a four-dimensional one.
Thanks to \cite{wrtcy}, the holographic principle from Corollary~\ref{corholp}  necessarily
implies a similar principle in terms of the non-compact topological field theory from \cite{rggpmr}.
However, we do not claim that it is obvious how a possible translation of Corollary~\ref{corholp} into the framework of \cite{rggpmr,skeintft,wrtcy} would look like (and also, we do not need it).
In any case, \cite{mwskein} would be the dictionary to be used.
\end{remark}

\subsection{Class functions\label{secclassfunctions}}
The following considerations offer a slightly different perspective on Corollary~\ref{corholp} that will help us with the computations in Section~\ref{sectorus}:
Let $\Sigma$ be a connected surface without marked intervals in the boundary, 
but at least one boundary component (without parametrization). 
For a symmetric Frobenius algebra $F\in \cat{A}$ in a finite ribbon category, \eqref{omicrocosmeqn} gives via Theorem~\ref{thmao} a
$\Map(\Sigma)$-invariant map
$\lambda_\Sigma^F:\qss\to\qss$, i.e.\ a mapping class group invariant element of the skein algebra $\SkAlg_\cat{A}(\Sigma)$. Now fix an interval on one of the boundary components and denote the resulting surface by $\Sigma_+$. The surfaces $\Sigma$ and $\Sigma_+$ have the same factorization homology and quantum structure sheaf (the additional marked interval is just relevant for the module structure).
This tells us that $\lambda_\Sigma^F$ gives us a $\Map(\Sigma_+)$-invariant map $I \to \mathfrak{a}_{\Sigma_+}$ with $\mathfrak{a}_{\Sigma_+} = \END(\cat{O}_{\Sigma_+}^\cat{A})\in\cat{A}$. 
 Actually, the sewing constraints for the open correlator $\lambda^F$ tell us that this map
 is the open correlator 
	$\lambda_{\Sigma_+}^F:\cat{O}_{\Sigma_+}^\cat{A} \to F \act \cat{O}_{\Sigma_+}^\cat{A}$ after post-composing with the counit and using~\eqref{eqnmorequiv}. 
	
	If $\Sigma$ has genus $g$ and $r\ge 1$ unparametrized boundary components, then \cite[Corollary~6.11]{bzbj} tells us
	$\mathfrak{a}_{\Sigma_+}\cong \mathbb{F}^{\otimes (2g+r-1)}$ in $\cat{A}$ for the coend $\mathbb{F}=\int^{X\in\cat{A}} X^\vee \otimes X$ which allows us to see the map
	$I \to \mathfrak{a}_{\Sigma_+}$ just constructed
	as a morphism
	\begin{align}
		\zeta^F_{g,r}: \mathbb{A}^{\otimes (2g+r-1)} \to I 
		\end{align} for the end $\mathbb{A}=\int_{X\in\cat{A}} X^\vee \otimes X$. 
	Under the action of $\Map(\Sigma_+)$ on $\mathbb{A}^{\otimes (2g+r-1)}$ from \cite[Section~5.2]{bzbj} (see \cite{brochierjordane} for the case of the torus),
	the map $\zeta^F_{g,r}$ is invariant. 
	
	We refer to maps of tensor powers of $\mathbb{A}$ to $I$ as \emph{(generalized) class functions}; the case of maps $\mathbb{A}\to I$ recovers the classical notion \cite{shimizucf}. Indeed, if $\cat{A}$ is given by finite-dimensional modules over a finite-dimensional ribbon Hopf algebra $H$, we have $\mathbb{A}\cong H_\text{adj}$ (this is $H$ with the adjoint action)~\cite[Theorem 7.4.13]{kl}, which means that $H$-module maps $H_\text{adj}\to k$ are really class functions in the usual sense, i.e.\ linear forms $f:H\to k$ with $f(x_{(1)}yS(x_{(2)}))=\varepsilon(x)f(y)$ for $x,y\in H$, the antipode $S:H\to H$, the counit $\varepsilon:H\to k$ and Sweedler notation $\Delta (x)=x_{(1)}\otimes x_{(2)}$ for the coproduct of $H$. 
In the special case $g=r=1$, we will refer to a class function $\mathbb{A}^{\otimes 2}\to I$ as an \emph{elliptic class function}
because it points at an element in the elliptic double~\cite{brochierjordane}. 	

Let us summarize:

\begin{corollary}\label{corclassfunctions}
	Any symmetric Frobenius algebra $F\in\cat{A}$ in a finite ribbon category	 gives canonically rise to
mapping class group invariant class functions
	\begin{align}
		\zeta^F_{g,r}: \mathbb{A}^{\otimes (2g+r-1)}\to I\ ,  \quad r\ge 1
		\end{align} at all genera $g$ provided that we have $r\ge 1$ punctures.
\end{corollary}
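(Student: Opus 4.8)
The plan is to assemble the constructions recalled in Section~\ref{secclassfunctions} into a single argument. Fix a connected surface $\Sigma$ of genus $g$ with $r\ge 1$ unparametrized boundary components, and let $\Sigma_+$ be $\Sigma$ equipped with one additional marked interval on a boundary component, so that $\Map(\Sigma_+)=\Map(\Sigma)$ and $\Sigma_+,\Sigma$ have the same factorization homology $\int_{\Sigma_+}\cat{A}=\int_\Sigma\cat{A}$ and the same quantum structure sheaf. First I would invoke the modular extension \cite[Theorem~8.3]{microcosm}: the symmetric Frobenius algebra $F\in\cat{A}$ yields the open correlator, in particular a $\Map(\Sigma_+)$-invariant vector $\lambda_{\Sigma_+}^F\in\Ao(\Sigma_+;F)$. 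Through the factorization homology description of $\Ao$ from Theorem~\ref{thmao} this is a $\Map(\Sigma_+)$-invariant morphism $\qss\to F\act\qss$ in $\int_\Sigma\cat{A}$; post-composing with the counit $\varepsilon\colon F\to I$ produces a $\Map(\Sigma)$-invariant element of the skein algebra $\SkAlg_\cat{A}(\Sigma)=\END_{\int_\Sigma\cat{A}}(\qss)$, equivalently, via the moduli-algebra reformulation \eqref{eqnmorequiv}, a $\Map(\Sigma_+)$-invariant morphism $I\to\mathfrak{a}_{\Sigma_+}$ in $\cat{A}$, where $\mathfrak{a}_{\Sigma_+}=\END_{\int_{\Sigma_+}\cat{A}}(\qss)$ carries the mapping class group action of \cite[Section~5.2]{bzbj}.

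Next I would use the explicit description of the moduli algebra: by \cite[Corollary~6.11]{bzbj} there is an isomorphism $\mathfrak{a}_{\Sigma_+}\cong\mathbb{F}^{\otimes(2g+r-1)}$ in $\cat{A}$, with $\mathbb{F}=\int^{X\in\cat{A}}X^\vee\otimes X$ the canonical coend. Composing $I\to\mathfrak{a}_{\Sigma_+}$ with this isomorphism gives a morphism $I\to\mathbb{F}^{\otimes(2g+r-1)}$. Because $\cat{A}$ is rigid and dualization interchanges limits with colimits, the end $\mathbb{A}=\int_{X\in\cat{A}}X^\vee\otimes X$ is dual to $\mathbb{F}$, so a morphism $I\to\mathbb{F}^{\otimes(2g+r-1)}$ is the same datum as a morphism $\zeta^F_{g,r}\colon\mathbb{A}^{\otimes(2g+r-1)}\to I$; this is the class function asserted by the corollary.

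It remains to transfer the invariance. The mapping class group $\Map(\Sigma_+)$ acts on $\mathfrak{a}_{\Sigma_+}$ by algebra automorphisms, and under $\mathfrak{a}_{\Sigma_+}\cong\mathbb{F}^{\otimes(2g+r-1)}$ this is precisely the action on tensor powers of $\mathbb{F}$ from \cite[Section~5.2]{bzbj} (in the torus case $g=r=1$ the $B_3$-action of \cite{brochierjordane}). Dualizing this action yields the $\Map(\Sigma_+)$-action on $\mathbb{A}^{\otimes(2g+r-1)}$ appearing in the statement, and the invariance of $I\to\mathfrak{a}_{\Sigma_+}$ --- inherited from the mapping class group invariance of $\lambda^F$ --- translates into the invariance of $\zeta^F_{g,r}$. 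Since every identification used (the equivalence of Theorem~\ref{thmao}, the isomorphisms \eqref{eqnmorequiv}, the coend/end duality, and \cite[Corollary~6.11]{bzbj}) is canonical, the resulting $\zeta^F_{g,r}$ is canonical, which completes the plan.

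The main obstacle I anticipate is purely organizational: one must check that the several mapping class group actions in play --- on $\Ao(\Sigma_+;F)$, on $\SkAlg_\cat{A}(\Sigma)$, on $\mathfrak{a}_{\Sigma_+}$, on $\mathbb{F}^{\otimes(2g+r-1)}$, and finally on $\mathbb{A}^{\otimes(2g+r-1)}$ --- are honestly carried to one another by the chain of isomorphisms, and in particular that the dualization step $\mathbb{F}\leadsto\mathbb{A}$ is equivariant on the nose rather than up to an irrelevant but distracting automorphism. Once these compatibilities are confirmed (they are already essentially established in the references invoked), the statement follows; note that the hypothesis $r\ge 1$ is exactly what makes $\mathfrak{a}_{\Sigma_+}$ a tensor power of $\mathbb{F}$ with the stated exponent, so the closed case is genuinely outside the scope of the corollary.
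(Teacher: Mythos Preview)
Your proposal is correct and follows essentially the same route as the paper: the corollary in the paper is stated as a summary of the discussion in Section~\ref{secclassfunctions}, which proceeds exactly as you outline (open correlator via the modular microcosm principle, factorization homology description from Theorem~\ref{thmao}, passage to the moduli algebra via \eqref{eqnmorequiv}, the identification $\mathfrak{a}_{\Sigma_+}\cong\mathbb{F}^{\otimes(2g+r-1)}$ from \cite[Corollary~6.11]{bzbj}, and dualization to $\mathbb{A}$). The only cosmetic difference is that the paper first produces the skein algebra element $\lambda_\Sigma^F\in\SkAlg_\cat{A}(\Sigma)$ for the surface without marked intervals and then adds the interval, while you start directly from $\lambda_{\Sigma_+}^F$ and post-compose with the counit; the paper explicitly notes these agree via the sewing constraints.
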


\begin{example}\label{exellclassfunction}
From \cite[Section~9]{microcosm} and the description of the factorization homology of the torus with one boundary component via the elliptic double \cite[Section~6.5]{bzbj}, see also \cite{brochierjordane}, it follows that 
the elliptic class function $\zeta^F_{1,1}:\mathbb{A}^{\otimes 2} \to I$ is dual to the
 map $I \to \cat{D}_\cat{A}$ is given by
\begin{align}
		I \ra{v_4} F^{\otimes 4} \ra{\id \otimes c_{F,F}^{-1} \otimes \id} F^{\otimes 4} \ra{ \text{self-duality}} (F^\vee \otimes F)^{\otimes 2} \ra{\substack{\text{structure map} \\ \text{of the coend}}} \mathbb{F}^{\otimes 2}=\cat{D}_\cat{A} \ .     \label{eqninvinD}
\end{align}
\end{example}

	\section{Torus partition function\label{sectorus}}
In this section,
we demonstrate that the correlators from Theorem~\ref{thmmainlong} are  explicitly computable by extracting the torus partition function.

	\subsection{Preparation: Traces on factorization homology of closed surfaces\label{sectraces}}
The results on the factorization homology of modular categories obtained in \cite{brochierwoike,reflection}
can be used to define traces for endomorphisms in factorization homology.

Let $\cat{A}$ be a finite ribbon category and $\Sigma$ a closed surface.
If $P\in \int_\Sigma \cat{A}$ is projective, the image of an endomorphism $f$ of $P$ under
$\End_{\int_\Sigma \cat{A}}(P) \to HH_0(\int_\Sigma \cat{A})=\int^{P\in\Proj \int_\Sigma \cat{A}} \End_{\int_\Sigma \cat{A}}(P)$ is the \emph{Hattori-Stallings trace}~\cite{hattori,stallings}
that we denote by $\trace_\catf{HS}(f) \in HH_0(\int_\Sigma \cat{A})$. 
Recall that $HH_0$ is the zeroth Hochschild homology: For a finite linear category $\cat{C}$, it is  defined via $HH_0(\cat{C}) = \int^{P\in \Proj \cat{C}} \cat{C}(P,P)$; for more on Hochschild homology in the context of quantum algebra, see e.g.~\cite[Section~2]{dva}.

Using the $\Rexf$-valued ansular functor $\widehat{\cat{A}}$ associated to $\cat{A}$~\cite{cyclic,mwansular}
($\Rexf$ is obtained from $\Lexf$ by replacing left exact functors with right exact ones), it is proved in \cite[Section~4]{brochierwoike}
that for a handlebody $H$ with $n$ embedded disks and boundary surface $\Sigma=\partial H$ with $n$ boundary components (the embedded disks are converted into boundary components of the boundary surface), one may associate a right exact functor
\begin{align}
	\PhiA(H):\int_\Sigma \cat{A}\to \cat{A}^{\boxtimes n} \ , 
\end{align}
the so-called \emph{generalized skein module} for $H$.
By \cite[Theorem~4.2]{brochierwoike} 
\begin{align}
	\PhiA(H)\circ \qss \cong \widehat{\cat{A}}(H)      \label{eqnphiaqss}
\end{align}
which makes $\widehat{\cat{A}}(H)$, seen here as object in $\cat{A}^{\boxtimes n}$ via the cyclic structure,	a module over the skein algebra~\eqref{eqnskeinalgebras}.
This construction agrees with the classical handlebody skein modules in the semisimple case and, after a suitable cocompletion,
 with the admissible skein modules~\cite{asm} 
in the non-semisimple case as is established in \cite{mwskein}.

If $\cat{A}$ is modular, then all functors $\PhiA$ are equivalences~\cite[Corollary~6.6]{reflection}, and
we can use, for a closed surface, the equivalence \begin{align}
	\label{phaequiveqn}	\PhiA(H):\int_\Sigma \cat{A}\ra{\simeq} \vect\end{align} 
for a handlebody $H$ with $\partial H=\Sigma$ to see $\trace_\catf{HS}(f)$ as an element in $k$ under the canonical isomorphism
$HH_0(\vect)=\int^{V \in \vect}V^*\otimes V\cong k$ given by the trace of matrices.
In other words, we identify the Hattori-Stallings trace in the modular case with the number
\begin{align}
	\label{eqntracenumber}	\trace _\catf{HS}(f) = \catf{tr}\,  \PhiA(H)f\in k \quad \text{for} \quad f \in \End_{\int_\Sigma \cat{A}}(X) \ , 
\end{align}
where $\catf{tr}$ is the matrix trace.
Note that we do not need to ask $X$ to be projective since all objects of $\int_\Sigma \cat{A}\simeq \vect$ are automatically projective.

\begin{proposition}\label{proptrace}
	Let $\cat{A}$ be a modular category.
	Then for a closed surface $\Sigma$ the trace $\trace_\catf{HS}$
	does not depend on the choice of $H$ in \eqref{eqntracenumber},  and $\trace_\catf{HS}$ is cyclic and non-degenerate.
	Let 
	   $f\in \End_{\int_\Sigma \cat{A}}(X)$ be an idempotent, i.e.\ $f\circ f=f$, 
	   then $\trace_\catf{HS}(f)$ 
	    is a non-negative integer,
	    namely the dimension of the subspace of the skein module for $H$ onto which $\PhiA(H)f$ projects.
\end{proposition}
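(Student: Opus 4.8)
The plan is to reduce every assertion to an elementary statement about the matrix trace on $\vect$, using that for modular $\cat{A}$ and a handlebody $H$ with $\partial H=\Sigma$ the generalized skein module functor $\PhiA(H):\int_\Sigma\cat{A}\ra{\simeq}\vect$ is an equivalence by \cite[Corollary~6.6]{reflection}. First I would record that $HH_0(\vect)=\int^{V\in\vect}\End_\vect(V)$ is canonically identified with $k$ via the matrix trace, with generator $[\id_k]$, so that this identification involves no choices. Since $\trace_\catf{HS}(f)$ a priori lives in $HH_0(\int_\Sigma\cat{A})$, which is an intrinsic invariant of the linear category $\int_\Sigma\cat{A}$ and in particular does not refer to any handlebody, the only way $H$ can enter the number in \eqref{eqntracenumber} is through the isomorphism $HH_0(\int_\Sigma\cat{A})\cong HH_0(\vect)=k$ induced by $\PhiA(H)$.

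For the independence of $H$, I would use that $HH_0(-)$ is functorial for $k$-linear equivalences and that, for two handlebodies $H,H'$ with $\partial H=\partial H'=\Sigma$, the composite $\PhiA(H')\circ\PhiA(H)^{-1}$ is an autoequivalence of $\vect$, hence naturally isomorphic to $\id_\vect$, and therefore acts as the identity on $HH_0(\vect)=k$. At the level of \eqref{eqntracenumber} this amounts to the concrete observation that $\PhiA(H')(f)$ is conjugate to $\PhiA(H)(f)$ by the component at $\PhiA(H)(X)$ of a natural isomorphism $\PhiA(H')\circ\PhiA(H)^{-1}\cong\id_\vect$, so that the two matrix traces coincide. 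Cyclicity is then immediate, since $\PhiA(H)$ is a $k$-linear functor: $\trace_\catf{HS}(g\circ f)=\catf{tr}\bigl(\PhiA(H)(g)\PhiA(H)(f)\bigr)=\catf{tr}\bigl(\PhiA(H)(f)\PhiA(H)(g)\bigr)=\trace_\catf{HS}(f\circ g)$ for composable $f,g$, by the cyclicity of the matrix trace. For non-degeneracy I would transport the composition pairing $\Hom_{\int_\Sigma\cat{A}}(X,Y)\otimes\Hom_{\int_\Sigma\cat{A}}(Y,X)\to k$, $(f,g)\mapsto\trace_\catf{HS}(g\circ f)$, along $\PhiA(H)$ to the pairing $(a,b)\mapsto\catf{tr}(ba)$ between the $\Hom$-spaces of finite-dimensional vector spaces, which is manifestly non-degenerate.

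Finally, for an idempotent $f$ with $f\circ f=f$ in $\End_{\int_\Sigma\cat{A}}(X)$, the operator $\PhiA(H)(f)$ is an idempotent linear endomorphism of the finite-dimensional vector space $\PhiA(H)(X)$, i.e.\ the projection onto $\im\PhiA(H)(f)$ along $\ker\PhiA(H)(f)$; the trace of such a projection is $\dim\,\im\PhiA(H)(f)\in\mathbb{Z}_{\ge 0}$. For $X=\qss$ I would invoke \eqref{eqnphiaqss} to identify $\PhiA(H)(X)$ with the skein module $\widehat{\cat{A}}(H)$, so that this dimension is precisely the dimension of the subspace of that skein module onto which $\PhiA(H)(f)$ projects. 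The only step that is more than a routine unwinding of definitions is the independence of $H$, where the actual content is the standard fact that every $k$-linear autoequivalence of $\vect$ is naturally isomorphic to the identity and hence induces the identity on $HH_0(\vect)$; I would make sure to spell this out, but it is not a genuine obstacle.
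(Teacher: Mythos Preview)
Your proof is correct and follows the same overall strategy as the paper: transport everything to $\vect$ via the equivalence $\PhiA(H)$ and use the elementary properties of the matrix trace there. The one point of divergence worth noting is the independence of $H$. The paper invokes connectedness of modular categories \cite[Corollary~8.3]{brochierwoike} to obtain a natural isomorphism $\alpha:\PhiA(H)\cong\PhiA(H')$ directly, and then uses $\alpha_X\circ\PhiA(H)f=(\PhiA(H')f)\circ\alpha_X$ together with cyclicity of the matrix trace. You instead observe that $\PhiA(H')\circ\PhiA(H)^{-1}$ is a $k$-linear autoequivalence of $\vect$, which is automatically naturally isomorphic to the identity, and hence induces the identity on $HH_0(\vect)=k$. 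Your route is more elementary here, since it bypasses the connectedness result entirely and only uses the trivial fact that $\vect$ has no nontrivial $k$-linear autoequivalences; the paper's route, by contrast, produces a natural isomorphism between the $\PhiA$'s themselves, which is stronger information but not needed for this proposition. Everything else---cyclicity, non-degeneracy, and the idempotent statement---is argued identically in both.
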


\begin{proof}
	For a different handlebody $H'$ with $\partial H'=\Sigma$,
	we have a natural isomorphism $\alpha : \PhiA(H)\ra{\cong} \PhiA(H')$ by connectedness of $\cat{A}$~\cite[Corollary~8.3]{brochierwoike}.
	For any endomorphism $f: X \to X$ in $\int_\Sigma \cat{A}$, this implies $\alpha_X\circ \PhiA(H)f = (\PhiA(H')f)\circ  \alpha_X$.
	The cyclicity of the matrix trace implies now the independence of $H$. 
	By~\eqref{eqnphiaqss} the cyclicity and non-degeneracy of the matrix trace carry over  to $\trace_\catf{HS}$.
	Finally, if $f$ is an idempotent, $ \PhiA(H)f$ is an idempotent matrix whose trace is a non-negative integer. 
	\end{proof}

Here `non-negative integer' means that the element is in the image of the standard map $\mathbb{Z}_{\ge 0}\to k$. Without assumption on the characteristic ok $k$, this map is not necessarily injective of course.

\subsection{The notion of the partition function in the non-semisimple case\label{sectoruspartdef}}
As was explained in the introduction,
the notion of a torus partition function is not a priori defined beyond the semisimple situation.
We will first have to introduce a reasonable notion. 
We will see 
that the proposed definition is, in hindsight, in line with existing definitions \cite{frs1,cardycartan}. This will not be obvious from the definition, but will follow from Theorem~\ref{thmpartition} below.

For a special symmetric Frobenius algebra $F\in\cat{A}$ in a modular category $\cat{A}$, we will first consider the vector  \begin{align}  \lambda_{\mathbb{T}^2_1}^F\in  \Ao(  \mathbb{T}_1^2 ; F)\cong \FAA(\mathbb{T}_1^2;\bzero(F))\label{eqnlambdatorus}
	\end{align} that 
 $F$ gives rise to by Theorem~\ref{thmsym}, and that
induces the correlator for $\mathbb{T}_1^2$ in Theorem~\ref{thmmainlong}.

By  construction 
 this vector in $\Ao(  \mathbb{T}_1^2 ; F)$
is just the vector that 
 $F$ gives  us through the modular microcosm principle.
This allows us to use the recipe in \cite[Section~9]{microcosm} to calculate it: We start with a disk with five marked intervals in its boundary that each carry the label $F$. The associated space of conformal blocks is $\cat{A}(I,F^{\otimes 5})$. For this disk, the (open) correlator is  a vector in $\cat{A}(I,F^{\otimes 5})$, namely the total arity five operation $I\to F^{\otimes 5}$ from~\eqref{eqntarn} of $F$ that we visualize by a red graph drawn on the surface, see Figure~\ref{figcorrelatortorus}. Then we glue two pairs of intervals together to obtain $\mathbb{T}_1^2$, again with the correlator drawn in red.

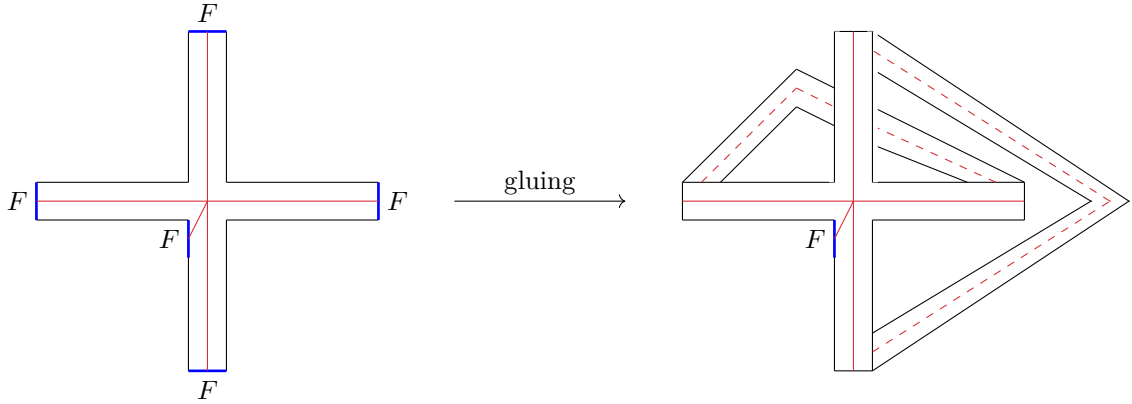
\begin{figure}[h]
\begin{tikzpicture}[scale=0.5]
	\begin{pgfonlayer}{nodelayer}
		\node [style=none] (0) at (4, 0) {};
		\node [style=none] (1) at (5, 0) {};
		\node [style=none] (2) at (4, 4) {};
		\node [style=none] (3) at (5, 4) {};
		\node [style=none] (4) at (4, -5) {};
		\node [style=none] (5) at (5, -5) {};
		\node [style=none] (6) at (4, -1) {};
		\node [style=none] (7) at (5, -1) {};
		\node [style=none] (8) at (9, 0) {};
		\node [style=none] (9) at (9, -1) {};
		\node [style=none] (10) at (0, 0) {};
		\node [style=none] (11) at (0, -1) {};
		\node [style=none] (12) at (-13, 0) {};
		\node [style=none] (13) at (-12, 0) {};
		\node [style=none] (14) at (-13, 4) {};
		\node [style=none] (15) at (-12, 4) {};
		\node [style=none] (16) at (-13, -5) {};
		\node [style=none] (17) at (-12, -5) {};
		\node [style=none] (18) at (-13, -1) {};
		\node [style=none] (19) at (-12, -1) {};
		\node [style=none] (20) at (-8, 0) {};
		\node [style=none] (21) at (-8, -1) {};
		\node [style=none] (22) at (-17, 0) {};
		\node [style=none] (23) at (-17, -1) {};
		\node [style=none] (24) at (-13, -2) {};
		\node [style=none] (25) at (4, -2) {};
		\node [style=none] (26) at (7.5, 0) {};
		\node [style=none] (27) at (1, 0) {};
		\node [style=none] (28) at (10.75, -0.5) {};
		\node [style=none] (29) at (11.75, -0.5) {};
		\node [style=none] (30) at (10.75, -0.5) {};
		\node [style=none] (31) at (11.75, -0.5) {};
		\node [style=none] (32) at (5, -4) {};
		\node [style=none] (33) at (5, 3) {};
		\node [style=none] (34) at (3, 2) {};
		\node [style=none] (35) at (5, 1) {};
		\node [style=none] (36) at (3, 3) {};
		\node [style=none] (37) at (5, 2) {};
		\node [style=none] (38) at (-12.5, -0.5) {};
		\node [style=none] (39) at (4.5, -0.5) {};
		\node [style=none] (40) at (-12.5, -5) {};
		\node [style=none] (41) at (-8, -0.5) {};
		\node [style=none] (42) at (-12.5, 4) {};
		\node [style=none] (43) at (-17, -0.5) {};
		\node [style=none] (44) at (-13, -1.5) {};
		\node [style=none] (45) at (4.5, -5) {};
		\node [style=none] (46) at (4.5, 4) {};
		\node [style=none] (47) at (9, -0.5) {};
		\node [style=none] (48) at (0, -0.5) {};
		\node [style=none] (49) at (4, -1.5) {};
		\node [style=none] (50) at (3, 2.5) {};
		\node [style=none] (51) at (5, 1.5) {};
		\node [style=none] (52) at (0.5, 0) {};
		\node [style=none] (53) at (8.25, 0) {};
		\node [style=none] (54) at (11.25, -0.5) {};
		\node [style=none] (55) at (11.25, -0.5) {};
		\node [style=none] (56) at (5, 3.5) {};
		\node [style=none] (57) at (5, -4.5) {};
		\node [style=none] (58) at (-17.5, -0.5) {$F$};
		\node [style=none] (59) at (3.5, -1.5) {$F$};
		\node [style=none] (60) at (-12.5, 4.5) {$F$};
		\node [style=none] (61) at (-7.5, -0.5) {$F$};
		\node [style=none] (62) at (-12.5, -5.5) {$F$};
		\node [style=none] (63) at (-13.5, -1.5) {$F$};
		\node [style=none] (64) at (-6, -0.5) {};
		\node [style=none] (65) at (-1.5, -0.5) {};
		\node [style=none] (66) at (-3.75, 0) {gluing};
		\node [style=none] (67) at (4, 2.5) {};
		\node [style=none] (68) at (4, 2) {};
		\node [style=none] (69) at (4, 1.5) {};
	\end{pgfonlayer}
	\begin{pgfonlayer}{edgelayer}
		\draw (0.center) to (10.center);
		\draw (6.center) to (11.center);
		\draw (6.center) to (4.center);
		\draw (7.center) to (5.center);
		\draw (7.center) to (9.center);
		\draw (1.center) to (8.center);
		\draw (13.center) to (15.center);
		\draw (12.center) to (14.center);
		\draw (12.center) to (22.center);
		\draw (18.center) to (23.center);
		\draw (18.center) to (16.center);
		\draw (19.center) to (17.center);
		\draw (19.center) to (21.center);
		\draw (13.center) to (20.center);
		\draw [style=open] (14.center) to (15.center);
		\draw [style=open] (20.center) to (21.center);
		\draw [style=open] (16.center) to (17.center);
		\draw [style=open] (22.center) to (23.center);
		\draw [style=open] (18.center) to (24.center);
		\draw (8.center) to (9.center);
		\draw (10.center) to (11.center);
		\draw (2.center) to (3.center);
		\draw (4.center) to (5.center);
		\draw [style=open] (6.center) to (25.center);
		\draw (28.center) to (30.center);
		\draw (29.center) to (31.center);
		\draw (5.center) to (29.center);
		\draw (32.center) to (28.center);
		\draw (30.center) to (33.center);
		\draw (31.center) to (3.center);
		\draw (27.center) to (34.center);
		\draw (10.center) to (36.center);
		\draw (35.center) to (26.center);
		\draw (37.center) to (8.center);
		\draw [style=RED] (40.center) to (38.center);
		\draw [style=RED] (38.center) to (41.center);
		\draw [style=RED] (38.center) to (42.center);
		\draw [style=RED] (44.center) to (38.center);
		\draw [style=RED] (38.center) to (43.center);
		\draw [style=RED] (39.center) to (45.center);
		\draw [style=RED] (49.center) to (39.center);
		\draw [style=RED] (39.center) to (47.center);
		\draw [style=RED] (39.center) to (48.center);
		\draw [style=RED] (39.center) to (46.center);
		\draw [style=REDdashed] (52.center) to (50.center);
		\draw [style=REDdashed] (53.center) to (51.center);
		\draw [style=mover] (1.center) to (3.center);
		\draw [style=mover] (2.center) to (0.center);
		\draw [style=REDdashed] (57.center) to (54.center);
		\draw [style=REDdashed] (54.center) to (55.center);
		\draw [style=REDdashed] (55.center) to (56.center);
		\draw [style=end arrow] (64.center) to (65.center);
		\draw (36.center) to (67.center);
		\draw (34.center) to (69.center);
		\draw [style=REDdashed] (50.center) to (68.center);
	\end{pgfonlayer}
\end{tikzpicture}
	\caption{For the calculation of the correlator for the torus with one boundary component.}
	\label{figcorrelatortorus}
\end{figure}

By Theorem~\ref{thmao} this corresponds to a morphism \begin{align}  \cat{O}_{\mathbb{T}_1^2} ^\cat{A}\to  F\act \cat{O}_{\mathbb{T}_1^2}^\cat{A}\label{eqncorrelatortorus}\end{align} in the factorization homology $\int_{\mathbb{T}_1^2}\cat{A}$ that by postcomposing with the counit $\varepsilon:F\to I$ gives us an element in $\SkAlg_\cat{A}(\mathbb{T}_1^2)$
invariant under
$\Map(\mathbb{T}_1^2)\cong B_3$
 or, equivalently, a generalized  element $I\to \cat{D}_\cat{A}$ in the elliptic double $\cat{D}_\cat{A}:= \END (\cat{O}^\cat{A}_{\mathbb{T}_1^2})\in\cat{A}$, namely~\eqref{eqninvinD}.

 From the element in $\SkAlg_\cat{A}(\mathbb{T}_1^2)$, we extract the partition function of the closed torus:
We denote its image 
 under $\int_{\mathbb{T}_1^2}\cat{A}\to \int_{\mathbb{T}^2} \cat{A}$ induced by the embedding $\mathbb{T}_1^2 \to \mathbb{T}_1^2 \cup_{\mathbb{S}^1} \mathbb{D}^2=\mathbb{T}^2$ by $\mathcal{Z}^F \in \SkAlg_\cat{A}(\mathbb{T}^2)$.
By \eqref{phaequiveqn} and~\eqref{eqnphiaqss} $\mathcal{Z}^F$ can be equivalently seen as an endomorphism $\FA(\mathbb{T}^2)\to \FA(\mathbb{T}^2)$
(this would a priori apply to the $\Rexf$-version of $\FA(\mathbb{T}^2)$ which however is dual to the $\Lexf$-valued one, so that both versions of the space of conformal blocks have the same endomorphisms).

\begin{definition}\label{defpartition}
	We call the endomorphism $\mathcal{Z}^F$ of $\FA(\mathbb{T}^2)$ the \emph{torus partition function} of the open-closed system of correlators associated to the special symmetric Frobenius algebra $F\in\cat{A}$.
\end{definition}

\begin{remark}
	This construction of an element in the skein algebra  and the associated endomorphism of the space of conformal blocks does not only work for the torus, but all closed surfaces of course. Hence, there is in principle a factorization homology construction of higher genus partition functions, but we focus our attention on the torus in this article.
	\end{remark}

\subsection{Modular invariance}
Before calculating the torus partition function in more detail,
we establish the property of modular invariance known from the semisimple case~\cite[Theorem~5.1]{frs1}:

\begin{proposition}\label{propmodularinvariance}
	The torus partition function $\mathcal{Z}^F$ has the modular invariance property
	\begin{align} [\mathcal{Z}^F,\FA(f)]=0 \quad \text{for all}\quad f \in \SL(2,\mathbb{Z}) \ . \label{eqncommutator}
	\end{align}
\end{proposition}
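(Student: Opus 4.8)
The plan is to show that the commutator identity $[\mathcal{Z}^F,\FA(f)]=0$ for $f\in\SL(2,\mathbb{Z})$ is a direct consequence of the fact that $\mathcal{Z}^F$ was built from a mapping class group invariant element of the skein algebra. Recall that $\mathcal{Z}^F$ arises as the image in $\SkAlg_\cat{A}(\mathbb{T}^2)$ of the element $\lambda_{\mathbb{T}^2_1}^F\in\SkAlg_\cat{A}(\mathbb{T}_1^2)$ under the map induced by the embedding $\mathbb{T}^2_1\hookrightarrow \mathbb{T}^2$, and that by Corollary~\ref{corholp} this element is invariant under $\Map(\mathbb{T}^2_1)\cong B_3$. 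First I would note that the gluing-in of a disk along $\mathbb{S}^1$ induces a surjection $B_3\cong\Map(\mathbb{T}^2_1)\to\Map(\mathbb{T}^2)\cong\SL(2,\mathbb{Z})$ on mapping class groups, compatibly with the functor $\int_{\mathbb{T}^2_1}\cat{A}\to\int_{\mathbb{T}^2}\cat{A}$ on factorization homology; this is standard (the central element of $B_3$ maps to the $\SL(2,\mathbb{Z})$-central element, and $\SL(2,\mathbb{Z})=B_3/Z'$). Hence the $B_3$-invariance of $\lambda_{\mathbb{T}^2_1}^F$ pushes forward to $\SL(2,\mathbb{Z})$-invariance of $\mathcal{Z}^F$ as an element of $\SkAlg_\cat{A}(\mathbb{T}^2)$.

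The second step is to translate invariance in the skein algebra into the commutator statement for the endomorphism of $\FA(\mathbb{T}^2)$. Here the key input is the equivalence $\PhiA(H):\int_{\mathbb{T}^2}\cat{A}\ra{\simeq}\vect$ for the solid torus $H=\mathbb{S}^1\times\mathbb{D}^2$ together with $\PhiA(H)\circ\qss\cong\widehat{\cat{A}}(H)\cong\FA(\mathbb{T}^2)$ from~\eqref{eqnphiaqss}, which makes $\FA(\mathbb{T}^2)$ a module over $\SkAlg_\cat{A}(\mathbb{T}^2)=\End_{\int_{\mathbb{T}^2}\cat{A}}(\qss)$. Under this identification, $\mathcal{Z}^F$ acts as an endomorphism of $\FA(\mathbb{T}^2)$, and for $f\in\SL(2,\mathbb{Z})=\Map(\mathbb{T}^2)$ the operator $\FA(f)$ is the image of $f$ under the mapping class group action, which on factorization homology is implemented by $f_*:\int_{\mathbb{T}^2}\cat{A}\to\int_{\mathbb{T}^2}\cat{A}$. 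Since $\qss$ is a homotopy fixed point, $f_*$ intertwines the $\SkAlg_\cat{A}(\mathbb{T}^2)$-module structure up to the automorphism of $\SkAlg_\cat{A}(\mathbb{T}^2)$ given by $f$; concretely, $\FA(f)\circ(a\cdot-)=(f\cdot a)\cdot\FA(f)$ for $a\in\SkAlg_\cat{A}(\mathbb{T}^2)$. Applying this with $a=\mathcal{Z}^F$ and using $f\cdot\mathcal{Z}^F=\mathcal{Z}^F$ from the first step gives $\FA(f)\circ\mathcal{Z}^F=\mathcal{Z}^F\circ\FA(f)$, which is~\eqref{eqncommutator}.

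The main obstacle I anticipate is bookkeeping the anomaly: the action on $\FA(\mathbb{T}^2)$ is a priori only projective, i.e.\ through a central extension $\widetilde{\Map(\mathbb{T}^2)}$ as in~\eqref{eqnextensions}, so one must check that the scalars cancel in the commutator. This is automatic here because the commutator $[\mathcal{Z}^F,\FA(f)]$ does not see the central scalar: conjugation by an element of $\widetilde{\Map(\mathbb{T}^2)}$ only depends on its image in $\Map(\mathbb{T}^2)$, and the module-structure intertwining relation is insensitive to rescaling $\FA(f)$ by an element of $k^\times$. Alternatively one can invoke that, thanks to reflection equivariance, $\FAA$ and hence the relevant endomorphism algebras live canonically over $\Surf$ (Proposition~\ref{propmicrocosmcor} and the discussion around~\eqref{eqnequivmf}), so the action of $\SL(2,\mathbb{Z})$ here is genuinely non-projective and there is nothing to check. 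A secondary point worth spelling out carefully is the precise compatibility of the embedding-induced map on skein algebras with the surjection $B_3\twoheadrightarrow\SL(2,\mathbb{Z})$ on mapping class groups, but this is a purely topological statement about the solid torus and the once-punctured torus and follows from functoriality of factorization homology under embeddings of surfaces.
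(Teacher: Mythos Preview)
Your proposal is correct and follows essentially the same route as the paper: establish $\SL(2,\mathbb{Z})$-invariance of $\mathcal{Z}^F$ in $\SkAlg_\cat{A}(\mathbb{T}^2)$ by pushing forward the $B_3$-invariance from $\mathbb{T}_1^2$ along the equivariant map of skein algebras, and then translate this into the commutator statement via the mapping class group equivariant identification $\SkAlg_\cat{A}(\mathbb{T}^2)\cong\End(\FA(\mathbb{T}^2))$ with the adjoint action. One small imprecision: $\lambda_{\mathbb{T}_1^2}^F$ is a priori a morphism $\qss\to F\act\qss$, and it is only after postcomposing with the counit $\varepsilon:F\to I$ that you land in $\SkAlg_\cat{A}(\mathbb{T}_1^2)$; the paper makes this explicit, and you should too.
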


Of course, $\FA(f)$ is only defined up to scalar, but the commutator being zero does not depend on that.

\begin{proof}
	The map~\eqref{eqncorrelatortorus} is invariant under the action of $\Map(\mathbb{T}_1^2)\cong B_3$, the braid group on three strands, by Theorem~\ref{thmao}, and after postcomposing with the counit, we obtain an invariant element in $\SkAlg_\cat{A}(\mathbb{T}_1^2)$. Its image under the $B_3$-equivariant map $\SkAlg_\cat{A}(\mathbb{T}_1^2)\to\SkAlg_\cat{A}(\mathbb{T}^2)$ is $B_3$-invariant as well, where $\SkAlg_\cat{A}(\mathbb{T}^2)$ carries the $B_3$-action obtained by restriction of the geometric $\SL(2,\mathbb{Z})$-action along the epimorphism $B_3 \to \SL(2,\mathbb{Z})$. This implies that $\mathcal{Z}^F$ is by construction the image of an $\SL(2,\mathbb{Z})$-invariant  element in $\SkAlg_\cat{A}(\mathbb{T}^2)$ under an isomorphism  
	$\SkAlg_\cat{A}(\mathbb{T}^2)\cong \End(\FA(\mathbb{T}^2))$ that is mapping class group equivariant by \cite[Theorem~6.1]{reflection}.
	Here $\End(\FA(\mathbb{T}^2))$ carries the adjoint action.
	This proves~\eqref{eqncommutator}.
\end{proof}

\subsection{Coefficients of the partition function\label{seccoeff}}
If $\cat{A}$ is semisimple, $\FA(\mathbb{T}^2)$ is the linear span of a basis of simple objects $x_i$.
The coefficients  $\mathcal{Z}^F$ with respect to that basis are the usual coefficients of the torus partition function.
The purpose of the rest of this section is to make, inspired by \cite{frs1,cardycartan}, a proposal how to extract coefficients from $\mathcal{Z}^F$ beyond semisimplicity and to use the gluing pattern in Figure~\ref{figcorrelatortorus} to calculate these coefficients.

Let us think of the torus $\mathbb{T}^2$ as obtained by gluing the two boundaries  of a cylinder together. This gives us an identification $\FA(\mathbb{T}^2)\cong \cat{A}(I,\mathbb{F})$, and $\mathcal{Z}^F$ can be  seen as an endomorphism of 
that vector space. Let $P \in \Proj \cat{A}$, then any endomorphism $f:P\to P$ 
 induces a map $I\to \mathbb{F}$. We denote the associated element in $\FA(\mathbb{T}^2)$ by $[f]$. 	

This gives us an opportunity to obtain a coefficient $\mathcal{Z}_{P,Q}^F \in k$ of the partition function, i.e.\
a `$(P,Q)$-matrix element' of the endomorphism $\mathcal{Z}^F$ for projective objects $P,Q\in\Proj \cat{A}$.
We should start with a warning: The coefficients that we extract follow a rather natural procedure prompted by the factorization homology description of $\mathcal{Z}^F$, but at the end of the day these coefficients are just numbers that one extracts. The identities at (indecomposable) projective objects seen as vectors $[\id_P]$ in the space of conformal blocks of the torus are \emph{not a basis}. We refer to Remark~\ref{remcardycartan} for a few more comments on this.

Fix $P,Q\in\Proj\cat{A}$. The vector $\mathcal{Z}^F[\id_Q]$ can be seen as an element in the skein module for a handlebody with $\partial H=\mathbb{T}^2$. This means by \cite{mwskein} that it can be described by an admissible skein  in $H$ in the sense of~\cite{asm}. The identity of $P^\vee$ winding around $\bar H$ once gives us similarly
an admissible skein in $\bar H$.
The union is an admissible skein in $\bar H\cup H\cong \mathbb{S}^2 \times \mathbb{S}^1$. 

The admissible skein module $\skA(\mathbb{S}^2 \times \mathbb{S}^1)$ is isomorphic to $k$ \cite[Theorem 5.8]{skeintft}, in fact canonically so because it is the Hochschild homology of the Müger center of $\cat{A}$ (which is opposite to $\int_{\mathbb{S}^2} \cat{A}$), see \cite[Lemma~4.5]{skeinfin} for the semisimple case and \cite[Corollary~5.2]{reflection}
for the non-semisimple case.
 This means that we obtain a number.
We define this number to be the coefficient $\mathcal{Z}_{P,Q}^F \in k$ of the partition function. This construction is in direct analogy to \cite[Section~5.3]{frs1}.

\spaceplease
\subsection{Calculating the coefficients of the	partition function using factorization homology}
We will first prove the following far-reaching generalization of~\cite[Lemma~5.2]{frs1}:

\begin{lemma}\label{lemmacoeff}
	With the notation from Section~\ref{seccoeff}, $\mathcal{Z}^F[f]=[z_f^F]$ for the endomorphism
	\begin{equation}\begin{array}{c}
	\begin{tikzpicture}[scale=0.5]
		\begin{pgfonlayer}{nodelayer}
			\node [style=none] (0) at (-1, -4) {};
			\node [style=none] (1) at (-1, 1) {};
			\node [style=none] (2) at (-1, 4) {};
			\node [style=none] (3) at (-2, -4) {};
			\node [style=none] (4) at (-2, -1) {};
			\node [style=none] (5) at (0, 1) {};
			\node [style=none] (6) at (-2, 3) {};
			\node [style=none] (8) at (-2, -4.75) {$F$};
			\node [style=none] (9) at (-1, -4.75) {$Q$};
			\node [style=wbox] (10) at (-1, -2.75) {$f$};
			\node [style=none] (11) at (-5, 0) {$=$};
			\node [style=none] (12) at (-6.5, 0) {$z_f^F$};
			\node [style=none] (13) at (-2, 4) {};
			\node [style=none] (14) at (-2, 1) {};
			\node [style=none] (15) at (-2, 1) {};
			\node [style=none] (16) at (0, 1) {};
		\end{pgfonlayer}
		\begin{pgfonlayer}{edgelayer}
			\draw (3.center) to (4.center);
			\draw [style=open] (4.center) to (3.center);
			\draw [style=open, bend left=90, looseness=1.75] (4.center) to (6.center);
			\draw [style=open] (4.center) to (15.center);
			\draw (0.center) to (1.center);
			\draw [style=bover, bend right=45] (4.center) to (16.center);
			\draw [style=open, bend right=45] (16.center) to (6.center);
			\draw [style=over] (1.center) to (2.center);
			\draw [style=open] (3.center) to (15.center);
			\draw [style=bover] (15.center) to (13.center);
		\end{pgfonlayer}
	\end{tikzpicture}\end{array}
	 \label{eqntorusaction}
	\end{equation}
	of the projective object $F\otimes Q$. Here $f$ is an endomorphism of $Q\in\Proj\cat{A}$.
\end{lemma}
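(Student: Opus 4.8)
The statement to be proven identifies, under the identification $\FA(\mathbb{T}^2)\cong\cat{A}(I,\mathbb{F})$, the image of the vector $[f]$ (for $f\in\End(Q)$, $Q\in\Proj\cat{A}$) under the torus partition function endomorphism $\mathcal{Z}^F$ with the vector $[z_f^F]$ coming from the explicit endomorphism of $F\otimes Q$ drawn in~\eqref{eqntorusaction}. My strategy is to trace back the definition of $\mathcal{Z}^F$ through the factorization homology description and to unwind it using the gluing pattern for the torus with one boundary component displayed in Figure~\ref{figcorrelatortorus}. Recall that $\mathcal{Z}^F$ was defined (Section~\ref{sectoruspartdef}) as the image in $\SkAlg_\cat{A}(\mathbb{T}^2)$ of the element of $\SkAlg_\cat{A}(\mathbb{T}_1^2)$ obtained from the morphism~\eqref{eqncorrelatortorus} $\cat{O}^\cat{A}_{\mathbb{T}_1^2}\to F\act\cat{O}^\cat{A}_{\mathbb{T}_1^2}$ in $\int_{\mathbb{T}_1^2}\cat{A}$ by postcomposing with the counit $\varepsilon:F\to I$, and then pushing forward along the embedding $\mathbb{T}_1^2\hookrightarrow\mathbb{T}^2$ that caps off the boundary circle with a disk. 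So the first step is to realise $[f]\in\FA(\mathbb{T}^2)\cong\cat{A}(I,\mathbb{F})$ concretely: writing $\mathbb{T}^2$ as the cylinder $\mathbb{S}^1\times[0,1]$ with the two ends glued, the vector $[f]$ is, via Theorem~\ref{thmao} and the moduli-algebra description~\eqref{eqnmorequiv}, the image of $\id_{\cat{O}^\cat{A}_{\mathbb{S}^1\times[0,1]}}$ decorated by $f:P\to P$ — more precisely, by the description in Section~\ref{seccoeff}, one inserts a skein coloured by $f$ winding once around the core of the solid torus.

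The heart of the argument is that the action of $\mathcal{Z}^F$ on such a vector is computed by the surface in Figure~\ref{figcorrelatortorus}: the correlator $\lambda^F_{\mathbb{T}^2_1}$ is the arity-five operation $v_5:I\to F^{\otimes 5}$ drawn on a disk and then glued along two pairs of intervals to produce $\mathbb{T}_1^2$; capping the remaining boundary circle and acting on $[f]$ amounts, on the level of factorization homology, to composing the morphism $\cat{O}^\cat{A}_{\mathbb{T}^2}\to F\act\cat{O}^\cat{A}_{\mathbb{T}^2}$ underlying $\mathcal{Z}^F$ with the skein presenting $[f]$. I would carry this out by using excision for $\int_{\mathbb{T}^2}\cat{A}$ (writing $\mathbb{T}^2$ as an annulus with its two ends identified, so $\int_{\mathbb{T}^2}\cat{A}$ is the trace of the $\cat{A}$-bimodule $\int_{\mathbb{S}^1\times[0,1]}\cat{A}\simeq\cat{A}$, giving $\mathbb{F}=\int^X X^\vee\otimes X$), translating the red graph $v_5$ together with the $F$-counit and the $f$-coloured loop into a morphism in $\cat{A}$, and simplifying. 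The Frobenius relations for $F$ (associativity, (co)unitality, and — crucially — the defining property $\mu\circ\delta=\id_F$ of a \emph{special} Frobenius algebra, together with $\varepsilon\circ\eta\ne0$, normalised) collapse the arity-five operation down to the handle/multiplication data appearing in~\eqref{eqntorusaction}. The resulting picture is exactly $z_f^F$: $F\otimes Q$ with $f$ acting on the $Q$-strand, the $F$-strand passing over/under the coend strand according to the braiding $c$ and its inverse (which enter because the two gluings of the disk are along intervals that wind around the handle, as in the elliptic-double presentation~\cite{brochierjordane}), closed up using the self-duality of $F$ and the coend structure map, as recorded in Example~\ref{exellclassfunction} (cf.~\eqref{eqninvinD}). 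Finally, since everything is natural and the identifications $\Ao(-;F,\dots)\cong\FAA(-;\bzero(F),\dots)$, Theorem~\ref{thmao}, and~\eqref{eqnmorequiv} are mapping-class-group equivariant and compatible with the gluings involved, the equality $\mathcal{Z}^F[f]=[z_f^F]$ follows.

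The main obstacle I anticipate is bookkeeping: faithfully translating the \emph{topological} gluing data of Figure~\ref{figcorrelatortorus} — which pair of intervals is glued to which, and with which framing/winding — into the correct \emph{algebraic} braiding insertions $c_{F,F}^{\pm1}$ and the correct order of tensor factors, so that the simplified graph is literally the one in~\eqref{eqntorusaction} and not a variant differing by a braid-group element. A clean way to control this is to not simplify by hand but to invoke the recipe of~\cite[Section~9]{microcosm} for the open correlator on the disk-with-five-intervals directly, combined with the explicit model for $\int_{\mathbb{T}_1^2}\cat{A}$ via the elliptic double from~\cite[Section~6.5]{bzbj}; this reduces the problem to the already-identified vector~\eqref{eqninvinD} of Example~\ref{exellclassfunction}, post-composed with the $f$-coloured loop, and the identification with $z_f^F$ then becomes a short diagram manipulation using only the coend structure maps and the special Frobenius relations. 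I would also need to check the one remaining compatibility — that capping the boundary circle of $\mathbb{T}_1^2$ with a disk corresponds on the algebra side to postcomposition with $\varepsilon:F\to I$ as asserted in Section~\ref{sectoruspartdef} — but this is immediate from the excision isomorphism and the description of the partial trace maps in~\cite[Section~6]{envas}.
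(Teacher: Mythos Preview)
Your proposal follows the definition of $\mathcal{Z}^F$ literally, via the once-punctured torus $\mathbb{T}_1^2$, the arity-five operation $v_5$, and the elliptic double model. The paper takes a different decomposition: it recognises that, once one thinks of $\mathbb{T}^2$ as the cylinder $C=\mathbb{S}^1\times[0,1]$ with its two ends glued, the action of $\mathcal{Z}^F$ on $[f]$ is computed entirely on the cylinder, namely $z_f^F=\Add(s_C^F)^Q f$, where $s_C^F$ is the image under Theorem~\ref{thmao} of the \emph{cylinder} correlator $\lambda_C^F\in\Ao(C;F,F)$ (a total-arity-four operation, not five) and $\Add$ is the skein action $\PhiA(\mathbb{D}^2\times[0,1])\circ -$. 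This is cleaner because $C$ has genus zero, so $\int_C\cat{A}\simeq\rmod\mathbb{F}$ and $s_C^F$ is just an $\mathbb{F}$-module map, computed directly from $\lambda_C^F$ via~\cite[Section~9]{microcosm}.

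More importantly, there is a genuine gap in your proposal. The phrase ``post-composed with the $f$-coloured loop'' hides the actual computation: the action of a skein-algebra element on $\FA(\mathbb{T}^2)$ is \emph{not} literal composition but the skein action $\Add=\PhiA(H)\circ -$, and computing $\PhiA(\mathbb{D}^2\times[0,1])$ explicitly requires the description via the \emph{Drinfeld map} $\mathbb{D}:\mathbb{F}\to\mathbb{A}$, extracted from~\cite[proof of Theorem~3.20]{bjss} and~\cite[Remark~6.4]{reflection}. The double braiding between the $F$-strand and the $Q$-strand visible in~\eqref{eqntorusaction} is precisely the component of the Drinfeld map; it does not come from the braidings $c_{F,F}^{\pm1}$ among $F$-strands that you anticipate as ``bookkeeping'', but from resolving the linking between the correlator skein and the $f$-loop in the solid torus. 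Without invoking this, your outline cannot produce the target diagram. (A minor point: postcomposition with $\varepsilon$ in Section~\ref{sectoruspartdef} removes the marked \emph{interval} on $\partial\mathbb{T}_1^2$, not the boundary circle itself; the capping $\mathbb{T}_1^2\hookrightarrow\mathbb{T}^2$ is a separate step.)
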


In the graphical calculus for braided monoidal categories used here, the diagrams are to be read from bottom to top. The Frobenius algebra is depicted in blue. Any $n$-valent vertex of the blue line is the standard operation of $F$ of arity $n$.

\begin{proof}[\slshape Proof of Lemma~\ref{lemmacoeff}]
	As in Section~\ref{seccoeff}, we think of 
	the torus as obtained by gluing the ends of a cylinder $C:=\mathbb{S}^1 \times[0,1]$ together.
	Then the construction from Section~\ref{sectoruspartdef}, when expressed in formulae, is
	\begin{align}
		z_f^F = \Add(s^F_{C} )^Q f \ , 
	\end{align}
	where
	\begin{itemize}
		
		\item $\Add(-)^Q - $ is the $Q$-component of the skein action for the cylinder  given by
		
		\footnotesize
		\begin{equation}\label{eqncompskeinaction}
			\begin{tikzcd}
				\Hom_{\int_{C}\cat{A}}(\cat{O}_{ C   }^\cat{A},(F\boxtimes F) \act \cat{O}_{ C   }^\cat{A})  \ar[]{rrr}{\Add=\PhiA(H)\circ -}  \ar[swap]{dd}{\Add(-)^Q -} &&& 	\Hom (   \FA^{\Rexf}(C;-) , \FA^{\Rexf}(C;F\otimes-,F\otimes-  ))  \ar{ddlll}{\text{\cite[Theorem 3.4]{mwskein} and restriction to $Q$}} \\
				\\ \Hom ( \FA(C;Q,Q^\vee), \FA( C;F\otimes Q,F\otimes Q^\vee  )   )
			\end{tikzcd} 
		\end{equation}
		\normalsize
		
		(the skein action $\Add$ is the postcomposition with $\PhiA$, see \cite[Section~4.3]{brochierwoike}),
		\item and $s^F_C \in 	\Hom_{\int_{C}\cat{A}}(  \cat{O}_{ C   }^\cat{A},(F\boxtimes F)\act\cat{O}_{ C   }^\cat{A})$ is the image of
		the open correlator $\lambda_C^F \in \Ao(C;F,F)$ under the isomorphism \begin{align}
			\Ao(C;F,F)\cong \Hom_{\int_{C}\cat{A}}(\cat{O}_{ C   }^\cat{A},(F\boxtimes F) \act \cat{O}_{ C   }^\cat{A})
			\end{align}
		from Theorem~\ref{thmao}.
		
	\end{itemize}
	To find~\eqref{eqntorusaction}, we need to calculate all these quantities:
	We begin by using the calculation recipe in \cite[Section~9]{microcosm} to conclude
	$\Ao(C;F,F)\cong \cat{A}\left(F,\int^{X \in \cat{A}} X^\vee \otimes F \otimes X\right)$ 
	and that $\lambda_C^F$ is given by
	\begin{align}
		\lambda_C^F : F \ra{\text{total arity $4$ operation}} F^{\otimes 3} \ra{\text{self-duality of $F$ and structure map of the coend}} \int^{X \in \cat{A}} X^\vee \otimes F \otimes X \ . 
	\end{align}
	By braiding $F$ over the left dummy variable, we obtain an element in $\cat{A}(F,F\otimes \mathbb{F})\cong \cat{A}(F\otimes F,\mathbb{F})$ that in the graphical calculus is
	given by
	\begin{equation}
	\begin{array}{c}\begin{tikzpicture}[scale=0.5]
			\begin{pgfonlayer}{nodelayer}
				\node [style=none] (0) at (-1, 0) {};
				\node [style=none] (1) at (-1, 2) {};
				\node [style=none] (2) at (-2, 4) {};
				\node [style=none] (3) at (-1, 4) {};
				\node [style=none] (4) at (0, 4) {};
				\node [style=none] (5) at (-2, 6) {};
				\node [style=none] (6) at (-1, 6) {};
				\node [style=none] (7) at (0, 6) {};
				\node [style=none] (8) at (-3, 0) {};
			\end{pgfonlayer}
			\begin{pgfonlayer}{edgelayer}
				\draw [style=open] (0.center) to (1.center);
				\draw [style=open] (1.center) to (3.center);
				\draw [style=open, in=270, out=120, looseness=0.75] (1.center) to (2.center);
				\draw [style=open] (1.center) to (4.center);
				\draw [style=open, in=-90, out=90] (2.center) to (6.center);
				\draw [style=open] (4.center) to (7.center);
				\draw [style=bover, in=0, out=90, looseness=0.75] (3.center) to (5.center);
				\draw [style=open, in=90, out=-180, looseness=0.50] (5.center) to (8.center);
			\end{pgfonlayer}
		\end{tikzpicture}
	\end{array}  \ , \label{eqnskeincyl}
	\end{equation}
	where the structure map sorting $F\otimes F$ into the coend via the self-duality is suppressed for readability; in other words, we display the components of the map.
	This map $F\otimes F \to \mathbb{F}$ provides us with a right $\mathbb{F}$-module map $F\otimes F \otimes \mathbb{F}\to\mathbb{F}$ for Lyubashenko coend algebra~\cite[Section~2]{lyu}, where $F\otimes F \otimes\mathbb{F}$ is the free right $\mathbb{F}$-module on $F\otimes F$. 
	The equivalence $\int_{C} \cat{A}\simeq \rmod\mathbb{F}$~\cite[Theorem~5.14]{bzbj} and the isomorphism from Theorem~\ref{thmao} is constructed in such a way such that
	the $\mathbb{F}$-module map
	$F\otimes F \otimes \mathbb{F}\to\mathbb{F}$
	seen as an element in $\Hom_{\int_{C} \cat{A}}(  \cat{O}_{ C   }^\cat{A}, (F\boxtimes F) \act 	\cat{O}_{ C   }^\cat{A})$ is exactly $s_C^F$. 
	Now we need to let $s_C^F$, with its description as an $\mathbb{F}$-module morphism~\eqref{eqnskeincyl}, act --- via the skein action  --- on $[f]$. 
	
	The skein action is defined by postcomposition with $\PhiA(\mathbb{D}^2\times [0,1])$,
	and a concrete description can be extracted from 
	 the proof of~\cite[Theorem~3.20]{bjss}. The authors do   not express this via the $\PhiA$-map, but the connection is explained in 
	\cite[Section~7.2]{brochierwoike}. In \cite[Remark~6.4]{reflection} this is used to express $\PhiA(\mathbb{D}^2\times [0,1])$ 
	 via the Drinfeld map, the well-known algebra map $\mathbb{D}:\mathbb{F}\to \mathbb{A}$ from the coend to the end~\cite{drinfeld},
	by using facts from the proof of \cite[Theorem~3.20]{bjss}. With this description of the skein action, we find 
	\begin{equation}
	\begin{array}{c}	\begin{tikzpicture}[scale=0.5]
			\begin{pgfonlayer}{nodelayer}
				\node [style=none] (11) at (-1, 1) {$=$};
				\node [style=none] (12) at (-4.75, 1) {$\Add(s^F_{C} )^Q f$};
				\node [style=none] (17) at (4, -5) {};
				\node [style=none] (18) at (4, -3) {};
				\node [style=none] (19) at (3, -1) {};
				\node [style=none] (20) at (4, -1) {};
				\node [style=none] (21) at (5, -1) {};
				\node [style=none] (22) at (3, 1) {};
				\node [style=none] (23) at (4, 1) {};
				\node [style=none] (24) at (5, 1) {};
				\node [style=none] (25) at (2, -1) {};
				\node [style=none] (26) at (6, 0) {};
				\node [style=none] (27) at (7, 0) {};
				\node [style=wbox] (28) at (8, -1) {$f$};
				\node [style=none] (29) at (5, 3) {};
				\node [style=none] (30) at (6, 3) {};
				\node [style=none] (31) at (5, 5) {};
				\node [style=none] (32) at (6, 5) {};
				\node [style=none] (35) at (8, -5) {};
				\node [style=none] (36) at (8, 0) {};
				\node [style=none] (37) at (7, 0) {};
				\node [style=none] (38) at (1, -1) {};
				\node [style=none] (39) at (1, 5) {};
				\node [style=none] (40) at (4, 5) {};
				\node [style=none] (41) at (6, 1) {};
				\node [style=none] (42) at (1, 7) {};
				\node [style=none] (43) at (6, 7) {};
				\node [style=none] (44) at (3.5, 6) {};
				\node [style=none] (45) at (7.5, 5) {};
				\node [style=none] (46) at (3.5, 1.25) {};
				\node [style=none] (47) at (7.25, -3.25) {};
			\end{pgfonlayer}
			\begin{pgfonlayer}{edgelayer}
				\draw [style=open] (17.center) to (18.center);
				\draw [style=open] (18.center) to (20.center);
				\draw [style=open, in=270, out=120, looseness=0.75] (18.center) to (19.center);
				\draw [style=open] (18.center) to (21.center);
				\draw [style=open, in=-90, out=90] (19.center) to (23.center);
				\draw [style=open] (21.center) to (24.center);
				\draw [style=bover, in=0, out=90, looseness=0.75] (20.center) to (22.center);
				\draw [style=open, in=90, out=-180, looseness=0.50] (22.center) to (25.center);
				\draw [style=open, in=-90, out=90, looseness=0.75] (30.center) to (31.center);
				\draw [bend left=90, looseness=19.75] (27.center) to (36.center);
				\draw (36.center) to (35.center);
				\draw [bend right=90, looseness=6.75] (26.center) to (37.center);
				\draw [style=open] (38.center) to (39.center);
				\draw [style=open, bend right=90, looseness=2.75] (38.center) to (25.center);
				\draw [style=open, bend left=90, looseness=2.00] (40.center) to (31.center);
				\draw [style=open] (40.center) to (23.center);
				\draw (26.center) to (41.center);
				\draw [in=-90, out=90] (41.center) to (29.center);
				\draw [style=bover, in=-90, out=90] (24.center) to (30.center);
				\draw (32.center) to (43.center);
				\draw [style=open] (39.center) to (42.center);
				\draw [style=over, in=-90, out=90, looseness=0.75] (29.center) to (32.center);
				\draw [style=mydots] (44.center) to (45.center);
				\draw [style=mydots] (44.center) to (46.center);
				\draw [style=mydots] (46.center) to (47.center);
				\draw [style=mydots] (47.center) to (45.center);
			\end{pgfonlayer}
		\end{tikzpicture}
	\end{array}  \ , \label{eqntorusaction0}
	\end{equation}
	with the components on the Drinfeld map highlighted through the dashed box.
	This gives us~\eqref{eqntorusaction}.
\end{proof}

This shows us that
the skein describing $\mathcal{Z}_{P,Q}^F$ is obtained by closing up the following diagram in $\mathbb{S}^2 \times [0,1]$:
\begin{equation}
	\begin{array}{c}
		\begin{tikzpicture}[scale=0.3]
			\begin{pgfonlayer}{nodelayer}
				\node [style=none] (2) at (0, 7) {};
				\node [style=none] (3) at (-1, -3.5) {};
				\node [style=none] (7) at (-1, 7) {};
				\node [style=none] (8) at (-1.65, -2.5) {$F$};
				\node [style=none] (9) at (0.55, -2.5) {$Q$};
				\node [style=none] (10) at (-5, -4) {};
				\node [style=none] (11) at (2, -4) {};
				\node [style=none] (12) at (0, -3.5) {};
				\node [style=none] (13) at (-0.25, -5.75) {};
				\node [style=none] (14) at (-0.25, -5.75) {$\mathbb{S}^2$};
				\node [style=none] (15) at (-4, -3.5) {};
				\node [style=none] (16) at (-4, 7) {};
				\node [style=none] (17) at (-3.25, -2.5) {$P$};
				\node [style=none] (19) at (0, 0.25) {};
				\node [style=none] (20) at (0, 3) {};
				\node [style=none] (21) at (0, 7) {};
				\node [style=none] (22) at (-1, 0.25) {};
				\node [style=none] (23) at (-1, 1) {};
				\node [style=none] (24) at (1, 3) {};
				\node [style=none] (25) at (-1, 5) {};
				\node [style=none] (29) at (-1, 7) {};
				\node [style=none] (30) at (-1, 3) {};
				\node [style=none] (31) at (-1, 3) {};
				\node [style=none] (32) at (1, 3) {};
			\end{pgfonlayer}
			\begin{pgfonlayer}{edgelayer}
				\draw [bend left=90, looseness=1.75] (10.center) to (11.center);
				\draw [bend right=90, looseness=1.75] (10.center) to (11.center);
				\draw [style=mydots, bend right=15] (10.center) to (11.center);
				\draw [style=mover] (15.center) to (16.center);
				\draw (22.center) to (23.center);
				\draw [style=open] (23.center) to (22.center);
				\draw [style=open, bend left=90, looseness=1.75] (23.center) to (25.center);
				\draw [style=open] (23.center) to (31.center);
				\draw (19.center) to (20.center);
				\draw [style=bover, bend right=45] (23.center) to (32.center);
				\draw [style=open, bend right=45] (32.center) to (25.center);
				\draw [style=over] (20.center) to (21.center);
				\draw [style=open] (22.center) to (31.center);
				\draw [style=bover] (31.center) to (29.center);
				\draw [style=bover] (3.center) to (22.center);
				\draw [style=over] (12.center) to (19.center);
			\end{pgfonlayer}
		\end{tikzpicture}
		\end{array}
		 \label{eqntorusactiononsphere}
\end{equation}
Of course, this is the familiar diagram from~\cite[eq.~(5.30)]{frs1}.

For $X\in \cat{A}$, denote by $\spr{X}$ the resulting object in $\int_{\mathbb{S}^2}\cat{A}$ that we obtain placing $X$ on a disk in the sphere.
By its definition in Section~\ref{seccoeff}, $\mathcal{Z}_{P,Q}^F$ is the trace of the endomorphism $\varphi(F;P,Q)$ of $\spr{P \otimes F\otimes Q}$ in $ \int_{\mathbb{S}^2}\cat{A}$ described by the skein~\eqref{eqntorusactiononsphere} 
 in the sense of Section~\ref{sectraces}.
We are now in position to prove the main result of this section:

\begin{theorem}\label{thmpartition} Let $F\in \cat{A}$ be a special Frobenius algebra in a modular category $\cat{A}$. 
	The coefficient $\mathcal{Z}_{P,Q}^F$ 
	of the
	torus partition function
	for $P,Q \in \Proj \cat{A}$ is a non-negative integer, and $\mathcal{Z}_{P,Q}^F \le \dim\, \cat{A}(P^\vee,F\otimes P)$. In the Cardy case $F=I$, the torus partition function for $P,Q \in \Proj \cat{A}$ is the entry $C_{P^\vee,Q}=\dim \, \cat{A}(P^\vee,Q)$ of the Cartan matrix of $\cat{A}$.  
\end{theorem}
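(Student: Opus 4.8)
The plan is to compute the Hattori–Stallings trace of the endomorphism $\varphi(F;P,Q)$ of $\spr{P\otimes F\otimes Q}\in\int_{\mathbb{S}^2}\cat{A}$ described by the skein~\eqref{eqntorusactiononsphere}, using Proposition~\ref{proptrace}: since $F$ is special, $\varphi(F;P,Q)$ should turn out to be an idempotent (up to the identifications in force), and then its trace is automatically a non-negative integer, namely the rank of its image inside $\skA(\mathbb{S}^2\times\mathbb{S}^1)$. So the three assertions of the theorem reduce to (i) showing $\varphi(F;P,Q)\circ\varphi(F;P,Q)=\varphi(F;P,Q)$, (ii) bounding the rank of the image, and (iii) identifying the rank in the Cardy case.

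\textbf{Key steps.}
First I would unwind the diagram~\eqref{eqntorusactiononsphere} in $\int_{\mathbb{S}^2}\cat{A}$. Because $\int_{\mathbb{S}^2}\cat{A}$ is equivalent to (the Hochschild homology of) the Müger center, and more usefully because a skein in $\mathbb{S}^2\times[0,1]$ can be evaluated by sliding strands around the sphere, the braidings in~\eqref{eqntorusactiononsphere} between the $P$- and $Q$-strands and the $F$-loop can be simplified: on $\mathbb{S}^2$ one can ``unknot'' one of the two braid crossings at the cost of a twist, and the Frobenius multiplication/comultiplication vertices of the blue $F$-line then combine. The special condition $\mu\circ\delta=\id_F$ (equivalently $H=\id_F$, Remark~\ref{remh}) is exactly what is needed to collapse the resulting composite of a comultiplication followed by a multiplication running along the $F$-loop. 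After this simplification $\varphi(F;P,Q)$ should become visibly a composition $\iota\circ\pi$ where $\pi$ projects $\spr{P\otimes F\otimes Q}$ onto the subobject cut out by ``$F$ absorbed into a single vertex'', so that $\pi\circ\iota=\id$; this gives idempotence. Concretely I expect $\varphi(F;P,Q)$ to be conjugate, under the canonical isomorphisms, to the idempotent on $\cat{A}(I,\mathbb{F})$ (or on $\cat{A}(P^\vee,F\otimes P)\otimes\cat{A}(Q^\vee,F\otimes Q)$, appropriately) whose image realizes the relevant invariants. Second, for the bound: once $\varphi(F;P,Q)$ is an idempotent acting (after the reductions) on a space naturally receiving $\cat{A}(P^\vee,F\otimes P)$ — this is the $P$-graded piece, the $Q$-loop closing up to produce the $\mathbb{S}^2\times\mathbb{S}^1$ evaluation contributing only a scalar/multiplicity factor that is itself bounded by $1$ after normalization by specialness — its rank is at most $\dim\,\cat{A}(P^\vee,F\otimes P)$. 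I would make this precise by factoring the trace through $HH_0(\int_{\mathbb{S}^2}\cat{A})$ and using that the relevant hom space injects into $\cat{A}(P^\vee,F\otimes P)$. Third, the Cardy case $F=I$: here the blue $F$-line disappears (all vertices become identities, $v_n$ collapses), the diagram~\eqref{eqntorusactiononsphere} becomes the bare Hopf-link-type configuration of $P$ and $Q$ in $\mathbb{S}^2\times\mathbb{S}^1$, and its evaluation is the pairing computing $\dim\,\cat{A}(P^\vee,Q)$, i.e.\ the Cartan matrix entry $C_{P^\vee,Q}$; this is the non-semisimple analogue of the standard fusion-category computation and follows from the description of $\skA(\mathbb{S}^2\times\mathbb{S}^1)$ as $HH_0$ of the Müger center together with the computation of the quantum structure sheaf on $\mathbb{S}^2$.

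\textbf{Main obstacle.}
The hard part will be step (i)–(ii): rigorously showing that the braided skein~\eqref{eqntorusactiononsphere}, after using specialness, genuinely defines an idempotent endomorphism \emph{in $\int_{\mathbb{S}^2}\cat{A}$} and controlling its image, rather than merely manipulating pictures. The subtlety is that the two $F$-vertices sit on opposite sides of a braid crossing wrapping the sphere, so one cannot naively apply $\mu\circ\delta=\id_F$ without first using the $\mathbb{S}^2$-topology (sliding a strand over the back of the sphere) and the ribbon/balancing relations to bring the two vertices adjacent; tracking the twists picked up in this process — and checking they cancel against the self-duality normalization of $\bzero(F)$ and the modified-trace conventions baked into $\trace_\catf{HS}$ — is where the real work lies. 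A clean way to organize this is to translate~\eqref{eqntorusactiononsphere} into the moduli-algebra language of Section~\ref{secmodulialgebras}, i.e.\ view it as the trace of an idempotent on $\mathfrak{a}_{\mathbb{S}^2}\cong\mathbb{F}$-module data, where the braiding manipulations become algebraic identities in $\mathbb{F}$ (the Drinfeld map $\mathbb{D}:\mathbb{F}\to\mathbb{A}$ already appeared in the proof of Lemma~\ref{lemmacoeff}), and then specialness is a single, checkable algebraic identity. Once that idempotency is established, non-negativity and the bound are formal consequences of Proposition~\ref{proptrace}, and the Cardy-case identification is a direct specialization.
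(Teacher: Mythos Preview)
Your overall strategy is correct and matches the paper: show $\varphi(F;P,Q)$ is an idempotent, apply Proposition~\ref{proptrace}, then read off the bound and the Cardy case. Where you diverge is in how much work you think each step requires.

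For idempotence, you propose to manipulate the skein on $\mathbb{S}^2$ directly---sliding strands around the back of the sphere, tracking twists, then applying $\mu\circ\delta=\id_F$. The paper notes that a direct calculation along the lines of \cite[Lemma~5.2]{frs1} works, but the cleaner argument is abstract and avoids all of this: the part of~\eqref{eqntorusactiononsphere} involving $F$ and $Q$ is literally the cylinder skein $s_C^F$ acting on $[\id_Q]$ (cf.\ the proof of Lemma~\ref{lemmacoeff}), and $s_C^F$ is idempotent under cylinder stacking precisely because the $\precor^F$ satisfy the gluing constraint (Proposition~\ref{propgluingspecial} combined with Theorem~\ref{thmao}). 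The $P$-strand is a passive spectator. So your ``main obstacle''---moving the Frobenius vertices across braidings using sphere topology---evaporates: the idempotence is inherited from the cylinder before anything is placed on the sphere.

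For the bound, your argument (separating a ``$P$-graded piece'' from a ``$Q$-loop contribution bounded by $1$'') is muddled and unnecessary. The equivalence $\PhiA(\mathbb{B}^3):\int_{\mathbb{S}^2}\cat{A}\simeq\vect$ sends $\spr{P\otimes F\otimes Q}$ to the vector space $\cat{A}(I,P\otimes F\otimes Q)\cong\cat{A}(P^\vee,F\otimes Q)$, and $\mathcal{Z}^F_{P,Q}$ is by construction the trace of an idempotent on that space---hence bounded by its dimension. Likewise for the Cardy case: when $F=I$ the blue part of Lemma~\ref{lemmacoeff} is trivial, so $\mathcal{Z}^I$ is the unit of $\SkAlg_\cat{A}(\mathbb{T}^2)$ and $\varphi(I;P,Q)=\id$; the trace is then $\dim\,\cat{A}(P^\vee,Q)$. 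No Hopf-link computation or appeal to $HH_0$ of the M\"uger center is needed.
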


\begin{proof}
	The morphism $\varphi(F;P,Q):\spr{P \otimes F\otimes Q}\to \spr{P \otimes F\otimes Q}$
	in $\int_{\mathbb{S}^2}\cat{A}$ mentioned above
	is an idempotent which follows from the description derived in Lemma~\ref{lemmacoeff} and same calculation as for \cite[Lemma~5.2]{frs1}.
	Actually, it is also clear abstractly, without the explicit calculation: 
	The part of~\eqref{eqntorusactiononsphere} without $P$ is obtained through the action of a skein algebra element for the cylinder (called $s_C^F$ in the proof of Lemma~\ref{lemmacoeff})
	 on a vector in the skein module. 
	 Both the skein algebra element and this vector in the skein module are idempotent with respect to the composition of cylinders.
	For the skein algebra element, this is a consequence of Proposition~\ref{propgluingspecial}
	and Theorem~\ref{thmao}.
	For the vector in the skein module, it is clear (it is the identity).
	This shows that
	$\varphi(F;P,Q)$
	 is an idempotent in $\int_{\mathbb{S}^2} \cat{A}$.

	The trace of $\varphi(F;P,Q)$ in the sense of Section~\ref{sectraces} is $\mathcal{Z}_{P,Q}^F$ by construction. Now Proposition~\ref{proptrace} implies that $\mathcal{Z}^F_{P,Q}$ is a non-negative integer. 
	
	Under  the equivalence $\PhiA(\mathbb{B}^3):\int_{\mathbb{S}^2} \cat{A}\ra{\simeq}\vect$, the object $ \spr{P \otimes F\otimes Q}$ is sent to $\cat{A}(I,P\otimes F \otimes Q)$. By the construction in Section~\ref{sectraces} $\mathcal{Z}^F_{P,Q}$
	is the trace of an idempotent on $\cat{A}(I,P\otimes F \otimes Q)$, which tells us that us that it is bounded by $\dim\, \cat{A}(P^\vee,F\otimes Q)$.
	
	In the Cardy case, we have clearly $\mathcal{Z}^F=\id$ as endomorphism of $\FA(\mathbb{T}^2)$ because the corresponding element in the skein algebra of $\mathbb{T}^2$ would be the unit, see also Lemma~\ref{lemmacoeff}. Therefore, $\mathcal{Z}^I_{P,Q}$ is the trace of the identity of the vector space $\cat{A}(P^\vee,Q)$. This gives us the entries of the Cartan matrix as the coefficients
	of the partition function.
\end{proof}

\begin{example}
	If $\cat{A}$ is given by finite-dimensional modules over a ribbon factorizable Hopf algebra $L$, then for any special symmetric Frobenius algebra $F$ in $L$-modules, we can consider the coefficient
	$\mathcal{Z}^F_{L^\vee,L}$ for the projective generator $L$.
	Since $\Hom_L(L,F\otimes L)=F\otimes L$ as vector spaces, Theorem~\ref{thmpartition} tells us $\mathcal{Z}^F_{L^\vee,L}\le \dim\, F \cdot \dim\, L$. 
	For the special Frobenius algebra $k[N] \in Z(H\catf{-mod})\simeq D(H)\catf{-mod}$, with notation $H=K\otimes k[N]$ and assumptions as in Example~\ref{exspecialsymfrob}, this implies
	\begin{align}
		\mathcal{Z}^{k[N]}_{D(H)^\vee,D(H)}\le |N| \cdot \dim\, D(H) = |N| \cdot (\dim\, K \cdot |N|)^2 = |N|^3 \cdot (\dim\, 	 K)^2 \ . 
		\end{align}
\end{example}

\begin{remark}\label{remcardycartan}
	Theorem~\ref{thmpartition} tells us that, in the Cardy case,
	we recover as coefficients of the
	partition function the Cardy-Cartan modular invariant that was proposed as a non-semisimple generalization of the partition function in \cite{cardycartan}.
	However, we need to accompany this by several warnings even though they just echo what was said in Section~\ref{seccoeff}: In \cite[Theorem~3]{cardycartan}, the coefficients of the Cartan matrix can be used
	 for a chiral decomposition of the partition function in the Hopf-algebraic Cardy case. 
	 It is unclear to us how such a decomposition in terms of characters is related to the endomorphism of $\FA(\mathbb{T}^2)$
	 that we understand as partition function, and even if we could extract such an endomorphism from the object in \cite[Theorem~3]{cardycartan},
	 it is not obvious to us what it would do to vectors in $\FA(\mathbb{T}^2)$ that are \emph{not} characters. 
	 For the relation of the subspace of characters 
	  to $\FA(\mathbb{T}^2)$, see  \cite{shimizucf,gainutdinovrunkel}.
\end{remark}

\subsection{The elliptic class function in the Hopf-algebraic special case\label{secellicf}}
The calculation of coefficients for the torus partition function at projective objects in Theorem~\ref{thmpartition} is certainly helpful
and showcases that many of the principle behind \cite{frs1} actually remain valid way beyond the framework that they were conceived in.
Nonetheless, we have highlighted amply that the coefficients are really just a family of numbers that one extracts. The more fundamental object is the object $\mathcal{Z}^F$ from Definition~\ref{defpartition} built in terms of factorization homology.
Its calculation is not terribly difficult and was accomplished in Lemma~\ref{lemmacoeff}.
In this subsection, we want to discuss an even more tangible piece of algebraic information that is \emph{enough} to know $\mathcal{Z}^F$, namely the elliptic class function $\zeta^F_{1,1}: \mathbb{A}^{\otimes 2}\to I$, see Section~\ref{secclassfunctions}.
This should be essentially understood as a partition function for the torus with an unparametrized boundary component (the punctured torus).

\begin{lemma}
	If for two consistent systems of open-closed
	correlators for a modular category subject to the conditions (S) and~(O) from Section~\ref{secclass}
	  the elliptic class functions agree, then so do their torus partition functions.
	\end{lemma}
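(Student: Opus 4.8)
The plan is to trace both invariants back to a single morphism attached to the once-punctured torus $\mathbb{T}_1^2$ and then observe that the torus partition function is obtained from this morphism by a procedure that is entirely independent of the input algebra. By the classification Corollary~\ref{corclassification}, the hypotheses (S) and (O) imply that each of the two given consistent systems of open-closed correlators is, up to equivalence, the one produced by Theorem~\ref{thmmainshort} from a special symmetric Frobenius algebra, say $F$ and $F'$ in $\cat{A}$, and that both the elliptic class function and --- through Definition~\ref{defpartition} --- the torus partition function of such a system are read off from the underlying special symmetric Frobenius algebra and are unchanged under replacing it by an isomorphic one. It therefore suffices to show that $\zeta_{1,1}^F = \zeta_{1,1}^{F'}$ forces $\mathcal{Z}^F = \mathcal{Z}^{F'}$.

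First I would unwind the construction of $\mathcal{Z}^F$ from Section~\ref{sectoruspartdef}. The vector $\lambda_{\mathbb{T}_1^2}^F \in \Ao(\mathbb{T}_1^2;F)$ corresponds, under the factorization homology description of Theorem~\ref{thmao}, to a morphism $\cat{O}_{\mathbb{T}_1^2}^\cat{A} \to F \act \cat{O}_{\mathbb{T}_1^2}^\cat{A}$ in $\int_{\mathbb{T}_1^2}\cat{A}$; post-composing it with the counit $\varepsilon : F \to I$ yields a mapping-class-group invariant element $z^F \in \SkAlg_\cat{A}(\mathbb{T}_1^2) = \End_{\int_{\mathbb{T}_1^2}\cat{A}}(\cat{O}_{\mathbb{T}_1^2}^\cat{A})$, and $\mathcal{Z}^F$ is by definition the image of $z^F$ under the skein-algebra map $\SkAlg_\cat{A}(\mathbb{T}_1^2) \to \SkAlg_\cat{A}(\mathbb{T}^2)$ induced by the disk-capping embedding $\mathbb{T}_1^2 \hookrightarrow \mathbb{T}_1^2 \cup_{\mathbb{S}^1} \mathbb{D}^2 = \mathbb{T}^2$, viewed as an endomorphism of $\FA(\mathbb{T}^2)$ through the canonical equivalences $\PhiA$ and the equivariant isomorphism $\SkAlg_\cat{A}(\mathbb{T}^2) \cong \End(\FA(\mathbb{T}^2))$ of \cite[Theorem~6.1]{reflection}. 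The point I want to extract is that every step after the morphism $\cat{O}_{\mathbb{T}_1^2}^\cat{A} \to F \act \cat{O}_{\mathbb{T}_1^2}^\cat{A}$ and the counit is a fixed operation that does not mention $F$ at all; hence $\mathcal{Z}^F$ is a function of the element $z^F$ alone.

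Second I would identify $z^F$ with the elliptic class function. Comparing with Corollary~\ref{corclassfunctions} and the discussion that precedes it in Section~\ref{secclassfunctions}, $\zeta_{1,1}^F : \mathbb{A}^{\otimes 2} \to I$ is built from exactly the same data --- the open correlator on the once-punctured torus composed with the counit --- now read through the canonical identifications $\SkAlg_\cat{A}(\mathbb{T}_1^2) \cong \cat{A}(I, \moduli_{(\mathbb{T}_1^2)_+})$ and $\moduli_{(\mathbb{T}_1^2)_+} \cong \mathbb{F}^{\otimes 2} \cong \elliptic$ of \cite[Corollary~6.11]{bzbj}, followed by Grothendieck--Verdier duality $\cat{A}(I,\mathbb{F}^{\otimes 2}) \cong \cat{A}(\mathbb{A}^{\otimes 2}, I)$; concretely it is the morphism~\eqref{eqninvinD}. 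So $z^F$ and $\zeta_{1,1}^F$ encode the same morphism, and $\zeta_{1,1}^F = \zeta_{1,1}^{F'}$ gives $z^F = z^{F'}$. Pushing forward along the $F$-independent chain of the previous paragraph then gives $\mathcal{Z}^F = \mathcal{Z}^{F'}$, as desired.

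The step I expect to be the main obstacle is precisely this last identification. One has to check, through the chain of canonical isomorphisms, that the element of $\SkAlg_\cat{A}(\mathbb{T}_1^2)$ entering Definition~\ref{defpartition} really is the dual of $\zeta_{1,1}^F$: this means matching the marked-interval conventions for $\mathbb{T}_1^2$ as it appears in Section~\ref{sectoruspartdef} with $(\mathbb{T}_1^2)_+$ as it appears in Section~\ref{secclassfunctions}, verifying that the module-category manipulations relating $\End_{\int_{\mathbb{T}_1^2}\cat{A}}(\cat{O}_{\mathbb{T}_1^2}^\cat{A})$ to $\moduli_{(\mathbb{T}_1^2)_+}$ respect the $B_3$-actions of \cite{brochierjordane}, and confirming that the duality pairing between $\mathbb{F}$ and $\mathbb{A}$ used to pass between the two descriptions is the one implicit in~\eqref{eqninvinD}. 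All of this is bookkeeping within constructions that are already in place, but it is what keeps the statement from being a tautology.
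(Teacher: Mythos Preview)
Your proposal is correct and follows essentially the same approach as the paper: reduce via Corollary~\ref{corclassification} to special symmetric Frobenius algebras $F,F'$, then observe that $\mathcal{Z}^F$ is built from the $B_3$-invariant element~\eqref{eqninvinD} in the elliptic double by an $F$-independent procedure, and that this element is exactly the dual of $\zeta_{1,1}^F$ (Example~\ref{exellclassfunction}). The paper compresses your three paragraphs into two sentences, but the logical content is the same; your final paragraph on the bookkeeping is a fair assessment of what is being asserted rather than proved in detail.
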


\begin{proof}
By Corollary~\ref{corclassification} this amounts to observing that $\zeta_{1,1}^F=\zeta_{1,1}^{F'}$ for two special symmetric Frobenius algebras $F$ and $F'$ in $\cat{A}$ implies $\mathcal{Z}^F=\mathcal{Z}^{F'}$.
But this is clear because, as remarked before Definition~\ref{defpartition}, the partition function is constructed from	the $B_3$-invariant element \eqref{eqninvinD} in the elliptic double that is just dual to the elliptic class function by Remark~\ref{exellclassfunction}.
	\end{proof}

The elliptic class function (also those of higher genus) can be calculated using the tools from Section~\ref{secclassfunctions}. To make the calculation even more explicit,
suppose that our modular category $\cat{A}$
is given by the finite-dimensional modules over a ribbon factorizable Hopf algebra $H$. 
Let $F\in\cat{A}$ be a special symmetric Frobenius algebra, and denote by 
$\spr{-,-}:F\otimes F\to k$
 its pairing.
Moreover, denote by $v_{(1)} \otimes v_{(2)} \otimes v_{(3)} \otimes v_{(4)} \in F^{\otimes 4}$ (in Sweedler notation)
the image of $1\in k$ under the total arity four operation $v_4$. 
With the $R$-matrix $R=R_1 \otimes R_2$ of $H$ with inverse $R^{-1}=SR_1 \otimes R_2$,
we can express
Remark~\ref{exellclassfunction} in the Hopf-algebraic special case using e.g.\ the textbook~\cite[Section~XIV.6]{kassel}:

\begin{corollary}\label{corellcfhopf}
	The elliptic class function for the full conformal field theory described by a special symmetric Frobenius algebra $F$ in the category of finite-dimensional modules over a  ribbon factorizable Hopf algebra $H$ with pivot $\omega$ is the map $\zeta^F_{1,1}: H_{\text{adj}}\otimes H_{\text{adj}}\to k$ of $H$-modules given by
	\begin{align}
		\zeta^F_{1,1}(	h_1\otimes h_2) =	\spr{ v_{(1)} ,((h_1 R_2).v_{(3)}} \spr{    v_{(2)}, (\omega R_1\omega^{-1} h_2).v_{(4)}}  \quad \text{for}\quad h_1,h_2 \in H     \label{eqnellclass} 
	\end{align}
	that is a modular invariant, i.e.\ invariant under the Brochier-Jordan $B_3$-action on $H_{\text{adj}}\otimes H_{\text{adj}}$. 
	Moreover, all elliptic class functions arising from full conformal field theories with monodromy data given by the category of finite-dimensional $H$-modules subject to the conditions (S) and~(O) from Section~\ref{secclass} are of the form~\eqref{eqnellclass}. 
\end{corollary}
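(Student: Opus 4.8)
The statement to prove is Corollary~\ref{corellcfhopf}: for $\cat{A}=H\catf{-mod}$ with $H$ a ribbon factorizable Hopf algebra with pivot $\omega$, the elliptic class function $\zeta^F_{1,1}$ attached to a special symmetric Frobenius algebra $F\in\cat{A}$ is given by the explicit formula~\eqref{eqnellclass}, it is $B_3$-invariant, and every elliptic class function of a full CFT with monodromy data $H\catf{-mod}$ satisfying~(S) and~(O) arises this way. The plan is to simply unwind Example~\ref{exellclassfunction} in the Hopf-algebraic model, transporting the abstract coend formula~\eqref{eqninvinD} through the standard dictionary between coend computations in $H\catf{-mod}$ and Hopf-algebraic formulas.

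\textbf{Step 1: Set up the Hopf-algebraic dictionary.} First I would recall that for $\cat{A}=H\catf{-mod}$ one has $\mathbb{F}=\int^{X\in\cat{A}}X^\vee\otimes X\cong H_{\text{coadj}}^*$ and, dually, $\mathbb{A}=\int_{X\in\cat{A}}X^\vee\otimes X\cong H_{\text{adj}}$ (this is \cite[Theorem~7.4.13]{kl}, quoted in Section~\ref{secclassfunctions}), so that $\mathbb{A}^{\otimes 2}\cong H_{\text{adj}}\otimes H_{\text{adj}}$. The universal property of the coend $\mathbb{F}$ says that a map $\mathbb{F}\to Z$ is the same datum as a dinatural family $X^\vee\otimes X\to Z$; concretely, precomposing with the structure map $X^\vee\otimes X\to\mathbb{F}$ at $X=H$ (the regular representation) recovers such a map, and the resulting formula involves the $R$-matrix exactly in the way recorded in \cite[Section~XIV.6]{kassel}. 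The braiding $c_{F,F}$ acts by $R=R_1\otimes R_2$, its inverse by $R^{-1}=SR_1\otimes R_2$, and the pivotal/double-dual identifications contribute the conjugation by $\omega$; these are the only nontrivial ingredients.

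\textbf{Step 2: Trace through~\eqref{eqninvinD}.} The map $I\to\cat{D}_\cat{A}=\mathbb{F}^{\otimes 2}$ of Example~\ref{exellclassfunction} is the composite
\[
I\ra{v_4}F^{\otimes 4}\ra{\id\otimes c_{F,F}^{-1}\otimes\id}F^{\otimes 4}\ra{\text{self-duality}}(F^\vee\otimes F)^{\otimes 2}\ra{\text{coend structure maps}}\mathbb{F}^{\otimes 2}.
\]
I would dualize this (Section~\ref{secclassfunctions} and the remark after Example~\ref{exellclassfunction} say $\zeta^F_{1,1}$ is the dual map $\mathbb{A}^{\otimes 2}\to I$), apply the coend $\leftrightarrow$ end dictionary slotwise, and then compute: write $v_{(1)}\otimes v_{(2)}\otimes v_{(3)}\otimes v_{(4)}$ for the image of $1$ under $v_4:I\to F^{\otimes 4}$, let the braiding act on the middle two tensor legs (producing $v_{(1)}\otimes R_2.v_{(3)}\otimes SR_1.v_{(2)}\otimes v_{(4)}$ up to bookkeeping, which after regrouping and absorbing the antipode into the $R^{-1}$ convention becomes the stated arrangement), pair the resulting tensor legs against the module elements $h_1,h_2$ using $\spr{-,-}$, and collect the pivot conjugation $\omega R_1\omega^{-1}$ from the double-dual identification on the second coend slot. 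This reproduces~\eqref{eqnellclass} after reorganizing the Sweedler indices; the bookkeeping is routine but needs care about which leg the braiding hits and how the self-duality maps $F^{\otimes 4}$ into $(F^\vee\otimes F)^{\otimes 2}$.

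\textbf{Step 3: Invariance and exhaustiveness.} $B_3$-invariance of $\zeta^F_{1,1}$ is immediate from Corollary~\ref{corclassfunctions} (the class functions $\zeta^F_{g,r}$ are mapping class group invariant by construction) applied with $g=r=1$, where $\Map(\mathbb{T}^2_1)\cong B_3$ acts through the Brochier--Jordan action of \cite{brochierjordane}; I would just cite this. For the last sentence, by Corollary~\ref{corclassification} every consistent system of open-closed correlators satisfying~(S) and~(O) arises from a special symmetric Frobenius algebra $F\in\cat{A}$, and $\zeta^F_{1,1}$ depends only on $F$ via the construction of Corollary~\ref{corclassfunctions}, so every elliptic class function is of the form~\eqref{eqnellclass}.

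\textbf{Expected main obstacle.} The only real work is Step~2: getting the Sweedler-index bookkeeping right, in particular the precise placement of the braiding (which of the four $F$-factors it acts between), the b$S$ hidden in the $R^{-1}=SR_1\otimes R_2$ convention, and the $\omega$-conjugation coming from the pivotal structure on the second dual slot. Everything else is citation. I would double-check the final formula against the $F=I$ Cardy case using Theorem~\ref{thmpartition} as a sanity check, since there $\zeta^I_{1,1}$ must be compatible with the Cartan-matrix answer.
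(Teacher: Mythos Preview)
Your proposal is correct and follows essentially the same route as the paper: the paper simply states that the formula is obtained by expressing Example~\ref{exellclassfunction} in the Hopf-algebraic special case via the standard dictionary from \cite[Section~XIV.6]{kassel}, and your Steps~1--3 spell out precisely that unwinding together with the citations to Corollary~\ref{corclassfunctions} and Corollary~\ref{corclassification} for invariance and exhaustiveness. Your proposal is in fact more detailed than the paper's own justification, which leaves the Sweedler bookkeeping entirely to the reader.
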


The elliptic class function is a tractable algebraic object that determines the torus partition function.
Of course, one could now, as an approach to correlators in the logarithmic setting, try to find all modular invariant elliptic class functions.
This would be the logarithmic analogue of the classification program of modular invariants.
This comes with a well-known caveat, see e.g.\ the introduction of \cite{frs1} for similar discussions in the rational case, that also applies here:
Even if we find a modular invariant elliptic class function, it could be `unphysical' in the sense that it is not part of a consistent system of correlators.
One of the most conceptual reasons for this is that by Corollary~\ref{corclassfunctions} we can produce modular invariant elliptic class functions from symmetric Frobenius algebras that are not special.

\section{Local operators and Hochschild cohomology of the category of boundary conditions}
Using the consistent system of correlators associated to a special symmetric Frobenius algebra $F$ in modular category $\cat{A}$,
we construct in this section an algebra of local operators, a differential graded $E_2$-algebra whose multiplication is induced by the operator product expansion. For this algebra of local operators, we prove in Theorem~\ref{thmhhfmod}
a conjecture of Kapustin-Rozansky~\cite{kapustinrozansky} in the context of logarithmic conformal field theories.

\subsection{Algebra of local operators via homotopy invariants of the bulk field\label{sechi}}
Let $F\in\cat{A}$ be a special symmetric Frobenius algebra in a modular category $\cat{A}$.
The consistent system of correlators that $F$ gives rise to by Theorem~\ref{thmmainshort} produces, after restriction to genus zero, a symmetric
commutative Frobenius algebra in $\Fbar\in\bar{\cat{A}}\boxtimes \cat{A}$. We saw this 
 in the proof of Proposition~\ref{propextclosed}, but it is also a general necessity~\cite{jfcs,microcosm}.
Under the equivalence $G: \bar{\cat{A}}\boxtimes\cat{A}\ra{\simeq} Z(\cat{A})$ from~\eqref{equivG}, we obtain a symmetric 
commutative Frobenius algebra $\Fhb \in Z(\cat{A})$. So far, we have said rather little 
 about this Frobenius algebra, but we will derive a concrete description in Section~\ref{secrelbulk} and~\ref{secinternalend}.
From this algebra, we will produce \emph{an algebra of local operators} in chain complexes over $k$ by taking homotopy invariants in the appropriate sense:

\begin{proposition}\label{prophi} Let $F\in\cat{A}$ be a special symmetric Frobenius algebra in a modular category $\cat{A}$.
	Through the consistent system of correlators associated to $F$, the homotopy invariants $\mathbb{R}\Hom_\cat{A}(I,U\Fhb)$
	inherit the structure of differential graded algebra over the framed $E_2$-operad.
	In particular, $\Ext_\cat{A}(I,U\Fhb)$ is a Batalin-Vilkovisky algebra.
\end{proposition}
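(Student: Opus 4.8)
The plan is to reduce the statement to the homotopy-invariants machinery of \cite{homotopyinvariants}. The underlying differential graded \emph{algebra} structure on $\LO(\cat{A},F)=\mathbb{R}\Hom_\cat{A}(I,U\Fhb)$ is already in hand: $U\Fhb$ is an associative algebra in $\cat{A}$ (the image, under the monoidal functor $U\colon Z(\cat{A})\to\cat{A}$, of the commutative symmetric Frobenius algebra $\Fhb\in Z(\cat{A})$ discussed above), and $\mathbb{R}\Hom_\cat{A}(I,-)\colon\cat{A}\to\Ch$ is lax monoidal and well defined because the finite category $\cat{A}$ has enough injectives. So what I would really produce is the upgrade of this differential graded algebra to an action of the framed $E_2$-operad $\framed$; the final sentence then follows because the homology of $\framed$ is the Batalin-Vilkovisky operad $\catf{BV}$ (Getzler), so it is enough to exhibit the framed $E_2$-structure coming from the correlator.

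First I would isolate the part of the correlator that is actually used. By Theorem~\ref{thmmainlong}, $F$ determines a modular $\Surf$-algebra with coefficients in $\bar{\cat{A}}\boxtimes\cat{A}$ whose underlying self-dual object is $\bulk(F)$; restricting it to genus-zero surfaces turns it into a cyclic framed $E_2$-algebra in $\bar{\cat{A}}\boxtimes\cat{A}$ with underlying object $\bulk(F)$, precisely the genus-zero incarnation of the modular microcosm principle already used in the proof of Proposition~\ref{propextclosed} (via \cite[Proposition~10.2]{microcosm}). Transporting along the ribbon equivalence $G\colon\bar{\cat{A}}\boxtimes\cat{A}\xrightarrow{\ \simeq\ }Z(\cat{A})$, I obtain a cyclic framed $E_2$-algebra in the ribbon finite tensor category $Z(\cat{A})$ (which is even modular, $\cat{A}$ being so) with underlying object $\Fhb$, whose underlying plain commutative symmetric Frobenius algebra is the one discussed above. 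It is this package --- an algebra in a ribbon finite tensor category equipped with the full cyclic framed $E_2$-refinement coming from genus-zero conformal blocks --- that is fed into \cite{homotopyinvariants}. Only the genus-zero part of the correlator enters, consistent with the fact that the multiplication of local operators is the operator product expansion, a genus-zero operation.

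Next I would invoke the methods of \cite{homotopyinvariants}: applied to a cyclic framed $E_2$-algebra in a ribbon finite tensor category, the homotopy-invariants functor $\mathbb{R}\Hom_\cat{A}(I,U(-))$ transports the structure to an action of $\framed$ on $\mathbb{R}\Hom_\cat{A}(I,U\Fhb)$ in chain complexes. The technical heart is the compatibility of the lax monoidal structure of $\mathbb{R}\Hom_\cat{A}(I,U(-))\colon Z(\cat{A})\to\Ch$ with the braiding and the twist of $Z(\cat{A})$: every morphism $I\to UX$ in $\cat{A}$ factors through $U$ applied to the identity of the monoidal unit $I\in Z(\cat{A})$, at which the half-braiding and the twist are trivial, so by naturality the braiding and the twist of $Z(\cat{A})$ act as the identity on homotopy invariants. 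This is what lets the ``$E_2$-commutativity of $\Fhb$ inside the braided category $Z(\cat{A})$'' descend to a genuine framed $E_2$-structure on $\LO(\cat{A},F)$, with the $S^1$-action --- hence the Batalin-Vilkovisky operator on cohomology --- governed by the ribbon structure of $Z(\cat{A})$ together with the cyclic (Frobenius) part of the datum assembled above. Passing to cohomology and using $H_\bullet(\framed)\cong\catf{BV}$ then equips $\Ext_\cat{A}(I,U\Fhb)$ with a Batalin-Vilkovisky structure.

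The hard part will be this last step: checking that the data assembled from the genus-zero correlator really satisfy the hypotheses of \cite{homotopyinvariants}, that the framed $E_2$-structure produced there coincides with the one induced by the correlator (rather than some a priori unrelated one), and --- most delicate in practice --- that performing the homotopy invariants along the forgetful functor $U\colon Z(\cat{A})\to\cat{A}$, rather than inside $Z(\cat{A})$, is compatible with all operadic coherences, so that nothing is lost in tracking $U$ through the framed little disks operad.
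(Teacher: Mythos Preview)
Your overall strategy --- reduce to \cite{homotopyinvariants} and then invoke Getzler --- matches the paper's, but the paper's route is shorter and your intermediate reasoning contains a muddled step. The paper simply observes that $U\Fhb$ lifts to a commutative algebra $\Fhb\in Z(\cat{A})$ whose ribbon automorphism is trivial (the latter via \cite[Proposition~2.25]{correspondences}, which relates symmetry of a special Frobenius algebra to triviality of its twist), and then applies \cite[Theorem~7.3]{homotopyinvariants} directly; \cite{getzlerbv} closes the argument. Neither the full cyclic framed $E_2$-package extracted from the correlator nor the Frobenius pairing is needed as input --- ``braided-commutative with trivial twist'' already suffices.

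Your explanation of why the twist ``acts as the identity on homotopy invariants'' --- that every $I\to UX$ factors through the monoidal unit of $Z(\cat{A})$, where the twist is trivial --- is not the operative mechanism. Triviality of the twist on $I$ is automatic and buys nothing; what \cite[Theorem~7.3]{homotopyinvariants} actually requires, and what the paper supplies via \cite{correspondences}, is that the ribbon automorphism of $\Fhb$ \emph{itself} is the identity. Without this ingredient the $S^1$-part of the framed $E_2$-structure on $\mathbb{R}\Hom_\cat{A}(I,U\Fhb)$ is not accounted for, and your proposed alternative of pushing the entire genus-zero cyclic $\framed$-structure through $\mathbb{R}\Hom_\cat{A}(I,U(-))$ would need substantially more coherence-tracking than you sketch.
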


\begin{proof}
	By construction the algebra $U\Fhb$ in $\cat{A}$
	comes with a lift to a  commutative algebra in $Z(\cat{A})$ whose ribbon automorphism is trivial; the latter is a consequence of \cite[Proposition~2.25]{correspondences}.
	Now it follows from \cite[Theorem~7.3]{homotopyinvariants} that $\mathbb{R}\Hom_\cat{A}(I,U\Fhb)$ is a framed $E_2$-algebra in chain complexes.
	The fact that framed $E_2$-algebras produce Batalin-Vilkovisky algebras after taking cohomology is a result of \cite{getzlerbv}.
\end{proof}

We denote the framed $E_2$-algebra from Proposition~\ref{prophi} by $\LO(\cat{A};F)$ and refer to it as the \emph{algebra of local operators (on $\mathbb{S}^1$)} for the full conformal field theory given by the pair
$(\cat{A},\cor^F)$ of the monodromy data $\cat{A}$ 
and the consistent system of correlators $\xi^F$ built from $F$.
The terminology is motivated by the fact that the multiplication on the boundary object $F$ and its induced multiplication on the bulk object describe the operator product expansion for operators on the interval and the circle, respectively, see e.g.\
\cite{algcften} or the
introduction of~\cite{cardycartan}.
The fact that such an algebra of local operators should have a framed $E_2$-structure is, from a physical perspective, expected in \cite[Section~1.2]{bbzbdn}.

\subsection{The pivotal module category of boundary conditions\label{secreminderpiv}}
Given a special symmetric Frobenius algebra $F\in \cat{A}$ in a modular category $\cat{A}$, the category $\Fmod$ of $F$-modules in $\cat{A}$ is a so-called \emph{pivotal module category} over $\cat{A}$ according to \cite[Theorem~5.20]{schaumannpiv}, see also \cite[Section~3.6]{relserre}.
(Since we are working with left $F$-modules, this is a \emph{right} $\cat{A}$-module category.)
This means 
 that the internal homs come with isomorphisms
\begin{align}\HOM_F(M,N)^\vee \cong \HOM_F(N,M) \end{align} for $M,N \in \Fmod$ that square to the identity
 relative to the pivotal structure. In particular, the internal homs are exact.
Moreover, using \cite[Theorem 4.26]{fss} and \cite[Section~3]{shibatashimizu}, we can see 
 that $\Fmod$ is a projective Calabi-Yau category and therefore self-injective.

By \cite[Theorem~3.14]{relserre} 
the internal endomorphism algebras $\HOM_F(M,M)\in\cat{A}$ are symmetric Frobenius algebras, with the multiplication coming from the composition of internal endomorphisms.
By \cite[Corollary 39]{internal} the end \begin{align} \int_{M\in\Fmod} \HOM_F(M,M)\label{eqninternalend}\end{align} inherits from this the structure of a symmetric commutative Frobenius algebra in $Z(\cat{A})$. 
Informed by the experience with the semisimple case, the pivotal module category $\Fmod$ is suggested in~\cite{fspivotal,internal} as a \emph{category of boundary conditions} for a potential system of correlators built from $F$.

\subsection{Proof of a conjecture of Kapustin-Rozansky in the context of logarithmic conformal field theories}
Kapustin-Rozansky conjecture in \cite{kapustinrozansky} that the algebra of local operators 
agrees with the Hochschild cohomology of the category of boundary conditions.
Such a principle is for instance realized for the differential graded topological conformal field theories built from Calabi-Yau categories~\cite{costellotcft}.

The following result is a proof of this conjecture in the context of full logarithmic conformal field theory, with the algebra of local operators built in Section~\ref{sechi} and the category of boundary condition proposed in \cite{fspivotal,internal}, as recalled in Section~\ref{secreminderpiv}:

\begin{theorem}\label{thmhhfmod}
	Let $F$ be a special symmetric Frobenius algebra in a modular category $\cat{A}$.
	Then there is an equivalence 
	\begin{align} \LO(\cat{A};F) \simeq CH^*(\Fmod)
	\end{align} of differential graded framed $E_2$-algebras
	between the local operators for the conformal field theory $\cat{A}$ with correlator built from $F$ and the Hochschild cochains of the category $\Fmod$ of $F$-modules in $\cat{A}$, equipped with the framed $E_2$-structure coming from the cyclic Deligne conjecture.
\end{theorem}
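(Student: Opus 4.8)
\textbf{Proof plan for Theorem~\ref{thmhhfmod}.}
The plan is to factor the equivalence through the internal endomorphism description of the bulk field. The first and most substantial step is to identify the commutative symmetric Frobenius algebra $\Fhb\in Z(\cat{A})$ produced by the genus-zero restriction of the correlators with the end $\int_{M\in\Fmod}\HOM_F(M,M)\in Z(\cat{A})$ \emph{as symmetric Frobenius algebras}. For this I would use the explicit description of $\bulk(F)$ coming out of the cylinder idempotent (Section~\ref{seccyl}): $\bulk(F)$ is the image of $C_F$ acting on $\bzero(F)=\int^{X\in\cat{A}}X^\vee\boxtimes F\otimes X$, so under $G:\bar{\cat{A}}\boxtimes\cat{A}\xrightarrow{\simeq}Z(\cat{A})$ it becomes the image of an idempotent on $LF$. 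One then computes this idempotent explicitly in terms of the Frobenius structure maps of $F$ and the half-braidings, matching it against the projector onto $\int_{M\in\Fmod}\HOM_F(M,M)$ inside $LF\cong L(\bigoplus\dots)$; the commutative multiplication on $\Fhb$ coming from the genus-zero (pair-of-pants) correlator must be matched against the composition-of-internal-endomorphisms product of \cite[Corollary~39]{internal}, and the symmetric Frobenius form against the one on $\int_M\HOM_F(M,M)$ induced by the modified trace / the Calabi-Yau structure on $\Fmod$. This comparison is carried out by the relations-based calculation of the bulk algebra promised in Sections~\ref{secrelbulk} and~\ref{secinternalend}; it is genuinely the heart of the argument, and I expect the bookkeeping of the pivotal structure (which trivialization of the Nakayama functors, which half-braiding) to be the single hardest point.

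With $\Fhb\simeq\int_{M\in\Fmod}\HOM_F(M,M)$ established as symmetric Frobenius algebras in $Z(\cat{A})$, the second step is purely homological. On the underived level one has $\Hom_\cat{A}(I,U\Fhb)\cong\Hom_{Z(\cat{A})}(\mathbf{1},\int_{M}\HOM_F(M,M))\cong\int_{M}\Hom_\cat{A}(\mathbf{1},\HOM_F(M,M))\cong\int_{M}\End_{\Fmod}(M)=HH^0(\Fmod)$, using that $\HOM_F(-,-)$ is exact and the defining adjunction~\eqref{eqnadjunction}. Deriving this: since $\Fmod$ is self-injective (a projective Calabi-Yau category, as recalled in Section~\ref{secreminderpiv}) and the internal homs are exact, the derived end computing $\mathbb{R}\Hom_\cat{A}(I,U\Fhb)$ is precisely the Hochschild cochain complex $CH^*(\Fmod)=\mathbb{R}\int_{M}\mathbb{R}\Hom_{\Fmod}(M,M)$. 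Thus one gets a quasi-isomorphism $\LO(\cat{A};F)\xrightarrow{\simeq}CH^*(\Fmod)$ of underlying complexes, and I would invoke the methods of \cite{homotopyinvariants} to upgrade the identification on $\Fhb$ to one on homotopy invariants.

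The third and final step is to promote this to an equivalence of framed $E_2$-algebras. The framed $E_2$-structure on $\LO(\cat{A};F)$ comes, via \cite[Theorem~7.3]{homotopyinvariants} as in Proposition~\ref{prophi}, from the commutative-algebra-in-$Z(\cat{A})$-with-trivial-ribbon datum $\Fhb$; the framed $E_2$-structure on $CH^*(\Fmod)$ is the cyclic Deligne structure built from the Calabi-Yau category $\Fmod$. The point is that both structures are, by construction, the output of the \emph{same} universal machine applied to the \emph{same} input, once one knows that the Calabi-Yau structure on $\Fmod$ corresponds under the equivalence to the ribbon/Frobenius data on $\Fhb$ --- this is exactly the content of \cite[Theorem~7.3]{homotopyinvariants} (or its cyclic refinement), which says that for $A$ a commutative Frobenius algebra in $Z(\cat{A})$ with trivial twist, $\mathbb{R}\Hom_\cat{A}(I,UA)$ with its framed $E_2$-structure agrees with $CH^*$ of the corresponding module category with its cyclic Deligne structure. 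So after Step~1 identifies $\Fhb$ with the center of $\Fmod$ as a symmetric Frobenius algebra, Steps~2 and~3 are an application of \cite{homotopyinvariants}; as noted in the excerpt after the theorem, the subtlety one must watch is that the multiplication on $\LO(\cat{A};F)$ visibly uses the ribbon structure of $\cat{A}$ while the Deligne product on $CH^*(\Fmod)$ sees only the linear category, so the equivalence is effecting the ``diagonalization'' of the multiplication in the sense of \cite{verlindesw}, and one must check the comparison is compatible with that diagonalization rather than merely with the underlying $E_1$-structures. I expect Step~1 to be the main obstacle; Steps~2--3, while technical, are essentially citations to \cite{homotopyinvariants,getzlerbv} once the center has been identified.
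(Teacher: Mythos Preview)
Your proposal is correct and follows essentially the same route as the paper: the heart is Proposition~\ref{propbulkend} (your Step~1), which identifies $\Fhb\cong\int_{M\in\Fmod}\HOM_F(M,M)$ as symmetric Frobenius algebras in $Z(\cat{A})$ via the relations-based computation of Sections~\ref{secrelbulk}--\ref{secinternalend}, and then Steps~2--3 are a direct application of \cite{homotopyinvariants} (the paper cites \cite[Theorem~5.11 \& Section~7]{homotopyinvariants} rather than Theorem~7.3, and works with the model $\bzero(F)=RF$ rather than $LF$, but these are cosmetic differences). Your expectation that the bookkeeping of half-braidings and pivotal/Nakayama trivializations in Step~1 is the hardest point is borne out by the paper's proof of Proposition~\ref{propbulkend}.
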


The (cyclic) Deligne conjecture is of course a theorem, 
with so many different proofs that no reasonable introduction 
of several lines could do this subject any justice. For more background, we refer to the introduction of \cite{brdeligne} and to \cite{costellotcft} for a connection to conformal field theory.

As postulated in \cite{kapustinrozansky}, Theorem~\ref{thmhhfmod} relates the closed part of the conformal field theory (here represented by the derived algebra of local operators) to the deformations of the open part, namely to the Hochschild cohomology of the category of boundary conditions.
The proof of this result will occupy most of the rest of the article and will be finished on page~\pageref{secproofloop}.

\subsection{Relations characterizing the bulk field\label{secrelbulk}}
As a first step towards the proof of Theorem~\ref{thmhhfmod}, we explicitly calculate the bulk object $\bulk(F)$:
For a special symmetric Frobenius algebra $F\in \cat{A}$ for a modular category $\cat{A}$, let us see $\bzero(F)$ as an object in the Drinfeld center $Z(\cat{A})$ under the equivalence $\bar{\cat{A}}\boxtimes\cat{A}\ra{\simeq} Z(\cat{A})$. 
Then $\bzero(F)=LF$ for the left adjoint $L:\cat{A}\to Z(\cat{A})$ to the forgetful functor $U:Z(\cat{A})\to \cat{A}$, see Section~\ref{secopencor}.
Note that the right adjoint $R$ for $U$ is, thanks to reflection equivariance in the sense of \cite{reflection} that we fix,  canonically equivalent to $L$.
This follows from~\cite[Theorem~4.10]{shimizuunimodular} because the reflection equivariance amounts to a trivialization of the distinguished invertible object of $\cat{A}$.

Here we prefer to work with the model $\bzero(F)=RF$.
The underlying object in $\cat{A}$ is $\int_{X \in \cat{A}} X^\vee \otimes F \otimes X$. 
With the calculation methods in  \cite[Section~9]{microcosm} (we need to dualize appropriately), we find that the cylinder idempotent $C_F:\bzero(F)\to\bzero(F)$ from Section~\ref{seccyl}
is characterized by the fact that for the structure map $\pi_X : \bzero(F)\to X^\vee \otimes F \otimes X$ the $X$-component $\pi_X \circ C_F$ is given by
\begin{align}
	\int_{X \in \cat{A}} X^\vee \otimes F \otimes X \ra{\pi_{F \otimes X} \ \text{and self-duality of $F$}} X^\vee \otimes F^{\otimes 3} \otimes X \ra{\text{multiplication of $F$}} X^\vee \otimes F \otimes X\ . 
\end{align}
The fact that an endomorphism of an end can be specified this way follows from the universal property of the end.
We will write this endomorphism as
\begin{equation}
	\begin{array}{c}\begin{tikzpicture}[scale=0.5]
			\begin{pgfonlayer}{nodelayer}
				\node [style=none] (0) at (-1, 0) {};
				\node [style=none] (1) at (-1, 4) {};
				\node [style=none] (2) at (0, 0) {};
				\node [style=none] (3) at (1, 0) {};
				\node [style=none] (4) at (2, 0) {};
				\node [style=none] (5) at (3, 0) {};
				\node [style=none] (6) at (3, 4) {};
				\node [style=none] (7) at (1, 2) {};
				\node [style=none] (8) at (1, 4) {};
				\node [style=none] (9) at (-3, 2) {$=$};
				\node [style=none] (10) at (-4, 2) {$C_F$};
				\node [style=none] (11) at (-1, 2) {};
				\node [style=none] (12) at (3, 2) {};
			\end{pgfonlayer}
			\begin{pgfonlayer}{edgelayer}
				\draw [style=open] (2.center) to (7.center);
				\draw [style=open] (3.center) to (7.center);
				\draw [style=open] (4.center) to (7.center);
				\draw [style=open] (7.center) to (8.center);
				\draw [style=end arrow] (1.center) to (11.center);
				\draw [style=end arrow] (5.center) to (12.center);
				\draw (12.center) to (6.center);
				\draw (11.center) to (0.center);
			\end{pgfonlayer}
		\end{tikzpicture}
	 \end{array}\label{eqnCF}
\end{equation}
in the graphical calculus for monoidal categories.
The black lines are the dummy variables, with the upward pointing line for $X$ and the downward pointing one for $X^\vee$.
The thick blue lines are for the Frobenius algebra $F$. No direction is indicated because $F$ is self-dual. The vertex with four incident lines is the multiplication operation in arity 3, or total arity 4.
Beware that on the right hand side of \eqref{eqnCF} we indicate the \emph{components} of $C_F$, not $C_F$ itself. 
We prefer to not include the structure maps of the end into the graphical calculus because it makes the notation rather clumsy.
Note that the double strand of dummy variables of the end satisfies
\begin{equation}
	\begin{array}{c}	\begin{tikzpicture}[scale=0.5]
			\begin{pgfonlayer}{nodelayer}
				\node [style=none] (0) at (-3, 0) {};
				\node [style=none] (1) at (-3, 4) {};
				\node [style=none] (2) at (-1, 4) {};
				\node [style=none] (3) at (-1, 0) {};
				\node [style=none] (4) at (-3, 2) {};
				\node [style=none] (5) at (-1, 2) {};
				\node [style=none] (6) at (-1, 3) {};
				\node [style=none] (7) at (-1, 2.75) {};
				\node [style=none] (8) at (-1, 4) {};
				\node [style=wbox] (9) at (-1, 1) {$f$};
				\node [style=none] (10) at (0, 2) {$=$};
				\node [style=none] (11) at (1, 0) {};
				\node [style=none] (12) at (1, 4) {};
				\node [style=none] (13) at (3, 4) {};
				\node [style=none] (14) at (3, 0) {};
				\node [style=none] (15) at (1, 2.75) {};
				\node [style=none] (16) at (3, 2) {};
				\node [style=none] (19) at (3, 4) {};
				\node [style=wbox] (20) at (1, 1) {$f^\vee$};
				\node [style=none] (21) at (-2, 4) {};
				\node [style=none] (22) at (-2, 0) {};
				\node [style=none] (23) at (2, 4) {};
				\node [style=none] (24) at (2, 0) {};
			\end{pgfonlayer}
			\begin{pgfonlayer}{edgelayer}
				\draw [style=end arrow] (1.center) to (4.center);
				\draw (4.center) to (0.center);
				\draw [style=end arrow] (5.center) to (7.center);
				\draw (7.center) to (8.center);
				\draw (3.center) to (5.center);
				\draw [style=end arrow] (12.center) to (15.center);
				\draw (15.center) to (11.center);
				\draw [style=open] (21.center) to (22.center);
				\draw [style=open] (24.center) to (23.center);
				\draw [style=end arrow] (14.center) to (16.center);
				\draw (16.center) to (19.center);
			\end{pgfonlayer}
		\end{tikzpicture}
	\end{array} \tag{B1}\label{eqnend}
\end{equation}
for all morphisms in $\cat{A}$ because $\bzero(F)=\int_{X \in \cat{A}} X^\vee \otimes F \otimes X$ is the subobject of $\prod_{X \in \cat{A}} X^\vee \otimes F \otimes F$ satisfying this relation. 
With this notation, we find
\begin{equation}
	\begin{array}{c}\begin{tikzpicture}[scale=0.5]
			\begin{pgfonlayer}{nodelayer}
				\node [style=none] (0) at (-2, 0) {};
				\node [style=none] (1) at (-2, 6) {};
				\node [style=none] (2) at (0, 0) {};
				\node [style=none] (3) at (1, 0) {};
				\node [style=none] (4) at (2, 0) {};
				\node [style=none] (5) at (4, 0) {};
				\node [style=none] (6) at (4, 6) {};
				\node [style=none] (7) at (1, 2) {};
				\node [style=none] (8) at (1, 6) {};
				\node [style=none] (11) at (-2, 3) {};
				\node [style=none] (12) at (4, 3) {};
				\node [style=none] (13) at (-1, 0) {};
				\node [style=none] (14) at (3, 0) {};
				\node [style=none] (17) at (1, 4) {};
				\node [style=none] (18) at (6, 0) {};
				\node [style=none] (19) at (6, 6) {};
				\node [style=none] (20) at (8, 0) {};
				\node [style=none] (21) at (9, 0) {};
				\node [style=none] (22) at (10, 0) {};
				\node [style=none] (23) at (12, 0) {};
				\node [style=none] (24) at (12, 6) {};
				\node [style=none] (25) at (9, 2) {};
				\node [style=none] (26) at (9, 6) {};
				\node [style=none] (27) at (6, 3) {};
				\node [style=none] (28) at (12, 3) {};
				\node [style=none] (29) at (7, 0) {};
				\node [style=none] (30) at (11, 0) {};
				\node [style=none] (31) at (9, 4) {};
				\node [style=none] (32) at (5, 3) {$=$};
				\node [style=none] (33) at (-3, 3) {$=$};
				\node [style=none] (34) at (-4, 3) {$C_F^2$};
				\node [style=none] (35) at (7.5, 2) {};
				\node [style=none] (36) at (10.5, 2) {};
				\node [style=none] (37) at (14, 0) {};
				\node [style=none] (38) at (14, 6) {};
				\node [style=none] (42) at (18, 0) {};
				\node [style=none] (43) at (18, 6) {};
				\node [style=none] (45) at (16, 6) {};
				\node [style=none] (46) at (14, 3) {};
				\node [style=none] (47) at (18, 3) {};
				\node [style=none] (48) at (15, 0) {};
				\node [style=none] (53) at (13, 3) {$=$};
				\node [style=none] (54) at (15, 1) {};
				\node [style=none] (55) at (15, 3) {};
				\node [style=none] (56) at (16, 4) {};
				\node [style=none] (57) at (17, 0) {};
				\node [style=none] (58) at (16, 0) {};
			\end{pgfonlayer}
			\begin{pgfonlayer}{edgelayer}
				\draw [style=open] (2.center) to (7.center);
				\draw [style=open] (3.center) to (7.center);
				\draw [style=open] (4.center) to (7.center);
				\draw [style=open] (7.center) to (8.center);
				\draw [style=end arrow] (1.center) to (11.center);
				\draw [style=end arrow] (5.center) to (12.center);
				\draw (12.center) to (6.center);
				\draw (11.center) to (0.center);
				\draw [style=open] (13.center) to (17.center);
				\draw [style=open] (17.center) to (14.center);
				\draw [style=open] (21.center) to (25.center);
				\draw [style=open] (25.center) to (26.center);
				\draw [style=end arrow] (19.center) to (27.center);
				\draw [style=end arrow] (23.center) to (28.center);
				\draw (28.center) to (24.center);
				\draw (27.center) to (18.center);
				\draw [style=open] (29.center) to (35.center);
				\draw [style=open] (35.center) to (20.center);
				\draw [style=open] (22.center) to (36.center);
				\draw [style=open] (36.center) to (30.center);
				\draw [style=open] (35.center) to (31.center);
				\draw [style=open] (31.center) to (36.center);
				\draw [style=end arrow] (38.center) to (46.center);
				\draw [style=end arrow] (42.center) to (47.center);
				\draw (47.center) to (43.center);
				\draw (46.center) to (37.center);
				\draw [style=open] (48.center) to (54.center);
				\draw [style=open, bend right=90] (54.center) to (55.center);
				\draw [style=open, bend left=90] (54.center) to (55.center);
				\draw [style=open] (56.center) to (57.center);
				\draw [style=open] (56.center) to (45.center);
				\draw [style=open] (55.center) to (56.center);
				\draw [style=open] (58.center) to (56.center);
			\end{pgfonlayer}
		\end{tikzpicture}
	 \end{array}\label{CFinv}
\end{equation}
and see immediately that $C_F^2=C_F$ if $F$ is special, which we already know from Section~\ref{seccyl}.
Actually, the converse is true: If on the right hand side of~\eqref{CFinv},
 we project to the unit component, we see that $C^2_F=C_F$ implies that the blue diagram on the very right of \eqref{CFinv} needs to agree with the blue part of \eqref{eqnCF}. This implies that $F$ needs to be special.

By definition $\bulk(F)$ is the subobject of $\bzero(F)$ with one more relation, namely
\begin{equation}
	\begin{array}{c}\begin{tikzpicture}[scale=0.5]
			\begin{pgfonlayer}{nodelayer}
				\node [style=none] (0) at (-5, 0) {};
				\node [style=none] (1) at (-5, 4) {};
				\node [style=none] (2) at (-2, 4) {};
				\node [style=none] (3) at (-2, 0) {};
				\node [style=none] (4) at (-5, 2) {};
				\node [style=none] (5) at (-2, 2) {};
				\node [style=none] (6) at (-2, 3) {};
				\node [style=none] (8) at (-2, 4) {};
				\node [style=none] (10) at (-1, 2) {$=$};
				\node [style=none] (11) at (0, 0) {};
				\node [style=none] (12) at (0, 4) {};
				\node [style=none] (13) at (1, 4) {};
				\node [style=none] (14) at (1, 0) {};
				\node [style=none] (15) at (0, 2) {};
				\node [style=none] (16) at (1, 2) {};
				\node [style=none] (19) at (1, 4) {};
				\node [style=none] (21) at (-3.5, 4) {};
				\node [style=none] (22) at (-3.5, 0) {};
				\node [style=none] (23) at (0.5, 4) {};
				\node [style=none] (24) at (0.5, 0) {};
				\node [style=none] (25) at (-3.5, 2) {};
				\node [style=none] (26) at (-4.5, 0) {};
				\node [style=none] (27) at (-2.5, 0) {};
			\end{pgfonlayer}
			\begin{pgfonlayer}{edgelayer}
				\draw [style=end arrow] (1.center) to (4.center);
				\draw (4.center) to (0.center);
				\draw [style=end arrow] (12.center) to (15.center);
				\draw (15.center) to (11.center);
				\draw [style=open] (21.center) to (22.center);
				\draw [style=open] (24.center) to (23.center);
				\draw [style=end arrow] (14.center) to (16.center);
				\draw (16.center) to (19.center);
				\draw [style=end arrow] (3.center) to (5.center);
				\draw (5.center) to (8.center);
				\draw [style=open] (26.center) to (25.center);
				\draw [style=open] (25.center) to (27.center);
			\end{pgfonlayer}
		\end{tikzpicture}
	\end{array} \tag{B2}\label{eqnrelbulk}
\end{equation}

\subsection{The connection to the end over internal endomorphisms and Morita invariance\label{secinternalend}} Let $F$ be a special symmetric Frobenius algebra in a modular category $\cat{A}$.
The object \begin{align} \int_{M\in\Fmod} \HOM_F(M,M)\end{align} from~\eqref{eqninternalend} is proposed as a bulk field candidate in \cite{fspivotal,internal}. 
From the correlator construction given in this article, such a description is not at all obvious
because it does not even use the pivotal module category $\Fmod$.
The following result reconciles both ideas and forms the core of the proof of Theorem~\ref{thmhhfmod}:

\begin{proposition}\label{propbulkend}
	For a special symmetric Frobenius algebra $F$ in a modular category $\cat{A}$, the following two symmetric Frobenius algebras are canonically isomorphic:
	\begin{pnum}
		\item The bulk Frobenius algebra $\Fhb \in Z(\cat{A})$ with the geometric cyclic framed $E_2$-algebra structure in $\cat{A}$ underlying the consistent system of correlators from Theorem~\ref{thmmainlong}.
		\item The symmetric commutative Frobenius algebra $\int_{M\in\Fmod} \HOM_F(M,M)\in Z(\cat{A})$ obtained from the natural endotransformations of the identity of $\Fmod$.
	\end{pnum}
\end{proposition}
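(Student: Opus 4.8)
The plan is to prove Proposition~\ref{propbulkend} by identifying both symmetric Frobenius algebras with a third object built out of factorization homology and skein theory, and then matching the algebraic structure on the nose. The bridge will be the description $\mathbb{F}=\int^{X\in\cat{A}}X^\vee\otimes X$ and the dually-flavored end $\mathbb{A}=\int_{X\in\cat{A}}X^\vee\otimes X$ from Section~\ref{secclassfunctions}, together with the explicit relations (B1) and (B2) obtained in Section~\ref{secrelbulk}. The guiding slogan: $\Fhb=\bulk(F)$ is the subobject of $\bzero(F)=RF=\int_X X^\vee\otimes F\otimes X$ cut out by (B1)~and~(B2), and I claim that the end $\int_{M\in\Fmod}\HOM_F(M,M)$ is cut out of the same ambient product by exactly the same two relations, once one uses that every $F$-module is a summand of a free one $F\otimes X$ and uses the formula $\HOM_F(F\otimes X,F\otimes Y)\cong X^\vee\otimes F\otimes Y$ for the internal hom of a free module.

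\textbf{Step 1: Reduce the end over $\Fmod$ to an end over free modules.} Since $\Fmod$ is self-injective and every object is a retract of a free module $F\otimes X$, the end $\int_{M\in\Fmod}\HOM_F(M,M)$ can be computed (by a standard cofinality/retract argument, e.g.\ as in the density results used in \cite{fss,shimizucf}) as the end over the full subcategory of free modules, i.e.\ as $\int_{X\in\cat{A}}\HOM_F(F\otimes X,F\otimes X)$. Next I would compute $\HOM_F(F\otimes X,F\otimes Y)$: by the adjunction~\eqref{eqnadjunction} (with $\cat{M}=\Fmod$ as right $\cat{A}$-module, internal hom landing in $\cat{A}$) together with the free-module adjunction, one gets $\HOM_F(F\otimes X,F\otimes Y)\cong X^\vee\otimes F\otimes Y$ — here the factor $F$ appears because $\HOM_F(F,F\otimes Y)\cong F\otimes Y$ as the underlying object and one then tensors by $X^\vee$ on the appropriate side using the pivotal/rigid structure. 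Substituting, the end over free modules becomes an end $\int_{X\in\cat{A}} X^\vee\otimes F\otimes X$ taken in the $\cat{A}$-module category of dinatural families, which is literally the underlying object of $\bzero(F)=RF$, \emph{but} the dinaturality here is exactly the relation that a map $f:X\to Y$ acts on $\HOM_F$ by $\HOM_F(F\otimes f,F\otimes Y)$ vs.\ $\HOM_F(F\otimes X,F\otimes f)$ — i.e.\ precisely relation (B1) of Section~\ref{secrelbulk}.

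\textbf{Step 2: Match the second relation.} The composition $\HOM_F(F\otimes X,F\otimes X)\otimes\HOM_F(F\otimes X,F\otimes X)\to\HOM_F(F\otimes X,F\otimes X)$ of internal endomorphisms, transported through the isomorphism of Step~1 and then through the formation of the end (the commutative algebra structure of \cite[Corollary~39]{internal}), must be computed in terms of the multiplication of $F$. I would carry this out in the graphical calculus of Section~\ref{secrelbulk}: the composition of internal homs of free modules is, after unwinding the adjunctions, given by plugging the $F\otimes X$-strand into the multiplication of $F$, which is the very operation whose idempotency is (B2). More precisely, the condition that a dinatural family lies in the \emph{commutative} subalgebra $\int_{M\in\Fmod}\HOM_F(M,M)$ — equivalently, that it comes from a natural endotransformation of $\id_{\Fmod}$ — forces, on the free-module part, the relation that absorbing $F$ through the multiplication on either side of the $F$-strand gives the same answer; this is exactly (B2)/\eqref{eqnrelbulk}. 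So the universal property of $\bulk(F)$ (it is the equalizer~\eqref{eqneq}, also the image of $C_F$) and the universal property of $\int_{M\in\Fmod}\HOM_F(M,M)$ as the terminal dinatural family compatible with composition match up, giving a canonical isomorphism of objects $\bulk(F)\cong\int_{M\in\Fmod}\HOM_F(M,M)$ in $Z(\cat{A})$ intertwining the $Z(\cat{A})$-structure (the half-braiding on both sides comes from the $\cat{A}$-module structure of $\Fmod$, resp.\ from $R=L$ under reflection equivariance — these agree by \cite[Theorem~4.10]{shimizuunimodular}).

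\textbf{Step 3: Match the Frobenius forms, hence the symmetric Frobenius structures.} Both sides carry a symmetric Frobenius structure: on the left, from the geometric cyclic framed $E_2$-structure underlying the correlator (the pairing is the cylinder pairing, Section~\ref{seccyl}), and on the right, from \cite[Theorem~3.14 \& Corollary~39]{internal,relserre}, where the Frobenius form on each $\HOM_F(M,M)$ comes from the symmetric Frobenius structure on $F$ via the modified trace / Calabi-Yau structure on $\Fmod$, and these assemble over the end. I would show these agree by checking that on a free module $F\otimes X$ both Frobenius forms unwind to the same expression built from the pairing $\beta:F\otimes F\to I$ of $F$ and the (co)evaluations of $X$ — using that the pivotal structure on $\Fmod$ (Section~\ref{secreminderpiv}, via \cite{schaumannpiv}) is the one induced by the pivotal structure of $\cat{A}$, which is also what enters the cyclic structure of the modular functor. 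Since an end is a limit, agreement on the free (cofinal) part plus compatibility with the dinatural structure forces agreement of the forms. By non-degeneracy and the fact that a symmetric Frobenius structure on a fixed commutative algebra is rigid up to the choice of form, this upgrades the object isomorphism of Step~2 to an isomorphism of symmetric Frobenius algebras.

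\textbf{Main obstacle.} The genuinely delicate point is Step~2–3: proving that the \emph{algebra} (and then \emph{Frobenius}) structure on $\int_{M\in\Fmod}\HOM_F(M,M)$ — which is defined through composition of internal homs and the Calabi-Yau/modified-trace data on the pivotal module category $\Fmod$ — matches, under the Step~1 identification, the structure on $\bulk(F)$ coming from an entirely different source, namely the geometric cyclic framed $E_2$-structure extracted from the modular functor and the cylinder idempotent. Getting the half-braiding, the commutative multiplication, \emph{and} the symmetric Frobenius form to line up simultaneously requires a careful and somewhat lengthy graphical computation with the explicit relations (B1) and (B2) of Section~\ref{secrelbulk} together with the explicit form of the internal hom composition for free modules; the risk is sign/pivotal bookkeeping and ensuring the cofinality argument for the end genuinely respects all of this structure and not just the underlying object. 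This is where I expect the bulk of the work (and the subsequent Sections referenced as needed) to go.
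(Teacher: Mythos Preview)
Your overall architecture matches the paper's, but there is a genuine gap in how you split the relations between Steps~1 and~2. In Step~1 you write the end over free modules as $\int_{X\in\cat{A}} X^\vee\otimes F\otimes X$ and claim that the dinaturality condition is ``a map $f:X\to Y$ acts by $\HOM_F(F\otimes f,-)$ vs.\ $\HOM_F(-,F\otimes f)$'', yielding exactly~(B1). This is wrong: the end is taken over the category of free $F$-modules, so dinaturality must hold for \emph{all $F$-module morphisms} $g:F\otimes X\to F\otimes Y$, and by the free--forget adjunction these are parametrized by arbitrary $h\in\cat{A}(X,F\otimes Y)$, not just by $\cat{A}(X,Y)$. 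The resulting single family of relations (the paper's~\eqref{eqnrelationend2}) is strictly richer than~(B1), and the heart of the proof is the graphical verification that this single family is equivalent to the conjunction of~(B1) and~(B2).

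Consequently your Step~2 is looking for~(B2) in the wrong place. You propose that~(B2) arises from an additional ``commutative subalgebra'' or ``natural endotransformation'' condition, but $\int_{M\in\Fmod}\HOM_F(M,M)$ \emph{is} the object of natural endotransformations by definition of the end---there is no further condition to impose. Relation~(B2) is not an algebra-structure constraint; it is already forced by dinaturality once you allow the $F$-strand in the free module morphism $h:X\to F\otimes Y$ to interact with the middle $F$ via multiplication. Concretely: taking $h$ to be the unit followed by $f$ recovers~(B1), while taking $h$ general and using the Frobenius/special relations yields~(B2). Once you fix this, your Step~3 and the matching of half-braidings, multiplications, and Frobenius forms proceed essentially as in the paper.
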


\begin{proof}
	As recalled in Section~\ref{secreminderpiv}, $\HOM_F(-,-)$ is exact, and therefore we can use \cite[Proposition~5.1.7]{kl} to write $\int_{M\in\Fmod} \HOM_F(M,M)$ as $\int_{M\in\Proj \Fmod} \HOM_F(M,M)$. (Note that this proposition applies to coends; in order to apply it to ends, we would need to replace projective objects with injective ones, but $\Fmod$ is self-injective.)
	In fact, a further reduction is possible:
	If $P$ is a projective $F$-module in $\cat{A}$, then the free $F$-module $F\otimes \widetilde P$ for a projective object $\widetilde P$ in $\cat{A}$ with an epimorphism
	$\widetilde P \to P$ in $\cat{A}$
	 comes with an epimorphism $F\otimes \widetilde P \to P$ of $F$-modules that splits by projectivity of $P$ and hence exhibits $P$ as direct summand of free modules $F\otimes \widetilde P$ on projective objects of $\cat{A}$.
	With the methods of \cite[Section~2.2]{dva}, the end 
	$\int_{M\in\Fmod} \HOM_F(M,M)$
	is actually canonically isomorphic to $\int_{\substack{\text{free $F$-modules} \\ F\otimes P}} \HOM_F(F\otimes P,F\otimes P)$, where it is understood that the $P$ is projective as object in $\cat{A}$. With $\HOM_F(F\otimes P,F\otimes P)\cong (F\otimes P)^\vee \otimes _F (F\otimes P)\cong P^\vee \otimes F \otimes P$, we see in combination
	with Section~\ref{secrelbulk}
	that $\bulk(F)$ and $\int_{\substack{\text{free $F$-modules} \\ F\otimes P}} \HOM_F(F\otimes P,F\otimes P)$, for the moment both seen as objects in $\cat{A}$, are both subobjects of $\prod_{X \in \cat{A}} X^\vee \otimes F \otimes X$, but with a priori
	different relations:
	For the bulk object $\bulk(F)$, these are \eqref{eqnend} and \eqref{eqnrelbulk} while for $\int_{\substack{\text{free $F$-modules} \\ F\otimes P}} \HOM_F(F\otimes P,F\otimes P)$ the relation is
	\begin{align}
		\begin{array}{c}	\begin{tikzpicture}[scale=0.5]
				\begin{pgfonlayer}{nodelayer}
					\node [style=none] (0) at (-3, 0) {};
					\node [style=none] (1) at (-3, 4) {};
					\node [style=none] (2) at (-1, 4) {};
					\node [style=none] (3) at (-1, 0) {};
					\node [style=none] (4) at (-3, 2) {};
					\node [style=none] (5) at (-1, 2) {};
					\node [style=none] (6) at (-1, 3) {};
					\node [style=none] (7) at (-1, 2.75) {};
					\node [style=none] (8) at (-1, 4) {};
					\node [style=none] (10) at (0, 2) {$=$};
					\node [style=none] (11) at (1, 0) {};
					\node [style=none] (12) at (1, 4) {};
					\node [style=none] (13) at (3, 4) {};
					\node [style=none] (14) at (3, 0) {};
					\node [style=none] (15) at (1, 2.75) {};
					\node [style=none] (16) at (3, 2) {};
					\node [style=none] (19) at (3, 4) {};
					\node [style=none] (21) at (-2, 4) {};
					\node [style=none] (22) at (-2, 0) {};
					\node [style=none] (23) at (2, 4) {};
					\node [style=none] (24) at (2, 0) {};
					\node [style=bigbox] (25) at (-1.5, 1) {$g$};
					\node [style=bigbox] (26) at (1.5, 1) {$g^\vee$};
				\end{pgfonlayer}
				\begin{pgfonlayer}{edgelayer}
					\draw [style=end arrow] (1.center) to (4.center);
					\draw (4.center) to (0.center);
					\draw [style=end arrow] (5.center) to (7.center);
					\draw (7.center) to (8.center);
					\draw (3.center) to (5.center);
					\draw [style=end arrow] (12.center) to (15.center);
					\draw (15.center) to (11.center);
					\draw [style=open] (21.center) to (22.center);
					\draw [style=open] (24.center) to (23.center);
					\draw [style=end arrow] (14.center) to (16.center);
					\draw (16.center) to (19.center);
				\end{pgfonlayer}
			\end{tikzpicture}
		\end{array}\label{eqnrelationend}
	\end{align}
	for morphisms $g$ of free modules. The latter are exactly  of the form
	\begin{align}
\begin{tikzpicture}[scale=0.5]
	\begin{pgfonlayer}{nodelayer}
		\node [style=none] (2) at (-1, 4) {};
		\node [style=none] (3) at (-1, 0) {};
		\node [style=none] (5) at (-1, 2) {};
		\node [style=none] (6) at (-1, 3) {};
		\node [style=none] (7) at (-1, 2.75) {};
		\node [style=none] (8) at (-1, 4) {};
		\node [style=none] (10) at (0, 2) {$=$};
		\node [style=none] (13) at (3, 4) {};
		\node [style=none] (14) at (3, 0) {};
		\node [style=none] (16) at (3, 2) {};
		\node [style=none] (19) at (3, 4) {};
		\node [style=none] (21) at (-2, 4) {};
		\node [style=none] (22) at (-2, 0) {};
		\node [style=none] (23) at (2, 4) {};
		\node [style=none] (24) at (2.5, 1) {};
		\node [style=bigbox] (25) at (-1.5, 1) {$g$};
		\node [style=bigbox] (26) at (2.75, 1) {$h$};
		\node [style=none] (27) at (1, 0) {};
		\node [style=none] (28) at (2.25, 2.5) {};
	\end{pgfonlayer}
	\begin{pgfonlayer}{edgelayer}
		\draw [style=end arrow] (5.center) to (7.center);
		\draw (7.center) to (8.center);
		\draw (3.center) to (5.center);
		\draw [style=open] (21.center) to (22.center);
		\draw [style=open] (24.center) to (23.center);
		\draw [style=end arrow] (14.center) to (16.center);
		\draw (16.center) to (19.center);
		\draw [style=open] (27.center) to (28.center);
	\end{pgfonlayer}
\end{tikzpicture}
	\end{align} with $h$ being just a morphism in $\cat{A}$, so that \eqref{eqnrelationend} becomes
	\begin{align}
		\begin{array}{c}\begin{tikzpicture}[scale=0.5]
				\begin{pgfonlayer}{nodelayer}
					\node [style=none] (0) at (0, 0) {};
					\node [style=none] (1) at (0, 4) {};
					\node [style=none] (2) at (2, 4) {};
					\node [style=none] (3) at (2, 0) {};
					\node [style=none] (4) at (0, 2) {};
					\node [style=none] (5) at (2, 2) {};
					\node [style=none] (6) at (2, 3) {};
					\node [style=none] (7) at (2, 3) {};
					\node [style=none] (8) at (2, 4) {};
					\node [style=none] (9) at (3.5, 2) {$=$};
					\node [style=none] (10) at (5, 0) {};
					\node [style=none] (11) at (5, 4) {};
					\node [style=none] (12) at (7, 4) {};
					\node [style=none] (13) at (7, 0) {};
					\node [style=none] (14) at (5, 3.25) {};
					\node [style=none] (15) at (7, 2) {};
					\node [style=none] (16) at (7, 4) {};
					\node [style=none] (17) at (1, 4) {};
					\node [style=none] (18) at (1.5, 1) {};
					\node [style=none] (19) at (5.5, 2.25) {};
					\node [style=none] (20) at (6, 0) {};
					\node [style=bigbox] (21) at (1.75, 1) {$h$};
					\node [style=bigbox] (22) at (5, 2.25) {$h^\vee$};
					\node [style=none] (23) at (6, 4) {};
					\node [style=none] (24) at (1, 0) {};
					\node [style=none] (25) at (1.25, 2.5) {};
					\node [style=none] (26) at (5.75, 1) {};
				\end{pgfonlayer}
				\begin{pgfonlayer}{edgelayer}
					\draw [style=end arrow] (1.center) to (4.center);
					\draw (4.center) to (0.center);
					\draw [style=end arrow] (5.center) to (7.center);
					\draw (7.center) to (8.center);
					\draw (3.center) to (5.center);
					\draw [style=end arrow] (11.center) to (14.center);
					\draw (14.center) to (10.center);
					\draw [style=open] (17.center) to (18.center);
					\draw [style=open] (20.center) to (19.center);
					\draw [style=end arrow] (13.center) to (15.center);
					\draw (15.center) to (16.center);
					\draw [style=open, bend left, looseness=1.25] (24.center) to (25.center);
					\draw [style=open, bend left=15, looseness=1.25] (23.center) to (26.center);
				\end{pgfonlayer}
			\end{tikzpicture}
		\end{array}\label{eqnrelationend2}
	\end{align}
	For this reason, we need to prove
	\begin{align}
	\label{eqnproofcomp}	\begin{array}{c}	\eqref{eqnend} \quad \&\quad  \eqref{eqnrelbulk} \quad \Longleftrightarrow \quad \eqref{eqnrelationend2} \ . \end{array}
	\end{align}
	Suppose that \eqref{eqnend} and  \eqref{eqnrelbulk} hold.
	Then
	\begin{align}
		\begin{tikzpicture}[scale=0.5]
			\begin{pgfonlayer}{nodelayer}
				\node [style=none] (0) at (0, 0) {};
				\node [style=none] (1) at (0, 4) {};
				\node [style=none] (2) at (2, 4) {};
				\node [style=none] (3) at (2, 0) {};
				\node [style=none] (4) at (0, 2) {};
				\node [style=none] (5) at (2, 2) {};
				\node [style=none] (6) at (2, 3) {};
				\node [style=none] (7) at (2, 3) {};
				\node [style=none] (8) at (2, 4) {};
				\node [style=none] (9) at (3.5, 2) {$\stackrel{\eqref{eqnrelbulk}}{=}$};
				\node [style=none] (10) at (28.25, 0) {};
				\node [style=none] (11) at (28.25, 4) {};
				\node [style=none] (12) at (30, 4) {};
				\node [style=none] (13) at (30, 0) {};
				\node [style=none] (14) at (28.25, 3.25) {};
				\node [style=none] (15) at (30, 2) {};
				\node [style=none] (16) at (30, 4) {};
				\node [style=none] (17) at (1, 4) {};
				\node [style=none] (18) at (1.5, 1) {};
				\node [style=none] (19) at (28.75, 2.25) {};
				\node [style=none] (20) at (29.25, 0) {};
				\node [style=bigbox] (21) at (1.75, 1) {$h$};
				\node [style=bigbox] (22) at (28.25, 2.25) {$h^\vee$};
				\node [style=none] (23) at (29.25, 4) {};
				\node [style=none] (24) at (1, 0) {};
				\node [style=none] (25) at (1.25, 2.5) {};
				\node [style=none] (26) at (29, 1) {};
				\node [style=none] (27) at (4.5, 0) {};
				\node [style=none] (28) at (4.5, 4) {};
				\node [style=none] (29) at (8.25, 4) {};
				\node [style=none] (30) at (8.25, 0) {};
				\node [style=none] (31) at (4.5, 2) {};
				\node [style=none] (32) at (8.25, 2) {};
				\node [style=none] (33) at (8.25, 3) {};
				\node [style=none] (34) at (8.25, 3) {};
				\node [style=none] (35) at (8.25, 4) {};
				\node [style=none] (36) at (6.25, 4) {};
				\node [style=none] (37) at (7.75, 1) {};
				\node [style=bigbox] (38) at (8, 1) {$h$};
				\node [style=none] (39) at (6.25, 0) {};
				\node [style=none] (40) at (7, 2.5) {};
				\node [style=none] (41) at (5.25, 0) {};
				\node [style=none] (42) at (7.25, 0) {};
				\node [style=none] (43) at (6.5, 0.75) {};
				\node [style=none] (44) at (10, 0) {};
				\node [style=none] (45) at (10, 4) {};
				\node [style=none] (46) at (13.5, 4) {};
				\node [style=none] (47) at (13.5, 0) {};
				\node [style=none] (48) at (10, 2) {};
				\node [style=none] (49) at (13.5, 2) {};
				\node [style=none] (50) at (13.5, 3) {};
				\node [style=none] (51) at (13.5, 3) {};
				\node [style=none] (52) at (13.5, 4) {};
				\node [style=none] (53) at (11.5, 4) {};
				\node [style=none] (54) at (13, 1) {};
				\node [style=bigbox] (55) at (13.25, 1) {$h$};
				\node [style=none] (56) at (11.5, 0) {};
				\node [style=none] (57) at (11.75, 3.5) {};
				\node [style=none] (58) at (10.5, 0) {};
				\node [style=none] (59) at (12.5, 0) {};
				\node [style=none] (61) at (9.25, 2) {$=$};
				\node [style=none] (62) at (12.25, 2.5) {};
				\node [style=none] (63) at (11, 1) {};
				\node [style=none] (64) at (15, 2) {$\stackrel{\eqref{eqnend}}{=}$};
				\node [style=none] (65) at (16.5, 0) {};
				\node [style=none] (66) at (16.5, 4) {};
				\node [style=none] (67) at (20, 4) {};
				\node [style=none] (68) at (20, 0) {};
				\node [style=none] (69) at (16.5, 3) {};
				\node [style=none] (70) at (20, 2) {};
				\node [style=none] (71) at (20, 3) {};
				\node [style=none] (72) at (20, 3) {};
				\node [style=none] (73) at (20, 4) {};
				\node [style=none] (74) at (18.5, 4) {};
				\node [style=none] (77) at (18.5, 0) {};
				\node [style=none] (79) at (17.5, 0) {};
				\node [style=none] (80) at (19.5, 0) {};
				\node [style=none] (84) at (17.5, 0.5) {};
				\node [style=none] (85) at (17, 1.75) {};
				\node [style=bigbox] (86) at (16.75, 1.75) {$h^\vee$};
				\node [style=none] (87) at (18, 1.5) {};
				\node [style=none] (88) at (18.5, 3) {};
				\node [style=none] (89) at (21, 2) {$=$};
				\node [style=none] (90) at (22, 0) {};
				\node [style=none] (91) at (22, 4) {};
				\node [style=none] (92) at (25.5, 4) {};
				\node [style=none] (93) at (25.5, 0) {};
				\node [style=none] (94) at (22, 3) {};
				\node [style=none] (95) at (25.5, 2) {};
				\node [style=none] (96) at (25.5, 3) {};
				\node [style=none] (97) at (25.5, 3) {};
				\node [style=none] (98) at (25.5, 4) {};
				\node [style=none] (99) at (24, 4) {};
				\node [style=none] (100) at (24, 0) {};
				\node [style=none] (101) at (23, 0) {};
				\node [style=none] (102) at (25, 0) {};
				\node [style=none] (103) at (23, 0.5) {};
				\node [style=none] (104) at (22.5, 1.75) {};
				\node [style=bigbox] (105) at (22.25, 1.75) {$h^\vee$};
				\node [style=none] (106) at (24, 3) {};
				\node [style=none] (107) at (24, 3) {};
				\node [style=none] (108) at (26.5, 2) {$\stackrel{\eqref{eqnrelbulk}}{=}$};
			\end{pgfonlayer}
			\begin{pgfonlayer}{edgelayer}
				\draw [style=end arrow] (1.center) to (4.center);
				\draw (4.center) to (0.center);
				\draw [style=end arrow] (5.center) to (7.center);
				\draw (7.center) to (8.center);
				\draw (3.center) to (5.center);
				\draw [style=end arrow] (11.center) to (14.center);
				\draw (14.center) to (10.center);
				\draw [style=open] (17.center) to (18.center);
				\draw [style=open] (20.center) to (19.center);
				\draw [style=end arrow] (13.center) to (15.center);
				\draw (15.center) to (16.center);
				\draw [style=open, bend left, looseness=1.25] (24.center) to (25.center);
				\draw [style=open, bend left=15, looseness=1.25] (23.center) to (26.center);
				\draw [style=end arrow] (28.center) to (31.center);
				\draw (31.center) to (27.center);
				\draw [style=end arrow] (32.center) to (34.center);
				\draw (34.center) to (35.center);
				\draw (30.center) to (32.center);
				\draw [style=open] (36.center) to (37.center);
				\draw [style=open] (39.center) to (40.center);
				\draw [style=open] (41.center) to (43.center);
				\draw [style=open] (43.center) to (42.center);
				\draw [style=end arrow] (45.center) to (48.center);
				\draw (48.center) to (44.center);
				\draw [style=end arrow] (49.center) to (51.center);
				\draw (51.center) to (52.center);
				\draw (47.center) to (49.center);
				\draw [style=open] (53.center) to (54.center);
				\draw [style=open, bend left=45] (59.center) to (62.center);
				\draw [style=open] (58.center) to (63.center);
				\draw [style=open] (63.center) to (56.center);
				\draw [style=open] (63.center) to (57.center);
				\draw [style=end arrow] (66.center) to (69.center);
				\draw (69.center) to (65.center);
				\draw [style=end arrow] (70.center) to (72.center);
				\draw (72.center) to (73.center);
				\draw (68.center) to (70.center);
				\draw [style=open] (79.center) to (84.center);
				\draw [style=open] (84.center) to (85.center);
				\draw [style=open] (84.center) to (87.center);
				\draw [style=open] (87.center) to (77.center);
				\draw [style=open] (87.center) to (88.center);
				\draw [style=open] (88.center) to (74.center);
				\draw [style=open] (88.center) to (80.center);
				\draw [style=end arrow] (91.center) to (94.center);
				\draw (94.center) to (90.center);
				\draw [style=end arrow] (95.center) to (97.center);
				\draw (97.center) to (98.center);
				\draw (93.center) to (95.center);
				\draw [style=open] (101.center) to (103.center);
				\draw [style=open] (103.center) to (104.center);
				\draw [style=open] (103.center) to (106.center);
				\draw [style=open] (106.center) to (100.center);
				\draw [style=open] (106.center) to (107.center);
				\draw [style=open] (107.center) to (99.center);
				\draw [style=open] (107.center) to (102.center);
			\end{pgfonlayer}
		\end{tikzpicture}
		 \ . 
	\end{align}
	Conversely, assume that \eqref{eqnrelationend2} holds. In order to prove \eqref{eqnend} for a morphism $f:X\to Y$ in $\cat{A}$, one uses simply \eqref{eqnrelationend2} for $h : X\ra{f} Y \ra{\text{unit of $F$}} F \otimes Y$. It remains to prove~\eqref{eqnrelbulk}. This is accomplished as follows:
	\begin{align}
	\begin{tikzpicture}[scale=0.5]
		\begin{pgfonlayer}{nodelayer}
			\node [style=none] (109) at (-2, -1) {};
			\node [style=none] (110) at (-2, 3) {};
			\node [style=none] (111) at (1, 3) {};
			\node [style=none] (112) at (1, -1) {};
			\node [style=none] (113) at (-2, 1) {};
			\node [style=none] (114) at (1, 1) {};
			\node [style=none] (115) at (1, 2) {};
			\node [style=none] (116) at (1, 3) {};
			\node [style=none] (117) at (2, 1) {$=$};
			\node [style=none] (118) at (13, -1) {};
			\node [style=none] (119) at (13, 3) {};
			\node [style=none] (120) at (14, 3) {};
			\node [style=none] (121) at (14, -1) {};
			\node [style=none] (122) at (13, 1) {};
			\node [style=none] (123) at (14, 1) {};
			\node [style=none] (124) at (14, 3) {};
			\node [style=none] (125) at (-0.5, 3) {};
			\node [style=none] (126) at (-0.5, -1) {};
			\node [style=none] (127) at (13.5, 3) {};
			\node [style=none] (128) at (13.5, -1) {};
			\node [style=none] (129) at (-0.5, 1) {};
			\node [style=none] (130) at (-1.5, -1) {};
			\node [style=none] (131) at (0.5, -1) {};
			\node [style=none] (132) at (3, -1) {};
			\node [style=none] (133) at (3, 3) {};
			\node [style=none] (134) at (6, 3) {};
			\node [style=none] (135) at (6, -1) {};
			\node [style=none] (136) at (3, 1) {};
			\node [style=none] (137) at (6, 1) {};
			\node [style=none] (138) at (6, 2) {};
			\node [style=none] (139) at (6, 3) {};
			\node [style=none] (140) at (4.5, 3) {};
			\node [style=none] (141) at (4.5, -1) {};
			\node [style=none] (142) at (4.5, 1) {};
			\node [style=none] (143) at (3.5, -1) {};
			\node [style=none] (144) at (5.5, -1) {};
			\node [style=none] (145) at (5, 0) {};
			\node [style=none] (146) at (7, 1) {$\stackrel{\eqref{eqnrelationend2}}{=}$};
			\node [style=none] (147) at (8, -1) {};
			\node [style=none] (148) at (8, 3) {};
			\node [style=none] (149) at (11, 3) {};
			\node [style=none] (150) at (11, -1) {};
			\node [style=none] (151) at (8, 1) {};
			\node [style=none] (152) at (11, 1) {};
			\node [style=none] (153) at (11, 2) {};
			\node [style=none] (154) at (11, 3) {};
			\node [style=none] (155) at (9.5, 3) {};
			\node [style=none] (157) at (9.5, 1.5) {};
			\node [style=none] (158) at (9.5, 0) {};
			\node [style=none] (159) at (9.5, -1) {};
			\node [style=none] (160) at (12, 1) {$=$};
		\end{pgfonlayer}
		\begin{pgfonlayer}{edgelayer}
			\draw [style=end arrow] (110.center) to (113.center);
			\draw (113.center) to (109.center);
			\draw [style=end arrow] (119.center) to (122.center);
			\draw (122.center) to (118.center);
			\draw [style=open] (125.center) to (126.center);
			\draw [style=open] (128.center) to (127.center);
			\draw [style=end arrow] (121.center) to (123.center);
			\draw (123.center) to (124.center);
			\draw [style=end arrow] (112.center) to (114.center);
			\draw (114.center) to (116.center);
			\draw [style=open] (130.center) to (129.center);
			\draw [style=open] (129.center) to (131.center);
			\draw [style=end arrow] (133.center) to (136.center);
			\draw (136.center) to (132.center);
			\draw [style=end arrow] (135.center) to (137.center);
			\draw (137.center) to (139.center);
			\draw [style=open] (143.center) to (142.center);
			\draw [style=open] (141.center) to (145.center);
			\draw [style=open] (145.center) to (144.center);
			\draw [style=open] (145.center) to (142.center);
			\draw [style=open] (142.center) to (140.center);
			\draw [style=end arrow] (148.center) to (151.center);
			\draw (151.center) to (147.center);
			\draw [style=end arrow] (150.center) to (152.center);
			\draw (152.center) to (154.center);
			\draw [style=open] (157.center) to (155.center);
			\draw [style=open] (159.center) to (158.center);
			\draw [style=open, bend right=90, looseness=2.00] (158.center) to (157.center);
			\draw [style=open, bend left=90, looseness=2.00] (158.center) to (157.center);
		\end{pgfonlayer}
	\end{tikzpicture} 
	\end{align}
	This proves~\eqref{eqnproofcomp}. 
	
We conclude that the bulk object $\bulk(F)$ and $\int_{M\in \Fmod} \HOM_F(M,M)$ are canonically isomorphic as objects in $\cat{A}$ since they can be identified with the same subobject of $\int_{X\in \cat{A}} X^\vee \otimes F \otimes X$.
	Of course,   $\int_{X\in \cat{A}} X^\vee \otimes F \otimes X$ has a half braiding by virtue of being $RF$ for the right adjoint $R:\cat{A}\to Z(\cat{A})$ to the forgetful functor $U:Z(\cat{A})\to\cat{A}$. The half braiding on $\bulk(F)$ and $\int_{M\in \Fmod} \HOM_F(M,M)$ comes in both cases by restriction of this half braiding along the inclusion. For $\bulk(F)$, this holds by construction and the comments made in Section~\ref{secrelbulk}.
	For $\int_{M\in \Fmod} \HOM_F(M,M)$, this is a part of the construction in \cite{internal}.
	This identifies $\bulk(F)$ and $\int_{M\in \Fmod} \HOM_F(M,M)$ as objects in $Z(\cat{A})$.
	
	By definition $\Fhb$ is the bulk object, seen as object in $Z(\cat{A})$, with its topologically inherited multiplication and Frobenius structure.
	As one can see again with the tools from \cite[Section~9]{microcosm}, the multiplication is 
	induced by the map $\int_{X\in\cat{A}} X^\vee \otimes F \otimes X \otimes \int_{Y\in\cat{A}}Y^\vee \otimes F \otimes Y\to F$
	given by
	\begin{align}
		\begin{tikzpicture}[scale=0.5]
			\begin{pgfonlayer}{nodelayer}
				\node [style=none] (0) at (-5, 0) {};
				\node [style=none] (1) at (-4, 0) {};
				\node [style=none] (2) at (-3, 0) {};
				\node [style=none] (3) at (-2, 0) {};
				\node [style=none] (4) at (-1, 0) {};
				\node [style=none] (5) at (0, 0) {};
				\node [style=none] (6) at (-1, 1) {};
				\node [style=none] (7) at (-4, 1) {};
				\node [style=none] (8) at (-2.5, 2.5) {};
				\node [style=none] (9) at (-2.5, 4) {};
			\end{pgfonlayer}
			\begin{pgfonlayer}{edgelayer}
				\draw [style=open] (0.center) to (7.center);
				\draw [style=open] (7.center) to (1.center);
				\draw [style=open] (7.center) to (2.center);
				\draw [style=open] (3.center) to (6.center);
				\draw [style=open] (6.center) to (4.center);
				\draw [style=open] (6.center) to (5.center);
				\draw [style=open] (7.center) to (8.center);
				\draw [style=open] (8.center) to (6.center);
				\draw [style=open] (8.center) to (9.center);
			\end{pgfonlayer}
		\end{tikzpicture}
		\end{align}
		(in the first step, we project to the $F$-component of the end; this is  suppressed as usual).
		Next we need to turn this into a binary operation on $RF$ using the adjunction $U\dashv R$
	(the fact that this is how we obtain the corresponding morphism in $Z(\cat{A})$ follows because, by \cite[Theorem 4.2]{envas}, we can use the string-net model \cite{sn} which produces the morphisms in the center via this adjunction).
	 This resulting map $RF\otimes RF\to RF$ restricts to the bulk object and is there given by
	 	\begin{align}
	 	\begin{tikzpicture}[scale=0.5]
	 		\begin{pgfonlayer}{nodelayer}
	 			\node [style=none] (0) at (-6, 0) {};
	 			\node [style=none] (1) at (-5, 0) {};
	 			\node [style=none] (2) at (-4, 0) {};
	 			\node [style=none] (3) at (-1, 0) {};
	 			\node [style=none] (4) at (0, 0) {};
	 			\node [style=none] (5) at (1, 0) {};
	 			\node [style=none] (6) at (0, 1) {};
	 			\node [style=none] (7) at (-5, 1) {};
	 			\node [style=none] (8) at (-2.5, 2.5) {};
	 			\node [style=none] (9) at (-2.5, 4) {};
	 			\node [style=none] (10) at (-3, 0) {};
	 			\node [style=none] (11) at (-2, 0) {};
	 			\node [style=none] (12) at (-7, 0) {};
	 			\node [style=none] (13) at (-7, 4) {};
	 			\node [style=none] (14) at (2, 4) {};
	 			\node [style=none] (15) at (2, 0) {};
	 			\node [style=none] (16) at (3, 2) {$\stackrel{\text{(B2)}}{=}$};
	 			\node [style=none] (18) at (5, 0) {};
	 			\node [style=none] (21) at (8, 0) {};
	 			\node [style=none] (25) at (6.5, 2.5) {};
	 			\node [style=none] (26) at (6.5, 4) {};
	 			\node [style=none] (27) at (6, 0) {};
	 			\node [style=none] (28) at (7, 0) {};
	 			\node [style=none] (29) at (4, 0) {};
	 			\node [style=none] (30) at (4, 4) {};
	 			\node [style=none] (31) at (9, 4) {};
	 			\node [style=none] (32) at (9, 0) {};
	 		\end{pgfonlayer}
	 		\begin{pgfonlayer}{edgelayer}
	 			\draw [style=open] (0.center) to (7.center);
	 			\draw [style=open] (7.center) to (1.center);
	 			\draw [style=open] (7.center) to (2.center);
	 			\draw [style=open] (3.center) to (6.center);
	 			\draw [style=open] (6.center) to (4.center);
	 			\draw [style=open] (6.center) to (5.center);
	 			\draw [style=open] (7.center) to (8.center);
	 			\draw [style=open] (8.center) to (6.center);
	 			\draw [style=open] (8.center) to (9.center);
	 			\draw [bend left=90, looseness=3.00] (10.center) to (11.center);
	 			\draw (12.center) to (13.center);
	 			\draw (15.center) to (14.center);
	 			\draw [style=open] (25.center) to (26.center);
	 			\draw [bend left=90, looseness=3.00] (27.center) to (28.center);
	 			\draw (29.center) to (30.center);
	 			\draw (32.center) to (31.center);
	 			\draw [style=open] (18.center) to (25.center);
	 			\draw [style=open] (25.center) to (21.center);
	 		\end{pgfonlayer}
	 	\end{tikzpicture}
	 \end{align}
	In other words, the product on the bulk object is induced by the multiplication on $F$ and the lax monoidal structure on $R$.
	For $\int_{M\in \Fmod} \HOM_F(M,M)$, the multiplication comes from the composition on the internal endomorphisms. When described on the isomorphic end 
	$\int_{\substack{\text{free $F$-modules} \\ F\otimes P}} \HOM_F(F\otimes P,F\otimes P)$ over free modules, the internal endomorphisms are $P^\vee \otimes F \otimes P$, and the composition of internal endomorphisms uses the evaluation of objects in $\cat{A}$ and the multiplication of $F$. This is the restriction of the aforementioned product on $RF$.
	Therefore, the multiplications agree.
	
	A similar consideration applies to the Frobenius structure. We just give the main idea:
	The symmetric non-degenerate pairing on the bulk object was constructed in Section~\ref{seccyl} using reflection equivariance, which amounted to a trivialization of the distinguished invertible object, and self-duality of $F$. This is what trivializes the Serre functor for $\Fmod$~\cite[Theorem 4.26]{fss} and is used in \cite{internal} to obtain the Frobenius structure on $\int_{M\in \Fmod} \HOM_F(M,M)$. This way, one observes that the Frobenius structures agree.
\end{proof}

For the correlator construction in Theorem~\ref{thmmainlong}, it is not obvious at all that the closed part of the
construction depends on the special symmetric Frobenius algebra only up to Morita equivalence.
Thanks to
Proposition~\ref{propbulkend}
however, this is now clear:

\begin{corollary}[Morita invariance]\label{cormoritainvarianz}
	For a modular category $\cat{A}$, the underlying bulk field correlator associated to a special symmetric Frobenius algebra $F$ depends on $F$ only up to Morita equivalence.
\end{corollary}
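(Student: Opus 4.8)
The plan is to derive the Morita invariance directly from the identification of the bulk object carried out in Proposition~\ref{propbulkend}, together with the fact that a bulk field correlator is rigidly determined by its genus zero restriction. First I would record this last point: by Proposition~\ref{propmicrocosmcor} the bulk field correlator that Theorem~\ref{thmmainlong} associates to a special symmetric Frobenius algebra $F\in\cat{A}$ is a modular $\Surf$-algebra with coefficients in $\bar{\cat{A}}\boxtimes\cat{A}$ whose underlying self-dual object is $\bulk(F)$; its genus zero part is precisely the symmetric commutative Frobenius algebra $\Fhb\in Z(\cat{A})$ of Proposition~\ref{propbulkend}(i), and by the uniqueness of the extension from genus zero to all surfaces (the ansular extension of \cite{microcosm} together with Proposition~\ref{propextclosed}, the torus condition being available here) the whole system $(\cor^F_\Sigma)_\Sigma$, closed surfaces included, is canonically recovered from this genus zero datum. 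Consequently it suffices to show that the symmetric commutative Frobenius algebra $\Fhb\in Z(\cat{A})$ depends on $F$ only through its Morita class.

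Next I would invoke Proposition~\ref{propbulkend}(ii): there is a canonical isomorphism of symmetric commutative Frobenius algebras $\Fhb\cong\int_{M\in\Fmod}\HOM_F(M,M)$ in $Z(\cat{A})$, the right-hand side being the end over the internal endomorphism algebras of the pivotal $\cat{A}$-module category $\Fmod$ of $F$-modules (Section~\ref{secreminderpiv}). This end, with its algebra structure coming from the composition of internal endomorphisms and its Frobenius form coming from the trivialization of the relative Serre functor of $\Fmod$, is visibly functorial in the pivotal module category: any equivalence of pivotal $\cat{A}$-module categories transports it to an isomorphic symmetric commutative Frobenius algebra in $Z(\cat{A})$. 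So I would have reduced the problem to the statement that a Morita equivalence between the special symmetric Frobenius algebras $F$ and $F'$ induces an equivalence $\Fmod\ra{\simeq}\cat{M}'$ of \emph{pivotal} $\cat{A}$-module categories, where $\cat{M}'$ denotes the corresponding category of $F'$-modules.

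The core step is thus to build and check this equivalence. A Morita equivalence of $F$ and $F'$ in $\cat{A}$ is implemented by an invertible bimodule, which yields an $\cat{A}$-module equivalence $\Phi\colon\Fmod\ra{\simeq}\cat{M}'$; because the internal homs are determined by a representability property, $\Phi$ automatically comes with natural isomorphisms $\HOM_{F'}(\Phi M,\Phi N)\cong\HOM_F(M,N)$ compatible with composition, so that reindexing the end along $\Phi$ already produces a canonical isomorphism $\int_{M\in\Fmod}\HOM_F(M,M)\cong\int_{M'\in\cat{M}'}\HOM_{F'}(M',M')$ of algebras in $Z(\cat{A})$. The point I expect to be the main obstacle is to upgrade this to an isomorphism of \emph{Frobenius} algebras, i.e.\ to verify that $\Phi$ carries the trivialization of the relative Serre functor of $\Fmod$ built from the self-duality of $F$ (as in \cite{relserre,fss}, used in \cite{internal}) to the analogous trivialization for $\cat{M}'$. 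This is what forces one to interpret ``Morita equivalence'' here — exactly as in the rational case — as a Morita equivalence compatible with the special symmetric Frobenius structures, equivalently as an equivalence of the associated pivotal module categories, and then to track the relevant nondegenerate pairing through the module functor $\Phi$.

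Putting the three steps together: for Morita equivalent special symmetric Frobenius algebras $F$ and $F'$ one obtains an isomorphism between the bulk Frobenius algebra $\Fhb$ associated to $F$ and the one associated to $F'$, as symmetric commutative Frobenius algebras in $Z(\cat{A})$, hence — by the uniqueness of the extension to all surfaces — a canonical isomorphism of the two bulk field correlators. If one prefers to avoid the Serre-functor bookkeeping altogether, one can instead argue entirely at the level of genus zero data: since the reconstruction of the correlators from $\Fhb$ is canonical, it is enough to know that $\Fhb$ is, up to canonical isomorphism of commutative symmetric Frobenius algebras in $Z(\cat{A})$, an invariant of the pivotal module category $\Fmod$, which is exactly what the reindexing above provides.
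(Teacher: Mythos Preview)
Your proposal is correct and follows essentially the same approach as the paper, which simply remarks that the corollary is ``now clear'' from Proposition~\ref{propbulkend} without spelling out details. You have supplied the two ingredients the paper leaves implicit: that the full bulk correlator is recovered from its genus zero restriction (via the ansular extension of \cite{microcosm} and Proposition~\ref{propextclosed}), and that the end $\int_{M\in\Fmod}\HOM_F(M,M)$ with its symmetric commutative Frobenius structure is manifestly an invariant of the pivotal module category $\Fmod$, hence of the Morita class of $F$.

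Your caveat about the pivotal structure is well placed: the Frobenius form on the end comes from the trivialization of the relative Serre functor, so one does need the Morita equivalence to intertwine these trivializations. In the setting of special symmetric Frobenius algebras this is automatic (the pivotal structure on $\Fmod$ is induced from the symmetric Frobenius structure on $F$ via \cite{schaumannpiv,relserre}, and an invertible bimodule between two such algebras transports one to the other), so the concern dissolves, but the paper does not make this explicit either.
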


\subsection{Cochains on the loop objects versus Hochschild cochains: The end of the proof of Theorem~\ref{thmhhfmod}\label{secproofloop}}
We are now finally in a position to prove the equivalence
\begin{align}
	\LO(\cat{A};F) \simeq CH^*(\Fmod)\label{eqnlohh}
\end{align} of framed $E_2$-algebras for any special symmetric Frobenius algebra $F\in\cat{A}$ in a modular category.

\begin{proof}[\slshape Proof of Theorem~\ref{thmhhfmod}]
	Through the construction in Proposition~\ref{prophi}, $\LO(\cat{A};F)=\mathbb{R}\Hom_\cat{A}(I,U\Fhb)$ as framed $E_2$-algebra.
	The framed $E_2$-structure and the functoriality of
	$\mathbb{R}\Hom_\cat{A}(I,U-)$ in the braided commutative (symmetric Frobenius) algebra in the Drinfeld center $Z(\cat{A})$ are a consequence of~\cite{homotopyinvariants}.
	
	The main point is that by
	Proposition~\ref{propbulkend} we have $\Fhb \cong \int_{M\in \Fmod} \HOM_F(M,M)$ as 
	braided commutative symmetric Frobenius algebras in $Z(\cat{A})$. 
	This gives us \begin{align} \LO(\cat{A};F) \simeq \mathbb{R}\Hom_\cat{A}\left(I,\int_{M\in \Fmod} \HOM_F(M,M)\right) \end{align} as differential graded framed $E_2$-algebras. Now the statement follows from \cite[Theorem~5.11 \& Section~7]{homotopyinvariants}.
\end{proof}

\begin{example}	In the Cardy case $F=I$, we have $\LO(\cat{A};F)=\mathbb{R}\Hom_\cat{A}(I,\mathbb{A})$ for the end $\mathbb{A}=\int_{X \in \cat{A}} X^\vee \otimes X$.
The right hand side of~\eqref{eqnlohh} reduces
to the Hochschild cochains of $\cat{A}$.
Suppose that $\cat{A}$ is given by finite-dimensional modules over a ribbon factorizable Hopf algebra $H$, then $\mathbb{A}$ is $H$ itself with the adjoint action, and \eqref{eqnlohh} reduces, at cohomology level, to the classical isomorphism $\Ext_H(k,H_\text{adj})\cong HH^*(H)$ that goes back to Cartan-Eilenberg~\cite{cartaneilenberg}, see e.g.~\cite[Proposition~2.1]{bichon}. 
The statement on the level of framed $E_2$-algebras is a consequence of the approach in~\cite{homotopyinvariants} to the cyclic Deligne conjecture via braided commutative algebras in the Drinfeld center.
This applies for example to the triplet at integral parameter thanks to the description via a suitable small quantum group in \cite{gannonnegron}. 
In that case,  $HH^*(\cat{A})$ as graded commutative algebra and hence also   $H^*\LO(\cat{A};I)$ can be explicitly described \cite{lq} in terms of generators and relations; in particular, this algebra is not concentrated in degree zero.
 The study of $\LO(\cat{A};F)$ in the general case lies beyond the scope of this article.
\end{example}

	\small	
\newcommand{\etalchar}[1]{$^{#1}$}

\vspace*{0.3cm} \noindent  \textsc{Université Bourgogne Europe, CNRS, IMB UMR 5584, F-21000 Dijon, France}

\end{document}